\documentclass[11pt,reqno]{amsart}
\usepackage{amsfonts}
\usepackage{amsmath}
\usepackage{amssymb}
\usepackage{multicol}
\usepackage{stmaryrd}
\usepackage{cite}
\usepackage{epsfig}
\usepackage{color}
\usepackage{graphics}
\usepackage{graphicx}
\usepackage{booktabs}
\usepackage{epstopdf}
\usepackage{multicol,graphics}
\usepackage{subfigure}
\newcommand\bes{\begin{eqnarray}}
\newcommand\ees{\end{eqnarray}}

\newtheorem{theorem}{Theorem}[section]
\newtheorem{lemma}[theorem]{Lemma}
\newtheorem{corollary}[theorem]{Corollary}

\newtheorem{definition}[theorem]{Definition}
\newtheorem{remark}[theorem]{Remark}

\numberwithin{equation}{section}
\allowdisplaybreaks

\allowdisplaybreaks[4]

\usepackage{hyperref}
\hypersetup{hidelinks,
colorlinks=true,linkcolor=blue,citecolor=red,
pdfstartview=Fit,
pdfborder={1 1 1},
bookmarksnumbered=true,
breaklinks=true}

\begin{document}

\title[A Hamilton-Jacobi Approach to Time-Delayed Nonlocal Diffusion Models]{A Hamilton-Jacobi Approach to Time-Delayed Nonlocal Diffusion Models in Shifting Habitats}

\author[Jiang, Lam, Li and Tao]{Bing-Er Jiang$^{1}$, King-Yeung Lam$^{2}$,  Wan-Tong Li$^{1,*}$ and Wen Tao$^{1}$}
\thanks{\hspace{-.6cm}
$^1$School of Mathematics and Statistics, Lanzhou University, Lanzhou, Gansu 730000, P.R. China.
\\
$^2$Department of Mathematics, The Ohio State University, Columbus, OH, 43210, USA\\
$^*${\sf Corresponding author} (wtli@lzu.edu.cn)}

\date{\today}

\begin{abstract}
This paper is concerned with the spatial propagation dynamics of time-delayed nonlocal diffusion equations in shifting habitats. We employ the theory of viscosity solutions for Hamilton-Jacobi equations to provide a complete classification of the spreading speeds. In particular, we derive variational formulas that explicitly characterize how these speeds depend on the decay rate of the initial data, the habitat shifting speed, and the maturation delay across three regimes: locally determined, nonlocally selected, and locked. We reveal a distinct directional asymmetry in the nonlocal selection mechanism and a novel delay-insensitivity phenomenon. We also establish the threshold conditions under which the habitat locking effect eliminates the classical decelerating role of time delay.

Keywords: spreading speed, time-delay, reaction-diffusion, viscosity solution

\textbf{AMS Subject Classification (2020)}: 35K57, 35R20, 92D25.

\end{abstract}

\maketitle

\tableofcontents

\newpage

\section{Introduction}\label{Sec:Int}

A standard model for population spread with nonlocal dispersal and maturation delay takes the form
\begin{equation}\label{eq:base_model}
    \dfrac{\partial u(t,x)}{\partial t}=d\mathcal{D}[u](t,x)-\mu u(t,x)+ \mu g(u(t-\tau,x)), \quad t>0, \, x\in\mathbb{R},
\end{equation}
which has been widely studied \cite{Fang2014,Yi2015,Liu2015,Li2020}.
Here, $u(t,x)$ denotes the density of mature individuals, $d>0$ is the dispersal rate, $\mu>0$ is the mortality rate, $\tau\ge0$ is the maturation delay. The nonlocal dispersal operator $\mathcal{D}$ is defined by
\begin{equation*}
\mathcal{D}[u](t,x)=\int_{\mathbb{R}}J(x-y)u(t,y)\,dy-u(t,x),
\end{equation*}
where $J$ is a continuous probability density on $\mathbb{R}$ describing both short-range and long-distance individual movements \cite{Turchin,Mollison1977,Murray2003,Coville2007,Ignat2007,Shen2010}.
This general formulation encompasses several classical biological models, most notably the Nicholson's blowflies equation with nonlocal dispersal \cite{Yi2015,Liu2015,Pan2010}, where the birth function takes the form
\[
g(u) = \alpha u e^{-\delta u}
\]
with $\alpha$ the maximum per capita egg production rate and $1/\delta$ the population size at which reproduction is maximized.

While model \eqref{eq:base_model} characterizes propagation in a static environment, it does not account for the effects of global climate change, which is causing systematic shifts in species' habitats \cite{Potapov2004}.
The mathematical study in shifting habitats was pioneered by Potapov and Lewis \cite{Potapov2004}, and the canonical model now widely used in the literature was introduced by Berestycki et al. \cite{Berestycki2009}:
\begin{align}\label{eq:local-kpp}
    \frac{\partial u(t,x)}{\partial t}=\frac{\partial^2 u(t,x)}{\partial x^2}+u(t,x)\left[\alpha(x-c_{1}t)-u(t,x)\right], \quad  (t,x)\in(0,\infty)\times\mathbb{R}.
\end{align}
Here, the growth rate depends explicitly on the moving coordinate $z = x - c_1 t$, where $c_1$ is the speed at which the habitat shifts. A fundamental mathematical challenge posed by this formulation is the loss of spatial translation invariance, which invalidates many classical techniques developed for homogeneous systems. Despite this difficulty, significant progress has been made in understanding the propagation dynamics of \eqref{eq:local-kpp} and its variants over the past decade \cite{Fang2016, Hu2020, Li2014,Lam2022,Lam2025}.

Integrating this environmental forcing into the delayed nonlocal framework \eqref{eq:base_model} yields the primary subject of this paper:
\begin{equation}\label{eq:main}
\begin{cases}
    \dfrac{\partial u(t,x)}{\partial t}=d\mathcal{D}[u](t,x)-\mu u(t,x)+ \mu f(x-c_{1}t,u(t-\tau,x)),&(t,x)\in(0,\infty)\times\mathbb{R}, \\
    u(\theta,x)=\phi(\theta,x), & (\theta,x)\in[-\tau,0]\times\mathbb{R}.
\end{cases}
\end{equation}
By formally taking the Dirac delta distribution kernel $J(x) = \delta(x) + \delta''(x)$, the nonlocal operator reduces to the Laplacian, and system \eqref{eq:main} recovers the corresponding local reaction-diffusion equation with shifting habitats:
\begin{align}\label{eq:local}
\dfrac{\partial u(t,x)}{\partial t}=d\Delta u-\mu u(t,x)+ \mu f(x-c_{1}t,u(t-\tau,x)),\quad(t,x)\in(0,\infty)\times\mathbb{R}.
\end{align}
While our primary focus is on nonlocal diffusion, we note that equation \eqref{eq:local} can be analyzed using a similar approach. See Remark \ref{remark:local-new} for details.

The central question we address in this paper is: under what conditions can a population successfully track its shifting habitat, and what determines the speed at which it spreads? To answer this question, we use the concept of spreading speeds, which provide a quantitative measure of invasion success. Following Aronson and Weinberger \cite{Aronson1978}, we say that a nonnegative solution $u$ admits rightward and leftward spreading speeds $c_u^r$ and $c_u^l$ if
\begin{align*}
\begin{cases}
\lim\limits_{t\to+\infty} \sup\limits_{x\le (-c_u^l-\eta)t,\text{or }x\ge(c_u^r+\eta)t}u(t,x)=0 \quad &\text{ for all } \eta>0,\\
\liminf\limits_{t\to+\infty} \inf\limits_{ (-c_u^l+\eta)t\le x\le(c_u^r-\eta)t}u(t,x)>0 \quad &\text{ for all } \eta\in(0,(c_u^r+c_u^l)/2).
\end{cases}
\end{align*}

The dynamics of shifting habitat models heavily depend on the asymptotic behavior of the growth function at spatial infinity. We define
\begin{align*}
 f^{\pm}(u) := \lim_{z \to \pm \infty} f(z,u),
\end{align*}
which characterize the habitat conditions in the far field. Based on these limits, habitats can be classified into three types: Bad-Bad (BB), Bad-Good (BG), and Good-Good (GG), where a ``good" habitat is one in which the corresponding homogeneous system admits a positive spreading speed, and a ``bad" habitat is one in which the population goes extinct. While significant progress has been made for BB-type \cite{Yi2023,Chen2025} and BG-type \cite{Yi2020, Yi2025-2} habitats, GG-type habitats, where conditions are favorable at both infinities, remain the least understood due to the emergence of complex propagation phenomena such as nonlocal speed selection and front locking.

Most recently, Yi and Zhao \cite{Yi2025} advanced the semiflow framework for GG-type habitats with local diffusion. A notable strength of their work is that it allows for non-monotone reaction terms, making it applicable to a wider class of biological models. However, three key limitations motivate our study. First, their spreading speed analysis is only valid for $-\min\{c_-^*,c_+^*\}<c_1<\min\{c_-^*,c_+^*\}$, where $c_\pm^*$ denote the far-field homogeneous spreading speeds; all other regimes remain only partially resolved. Second, their methods do not readily extend to nonlocal diffusion, as the absence of regularizing effects from the Laplacian makes verifying the compactness assumptions required by the semiflow approach highly nontrivial \cite{Li2018}. Third, their approach cannot explicitly quantify how spreading speeds depend on the initial exponential decay rate, which is essential for understanding nonlocal selection phenomena. Table \ref{tab:literature_compare} provides a systematic comparison of our work with existing literature.
\begin{table}[ht]
\centering
\caption{Comparison of existing results and this work.}\label{tab:literature_compare}
\renewcommand{\arraystretch}{1.2}
\begin{tabular}{l c c c c}
\toprule
Reference & Diffusion Operator & Habitat Type & Monotonicity of $f$ & Time Delay \\
\midrule
\cite{Yi2023} & Local & BB & Non-monotone & Discrete \\
\cite{Chen2025} & Nonlocal & BB & Non-monotone & Discrete \\
\cite{Yi2020} & Nonlocal \& Local & BG & Monotone & Discrete \\
\cite{Yi2025-2} & Local & BG & Non-monotone & Discrete \\
\cite{Yi2025} & Local & GG & Non-monotone & Discrete \\
\cite{Lam2022} & Local & GG & Weakly monotone & Distributed \\
\midrule
This work & Nonlocal \& Local$^\dagger$ & GG & Monotone & Discrete \\
\bottomrule
\end{tabular}
\begin{flushleft}
\small{$^\dagger$ Analogous results hold for the local diffusion case; see Remarks \ref{remark:local-refs} and \ref{remark:local-new} for details.}
\end{flushleft}
\end{table}

To overcome these limitations, we adopt the Hamilton-Jacobi (H-J) approach, which has emerged as a powerful alternative for studying propagation problems. Originating from the work of Freidlin \cite{freidlin1985} and Evans and Souganidis \cite{evans1989}, this methodology characterizes the asymptotic spreading behavior via the viscosity solution of a macroscopic H-J equation, thereby bypassing the need for the strict compactness and regularity assumptions required by semiflow theory.

The H-J framework has been widely used to study propagation dynamics in complex systems, including Lotka-Volterra competition systems \cite{liu2020,Liu2021}, nonlocal equations in almost periodic media \cite{Liang2020}, age-structured models \cite{kang2025}, and kinetic equations \cite{loy2024}. We refer the readers to \cite{Lam2026} for more details. Recently, this approach has profoundly advanced the study of heterogeneous problems. For classical reaction-diffusion equations in shifting habitats with distributed delays, Lam and Yu \cite{Lam2022} gave a complete characterization of spreading speeds in weakly monotonic GG-type habitats, and quantified how the speed depends on the exponential decay rate of initial data.
Later, Lam, Nadin, and Yu \cite{Lam2025} extended their results to equation \eqref{eq:local-kpp} with more general intrinsic growth rate function and identified two distinct regimes: nonlocally selected fronts \cite{girardin2019} and locked fronts. For nonlocal dispersal, Tao et al. \cite{Tao2026} applied this approach to Fisher-KPP equations in shifting habitats and discovered a directional asymmetry for the occurrence of nonlocal speed selection in spatially increasing versus decreasing environments.

Motivated by these developments, we adopt the H-J framework to rigorously investigate the time-delayed nonlocal system \eqref{eq:main}. In our study, this methodological shift is paired with slightly stronger structural assumptions (e.g., monotonicity). This strategy enables us to move beyond the aforementioned partial characterizations. Unlike the results in \cite{Yi2025}, our analysis derives explicit variational formulas and a complete classification of spreading speeds for all \(c_1\in\mathbb{R}\), covering both exponentially decaying initial data \rm{(IC\(^\lambda\))} and compactly supported initial data \rm{(IC\(^\infty\))} (see Theorems \ref{thm:right}--\ref{thm:vis-infty}).
Our analysis provides a \textbf{complete classification} of the spreading speeds for the nonlocal model, yielding a phase diagram (see Figure \ref{fig:images}) that describes the parameter regimes governing locally determined, nonlocally selected, and locked propagation. Furthermore, all classification regimes and explicit formulas apply to the local setting (see Remark \ref{remark:local-new}). The specific analysis for the Laplacian case is deferred to Appendix \ref{App:Local}, thereby yielding a comprehensive description that strictly improves upon existing partial results for local models \cite{Yi2025}.

Our primary novelties lie in two aspects:
\begin{itemize}
\item \textbf{Nonlocal speed selection in time-delayed systems:} We establish the existence of a nonlocal speed selection mechanism in time-delayed nonlocal reaction-diffusion equations with shifting habitats. We show that the environmental heterogeneity forces the spreading speed to be determined by a global coupling between the near-field information of the propagation front and the far-field habitat conditions rather than just local information at the front. This is manifested as a dynamic adjustment of the decay rate driven by the heterogeneous distribution of $f(x-c_{1}t,\cdot)$. Furthermore, we prove a strict directional asymmetry in the speed selection mechanism, extending the recent results of Tao et al. \cite{Tao2026} to time-delayed systems: for spatially increasing habitats, nonlocal selection determines the leftward speed only for sufficiently small initial decay rates, but dominates the rightward speed even for compactly supported initial data.

\item \textbf{Delay-independence and locking thresholds:} We provide a precise quantitative analysis of how maturation delay affects spreading speeds in shifting habitats. In homogeneous environments, it is well known that delay always decelerates propagation speed \cite{Pan2009,Schaaf1987,Wang2008}. In sharp contrast, we demonstrate that in GG-type shifting habitats, this monotonic deceleration effect is completely disrupted by the locking phenomenon. Once the propagation front locks to the habitat shifting speed, the spreading speed becomes independent of the delay, remaining exactly equal to $c_1$ regardless of further increases in $\tau$. We explicitly compute the critical delay thresholds that mark the transition to and from the locking regime.
\end{itemize}

\textbf{Outline of the paper:} Section \ref{Sec:Pre} contains the key assumptions, and presents the main results, including the explicit formulas for the spreading speeds. Section \ref{Sec:HJ} is devoted to the derivation of the limiting Hamilton-Jacobi equation. Section \ref{Sec:Speed} provides the explicit formulas for the spreading speeds and completes the proof of main theorems. Section \ref{Sec:tau} establishes the effect of the time delay on the spreading speeds. Section \ref{Sec:Num} presents numerical simulations to verify the theoretical findings.

\section{Preliminary}\label{Sec:Pre}

In this section, we present the main hypotheses and results. Let \( X = BUC(\mathbb{R}, \mathbb{R}) \) be the normed vector space of all bounded and uniformly continuous functions from \( \mathbb{R} \) to \( \mathbb{R} \) with the usual supremum norm. Let $X^+=BUC(\mathbb{R},\mathbb{R}^+)$.
Let \( C = C([-\tau, 0], X) \) be the normed vector space of all continuous functions from \( [-\tau, 0] \) into \( X \) with the norm $\|\phi\|_C := \sup\limits_{\theta \in [-\tau,0]} \|\phi(\theta)\|_X$ and $C^+=C([-\tau, 0], X^+)$. We shall also treat an element $\phi\in C$ as a function from $[-\tau,0]\times\mathbb{R}$ into $\mathbb{R}$, so that $\phi\in BUC([-\tau,0]\times\mathbb{R},\mathbb{R})$. Let \(C_{r}=\{\phi\in BUC([-\tau,0]\times\mathbb{R},\mathbb{R}): 0\le \phi(\theta,x)\le r\}\) for some $r>0$.

We begin with the assumption on the dispersal kernel $J$.
\begin{itemize}
\item[(J)] $J\in C(\mathbb{R})$ is nonnegative, symmetric and compactly supported and $\int_\mathbb{R}J(y)dy=1$.
\end{itemize}
Next state the main hypotheses on the nonlinearity $f$.
\begin{enumerate}
    \item[(F1)] $f(z, 0) = 0$ for all $z\in\mathbb{R}$, \(f(z,u)>0\) for \((z,u)\in\mathbb R\times(0,\infty)\), $f \in C(\mathbb{R} \times [0, \infty))$ and $f(z,\cdot)\in C^1([0,\infty))$. Moreover,
          \begin{align}\label{eq:F2-1}
           f(z, u) \le u \, \partial_u f(z, 0) \quad \text{for all } (z, u),
          \end{align}
          for each $M > 0$, $\sup\limits_{\mathbb{R} \times [0,M]} |\partial_u f(z, u)| < \infty$. For each $\eta_*>0$, there exists $\delta_*>0$ such that
          \[
          f(z, u) \ge \left( \partial_u f(z, 0) - \eta_* \right) u \qquad \text{whenever } 0 \le u \le \delta_*.
          \]

    \item[(F2)] The limits $f^{\pm}(u):=\lim\limits_{z\to\pm\infty}f(z,u)$ exist in $C_{\text{loc}}^1(\mathbb{R}^+,\mathbb{R}^+)$ and $\frac{\mathrm{d} f^\pm(0)}{\mathrm{d} u}>1$.

    \item[(F3)] The function $\partial_u f(z,0)$ is strictly nondecreasing in $z\in\mathbb{R}$.

    \item[(F4)] The function $f(z,u)$ is nondecreasing in $u\in[0,u^*]$, where $u^*$ is given by the following condition: there exists $u^*>0$ such that $f^+(u^*)=u^*$ and
    \[
    (f^+(u)-u)(u-u^*)<0 \quad\text{ for all } u\in(0,\infty)\backslash\{u^*\}.
    \]
\end{enumerate}
A typical form of $f$ is Nicholson's blowflies model, that is $f(z,u)=\alpha(z)ue^{-u}$, where $\alpha(z)\in C(\mathbb{R})$ is non-decreasing, $1<\alpha(z)<e$, and $\alpha(\pm\infty)$ exists.

Define \( R, \underline{R} \in L^\infty(\mathbb{R}) \) be given by
\begin{align*}
\begin{cases}
R(x/t) := \limsup\limits_{\substack{\varepsilon \to 0 \\ (t',x') \to (t,x) }} \partial_u f(\frac{x'-c_1t'}{\varepsilon}, 0) & \text{for } (t,x) \in (0, \infty) \times \mathbb{R}, \\
\underline{R}(x/t) := \liminf\limits_{\substack{ \varepsilon \to 0 \\ (t',x') \to (t,x) }} \partial_u f(\frac{x'-c_1t'}{\varepsilon}, 0) & \text{for } (t,x) \in (0, \infty) \times \mathbb{R}.
\end{cases}
\end{align*}
As a consequence of (F2) and (F3), there exist two constants \(R_\pm=\frac{\mathrm{d}f^\pm(0)}{\mathrm{d}u}>1\) such that
\begin{align}\label{eq:R+-}
R(s)=\begin{cases}
R_+, & s\ge c_1,\\[2pt]
R_-, & s<c_1,
\end{cases}\qquad
\underline{R}(s)=\begin{cases}
R_+, & s>c_1,\\[2pt]
R_-, & s\le c_1.
\end{cases}
\end{align}
In particular, we have
\begin{align}\label{under-R}
R(s)=\underline{R}(s) \quad\text{for a.e. } s\in\mathbb{R} \quad\text{and }\quad R(s)\geqslant \underline{R}(s)>1 \quad\text{for all }  s\in\mathbb{R}.
\end{align}

We now state the assumptions on the initial data $\phi: [-\tau, 0] \times \mathbb{R} \to \mathbb{R}$.
\begin{itemize}
\item[$(\text{IC}^\lambda)$] $\phi \in C_{u^*}$ is positive, and there exist constants $0 < a < b$, $\lambda^r > 0$, $\lambda^l > 0$ such that
\begin{align*}
&ae^{-(\lambda^r+o(1))x} \leq \phi(\theta,x) \leq be^{-(\lambda^r+o(1))x} \quad \text{for } x \gg 1, \forall \theta \in [-\tau,0],\\
&ae^{(\lambda^l+o(1))x} \leq \phi(\theta,x) \leq be^{(\lambda^l+o(1))x} \quad \text{for } x \ll -1, \forall \theta \in [-\tau,0].
\end{align*}
\item[$(\text{IC}^\infty)$] $\phi \in C_{u^*}$ is non-trivial, nonnegative and decays faster than any exponential, i.e.,
\[
\lim_{|x|\to+\infty}{e^{\lambda |x|}}\sup_{\theta\in[-\tau,0]}\phi(\theta,x)=0 \quad\text{ for every } \lambda>0.
\]
\end{itemize}
Notably, compactly supported initial data satisfies $(\text{IC}^\infty)$. In what follows, we will sometimes use the phrase ``$(\text{IC}^\lambda)$ holds with $\lambda = +\infty$" to indicate that condition $(\text{IC}^\infty)$ is in effect.

For any initial datum \(\phi\in C_{u^*}\), system \eqref{eq:main} admits a unique solution on \(\mathbb{R}^{+}\), denoted by \((u^{\phi;f})_{t}\). This solution is classical for \(t>0\), and the map $t \longmapsto (u^{\phi;f})_{t}\in C_{u^*}$ is continuous. Here \((u^{\phi;f})_{t}(\theta,x)=u^{\phi;f}(t+\theta,x)\) for all \((\theta,x)\in[-\tau,0]\times\mathbb{R}\). These conclusions follow from the standard step method in \cite[Theorem 5.1]{Liu2015}.

Below we introduce several notions related to the Hamilton-Jacobi framework. Let us first introduce the Hamiltonian appearing in the constrained Hamilton-Jacobi equation that will be used to characterize the propagation phenomena. For real variables $s,p,q$, define the Hamiltonian $H:\mathbb{R}\times\mathbb{R}\times\mathbb{R}\to\mathbb{R}$ by
\begin{align}\label{H}
H(s,p,q)=d\left(\int_{\mathbb{R}}J(y)e^{py}dy-1\right) + q-\mu+\mu R(s)e^{\tau q},
\end{align}
where $R(\cdot)$ is explicitly given by \eqref{eq:R+-}.
Note that for each fixed $s$, the function $(p,q)\mapsto H(s,p,q)$ is convex. Consider the constrained Hamilton-Jacobi equation
\begin{equation}\label{eq:constrained-HJ}
\min\left\{w,H(x/t,\partial_x w,\partial_t w)\right\}=0,
\qquad (t,x)\in(0,+\infty)\times\mathbb{R}.
\end{equation}
We recall the notion of viscosity subsolutions and supersolutions for \eqref{eq:constrained-HJ}, see e.g. \cite{Ishii2013}.
\begin{definition}
A locally bounded lower semi-continuous function $w$ is a viscosity supersolution of \eqref{eq:constrained-HJ} if $w \geq 0$, and for all $\varphi \in C^1$, if $(t_0,x_0)$ is a strict local minimum of $w - \varphi$, then
\[
H^*(x_0/t_0,\partial_x\varphi(t_0,x_0),\partial_t\varphi(t_0,x_0)) \geq 0.
\]
A locally bounded upper semi-continuous function $w$ is a viscosity subsolution of \eqref{eq:constrained-HJ} if for all $\varphi \in C^1$, if $(t_0,x_0)$ is a strict local maximum of $w - \varphi$ and $w(t_0,x_0) > 0$, then
\[
H_*(x_0/t_0,\partial_x\varphi(t_0,x_0),\partial_t\varphi(t_0,x_0)) \leq 0.
\]
Finally, $w$ is a viscosity solution of \eqref{eq:constrained-HJ} if and only if $w$ is a viscosity supersolution and subsolution.
Here, $H^*$ and $H_*$ are the semi-continuous envelope of $H$. Precisely,
\[
H^*(s,p,q)=\limsup_{(s',p',q')\to (s,p,q)}H(s',p',q') \quad \text{and} \quad H_*(s,p,q)=\liminf_{(s',p',q')\to (s,p,q)}H(s',p',q').
\]
\end{definition}

For fixed $\tau\ge0$, let $\mathbf{H}_\pm(p):\mathbb{R}\to(0,\infty)$ be uniquely defined by
\begin{align}\label{gamma}
\Delta_\pm(p,\mathbf{H}(p)):=d\left(\int_{\mathbb{R}}J(y)e^{py}dy-1\right) - \mathbf{H}(p) -\mu+\mu R_\pm e^{-\tau \mathbf{H}(p)}=0,
\end{align}
where $\mathbf{H}_\pm$ is strictly convex, a property proven in Section \ref{Sec:Speed}.
Let $\Psi_+ : \mathbb{R} \to \mathbb{R}$ denote the inverse of $\mathbf{H}_+' : \mathbb{R} \to \mathbb{R}$.
Define the Lagrangian $\mathbf{L}_+(v)$ by
\[
\mathbf{L}_+(v) = \max_{p\in\mathbb{R}} [vp - \mathbf{H}_+(p)] \quad \text{for } v\in\mathbb{R},
\]
which, by the first-order condition, can be written as $\mathbf{L}_+(v) = [vp - \mathbf{H}_+(p)]_{p=\Psi_+(v)}$.
Set $c_\pm(p)=\frac{\mathbf{H}_\pm(p)}{p}$. We then define the pair $(c_+^*, \nu_+^*)$ and $(c_-^*, \nu_-^*)$ via $c_\pm^* = \inf\limits_{p>0} c_\pm(p)=c_\pm(\nu_\pm^*)$.
See details in Section \ref{Sec:Speed}.

Now we can state the main theorem of the paper. The following two theorems establish the explicit formulas for the spreading speeds when  (\text{IC}$^\lambda$) holds.
\begin{figure}[htpb]
	\centering
	\subfigure[Division for rightward propagation.]{\includegraphics[width=0.48\textwidth]{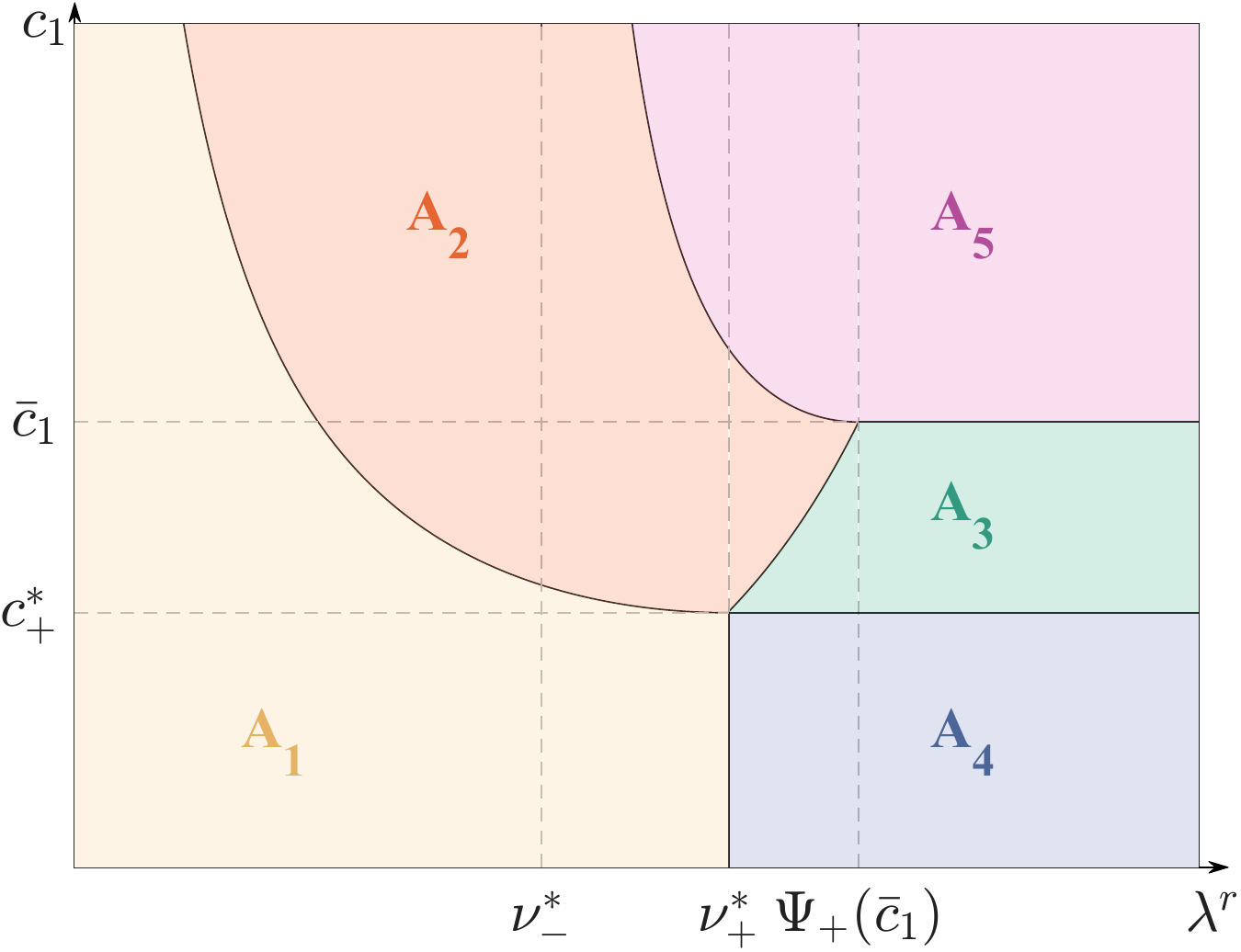}}
	\subfigure[Division for leftward propagation.]{\includegraphics[width=0.49\textwidth]{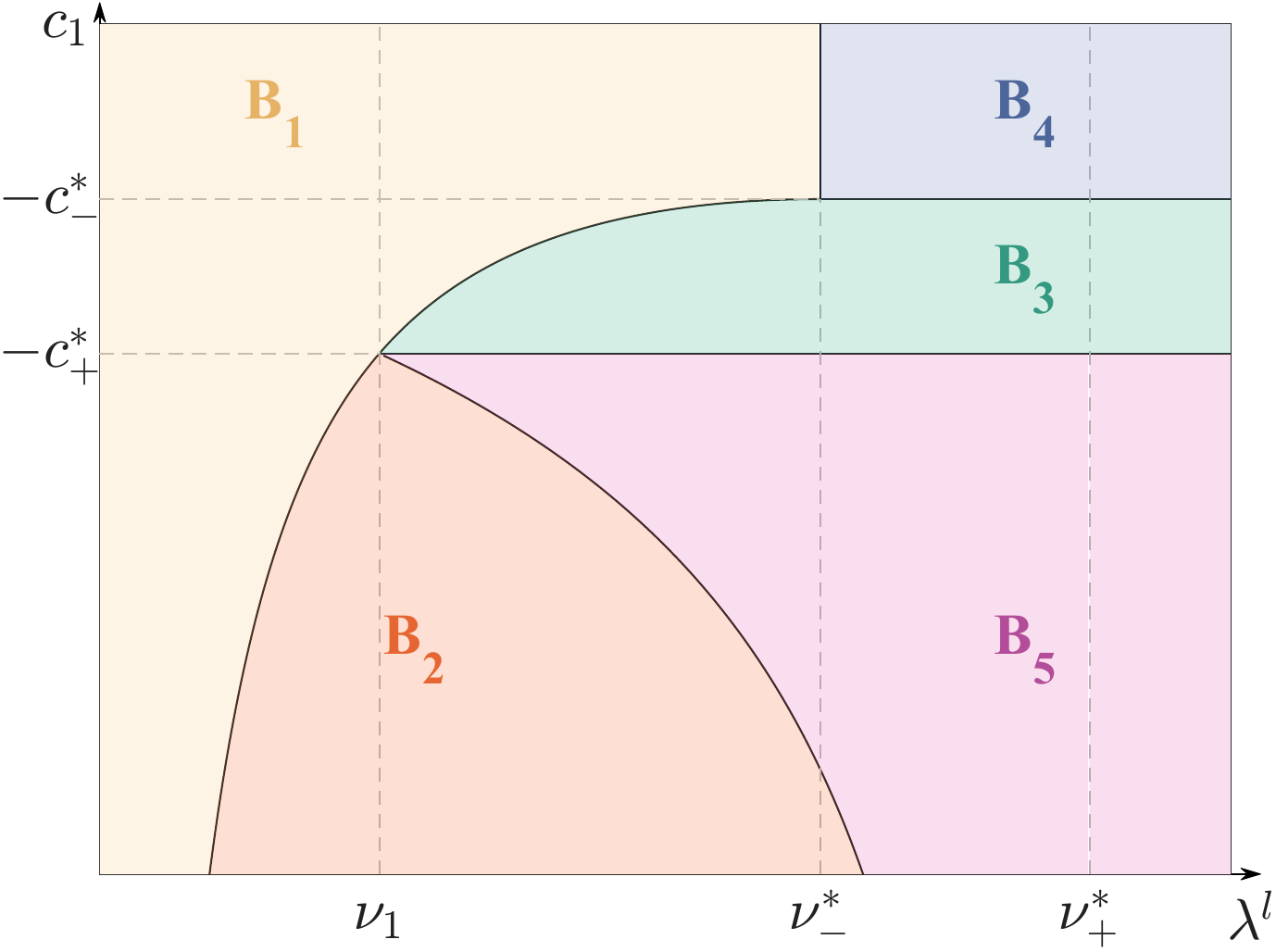}}
	\caption{Regional division of spreading speeds. }
	\label{fig:images}
	\end{figure}

\begin{theorem}\label{thm:right}
Assume that {\rm (J), (F1)--(F4)} and {\rm(\text{IC}$^\lambda)$} hold with some $\lambda\in(0,\infty)$. Then the rightward spreading speed $c_{r}$ of \eqref{eq:main} is continuous in $\{(\lambda^{r},c_{1})|~\lambda^{r}\in \mathbb{R}^{+}, c_{1}\in \mathbb{R}\}$. Moreover, it can be expressed as follows:
\begin{equation*}
    c_{r}=
\begin{cases}
    c_+(\lambda^r)  & \text{if } (\lambda^{r},c_{1})\in A_{1},\\
    c_-(\underline{p})  & \text{if }(\lambda^{r},c_{1})\in A_{2},\\
    c_-(\bar{p}) & \text{if }(\lambda^{r},c_{1})\in A_{3},\\
    c_{+}^{*} & \text{if }(\lambda^{r},c_{1})\in A_{4},\\
    c_{-}^{*}& \text{if }(\lambda^{r},c_{1})\in A_{5}.
\end{cases}
\end{equation*}
Here, for $(\lambda^r,c_1)\in A_2$, $\underline{p}(\lambda^r,c_1)$ is the smallest root of
\[
c_1p-\mathbf{H}_-(p)=c_1\lambda^r-\mathbf{H}_+(\lambda^r),
\]
and for $c_1\in[c_+^*,\bar{c}_1)$, $\bar{p}(c_1)$ is the smallest root of
\[
c_1p-\mathbf{H}_-(p)=\mathbf{L}_+(c_1),
\]
with $\bar{c}_1$ the unique positive number such that $\bar{p}(\bar{c}_1)=\nu_-^*$.
Besides, $A_{i}$, $i=1,\cdots,5$ are given by (see also Figure \ref{fig:images}-(a))
\begin{eqnarray*}
&&A_{1}: \left\{\,(\lambda^{r}, c_1)\,\left| ~\lambda^r\in (0,\nu^{*}_{+}) \text{ and } \,c_{1}\leq c_+(\lambda^r)\right.\right\},\\
&&A_{2}:
\left\{(\lambda^{r}, c_1)
\left|
\begin{array}{l}
\lambda^{r}\in (0,\nu^{*}_{-}] \text{ and } \,c_{1}>c_+(\lambda^r), ~~\text{or}\\
\lambda^{r}\in(\nu^{*}_{-},\nu^{*}_{+}] \text{ and } \,c_+(\lambda^r)<c_{1}<g(\lambda^{r}),~~\text{or}\\
\lambda^r\in (\nu^{*}_{+},\Psi_{+}(\bar c_{1})) \text{ and }\, \mathbf{H}_{+}'(\lambda^{r})\le c_{1}<g(\lambda^{r})
\end{array}
\right.
\right\},\\
&&A_{3}:
\left\{(\lambda^{r}, c_1)\,
\left|
\begin{array}{l}
\lambda^{r}\in(\nu^{*}_{+},\Psi_{+}(\bar c_{1})) \text{ and }\,c_+^*\le c_1<  \mathbf{H}_{+}'(\lambda^{r}),~~\text{or}\\
\lambda^{r}\in[\Psi_{+}(\bar c_{1}),+\infty) \text{ and }\,c_+^*\le c_1<\bar c_{1}
\end{array}
\right.
\right\}, \\
&&A_{4}: \left\{\,(\lambda^{r}, c_1)\,\left| ~\lambda^{r}\in [\nu^{*}_{+},+\infty) \text{ and }\,c_{1}< c^{*}_{+} \right. \right\},\\
&&A_{5}:
\left\{(\lambda^{r}, c_1)\,
\left|
\begin{array}{l}
\lambda^r\in (\nu^{*}_{-},\Psi_{+}(\bar c_{1})) \text{ and }\,c_{1}\geq g(\lambda^{r}), ~~\text{or}\\
\lambda^{r}\in [\Psi_{+}(\bar c_{1}),+\infty)\text{ and }\,c_{1}\geq \bar{c}_{1}
\end{array}
\right.
\right\},
\end{eqnarray*}
where $g(\cdot)$ is given by \eqref{eq:g}.
\end{theorem}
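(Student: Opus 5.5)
We prove Theorem~\ref{thm:right} along the Hamilton--Jacobi route of \cite{Lam2022,Lam2025,Tao2026}, in three steps.

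\textbf{Step 1: the limiting equation.} We perform the hyperbolic scaling $u^\varepsilon(t,x)=u(t/\varepsilon,x/\varepsilon)$ and the WKB change of unknown $w^\varepsilon=-\varepsilon\log u^\varepsilon$. For $w^\varepsilon$ the equation becomes
\begin{equation*}
-\partial_t w^\varepsilon
= d\Big(\int_{\mathbb{R}} J(y)e^{(w^\varepsilon(t,x)-w^\varepsilon(t,x-\varepsilon y))/\varepsilon}\,dy-1\Big)
-\mu+\mu\,\frac{f\big(\tfrac{x-c_1t}{\varepsilon},u^\varepsilon(t-\varepsilon\tau,x)\big)}{u^\varepsilon(t,x)}.
\end{equation*}
Using (F1), in the region where $u^\varepsilon$ is small the reaction quotient behaves like $\partial_uf\cdot e^{(w^\varepsilon(t,x)-w^\varepsilon(t-\varepsilon\tau,x))/\varepsilon}\approx R\,e^{\tau\partial_tw}$. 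This is exactly the Hamiltonian \eqref{H} evaluated at $(x/t,\partial_xw,\partial_tw)$. We then need the following ingredients.

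\begin{enumerate}
\item \emph{Uniform local bounds on $w^\varepsilon$.} These follow from comparison with exponential sub- and supersolutions built from $\lambda^r,\lambda^l$ and the bounds in (IC$^\lambda$). Monotonicity (F4) makes the comparison principle available on $C_{u^*}$.
\item \emph{Identification of the limits.} We take the half-relaxed limits $w^*,w_*$ and show they are, respectively, a viscosity sub- and supersolution of \eqref{eq:constrained-HJ}. For the supersolution part we use the upper envelope $R$; for the subsolution part we use the lower envelope $\underline R$. The nonlocal operator is handled directly on test functions, since $J$ has compact support, so no regularization is needed.
\item \emph{Initial trace.} We show $w(0,x)=\lambda^r x$ for $x>0$ and $-\lambda^l x$ for $x<0$.
\item \emph{Uniqueness.} We prove a comparison principle for \eqref{eq:constrained-HJ} with the discontinuous coefficient $R(x/t)$. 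It holds because the discontinuity lies only along the single ray $x=c_1t$ and $R\ge\underline R$ agree a.e. (cf.\ \eqref{under-R}). This gives $w^*=w_*=w$.
\end{enumerate}

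\textbf{Step 2: from $w$ to the speed.} The standard argument (Evans--Souganidis) gives $u\to0$ where $w>0$ and $\liminf u>0$ in the interior of $\{w=0\}$. The positivity part requires a lower barrier argument in the zero set, using (F1)'s lower bound and the far-field equilibrium $u^*$ from (F4). Since $w$ is $1$-homogeneous, $w(t,x)=t\,\hat w(x/t)$, and therefore $c_r=\sup\{s:\hat w(s)=0\}$.

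\textbf{Step 3: solving the HJ problem explicitly.} Solving $\partial_t\varphi=-\mathbf H_\pm(\partial_x\varphi)$ for $q=\partial_tw$ recovers \eqref{gamma}. We represent $w$ through the optimal control formula
\begin{equation*}
w(t,x)=\inf\Big\{\int_0^t \mathbf L\big(\dot\gamma(s),\gamma(s)/s\big)\,ds+w(0,\gamma(0))\Big\},
\end{equation*}
restricted to paths on which $w>0$ (the constraint), with $\mathbf L=\mathbf L_\pm$ according to the side of $x=c_1t$. The optimal trajectories are piecewise linear with at most one switch, either along the ray $x=c_1t$ or at time $0$. This reduces $\hat w$ to a minimization over a few parameters. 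Case analysis then yields the five regimes:
\begin{itemize}
\item $A_1$: the initial slope $\lambda^r$ propagates in the $+$ region, giving $c_+(\lambda^r)$.
\item $A_4$: we get $c_+^*$.
\item $A_2$: a path that runs first along $x=c_1t$ in the $+$ region with slope $\lambda^r$ and then enters the $-$ region. Matching $c_1p-\mathbf H_-(p)=c_1\lambda^r-\mathbf H_+(\lambda^r)$ gives the nonlocally selected speed $c_-(\underline p)$.
\item $A_3$: the same mechanism with the front slope replaced by $\mathbf L_+(c_1)$, giving $c_-(\bar p)$.
\item $A_5$: $c_-^*$, once $\bar p$ or $\underline p$ passes $\nu_-^*$. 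This defines $\bar c_1$ and the curve $g$.
\end{itemize}
Continuity of $c_r$ in $(\lambda^r,c_1)$ is then checked by matching the formulas on the boundaries of the regions, using the convexity of $\mathbf H_\pm$ proved in Section~\ref{Sec:Speed}.

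\textbf{Main obstacles.} The first is the comparison/uniqueness for \eqref{eq:constrained-HJ} with the discontinuous, $t$-dependent coefficient $R(x/t)$ coupled with the delay term. I would follow \cite{Lam2025,Tao2026}, proving it on the cone $x<c_1t$ and $x>c_1t$ separately and gluing via a Lipschitz estimate across the ray. The second is the laborious case analysis of the variational problem in Step 3, particularly identifying the exact boundaries $g$ and $\Psi_+(\bar c_1)$.
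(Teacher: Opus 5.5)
Your Steps 1 and 2 are essentially Section~\ref{Sec:HJ} of the paper: the WKB scaling, half-relaxed limits that are a super-/subsolution with the envelopes $R$ and $\underline R$, the initial trace, and the two convergence lemmas.

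There is one organizational difference. The paper does not prove comparison for the $(t,x)$-problem \eqref{eq:constrained-HJ} by gluing cones. Scaling invariance of the half-relaxed limits gives $w_*(t,x)=t\rho_*(x/t)$ and $w^*(t,x)=t\rho^*(x/t)$ separately. The paper then applies a comparison principle for the one-dimensional equation \eqref{rho1} on $(0,\infty)$, with $\rho(0)=0$ and the growth condition at $+\infty$ as boundary data, imported from \cite{Lam2022,Tao2026}. This also decouples the rightward problem from the left initial data. Note that a.e.\ agreement of $R$ and $\underline R$ is not by itself a reason for comparison to hold across an interface; that is why it is cleaner to import comparison in the 1D form.

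The real divergence is Step 3, and there your plan has a gap. The paper never uses a control representation. For each $A_i$ it writes down a piecewise candidate $\rho$ built from exactly the pieces your path analysis produces: $\lambda^rs-\mathbf H_+(\lambda^r)$, $\mathbf L_+(s)$, $\underline p s-\mathbf H_-(\underline p)$ or $\bar p s-\mathbf H_-(\bar p)$, $\mathbf L_-(s)$, and $0$. These candidates are convex at every kink, so $D^+\rho=\emptyset$ there and only the supersolution inequality needs checking. That check uses that $\mathbf H\mapsto\Delta_\pm(p,\mathbf H)$ is decreasing, together with the root properties of $\underline p,\bar p$ (Lemmas~\ref{lem:bar-p} and \ref{lem:under-p}). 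Uniqueness then finishes the argument (Lemmas~\ref{lem:vis-A1}--\ref{lem:vis-A5}).

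Your route instead needs a Freidlin-type representation theorem for a constrained problem whose Hamiltonian is discontinuous across $x=c_1t$. As written it is circular: ``restricted to paths on which $w>0$'' makes the admissible class depend on the unknown $w$. A usable version encodes the constraint as a stopping term, for instance
\[
w(t,x)=\inf_{\gamma(t)=x}\max\Big\{w(0,\gamma(0))+\int_0^t\mathbf L\,dr,\ \sup_{0<\theta\le t}\int_\theta^t\mathbf L\,dr\Big\},
\]
with $\mathbf L=\mathbf L_\pm(\dot\gamma(r))$ according to the side of the ray. This is not cosmetic. Since $\mathbf L_\pm(v)<0$ for $|v|<c_\pm^*$, an unconstrained infimum rewards lingering near the origin. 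The constraint is what makes straight paths from the origin optimal and yields, e.g., the $\mathbf L_-(s)$ piece in $A_5$.

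You must also fix the running cost on the ray itself to be $\mathbf L_+(c_1)=\min\{\mathbf L_+(c_1),\mathbf L_-(c_1)\}$. Taking the $-$ side value there, as a naive reading of $\tilde H(s,\cdot)=\mathbf H_-$ for $s\le c_1$ suggests, gives no gain from riding the ray and loses the $A_3$ regime entirely.

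Beyond that, you would have to prove two further things:
\begin{itemize}
\item that this value function is a viscosity solution in the envelope sense, so that uniqueness identifies it with $w$;
\item that it suffices to consider paths with at most one crossing, e.g.\ via Jensen within each half-plane $\{x\gtrless c_1t\}$ plus $\mathbf L_+<\mathbf L_-$.
\end{itemize}
None of this is false, but together it is an extra theorem your outline does not account for. What your route buys is a transparent explanation of the nonlocal selection, as the cost $c_1\lambda^r-\mathbf H_+(\lambda^r)$ or $\mathbf L_+(c_1)$ accumulated along the ray. The economical repair is to use the path computation only to generate candidates and then verify them directly at the kinks, which is precisely the paper's argument.
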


\begin{theorem}\label{thm:left}
Assume that {\rm (J), (F1)--(F4)} and {\rm(\text{IC}$^\lambda)$} hold with some $\lambda\in(0,\infty)$. Then the leftward spreading speed $c_{l}$ of \eqref{eq:main} is continuous in $\{(\lambda^{l},c_{1})|~\lambda^{l}\in \mathbb{R}^{+}, c_{1}\in \mathbb{R}\}$. Moreover, it can be expressed as follows:
\begin{equation*}
    c_{l}=
\begin{cases}
    c_-(\lambda^l)  & \text{if } (\lambda^{l},c_{1})\in B_{1},\\
    c_-(p_*)  & \text{if } (\lambda^{l},c_{1})\in B_{2},\\
    -c_1, & \text{if } (\lambda^{l},c_{1})\in B_{3},\\
    c_-^*, & \text{if } (\lambda^{l},c_{1})\in B_{4},\\
    c_+^*, & \text{if } (\lambda^{l},c_{1})\in B_{5}.
\end{cases}
\end{equation*}
Here, for $(\lambda^l,c_1)\in B_2$, $p_*:=p_*(\lambda^l,c_1)$ is the smallest root of
\[
-c_1p-\mathbf{H}_+(p)=-c_1\lambda^l-\mathbf{H}_-(\lambda^l).
\]
Besides, $B_{i}$, $i=1,\cdots,5$ are given by (see also Figure \ref{fig:images}-(b))
\begin{eqnarray*}
&&B_{1}: \left\{(\lambda^{l}, c_1)\,\left| ~\lambda^l\in (0,\nu^{*}_{-}] \text{ and } \,c_{1}> -c_-(\lambda^l)\right.\right\},\\
&&B_{2}:
\left\{(\lambda^{l}, c_1)
\left|
\begin{array}{l}
\lambda^{l}\in (0,\nu_1) \text{ and } \,c_{1}\le-c_-(\lambda^l), ~~\text{or}\\
\lambda^{l}\in[\nu_1,\nu^{*}_{+}) \text{ and } \,c_{1}\le-k(\lambda^{l})
\end{array}
\right.
\right\},\\
&&B_{3}:
\left\{(\lambda^{l}, c_1)
\left|
\begin{array}{l}
\lambda^{l}\in[\nu_1,\nu_-^*) \text{ and }\,-c_+^*\le c_1\le-c_-(\lambda^l),~~\text{or}\\
\lambda^{l}\in[\nu_-^*,+\infty) \text{ and }\,-c_+^*\leq c_1\le-c_-^*
\end{array}
\right.
\right\}, \\
&&B_{4}: \left\{(\lambda^{l}, c_1)\,\left| ~\lambda^{l}\in [\nu^{*}_{-},+\infty) \text{ and }\,c_{1}> -c^{*}_{-} \right. \right\},\\
&&B_{5}:
\left\{(\lambda^{l}, c_1)
\left|
\begin{array}{l}
\lambda^l\in (\nu_1,\nu_+^*) \text{ and }\,-k(\lambda^{l})< c_{1}<-c_+^*, ~~\text{or}\\
\lambda^{l}\in [\nu_+^*,+\infty) \text{ and }\,c_{1}<-c_+^*
\end{array}
\right.
\right\},
\end{eqnarray*}
where $\nu_1>0$ is the smallest root of $c_+^*=\frac{\mathbf{H}_-(\nu)}{\nu}$ and $k(\cdot)$ is given by \eqref{eq:k}.
\end{theorem}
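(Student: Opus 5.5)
We follow the Hamilton--Jacobi strategy of \cite{Lam2022,Lam2025,Tao2026} and adapt it to the delay term. The main line is: rescale, obtain a limiting variational inequality, identify its unique solution by an explicit variational formula, and read off the leftward front.

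\emph{Scaling and WKB transform.} Set $u^\varepsilon(t,x)=u(t/\varepsilon,x/\varepsilon)$ and $w^\varepsilon=-\varepsilon\log u^\varepsilon$. The delayed equation becomes
\[
\partial_t w^\varepsilon = -d\Big(\int J(y)e^{(w^\varepsilon(t,x)-w^\varepsilon(t,x-\varepsilon y))/\varepsilon}dy-1\Big)+\mu-\mu\frac{f(\cdot,u^\varepsilon(t-\varepsilon\tau,x))}{u^\varepsilon(t,x)} .
\]
The key observation concerns the last quotient. For small $u$ it behaves like
\[
\partial_uf\big(\tfrac{x-c_1t}{\varepsilon},0\big)\,e^{(w^\varepsilon(t,x)-w^\varepsilon(t-\varepsilon\tau,x))/\varepsilon}\approx R(x/t)\,e^{\tau\partial_t w}.
\]
This is what produces the Hamiltonian $H$ in \eqref{H} with $q=-\partial_tw$; we use the sign convention so that the statement matches \eqref{eq:constrained-HJ}.

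\emph{Limiting equation.} We take the half-relaxed limits $w^*=\limsup^* w^\varepsilon$ and $w_*=\liminf_* w^\varepsilon$. Local uniform Lipschitz bounds are not available, because nonlocal diffusion gives no regularization. Instead we obtain:
\begin{itemize}
\item uniform bounds on $w^\varepsilon$ from exponential super- and subsolutions built from (IC$^\lambda$) and (F1);
\item the initial trace $w(0,x)=\lambda^l|x|$ for $x<0$.
\end{itemize}
We then show that $w^*$ is a subsolution and $w_*$ a supersolution of \eqref{eq:constrained-HJ}, using the envelopes $H^*,H_*$ to handle the discontinuity of $R$ at $s=c_1$. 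The delay requires care: at a test point we control $w^\varepsilon(t-\varepsilon\tau,x)$ by a Taylor expansion of the test function. This works because the perturbed test function can be chosen with $t$-derivative information on an $O(\varepsilon)$ time window, via a standard strict-extremum argument.

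\emph{Comparison and variational formula.} This is the main obstacle: a comparison principle for \eqref{eq:constrained-HJ} with a Hamiltonian discontinuous along the line $x=c_1t$. Following \cite{Lam2025,Tao2026}, we plan to avoid a general comparison result. We would construct the candidate explicitly as
\[
\hat w(t,x)=\max\{\rho(t,x),0\},
\]
where $\rho$ is the value function of the optimal control problem whose running cost is the Lagrangian $\mathbf L_\pm$ associated with $\mathbf H_\pm$ (the Legendre duality with $\Delta_\pm$ replacing $H$, using that $H$ is increasing in $q$). We would then verify directly that $w_*\ge\hat w\ge w^*$ on the region where it matters. Because $\hat w$ is homogeneous of degree one, it suffices to compute $\hat w(1,s)$.

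\emph{Computing the leftward speed.} For $s<\min\{c_1,0\}$, optimal trajectories either stay in the left region, giving $\lambda^l(-s)$-type terms and $t\mathbf L_-$, or travel along the interface $x=c_1t$ and then enter the right region. The leftward speed $c_l$ is then given by $-c_l=\sup\{s:\hat w(1,s)>0\}$ on the left side. We split into the five regimes:
\begin{itemize}
\item $B_1$ and $B_4$: the optimal path stays in the left habitat, giving $c_-(\min\{\lambda^l,\nu_-^*\})$.
\item $B_2$: the optimal path exploits the right habitat. The initial slope is transported through the interface, producing the root $p_*$ of $-c_1p-\mathbf H_+(p)=-c_1\lambda^l-\mathbf H_-(\lambda^l)$.
\item $B_3$: the zero set of $\hat w$ is pinned to the interface, giving $c_l=-c_1$.
\item $B_5$: the right habitat overtakes, giving $c_+^*$.
\end{itemize}
The boundaries $\nu_1$ and $k$ come from equating these candidates; continuity of $c_l$ follows since the candidates agree on the boundaries.

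Finally, we convert $\hat w$ back to spreading statements:
\begin{itemize}
\item where $\hat w>0$, we get $u\to0$ uniformly;
\item on the interior of $\{\hat w=0\}$, we get $\liminf u>0$, using (F4) monotonicity, the lower bound in (F1), and a standard argument that small positive values grow under delayed KPP dynamics.
\end{itemize}
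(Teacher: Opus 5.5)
There is a genuine gap, at exactly the step you flag as the main obstacle. Your candidate $\hat w=\max\{\rho,0\}$ takes $\rho$ to be the value function of the \emph{unconstrained} control problem with running cost $\mathbf{L}_\pm$. This ignores the state constraint $w\ge0$ along trajectories (Freidlin's condition). In a shifting habitat the truncation does not solve \eqref{eq:constrained-HJ}; it is at best a lower bound. So the inequality $\hat w\ge w^*$ you plan to verify is false.

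Concretely, let $-c_+^*<c_1<-c_-^*$, which is in the interior of $B_3$. Since $|c_1|<c_+^*$, we have $\mathbf{L}_+(c_1)<0$. Take a path from $0$ that moves with speed $c_1+\delta$ until time $1-\delta$, so it stays in $\{x>c_1t\}$. It then reaches $c_1-\delta$ at time $1$ with bounded speed. Its cost is
\[
(1-\delta)\,\mathbf{L}_+(c_1+\delta)+O(\delta)<0\qquad\text{for small }\delta>0 .
\]
Hence $\hat w(1,\cdot)$ vanishes on a left neighbourhood of $c_1$, and your formula would give $c_l>-c_1$. The true solution is positive for every $s<c_1$: it dominates $\max\{0,-\underline\nu s-\mathbf{H}_-(\underline\nu)\}$ with $c_-(\underline\nu)=-c_1$, which gives $c_l=-c_1$.

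A short excursion along the interface likewise pushes $\hat w(1,-c_-^*)$ below $0$ in $B_4$ for $c_1$ slightly above $-c_-^*$. The reason is that the linearized problem lets mass grow without bound in the favourable habitat and then export it. Saturation, encoded by the constraint, forbids this, and that is precisely what produces locking. So the answers you list for $B_3$ and $B_4$ are right, but your mechanism would not produce them.

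What is missing is a way to handle the constrained problem itself. The paper does this in three steps:
\begin{enumerate}
\item It uses $w(t,x)=t\rho(x/t)$ to reduce to $\min\{\rho,\rho-s\rho'+\tilde H(s,\rho')\}=0$ on $(-\infty,0)$, with $\rho(0)=0$ and $\rho(s)/|s|\to\lambda^l$.
\item It invokes a comparison principle for this one-dimensional problem with the jump of $R$ at $s=c_1$, cited from Lam--Yu and Tao et al. Your wish to avoid comparison is incompatible with your step $w_*\ge\hat w\ge w^*$, which is itself a comparison statement.
\item In each of $B_1$--$B_5$ it exhibits either an explicit viscosity solution or a sub/supersolution pair with the same zero set.
\end{enumerate}
Locking in $B_3$ comes from the supersolution that equals $-\bar\nu s-\mathbf{H}_-(\bar\nu)$ just left of $c_1$ and $0$ on $(c_1,0]$, where $c_-(\bar\nu)=-c_1$ and $\bar\nu\ge\nu_-^*$. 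It passes the test at the kink only because supersolutions are tested with the upper envelope $R^*(c_1)=R_+$ and $-c_1\le c_+^*$.

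Finally, your $B_2$ item never names the speed. The front sits at $s=-c_+(p_*)>c_1$, in the favourable habitat, so $c_l=c_+(p_*)$, as in Lemma~\ref{lem:vis-B2}. The $c_-(p_*)$ in the displayed statement should be read this way.
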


The next theorem establishes the explicit formulas for the spreading speeds when  (\text{IC}$^\infty)$ holds.

\begin{theorem}\label{thm:vis-infty}
Assume that {\rm (J), (F1)--(F4)} and {\rm(\text{IC}$^\infty)$} hold.  Then the rightward (resp. leftward) spreading speed $c_{r}$ (resp. $c_{l}$) is given explicitly by
	\begin{eqnarray*}
&c_{r}=\left\{\begin{aligned}
&c_{-}^{*},&& c_1\in[\bar{c}_1,+\infty),\\
&c_-(\bar{p}(c_1)), &&c_1\in(c_+^*,\bar{c}_1),\\
&c_+^*,&& c_1\in(-\infty, c_+^*],
\end{aligned}
\right. \quad
&c_l=\left\{\begin{aligned}
&c_-^*,&&c_1\in[ -c_-^*,+\infty),\\
&-c_1,&& c_1\in(-c_+^*, -c_-^*),\\
&c_+^*,&& c_1\in(-\infty, -c_+^*].
\end{aligned}
\right.
\end{eqnarray*}
\end{theorem}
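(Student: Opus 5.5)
The plan is to deduce Theorem \ref{thm:vis-infty} from Theorems \ref{thm:right} and \ref{thm:left} through a comparison and limiting argument in the decay rate. We read off the formulas by letting $\lambda^r,\lambda^l\to+\infty$ in the classifications. The sandwich is built as follows. Given $\phi$ satisfying (IC$^\infty$) and any large $\Lambda>0$, there is $\phi^\Lambda$ satisfying (IC$^\lambda$) with $\lambda^r=\lambda^l=\Lambda$ and $\phi\le\phi^\Lambda\le u^*$. Since $f$ is nondecreasing in $u\in[0,u^*]$ by (F4), the comparison principle for \eqref{eq:main} (step method in $[0,\tau]$ intervals, with $J\ge0$) gives $u^{\phi;f}\le u^{\phi^\Lambda;f}$. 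Hence $c_r\le c_r(\Lambda,c_1)$ and $c_l\le c_l(\Lambda,c_1)$ for every $\Lambda$.

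Next I take $\Lambda\to\infty$ in the formulas of Theorems \ref{thm:right} and \ref{thm:left}. For fixed $c_1$ and $\Lambda$ large, $\Lambda>\max\{\nu_+^*,\Psi_+(\bar c_1),\nu_-^*\}$, so only $A_3,A_4,A_5$ and $B_3,B_4,B_5$ are active.
\begin{itemize}
\item For the rightward speed, $c_1<c_+^*$ lies in $A_4$ and gives $c_+^*$; $c_+^*\le c_1<\bar c_1$ lies in $A_3$ and gives $c_-(\bar p(c_1))$; $c_1\ge\bar c_1$ lies in $A_5$ and gives $c_-^*$. At $c_1=c_+^*$, one checks $\mathbf L_+(c_+^*)=0$, so $\bar p$ solves $c_+^*p=\mathbf H_-(p)$ and $c_-(\bar p)=c_+^*$. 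This is consistent with the stated form.
\item For the leftward speed, the cases $B_4$, $B_3$, $B_5$ give $c_-^*$, $-c_1$, $c_+^*$ on the stated intervals.
\end{itemize}
This yields the upper bounds.

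For the lower bounds, I use the positive lower bound of $\phi$. Since $\phi$ is nontrivial, after a time shift (e.g.\ at $t=\tau$, using positivity from the dispersal term) the solution is bounded below by a positive compactly supported function. It then dominates $\epsilon\,\phi^\Lambda_{\rm trunc}$, a truncation that is not of exponential type. So this route fails, and I instead argue directly in the Hamilton-Jacobi framework of Section \ref{Sec:HJ}. The WKB rescaling $w^\varepsilon=-\varepsilon\log u(t/\varepsilon,x/\varepsilon)$ converges to the unique viscosity solution of \eqref{eq:constrained-HJ} with initial trace $w(0,x)=\infty$ for $x\ne0$ and $w(0,0)=0$. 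The spreading speeds are then read off as the boundary of $\{w=0\}$, using the nondegeneracy in (F1) to obtain the lower estimate on the interior of $\{w=0\}$ (standard Evans--Souganidis argument).

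To make the speed identification concrete, note that the (IC$^\Lambda$) solution $w^\Lambda$ has initial data $\Lambda\max\{x,0\}+\Lambda\max\{-x,0\}=\Lambda|x|$. These data increase to the $(\infty\cdot\mathbf 1_{x\ne0})$ datum. By stability and monotonicity of viscosity solutions (comparison for \eqref{eq:constrained-HJ}, which I assume as established in Section \ref{Sec:HJ}), $w^\Lambda\uparrow w^\infty$ locally uniformly on $t>0$. The zero sets therefore shrink to that of $w^\infty$. Combined with continuity of the explicit speeds as $\Lambda\to\infty$, where they stabilize for $\Lambda$ beyond a threshold, this identifies the free boundaries of $w^\infty$ as $x=c_rt$ and $x=-c_lt$ with the limiting values.

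The main obstacle is this monotone limit together with the uniqueness and comparison for the discontinuous Hamiltonian with infinite initial data. I would handle it through the representation of $w^\infty$ as the control value $\inf\{\int_0^t[\mathbf L(\gamma/s,\dot\gamma)]\,ds\}$ restricted by the constraint. I would check that this representation is the increasing limit of the $\Lambda$-problems, since the infimum over paths penalizing $\Lambda|\gamma(0)|$ converges to paths starting at $0$.
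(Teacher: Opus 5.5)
Your argument is correct and is essentially the paper's. The paper also passes $\lambda^r\to\infty$ in the explicit finite-decay solutions (e.g.\ \eqref{rho13} for $c_+^*<c_1<\bar c_1$), uses stability to obtain a viscosity solution of \eqref{rho1} with $\rho(0)=0$ and $\rho(s)/s\to\infty$, and concludes by the uniqueness already contained in Corollary~\ref{col:rho-boundary-point-r} for $\lambda^r=+\infty$. The one difference is that the paper reads the free boundary off the explicit limit profile, whereas you read it off the stabilized zero sets of the monotone family. Consequently, the obstacle you flag is already covered by the comparison principle invoked in Section~\ref{Sec:HJ}. Neither your separate PDE-comparison upper bound nor the control representation is needed.
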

\begin{remark}\label{remark:local-refs}
\rm{
The local versions of Theorems \ref{thm:right}, \ref{thm:vis-infty} are proved in \cite{Lam2022}, while Theorem \ref{thm:vis-infty}, for the cases when $c_{r}=c_{+}^{*}$ and $c_{l}=c_{-}^{*}$ where the intrinsic speed was selected, is a consequence of \cite{Yi2025}.
}
\end{remark}

To quantify the effect of time delay $\tau$ and the associated locking phenomenon, we define the critical delay thresholds for given $c_1 \in (-c_+^*(0), 0)$ and $\lambda^l > 0$:
\begin{equation*}
\begin{aligned}
&\tau_+ := \inf\,\{\tau\ge 0 : c_+^*(\tau) \le -c_1\}, \quad &&\tau_- := \inf\,\{\tau\ge 0 : c_-^*(\tau) \le -c_1\},\\
&\tau_\lambda := \inf\,\{\tau\ge 0 : c_-(\lambda^l;\tau) \le -c_1\}, \quad &&\hat\tau := \inf\,\{\tau\ge0 : \nu_-^*(\tau) \le \lambda^l\},
\end{aligned}
\end{equation*}
with the standard notation that $\inf \emptyset = \infty$. The rigorous well-posedness of these thresholds and their ordering properties will be established in Section \ref{Sec:tau}.
Then the following holds.

\begin{theorem}\label{thm:tau}
Assume that {\rm (J), (F1)--(F4)} and {\rm(\text{IC}$^\lambda$)} hold for some $\lambda\in(0,\infty]$.
The rightward spreading speed \(c_r(\lambda^r,c_1;\tau)\) is strictly decreasing in \(\tau\ge 0\). The leftward spreading speed \(c_l(\lambda^l,c_1;\tau)\) is non-increasing in \(\tau\ge 0\), and exhibits a locking regime.
Specifically, for a given $\lambda^{l}>0$, the leftward spreading speed $c_l(\lambda^l,c_1;\tau)$ remains locally constant with respect to $\tau$ (i.e., habitat locking occurs) if and only if $c_{1}\in(-c^{*}_{+}(0),0)$ and $\tau$ belongs to the interval $\mathcal{I}_{\rm lock}$, where
\[
\mathcal{I}_{\rm lock} =
\begin{cases}
[\tau_\lambda,\tau_+], & \text{if } \hat{\tau}\ge\tau_\lambda,\\
[\tau_-,\tau_+], & \text{if } \hat{\tau}<\tau_\lambda.
\end{cases}
\]
Outside this interval, $c_l(\lambda^l,c_1;\tau)$ is strictly decreasing.
\end{theorem}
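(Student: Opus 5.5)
The plan is to deduce Theorem \ref{thm:tau} entirely from the explicit formulas of Theorems \ref{thm:right}, \ref{thm:left} and \ref{thm:vis-infty}. We track how every ingredient ($\mathbf{H}_\pm$, $c_\pm$, $c_\pm^*$, $\nu_\pm^*$, $\mathbf{L}_+$, $\underline p$, $\bar p$, $p_*$, and the region boundaries) depends on $\tau$.

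\textbf{Step 1: monotonicity of $\mathbf{H}_\pm$ in $\tau$.} Differentiate $\Delta_\pm(p,\mathbf{H}_\pm(p;\tau))=0$ from \eqref{gamma} implicitly in $\tau$. This gives
\[
\partial_\tau \mathbf{H}_\pm\,\bigl(1+\mu R_\pm\tau e^{-\tau\mathbf{H}_\pm}\bigr)=-\mu R_\pm \mathbf{H}_\pm e^{-\tau \mathbf{H}_\pm}<0,
\]
using $\mathbf{H}_\pm>0$. Hence $\mathbf{H}_\pm(p;\tau)$ is strictly decreasing in $\tau$ for each fixed $p$. Consequences:
\begin{itemize}
\item $c_\pm(p;\tau)$ is strictly decreasing for every $p>0$.
\item $c_\pm^*(\tau)=\inf_{p>0}c_\pm(p;\tau)$ is strictly decreasing, since the infimum is attained at $\nu_\pm^*$ and the inequality at $\nu_\pm^*(\tau_2)$ is strict.
\item $\mathbf{L}_+(v;\tau)$ is strictly increasing in $\tau$.
\end{itemize}
All of these quantities are continuous in $\tau$.

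\textbf{Step 2: branches involving implicitly defined roots.} For the branches $c_-(\underline p)$, $c_-(\bar p)$ and $c_-(p_*)$, I would not differentiate the roots directly. Instead I would use the characterization of these speeds as the first zero of a function of the speed variable. For example, $c_-(\underline p)$ is the speed $c$ at which the line $p\mapsto cp-\mathbf{H}_-(p)$ touches the level $c_1\lambda^r-\mathbf{H}_+(\lambda^r)$. Decreasing $\mathbf{H}_-$ raises the left side and, since $\mathbf{H}_+$ also decreases, raises the level. The net effect should be computed through the variational (Legendre) formula used in Section \ref{Sec:Speed}; I expect it to yield strict decrease of the speed. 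The same argument applies to $\bar p$ with $\mathbf{L}_+$, and to $p_*$.

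Together with the continuity of $c_r$ across the regions $A_i$ (Theorem \ref{thm:right}), which holds for each fixed $\tau$ and extends jointly in $\tau$, Steps 1–2 give strict decrease of $c_r$ on every piece. Hence $c_r$ is strictly decreasing globally. The same reasoning shows that $c_l$ is strictly decreasing on $B_1,B_2,B_4,B_5$. On $B_3$ we have $c_l=-c_1$, which is independent of $\tau$. Therefore $c_l$ is non-increasing, and it is locally constant exactly when $(\lambda^l,c_1;\tau)$ lies in $B_3$, up to boundary points, which are handled by continuity. The case $\lambda=\infty$ is handled identically using Theorem \ref{thm:vis-infty}.

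\textbf{Step 3: translating $B_3$ into conditions on $\tau$.} Since $B_3$ requires $c_1\in[-c_+^*,-c_-^*]$, locking forces $c_1<0$. It also forces $c_1\ge -c_+^*(\tau)\ge -c_+^*(0)$, giving the restriction $c_1\in(-c_+^*(0),0)$. For $\lambda^l<\nu_1$ we have $c_-(\lambda^l)>c_+^*$, so the $B_3$ condition is vacuous. The membership conditions therefore read:
\begin{itemize}
\item $c_+^*(\tau)\ge -c_1$, which is equivalent to $\tau\le\tau_+$ by Step 1;
\item when $\lambda^l<\nu_-^*(\tau)$: $c_-(\lambda^l;\tau)\le -c_1$, i.e. $\tau\ge\tau_\lambda$;
\item when $\lambda^l\ge\nu_-^*(\tau)$: $c_-^*(\tau)\le-c_1$, i.e. $\tau\ge\tau_-$.
\end{itemize}
To convert the dichotomy $\lambda^l\gtrless\nu_-^*(\tau)$ into $\tau\gtrless\hat\tau$, I need $\nu_-^*(\tau)$ to be monotone (non-increasing) in $\tau$. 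I would establish this from the first-order condition $p\mathbf{H}_-'(p)=\mathbf{H}_-(p)$ combined with the sign of $\partial_\tau\mathbf{H}_-$ and the strict convexity of $\mathbf{H}_-$.

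Assembling the conditions, the locking set is $\bigl([\tau_\lambda,\infty)\cap[0,\hat\tau)\bigr)\cup\bigl([\tau_-,\infty)\cap[\hat\tau,\infty)\bigr)\cap[0,\tau_+]$. Since $c_-^*\le c_-(\lambda^l)$, we have $\tau_-\le\tau_\lambda$.
\begin{itemize}
\item If $\hat\tau\ge\tau_\lambda$: for $\tau\ge\hat\tau$ we have $\tau\ge\tau_\lambda\ge\tau_-$, so the two pieces join to give $[\tau_\lambda,\tau_+]$.
\item If $\hat\tau<\tau_\lambda$: the first piece is empty. It remains to show $\tau_-\ge\hat\tau$, so that the set is $[\tau_-,\tau_+]$. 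For this I would use that $c_-(\lambda^l;\tau)=c_-^*(\tau)$ at $\tau=\hat\tau$, together with continuity and the strict monotonicity from Step 1.
\end{itemize}

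\textbf{Main obstacle.} The step I expect to be hardest is the monotonicity of $\nu_-^*(\tau)$ together with the ordering $\tau_-\ge\hat\tau$ in the second case. I would first try to show $\partial_\tau\nu_-^*\le0$ by implicit differentiation of $\mathbf{H}_-(\nu)=\nu\mathbf{H}_-'(\nu)$. If the sign does not come out cleanly, I would instead argue directly with the geometry of the curves $c_-(\cdot;\tau)$, which move strictly downward as $\tau$ increases.
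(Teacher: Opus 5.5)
Your Steps 1 and 3 follow the paper's argument closely. Step 1 is its implicit differentiation of $\Delta_\pm$, which gives $\partial_\tau\mathbf{H}_\pm<0$. Step 3 is exactly its reduction of $B_3(\tau)$ to $[\tau_\lambda,\min\{\tau_+,\hat\tau\}]\cup[\max\{\tau_-,\hat\tau\},\tau_+]$, completed by $\tau_-\le\tau_\lambda$ and $\hat\tau<\tau_\lambda\Rightarrow\hat\tau<\tau_-$ (Lemmas~\ref{lem:tau+tau-} and~\ref{lem:monotonicity}). The monotonicity of $\nu_-^*$, which you single out as the main obstacle, is handled in the paper by the very implicit differentiation you propose, and the sign comes out cleanly. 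With $X=\mu R e^{-\tau\mathbf{H}}$ one gets, at $p=\nu^*$, that $\partial_p\partial_\tau c=(c^*)^2\tau X/(1+\tau X)^2$, which is positive for $\tau>0$. Since also $\partial_{pp}c=\mathbf{H}_{pp}/\nu^*>0$, $\nu^*$ is strictly decreasing (Lemma~\ref{lem:nu-decreasing}).

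The genuine gap is Step 2. You only announce strict decrease of the nonlocally selected speeds $c_-(\underline p)$, $c_-(\bar p)$, $c_+(p_*)$. As you note yourself, the two perturbations act in opposite directions, so pointwise monotonicity of $\mathbf{H}_\pm$ cannot decide the sign. The missing input is that the region fixes the sign. On $A_2$, set $L_1(\tau)=c_1\lambda^r-\mathbf{H}_+(\lambda^r;\tau)$, which is positive and increasing, so that $c_-(\underline p)=c_1-L_1/\underline p$. Differentiating $c_1\underline p-\mathbf{H}_-(\underline p;\tau)=L_1(\tau)$ gives
\[
\frac{d}{d\tau}c_-(\underline p)=-\frac{L_1'\bigl(\mathbf{H}_-(\underline p)-\underline p\,\partial_p\mathbf{H}_-(\underline p)\bigr)-L_1\,\partial_\tau\mathbf{H}_-(\underline p)}{\underline p^{2}\bigl(c_1-\partial_p\mathbf{H}_-(\underline p)\bigr)}.
\]
The denominator is positive since $\underline p<\Psi_-(c_1)$. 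The quantity $\mathbf{H}_-(\underline p)-\underline p\,\partial_p\mathbf{H}_-(\underline p)$ is positive precisely because $\underline p<\nu_-^*$. That holds on $A_2$ (Lemmas~\ref{lem:under-p} and~\ref{lem:g(lambda^r)}) and fails exactly beyond the curve $c_1=g(\lambda^r)$, where the speed switches to $c_-^*$.

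The same computation, with $\mathbf{L}_+(c_1;\tau)$ (increasing in $\tau$) in place of $L_1$, works on $A_3$ using $\bar p<\nu_-^*$, i.e.\ $c_1<\bar c_1$. On $B_2$ it works using $p_*\le\nu_+^*$, i.e.\ $-c_1\ge k(\lambda^l)$.

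Your ``first zero'' idea can also be completed. $c_-(\underline p)$ is the first zero on $[0,c_1)$ of $c\mapsto c_-\bigl(L_1/(c_1-c);\tau\bigr)-c$. At that zero, both the decrease of $\mathbf{H}_-$ and the increase of $L_1$ push this function down, provided $\underline p<\nu_-^*$, where $c_-(\cdot;\tau)$ is decreasing. This is the same region condition. Without this ingredient, neither the strict decrease of $c_r$ nor the strict decrease of $c_l$ outside $\mathcal{I}_{\rm lock}$ (on $B_2$) is actually proved.
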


\begin{figure}[htbp]
  \centering
  \includegraphics[width=1\textwidth]{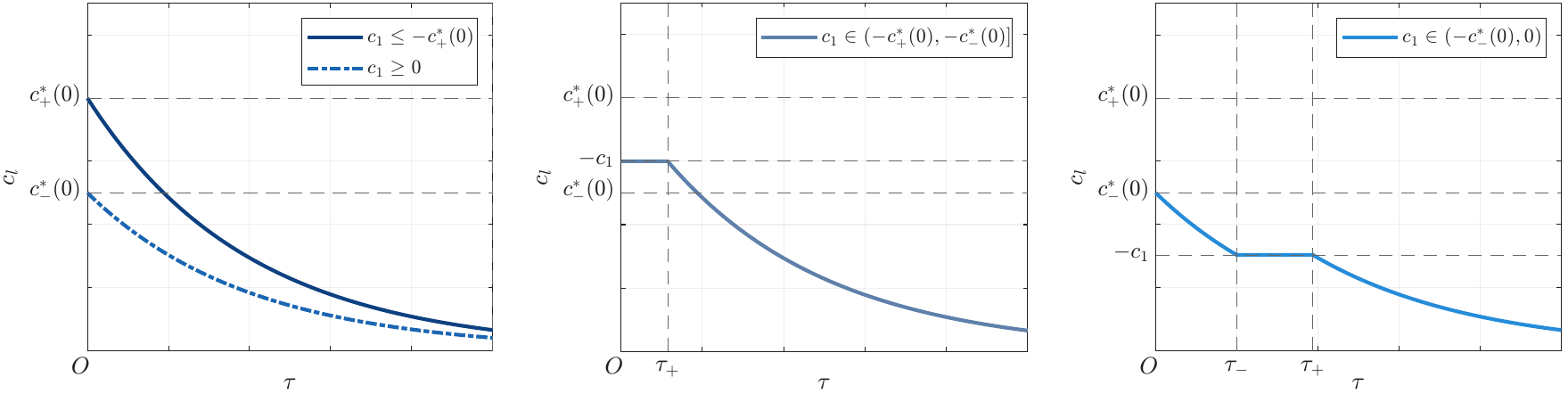}
  \caption{Profile of the leftward speed \(c_l\) versus time delay \(\tau\).}
  \label{fig:images1}
\end{figure}

\begin{remark}\label{remark:local-new}
\rm{

(i) \textbf{Laplacian diffusion and completeness:} All classification regimes and delay effects established in Theorems \ref{thm:right}--\ref{thm:vis-infty} and \ref{thm:tau} apply consistently to the local model \eqref{eq:local}, differing only in the explicit form of the limiting Hamiltonian (i.e., replacing the integral operator $\int_{\mathbb{R}}J(y)e^{py}dy-1$ with $p^{2}$ in \eqref{gamma}). While the analytical framework for the local case remains largely analogous to the nonlocal one, we have provided a rigorous proof for Lemma \ref{lem:w^vareps-bound-2} (the most distinctive component in the Laplacian setting) in Appendix \ref{App:Local}. Hence our formulas also give a complete description for the local case, extending the results of \cite{Lam2022,Yi2025} (see \ Remark~\ref{remark:local-refs}).

(ii) \textbf{Delay-insensitivity and locking threshold:} Biologically, Theorem \ref{thm:tau} highlights a clear deviation from classical delay models. While maturation delay strictly decelerates propagation speed in homogeneous settings, this monotonic reduction is disrupted in shifting habitats once the locking phenomenon occurs. Specifically, there exists a critical threshold for the delay parameter that delineates the locking regime; within this regime, the spreading speed is forced to match the habitat shifting speed $c_1$,  thereby neutralizing any further impact of the time delay. We refer to Figure \ref{fig:images1} for an illustration of this threshold behavior under condition (IC)$^{\infty}$.
}
\end{remark}

\section{Hamilton-Jacobi limits}\label{Sec:HJ}
\noindent

We introduce the hyperbolic scaling and the WKB transformation as follows:
\begin{equation}\label{w_u_varepsilon}
u^\varepsilon(t,x) = u\left( \frac{t}{\varepsilon}, \frac{x}{\varepsilon} \right), \quad
w^\varepsilon(t,x) = -\varepsilon \ln u^\varepsilon(t,x).
\end{equation}
A simple calculation gives that $w^\varepsilon$ satisfies that
\begin{align}\label{w^varepsilon}
d\int_{\mathbb{R}}J(y)\left[e^{-\frac{w^\varepsilon(t,x-\varepsilon y)-w^\varepsilon(t,x)}{\varepsilon}}-1\right]dy + \partial_tw^\varepsilon -\mu  + \mu \frac{f^\varepsilon(x-c_1t,u^\varepsilon(t-\varepsilon\tau,x))}{u^\varepsilon(t,x)} =0,
\end{align}
for all $t>0$ and $x\in\mathbb{R}$, where $f^\varepsilon(z,u)=f\left(\frac{z}{\varepsilon},u\right)$.

\begin{lemma}\label{lem:w^vareps-bound-1}
Assume that {\rm (J)} and {\rm (F1)--(F4)} hold. Let $w^\varepsilon$ be a solution of \eqref{w^varepsilon} with initial condition {\rm(\text{IC}$^\lambda)$.} Then for any $\delta>0$, there exists a positive constant $K$ independent of small $\varepsilon$, such that
\begin{align*}
\max\left\{(\lambda^r-\delta)x_++(\lambda^l-\delta)x_--K(t+\varepsilon), -\varepsilon
\ln{u^* }\right\}\leq w^\varepsilon(t,x)
\end{align*}
and
\begin{align*}
w^\varepsilon(t,x)\leq(\lambda^r+\delta)x_++(\lambda^l+\delta)x_-+K(t+\varepsilon)
\end{align*}
for all $(t,x)\in[-\varepsilon\tau,+\infty)\times\mathbb{R}$, where $x_+=\max\{x,0\}$ and $x_-=\max\{-x,0\}$.
\end{lemma}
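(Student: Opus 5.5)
The plan is to prove both inequalities for the unscaled solution $u=u^{\phi;f}$ of \eqref{eq:main}, using explicit exponential super- and subsolutions and the comparison principle for the delayed nonlocal equation. We then pass to $w^\varepsilon$ through \eqref{w_u_varepsilon}. Since $w^\varepsilon(t,x)=-\varepsilon\ln u(t/\varepsilon,x/\varepsilon)$, it suffices to find constants $A,\epsilon_0,K>0$ with
\[
\epsilon_0 e^{-(\lambda^r+\delta)x_+-(\lambda^l+\delta)x_--K(t+\tau)}\le u(t,x)\le \min\Big\{u^*,\,A e^{-(\lambda^r-\delta)x_+-(\lambda^l-\delta)x_-+K(t+\tau)}\Big\}
\]
for $t\ge-\tau$. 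The additive constants $\varepsilon|\ln\epsilon_0|$, $\varepsilon\ln A$ and $K\tau\varepsilon$ are then absorbed into $K\varepsilon$ after enlarging $K$.

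\textbf{Upper bound on $u$.} The bound $u\le u^*$, which gives $w^\varepsilon\ge-\varepsilon\ln u^*$, follows from the invariance of $C_{u^*}$ stated after the well-posedness discussion. I would double-check this via comparison with the constant $u^*$, using (F4) together with $\partial_uf(z,0)\le R_+$ from (F3). For the exponential bound, (F1) gives $f(z,u)\le R_+u$ with $R_+=\sup_z\partial_uf(z,0)$. Hence $u$ is a subsolution of the linear cooperative delayed equation
\[
\partial_t v=d\mathcal{D}[v]-\mu v+\mu R_+ v(t-\tau,x).
\]
Set $\bar v_r=Ae^{-(\lambda^r-\delta)x+K(t+\tau)}$. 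Because $\bar v_r$ is increasing in $t$, we have $\bar v_r(t-\tau)\le\bar v_r(t)$. So $\bar v_r$ is a supersolution as soon as $K\ge d(\int J(y)e^{(\lambda^r-\delta)y}dy-1)+\mu(R_+-1)$, and this integral is finite because $J$ has compact support. For the initial comparison, (IC$^\lambda$) gives $\phi\le be^{-(\lambda^r-\delta)x}$ for $x\gg1$. For bounded or negative $x$ it suffices that $A\ge u^*e^{(\lambda^r-\delta)M}$. The linear comparison principle then yields $u\le\bar v_r$. The symmetric construction gives $u\le\bar v_l=Ae^{(\lambda^l-\delta)x+K(t+\tau)}$, and taking the minimum of the two bounds produces the $x_\pm$ form.

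\textbf{Lower bound on $u$.} I would take
\[
\underline u=\epsilon_0e^{-(\lambda^r+\delta)x_+-(\lambda^l+\delta)x_--K(t+\tau)}.
\]
Since $J\ge0$ and $f\ge0$ by (F1), dropping the nonnegative terms gives $d\mathcal{D}[\underline u]-\mu\underline u+\mu f(\cdot,\underline u(t-\tau))\ge-(d+\mu)\underline u$. Therefore $\underline u$ is a subsolution whenever $K\ge d+\mu$; the kink at $x=0$ causes no difficulty because no spatial derivatives appear. For the initial ordering, (IC$^\lambda$) gives $\phi\ge ae^{-(\lambda^r+\delta/2)x}$ for $x\ge M$ and $\phi\ge ae^{(\lambda^l+\delta/2)x}$ for $x\le-M$. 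On $[-\tau,0]\times[-M,M]$ the positive continuous $\phi$ is bounded below by a positive constant. Choosing $\epsilon_0\le\min\{a,\min\phi,u^*\}$ then gives $\underline u\le\phi$ and $\underline u\le u^*$. Comparison applies because $f$ is nondecreasing in $u$ on $[0,u^*]$ by (F4), and both functions take values in $[0,u^*]$.

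\textbf{Main obstacle.} The main point requiring care is the comparison principle for the nonlinear delayed nonlocal problem, since $f$ is monotone only on $[0,u^*]$. The method of steps on intervals of length $\tau$ handles this. On each step the delayed term is a known function that is ordered by the previous step and monotone in $u$. What remains is a nonlocal linear ODE-type comparison in $BUC$, which is standard and can be proved by a maximum-principle argument on $e^{-Ct}(\underline u-u)$. Some additional care is needed for the unbounded exponential supersolution. I would handle it either through a weighted norm or by comparing with $\min\{\bar v,\text{large constant}\}$, which the linearity of the majorant equation permits.
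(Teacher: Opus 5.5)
Your proposal is correct and is the paper's proof up to the change of variables $w^\varepsilon=-\varepsilon\ln u^\varepsilon$: the paper uses the same barriers, namely $\overline{w}^\varepsilon=(\lambda^r+\delta)x_++(\lambda^l+\delta)x_-+K_1(t+\varepsilon)$ compared through the $f$-free inequality \eqref{eq:w-sub} with $K_1\ge d+\mu$ (so (F4) is not even needed for that half), and $(\lambda^r-\delta)x-K_2(t+\varepsilon)$, $-(\lambda^l-\delta)x-K_2(t+\varepsilon)$, which are made subsolutions via $f(z,u)\le\partial_uf(z,0)\,u$.

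Two small caveats apply. First, absorbing $K\tau\varepsilon$ into $K\varepsilon$ is only legitimate for $t\ge0$ (or $\tau<1$): for $\tau>1$ and, e.g., $u^*\le1$, the stated upper bound on $w^\varepsilon$ fails at $t=-\varepsilon\tau$ for every $K$. This is a defect of the statement that the paper's proof shares, and it is harmless because only $t\ge0$ is used later; your normalization $K(t+\tau)$ is the right one on the initial strip. Second, positive constants are not supersolutions of your linear majorant when $R_+>1$, and (F1)--(F4) alone do not make $u^*$ a supersolution of the nonlinear equation either. So handle the unbounded barrier via the weighted-norm option, or simply via the boundedness of $u$ in the maximum-principle argument, and take $u\le u^*$ from the invariance of $C_{u^*}$ asserted in the paper.
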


\begin{proof}
By assumption (F1), it follows that $u^\varepsilon$ is a supersolution to 
\[
\varepsilon u_t-d\int_{\mathbb{R}}J(y)\left[u(t,x-\varepsilon y)-u(t,x)\right]dy +\mu u\ge0 \quad\text{ in } (0,\infty)\times\mathbb{R}.
\]
Noting that $w^\varepsilon = -\varepsilon\ln u^\varepsilon$, we have
\begin{align}\label{eq:w-sub}
\partial_tw^\varepsilon+d\int_{\mathbb{R}}J(y)\left[e^{-\frac{w^{\varepsilon}(t,x-\varepsilon y)-w^\varepsilon(t,x)}{\varepsilon}}-1\right]dy-\mu \le0 &\quad\text{ in } (0,\infty)\times\mathbb{R}.
\end{align}
In view of initial condition $(\text{IC}^\lambda)$, for any small $\delta\in(0,\min\{\lambda^r,\lambda^l\})$, there exist constants $b>a>0$ such that
\[
ae^{-[(\lambda^r+\delta)x_++(\lambda^l+\delta)x_-]}\leq \phi(\theta,x)\leq be^{-[(\lambda^r-\delta)x_++(\lambda^l-\delta)x_-]} \quad \text{ for all } (\theta,x)\in[-\varepsilon\tau,0]\times\mathbb{R}.
\]
Via WKB transformation \eqref{w_u_varepsilon}, this implies
\[
(\lambda^r-\delta)x_++(\lambda^l-\delta)x_--\varepsilon\ln b\leq w^\varepsilon(\theta,x)\leq(\lambda^r+\delta)x_++(\lambda^l+\delta)x_--\varepsilon\ln a
\]
for all $(\theta,x)\in[-\varepsilon\tau,0]\times\mathbb{R}$.

Define
\[
\overline{w}^\varepsilon(t,x)=(\lambda^r+\delta)x_++(\lambda^l+\delta)x_-+K_1(t+\varepsilon),
\]
where $K_1>0$ is chosen sufficiently large to satisfy $K_1\geq\max\left\{|\ln a|, d+\mu\right\}$.
By construction, the initial condition $w^\varepsilon(\theta,x)\leq\overline{w}^\varepsilon(\theta,x)$ holds for all $(\theta,x)\in[-\varepsilon\tau,0]\times\mathbb{R}$. A direct computation implies that $\overline{w}^\varepsilon$ is a supersolution to \eqref{eq:w-sub}, so the comparison principle yields
\[
w^\varepsilon(t,x)\leq\overline{w}^\varepsilon(t,x) \quad\text{for all } (t,x)\in[-\varepsilon\tau,+\infty)\times\mathbb{R}.
\]

Define
\[
\underline{w}_1^\varepsilon(t,x)= (\lambda^r-\delta)x-K_2(t+\varepsilon) \quad\text{ and } \quad\underline{w}_2^\varepsilon(t,x)= -(\lambda^l-\delta)x-K_2(t+\varepsilon),
\]
where $K_2>0$ is sufficiently large to satisfy
\[
K_2\geq\max\left\{d\int_{\mathbb{R}}J(y)e^{(\max\{\lambda^r,\lambda^l\}-\delta)y}dy-d+\mu K_3,|\ln b|\right\},
\]
with \( K_3 = \sup\limits_{z \in \mathbb{R}} |\partial_u f(z,0)| \). By assumption (F1) and the WKB transformation \eqref{w_u_varepsilon}, we have
\begin{align*}
&d\int_{\mathbb{R}} J(y) \left[ e^{-\frac{\underline{w}_1^\varepsilon(t,x - \varepsilon y) - \underline{w}_1^\varepsilon(t,x)}{\varepsilon}} - 1 \right] dy +\partial_t \underline{w}_1^\varepsilon - \mu + \mu \frac{f^\varepsilon\left(x - c_1t, e^{-\frac{\underline{w}_1^\varepsilon(t - \varepsilon\tau, x)}{\varepsilon}}\right)}{e^{-\frac{\underline{w}_1^\varepsilon(t, x)}{\varepsilon}}} \\
\leq &d\int_{\mathbb{R}} J(y) \left[ e^{-\frac{\underline{w}_1^\varepsilon(t,x - \varepsilon y) - \underline{w}_1^\varepsilon(t,x)}{\varepsilon}} - 1 \right] dy +\partial_t \underline{w}_1^\varepsilon - \mu + \mu \partial_u f^\varepsilon(x - c_1t, 0) e^{\frac{\underline{w}_1^\varepsilon(t,x) - \underline{w}_1^\varepsilon(t - \varepsilon\tau, x)}{\varepsilon}} \\
=& d\int_{\mathbb{R}} J(y)\left(e^{(\lambda^r-\delta)y} - 1\right)dy - K_2 - \mu + \mu K_3 e^{-K_2\tau} \\
\leq& 0.
\end{align*}
Then $\underline{w}_1^\varepsilon$ is a subsolution of \eqref{w^varepsilon}.
For $\underline{w}_2^\varepsilon$, a parallel computation gives
\[
d\int_{\mathbb{R}} J(y)\left(e^{-(\lambda^l-\delta)y} - 1\right)dy - K_2 - \mu + \mu K_3 e^{-K_2\tau} \leq 0,
\]
and therefore, $\underline{w}_2^\varepsilon$ is also a subsolution.
Taking $K = \max\{K_1, K_2\}$ unifies the upper and lower bounds, which completes the proof.
\end{proof}

\begin{lemma}\label{lem:w^vareps-bound-2}
Assume that {\rm (J)} and {\rm (F1)--(F4)} hold. Let $w^\varepsilon$ be a solution of \eqref{w^varepsilon} with initial condition {\rm(\text{IC}$^\infty)$.} Then for any compact subset $Q_1$ of $(0,\infty)\times\mathbb{R}$, there exists a positive constant $C=C(Q_1)$ and $\varepsilon_0>0$ such that for any $\varepsilon\in(0,\varepsilon_0)$,
\[
\sup_{Q_1}|w^\varepsilon|\leq C  .
\]
\end{lemma}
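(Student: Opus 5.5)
We need two-sided local bounds for $w^\varepsilon$ on compact $Q_1\subset(0,\infty)\times\mathbb{R}$ under (IC$^\infty$). The lower bound is immediate from a comparison with the constant $u^*$, and the real work is the upper bound, which amounts to a quantitative lower bound on $u$ at large times.

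\textbf{Lower bound.} Since $\phi\le u^*$ and $f(z,\cdot)$ is nondecreasing on $[0,u^*]$ with $f(z,u^*)\le u^*$, the constant $u^*$ is a supersolution of \eqref{eq:main}. By (F4) and (F3), $f(z,u^*)\le f^+(u^*)=u^*$, since $\partial_uf(z,0)$ nondecreasing should propagate to $f(z,u)\le f^+(u)$ under the paper's structure. If that propagation fails, we instead use the bound $\sup\partial_u f\cdot u$ on a finite horizon. Comparison then gives $u\le u^*$, so
\[
w^\varepsilon\ge -\varepsilon\ln u^*\ge -|\ln u^*|
\]
for $\varepsilon<1$.

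\textbf{Upper bound.} We must show $u(t/\varepsilon,x/\varepsilon)\ge e^{-C/\varepsilon}$ uniformly on $Q_1$. The approach is:

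1. Since $\phi$ is nontrivial, nonnegative and continuous, $\phi(\theta_0,\cdot)$ is positive on some interval. Using the Duhamel/step method together with $f\ge0$, we get
\[
u(t,\cdot)\ge e^{-(d+\mu)t}e^{dtJ*}\phi(0,\cdot)-\text{(trivial)},
\]
because $u_t\ge d(J*u-u)-\mu u$. Hence $u$ dominates the solution $v$ of the linear equation $v_t=dJ*v-(d+\mu)v$ with datum $\phi(0,\cdot)$. If $\phi(0,\cdot)\equiv0$, we start instead from a later time $t_0\in(0,\tau]$, where positivity is inherited through the delayed birth term $\mu f(\cdot,\phi(\theta,\cdot))>0$.

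2. We bound $v$ from below. Since $J$ is continuous with $J(0)>0$, we have $J\ge j_0\mathbf 1_{[-r,r]}$. Then
\[
v(t)\ge e^{-(d+\mu)t}\sum_n \frac{(dt)^n}{n!}\,J^{*n}*\phi_0 .
\]
Suppose $\phi_0\ge a\mathbf 1_{[-r,r]}$ (after a translation). Then $J^{*n}*\phi_0\ge a(j_0r)^n\mathbf 1_{[-nr/2,nr/2]}$, up to constants. For $|x|\le Xn$ we keep the single term with $n\approx 2|x|/r+1$. Stirling's formula gives
\[
v(t,x)\ge \exp\bigl(-C(1+|x|)\ln(2+|x|/t)-Ct\bigr).
\]

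3. Rescaling to $(t/\varepsilon,x/\varepsilon)$ with $(t,x)\in Q_1$, where $t\ge t_1>0$ and $|x|\le X$, yields
\[
-\varepsilon\ln u^\varepsilon\le C\bigl(t+|x|\ln(2+|x|/t)\bigr)+O(\varepsilon).
\]
This is bounded on $Q_1$, which gives the upper bound. The logarithmic factor is harmless because $t$ stays away from $0$. This explains why the statement is only local on compact subsets of $(0,\infty)\times\mathbb{R}$: as $t\to0$ the bound blows up, in contrast to Lemma~\ref{lem:w^vareps-bound-1}.

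\textbf{Main obstacle.} The main obstacle is step 2: obtaining the combinatorial lower bound for the iterated convolutions uniformly with a linear-times-log exponent. We also need the initial positivity when $\phi(0,\cdot)$ vanishes, which requires tracking the delay. Everything else is a comparison argument.
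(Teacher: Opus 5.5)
Your argument uses the same reduction as the paper. The lower bound comes from $0\le u\le u^*$. For the upper bound, you discard the nonnegative birth term, so that $u$ dominates the solution $v$ of the linear equation $v_t=dJ*v-(d+\mu)v$. Where you differ is the lower estimate for $v$. The paper does not carry it out and cites the argument of \cite{Liang2020} (Theorem 4.5). You prove it directly from the series $e^{dtJ*}=\sum_n (dt)^nJ^{*n}/n!$ and Stirling's formula, which is the nonlocal counterpart of the explicit Gaussian computation in Appendix~\ref{App:Local}. The mechanism is the one you identify: the exponent $C(1+|x|)\ln(2+|x|/t)+Ct$ is, up to lower-order terms, homogeneous of degree one in $(t,x)$. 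It therefore scales exactly by $1/\varepsilon$ under $(t,x)\mapsto(t/\varepsilon,x/\varepsilon)$ and stays bounded for $t$ away from $0$. The paper's version is shorter. Yours is self-contained and also treats the case $\phi(0,\cdot)\equiv0$ via the delayed birth term, which the paper passes over.

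Two points need repair.
\begin{itemize}
\item Hypothesis (J) does not give $J(0)>0$. A symmetric, continuous, nonnegative kernel may vanish near $0$, e.g.\ be supported in $\{1\le|y|\le 2\}$. In that case $J\ge j_0\mathbf 1_{[-r,r]}$ fails and your convolution lower bound does not start. The fix uses symmetry: $J*J(0)=\int_{\mathbb R}J(y)J(-y)\,dy=\int_{\mathbb R}J^2\,dy>0$. By continuity, $J^{*2}\ge j_0\mathbf 1_{[-r,r]}$ for some $j_0,r>0$. Keep only the even-order terms of the series; the Stirling estimate is unchanged.
\item Your first justification of $u\le u^*$ is not valid. (F3) constrains only $\partial_uf(z,0)$ and does not yield $f(z,u^*)\le f^+(u^*)=u^*$. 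The paper simply takes $0\le u\le u^*$ from well-posedness in $C_{u^*}$. Your fallback suffices for this lemma anyway. By (F1), with $K:=\mu\sup_z\partial_uf(z,0)$, comparison for the linear delay equation gives $u\le\|\phi\|_\infty e^{K(t+\tau)}$. Hence $w^\varepsilon\ge-\varepsilon\ln\|\phi\|_\infty-K(t+\varepsilon\tau)$, which is bounded below on the compact set $Q_1$.
\end{itemize}
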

\begin{proof}
Since $0\le u(t,x)\le u^*$ for all \( (t,x) \in [-\tau,\infty)\times\mathbb{R} \), we have $w^\varepsilon\ge -\varepsilon \ln u^*$ in \( [-\varepsilon\tau,\infty)\times\mathbb{R} \). Hence it suffices to establish an upper bound for $w^\varepsilon$. Observe that $f(z,u) \ge 0$ on $\mathbb{R} \times [0,\infty)$, which gives
\begin{align*}
u_t &= d[J * u - u] - \mu u + \mu f\left(x - c_1 t, u(t-\tau,x)\right) \\
&\ge d[J * u - u] - \mu u .
\end{align*}
The upper bound for $w^\varepsilon$ now follows from the argument of \cite[Theorem 4.5]{Liang2020}. We end this proof.
\end{proof}

As a consequence of Lemmas \ref{lem:w^vareps-bound-1} and \ref{lem:w^vareps-bound-2}, the half-relaxed limits
\begin{align*}
w_*(t,x):=\liminf_{\substack{\varepsilon \rightarrow 0\\(s,y)\rightarrow(t,x)}}w^{\varepsilon}(s,y) \quad \text { and } \quad w^*(t,x):=\limsup_{\substack{\varepsilon \rightarrow 0\\(s,y)\rightarrow(t,x)}} w^{\varepsilon}(s,y)
\end{align*}
are well-defined. Then, we have the following results.
\begin{lemma}\label{lem:w^*_*}
Assume that {\rm (J)} and {\rm (F1)--(F4)} hold. Then $w^*(t,0)=w_{*}(t,0)=0$ for all $t\in(0,\infty)$, and
\begin{align*}
& w_*(0,x)=w^*(0,x)=\begin{cases} \lambda^rx, &x\geq0,\\-\lambda^l x, &x<0, \end{cases}\quad \quad\text{if } {{\rm(IC}}^\lambda) \text{ holds},\\
& w^*(0,x)=w_{*}(0,x)=\infty \quad\text{ for } x\neq0, \quad\quad\text{if } {{\rm(IC}}^\infty) \text{ holds}.
\end{align*}
\end{lemma}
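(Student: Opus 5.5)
The plan is to treat the two assertions separately: the initial traces follow from the a priori bounds of Lemmas~\ref{lem:w^vareps-bound-1}--\ref{lem:w^vareps-bound-2} plus an extra lower barrier, and the identity $w^*(t,0)=w_*(t,0)=0$ comes from a uniform persistence estimate near the origin.

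\emph{Initial traces under $(\mathrm{IC}^\lambda)$.} Fix $\delta>0$ and let $(s,y)\to(0,x)$ and $\varepsilon\to0$ in the two-sided bound of Lemma~\ref{lem:w^vareps-bound-1}. The term $K(s+\varepsilon)$ tends to $0$, and $x_\pm$ is continuous. This gives
\[
(\lambda^r-\delta)x_++(\lambda^l-\delta)x_-\le w_*(0,x)\le w^*(0,x)\le(\lambda^r+\delta)x_++(\lambda^l+\delta)x_- .
\]
Letting $\delta\to0$ yields the stated piecewise linear profile.

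\emph{Initial traces under $(\mathrm{IC}^\infty)$.} It suffices to show $w_*(0,x)=\infty$ for $x\neq0$. Fix an arbitrary $\Lambda>0$. By $(\mathrm{IC}^\infty)$ there is $b_\Lambda$ with $\phi(\theta,x)\le b_\Lambda e^{-\Lambda|x|}$. By \eqref{eq:F2-1}, $u$ is a subsolution of a linear delayed problem:
\[
u_t\le d(J*u-u)-\mu u+\mu K_3u(t-\tau,x).
\]
So $w^\varepsilon$ satisfies the corresponding inequality with $\le$ replaced by $\ge$ in \eqref{w^varepsilon}. Exactly as for $\underline w_1^\varepsilon,\underline w_2^\varepsilon$ in the proof of Lemma~\ref{lem:w^vareps-bound-1}, the functions $\pm\Lambda x-K_\Lambda(t+\varepsilon)$ are subsolutions for $K_\Lambda$ large, and they lie below $w^\varepsilon$ initially. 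Comparison then gives $w^\varepsilon\ge \Lambda|x|-K_\Lambda(t+\varepsilon)$, hence $w_*(0,x)\ge\Lambda|x|$. Letting $\Lambda\to\infty$ gives $w_*(0,x)=w^*(0,x)=\infty$ for $x\ne0$.

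\emph{The identity $w(t,0)=0$.} The lower bound $w_*\ge0$ is immediate, since $u\le u^*$ gives $w^\varepsilon\ge-\varepsilon\ln u^*$. For the upper bound $w^*(t,0)\le0$ I will prove a uniform persistence statement: there exist $c_0>0$, $\sigma>0$ and $T_0$ such that
\[
u(t,x)\ge\sigma\quad\text{for } t\ge T_0,\ |x|\le c_0t .
\]
Then for $(s,y)$ near $(t,0)$ with $t>0$ fixed, we have $|y/\varepsilon|\le c_0 s/\varepsilon$ for small $\varepsilon$, so $w^\varepsilon(s,y)\le-\varepsilon\ln\sigma\to0$.

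To get persistence, I will build a spatially homogeneous monotone lower comparison problem. By (F3), $\partial_uf(z,0)\ge R_->1$ for all $z$. Fix $\eta_*$ with $R_--\eta_*>1$ and take the corresponding $\delta_*$ from (F1), with $\delta_*\le u^*$. Define
\[
g(u):=\min\{(R_--\eta_*)u,\ (R_--\eta_*)\delta_*\}\ \text{ for } u\le u^*,
\]
possibly smoothed to a monotone KPP-type function that has a positive equilibrium in $(0,\delta_*]$. Since $f$ is nondecreasing in $u$ on $[0,u^*]$ by (F4), we get $f(z,u)\ge f(z,\min\{u,\delta_*\})\ge g(u)$ for all $z$. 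The monotonicity (F4) also gives the comparison principle for \eqref{eq:main} on $C_{u^*}$. Hence $u\ge v$, where $v$ solves the homogeneous delayed nonlocal equation with nonlinearity $\mu g$, started from a small nontrivial compactly supported datum below $\phi$ at some positive time. Such a datum exists because $u(t_0,\cdot)>0$ for $t_0>0$ by the strong positivity of the nonlocal semiflow.

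For this monotone homogeneous equation, the known spreading theory (e.g.\ \cite{Liu2015,Li2020}) yields a positive spreading speed, since $R_--\eta_*>1$. It also yields $\liminf v\ge\sigma>0$ uniformly on $|x|\le c_0t$.

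\emph{Main obstacle.} I expect the main difficulty to be this last step. It requires checking that the truncated $g$ satisfies the hypotheses of the cited homogeneous spreading results. It also requires justifying comparison for the delay system despite the lack of monotonicity of $f$ above $u^*$. That issue is handled by the invariance of $C_{u^*}$: the condition $\phi\le u^*$, (F1) and (F4) keep $u\le u^*$.
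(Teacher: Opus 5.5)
Your proposal is correct. The treatment of the initial traces is essentially the paper's. Under (IC$^\lambda$) you pass to the limit in the two-sided bounds of Lemma~\ref{lem:w^vareps-bound-1}. Under (IC$^\infty$) you use the barriers $\pm\Lambda x-K_\Lambda(t+\varepsilon)$ for every $\Lambda$. The paper first introduces an auxiliary solution $\tilde u$ with data $\min\{C_1e^{-\lambda|x|},u^*\}$ and compares $u\le\tilde u$. You apply the barrier argument directly to $u$, which is legitimate because that half of Lemma~\ref{lem:w^vareps-bound-1} only uses an exponential upper bound on the data.

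The real difference is in how you get $w^*(t,0)\le0$. The paper compares $u$ with the solution $\underline u$ of the problem with $f$ replaced by the far-field limit $f^-$, and then quotes the homogeneous spreading results of \cite{Fang2014,Yi2015}. This is shorter, but it relies on two facts the hypotheses do not supply. First, it needs the ordering $f^-(u)\le f(z,u)$ on $[0,u^*]$ for all $z$; (F3) only concerns $\partial_uf(z,0)$, so it does not give this. Second, it needs $f^-$ to fit the cited homogeneous theory, whereas (F4) describes only $f^+$.

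Your truncated KPP nonlinearity uses only the uniform-in-$z$ lower bound in (F1), the inequality $\partial_uf(z,0)\ge R_-$ from (F2)--(F3), and the monotonicity (F4). So it works under exactly the stated assumptions. The cost is the bookkeeping you already flagged.
\begin{itemize}
\item You should fix the truncation level so that the equilibrium lies in $(0,\delta_*]\subset(0,u^*]$. A convenient choice is $g(u)=\min\{(R_--\eta_*)u,\delta_*\}$. Then $f\ge g$ holds on $[0,\delta_*/(R_--\eta_*)]$ by (F1), and above that level by (F4).
\item You need a nontrivial compactly supported history below $u$ on some window $[t_0-\tau,t_0]$. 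This follows from $u_t\ge d(J*u-u)-\mu u$ together with the spreading of positivity under the nonlocal operator.
\end{itemize}
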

\begin{proof}
From the definition of $w^\varepsilon$ and the bound $0\le u\le u^*$, we have
$w^\varepsilon\ge -\varepsilon\ln u^*$, and hence
\[
w_*(t,x) \geq \liminf_{\varepsilon\to0} \left(-\varepsilon \ln u^*\right) = 0 \quad \text{for}\quad (t,x) \in (0,\infty)\times\mathbb{R}.
\]
In particular, \( w_*(t,0) \geq 0 \) holds for all \( t > 0 \). To obtain an upper bound, let $\underline{u}$ denote the unique solution of \eqref{eq:main} with $f$ replaced by $f^-$. By the comparison principle, $0\leq\underline{u}(t,x)\le u(t,x)$ on $[-\tau,\infty)\times\mathbb{R}$. Under assumptions (F2), the results of \cite{Fang2014,Yi2015} provide a spreading speed $\tilde{c}_->0$ and a number $\eta_0\in(0,\tilde{c}_-)$ such that \( \liminf\limits_{t\to\infty}\inf\limits_{|x|\leq(\tilde{c}_--\eta_0)t}\underline{u}(t,x) > 0 \).
Consequently,
\[
w^*(t,0)=\limsup_{\substack{\varepsilon \rightarrow 0\\(s,x)\rightarrow(t,0)}} w^{\varepsilon}(s,x)\leq\limsup_{\substack{\varepsilon \rightarrow 0\\(s,y)\rightarrow(t,0)}} -\varepsilon \ln \underline{u}\left(\frac{s}{\varepsilon},\frac{x}{\varepsilon}\right)\leq0 \quad\text{for all } t>0.
\]
Thus, $w^*(t,0)=w_{*}(t,0)=0$ for all $t\in(0,\infty)$.

We now prove the remaining assertions in this lemma. If (IC$^\lambda$) holds, Lemma~\ref{lem:w^vareps-bound-1} gives, after sending $\varepsilon\to0$,
\[
\max\{\lambda^rx_++\lambda^lx_--Kt, 0\}\leq w_*(t,x)\leq w^*(t,x)\leq\lambda^rx_++\lambda^lx_-+Kt
\]
for $(t,x)\in[0,+\infty)\times\mathbb{R}$. Letting $t\to0$ proves the first case. If (IC$^\infty$) holds, fix an arbitrary $\lambda>0$ and choose $C_1>0$ large enough so that $\phi(\theta,x)\le C_1e^{-\lambda |x|}$ for $(\theta,x)\in[-\tau,0]\times\mathbb{R}$. Let $\tilde u$ be the solution of \eqref{eq:main} with initial data $\min\{C_1e^{-\lambda |x|},u^*\}$. By comparison principle, $u(t,x)\le\tilde u(t,x)$ on $[-\tau,\infty)\times\mathbb{R}$.
Repeating the proof of Lemma~\ref{lem:w^vareps-bound-1}, there exists a constant $K_4>0$ such that
\[
\max\left\{\lambda |x|-K_4(t+\varepsilon),\,-\varepsilon\ln C_1\right\}\le \tilde w^\varepsilon(t,x)
\qquad\text{for } (t,x)\in[-\varepsilon\tau,\infty)\times\mathbb{R},
\]
where $\tilde w^\varepsilon:=-\varepsilon\ln\tilde{u}(t/\varepsilon,x/\varepsilon)$.
Since $u\le\tilde u$, passing to the limit $\varepsilon\to0$ gives
\[
\max\left\{\lambda |x|-K_4t,\,0\right\}\le w_*(t,x)
\qquad\text{for } (t,x)\in(0,\infty)\times\mathbb{R}.
\]
For any $x\neq0$, letting $t\to0$ in the inequality above yields $\lambda |x|\le w_*(0,x)$. Because $\lambda>0$ is arbitrary, taking $\lambda\to\infty$ shows that $w_*(0,x)=\infty$ for every $x\neq0$. Consequently, $w^*(0,x)\ge w_*(0,x)=\infty$ for all $x\neq0$, which completes the proof.
\end{proof}

\begin{lemma}\label{lem:w-sup-sol}
Assume that {\rm (J)}, {\rm (F1)} hold and {\rm\text{(IC}$^\lambda)$} hold for some $\lambda\in(0,+\infty]$. Then $w_{*}$ is a viscosity supersolution of \eqref{eq:constrained-HJ}.
\end{lemma}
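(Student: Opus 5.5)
The plan is the standard half-relaxed-limit argument of Barles--Perthame, applied to \eqref{w^varepsilon}. It uses only the one-sided bound \eqref{eq:F2-1} from (F1) to control the delayed reaction term from above.

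\textbf{Nonnegativity and local boundedness.} Since $0\le u\le u^*$, we have $w^\varepsilon\ge-\varepsilon\ln u^*$, and hence $w_*\ge 0$, as already observed in the proof of Lemma \ref{lem:w^*_*}. Local boundedness of $w_*$ on $(0,\infty)\times\mathbb{R}$ follows from Lemma \ref{lem:w^vareps-bound-1} in the case $\lambda<\infty$, and from Lemma \ref{lem:w^vareps-bound-2} in the case (IC$^\infty$). Lower semicontinuity is automatic for a half-relaxed limit.

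\textbf{Localization of minima.} Fix $\varphi\in C^1$ and let $(t_0,x_0)$, with $t_0>0$, be a strict local minimum of $w_*-\varphi$. Replacing $\varphi$ by $\varphi+|t-t_0|^4+|x-x_0|^4$, we may assume the minimum is strict on a closed ball $\bar B$ around $(t_0,x_0)$ contained in $\{t>0\}$. By the usual argument for half-relaxed limits, there are $\varepsilon\to0$ and points $(t_\varepsilon,x_\varepsilon)\in\bar B$ such that $w^\varepsilon-\varphi$ attains its minimum over $\bar B$ at $(t_\varepsilon,x_\varepsilon)$, with $(t_\varepsilon,x_\varepsilon)\to(t_0,x_0)$ and $w^\varepsilon(t_\varepsilon,x_\varepsilon)\to w_*(t_0,x_0)$. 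For $\varepsilon$ small, $(t_\varepsilon,x_\varepsilon)$ lies in the interior of $\bar B$. Because $J$ has compact support, say in $[-M,M]$, and $\tau$ is fixed, the points $(t_\varepsilon,x_\varepsilon-\varepsilon y)$ with $|y|\le M$, and the point $(t_\varepsilon-\varepsilon\tau,x_\varepsilon)$, all lie in $\bar B$ once $\varepsilon$ is small. Consequently
\[
w^\varepsilon(t_\varepsilon,x_\varepsilon-\varepsilon y)-w^\varepsilon(t_\varepsilon,x_\varepsilon)\ge \varphi(t_\varepsilon,x_\varepsilon-\varepsilon y)-\varphi(t_\varepsilon,x_\varepsilon),
\]
\[
w^\varepsilon(t_\varepsilon,x_\varepsilon)-w^\varepsilon(t_\varepsilon-\varepsilon\tau,x_\varepsilon)\le \varphi(t_\varepsilon,x_\varepsilon)-\varphi(t_\varepsilon-\varepsilon\tau,x_\varepsilon).
\]
At the interior minimum we also have $\partial_t w^\varepsilon=\partial_t\varphi$, since $u$ is classical for $t>0$.

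\textbf{Plugging into the equation.} Evaluate \eqref{w^varepsilon} at $(t_\varepsilon,x_\varepsilon)$. By \eqref{eq:F2-1},
\[
\frac{f^\varepsilon(x-c_1t,u^\varepsilon(t-\varepsilon\tau,x))}{u^\varepsilon(t,x)}\le \partial_uf\Big(\frac{x-c_1t}{\varepsilon},0\Big)\,e^{\frac{w^\varepsilon(t,x)-w^\varepsilon(t-\varepsilon\tau,x)}{\varepsilon}}.
\]
Combining this with the two inequalities above gives
\[
0\le d\int_{\mathbb{R}}J(y)\Big[e^{-\frac{\varphi(t_\varepsilon,x_\varepsilon-\varepsilon y)-\varphi(t_\varepsilon,x_\varepsilon)}{\varepsilon}}-1\Big]dy+\partial_t\varphi(t_\varepsilon,x_\varepsilon)-\mu+\mu\,\partial_uf\Big(\tfrac{x_\varepsilon-c_1t_\varepsilon}{\varepsilon},0\Big)e^{\frac{\varphi(t_\varepsilon,x_\varepsilon)-\varphi(t_\varepsilon-\varepsilon\tau,x_\varepsilon)}{\varepsilon}}.
\]
Since $\varphi\in C^1$ and $y$ ranges over a compact set, the difference quotients converge uniformly. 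Hence the integral term tends to $d(\int J(y)e^{\partial_x\varphi(t_0,x_0)y}dy-1)$, and the delay exponent tends to $\tau\partial_t\varphi(t_0,x_0)$.

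\textbf{Passing to the limit.} Take $\limsup$ as $\varepsilon\to0$. By the very definition of $R$ as a $\limsup$ over $\varepsilon\to0$ and $(t',x')\to(t_0,x_0)$,
\[
\limsup\partial_uf\big((x_\varepsilon-c_1t_\varepsilon)/\varepsilon,0\big)\le R(x_0/t_0).
\]
This yields $H(x_0/t_0,\partial_x\varphi,\partial_t\varphi)\ge0$, where $R$ is already upper semicontinuous in view of \eqref{eq:R+-}, and hence $H^*\ge H\ge 0$.

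\textbf{Expected difficulty.} The only delicate point is the delayed term. One needs the backward point $t_\varepsilon-\varepsilon\tau$ to stay inside the neighborhood where the minimum property holds; this is why $t_0>0$ and the strictness of the minimum are used. One also needs the reaction term bounded above rather than below, which is exactly what \eqref{eq:F2-1} supplies. No monotonicity of $f$ is needed, consistent with the lemma assuming only (J) and (F1).
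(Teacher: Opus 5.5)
Your proof is correct and follows essentially the same argument as the paper. Like the paper, you extract minimizers of $w^\varepsilon-\varphi$ converging to $(t_0,x_0)$, bound the nonlocal term and the delayed reaction term from above using the minimum property together with \eqref{eq:F2-1}, and pass to the limit using the $\limsup$ definition of $R$. One small slip: to force a strict minimum you would replace $\varphi$ by $\varphi-|t-t_0|^4-|x-x_0|^4$, not $\varphi+|t-t_0|^4+|x-x_0|^4$, but the step is unnecessary anyway because the definition already assumes a strict local minimum.
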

\begin{proof}
By Lemma \ref{lem:w^*_*}, $w_*(t,x)\geq0$ in $(0,\infty)\times\mathbb{R}$. Fix a smooth test function $\varphi,$ without loss of generality, assume that $w_{*}-\varphi$ has a strict local minimum at some point $(t_{0},x_{0})\in(0,\infty)\times\mathbb{R}.$ It then suffices to show that $H^*(x/t,\partial_{x}\varphi,\partial_t\varphi)\geq0$ at $(t_{0},x_{0}).$

By the definition of $w_*$, there exist sufficiently small constants $r_1>0$ and $x_1>0$, and sequences $\{\varepsilon_n\}$ and $\{(t_{\varepsilon_n},x_{\varepsilon_n})\}$ with $|t_{\varepsilon_n}-t_0|\leq r_1$ and $|x_{\varepsilon_n}-x_0|\leq x_1$ such that $w^{\varepsilon_{n}}(t,x)-\varphi$ has a local minimum at $(t_{\varepsilon_n},x_{\varepsilon_n})$ over $Q_1:=(t_0-r_1,t_0+r_1)\times(x_0-2x_1,x_0+2x_1)$ with
\begin{align*}
w^{\varepsilon_{n}}(t_{\varepsilon_n},x_{\varepsilon_n})\to w_{*}(t_{0},x_{0})\quad \text{and}\quad(t_{\varepsilon_n},x_{\varepsilon_n})\to(t_{0},x_{0})\ \quad\text{as}\quad\ n\to\infty.
\end{align*}
By the definition of $(t_{\varepsilon_n},x_{\varepsilon_n})$ being the minimum, we have the inequality
\begin{align}\label{w_varep_n_2}
w^{\varepsilon_{n}}(t_{\varepsilon_n},x_{\varepsilon_n})-\varphi(t_{\varepsilon_n},x_{\varepsilon_n})\leq w^{\varepsilon_{n}}(t,x)-\varphi(t,x)~~~\ \text{for}~~~\ (t,x)\in Q_1.
\end{align}
From equation \eqref{w^varepsilon}, it follows that at $(t_{\varepsilon_n}, x_{\varepsilon_n})$,
\begin{equation}\label{phi}
\begin{aligned}
0=&d\int_{\mathbb{R}}J(y)\left[e^{-\frac{w^{\varepsilon_n}(t_{\varepsilon_n},x_{\varepsilon_n}-\varepsilon_n y)-w^{\varepsilon_n}(t_{\varepsilon_n},x_{\varepsilon_n})}{\varepsilon_n}}-1\right]dy + \partial_t\varphi -\mu  \\
&+ \mu \frac{f^{\varepsilon_n}(x_{\varepsilon_n}-c_1t_{\varepsilon_n},u^{\varepsilon_n}(t_{\varepsilon_n}-\varepsilon_n\tau,x_{\varepsilon_n}))}{u^{\varepsilon_n}(t_{\varepsilon_n},x_{\varepsilon_n})}.
\end{aligned}
\end{equation}

First, we analyze the nonlocal term. Choose a large constant $L>0$ such that $\text{supp}(J)\Subset[-L,L]$. For sufficiently large $n$, we have $|x_{\varepsilon_n} - x_0| < x_1$ and $|\varepsilon_n y| < x_1$ for $y \in [-L, L]$, which implies $x_{\varepsilon_n} - \varepsilon_n y \in (x_0 - 2x_1, x_0 + 2x_1)$. Using \eqref{w_varep_n_2}, we obtain
\begin{align*}
d\int_{\mathbb{R}}J(y)e^{-\frac{w^{\varepsilon_n}(t_{\varepsilon_n},x_{\varepsilon_n}-\varepsilon_n y)-w^{\varepsilon_n}(t_{\varepsilon_n},x_{\varepsilon_n})}{\varepsilon_n}}dy\leq\int_{-L}^{L}J(y)e^{-\frac{\varphi(t_{\varepsilon_n},x_{\varepsilon_n}-\varepsilon_n y)-\varphi(t_{\varepsilon_n},x_{\varepsilon_n})}{\varepsilon_n}}dy:=J_1^{L,\varepsilon_n}
\end{align*}
By the smoothness of $\varphi$, we have
\[
\lim_{n\to\infty}-\frac{\varphi(t_{\varepsilon_n},x_{\varepsilon_n}-\varepsilon_n y)-\varphi(t_{\varepsilon_n},x_{\varepsilon_n})}{\varepsilon_n}=y\partial_x\varphi(t_0,x_0) \quad\text{ uniformly in } [-L,L].
\]
The dominated convergence theorem then implies
\[
\lim_{n\to\infty}J_1^{L,\varepsilon_n}=\int_{-L}^{L}J(y)e^{y\partial_x\varphi(t_0,x_0)}dy.
\]

We now analyze the reaction term. Under condition {\rm{(F1)}}, along with \eqref{w_u_varepsilon} and \eqref{w_varep_n_2}, we obtain the following estimate at $(t_{\varepsilon_n}, x_{\varepsilon_n})$,
\begin{align*}
\frac{f^{\varepsilon_n}(x_{\varepsilon_n}-c_1t_{\varepsilon_n},u^{\varepsilon_n}(t_{\varepsilon_n}-\varepsilon_n\tau,x_{\varepsilon_n}))}{u^{\varepsilon_n}(t_{\varepsilon_n},x_{\varepsilon_n})} \leq& \partial_uf^{\varepsilon_n}(x_{\varepsilon_n}-c_1t_{\varepsilon_n},0)\frac{u^{\varepsilon_n}(t_{\varepsilon_n}-\varepsilon_n\tau,x_{\varepsilon_n})}{u^{\varepsilon_n}(t_{\varepsilon_n},x_{\varepsilon_n})} \\
=&\partial_uf^{\varepsilon_n}(x_{\varepsilon_n}-c_1t_{\varepsilon_n},0)e^{\frac{w^{\varepsilon_n}(t_{\varepsilon_n},x_{\varepsilon_n})-w^{\varepsilon_n}(t_{\varepsilon_n}-\varepsilon_n\tau,x_{\varepsilon_n})}{\varepsilon_n}}\\
\leq&\partial_uf^{\varepsilon_n}(x_{\varepsilon_n}-c_1t_{\varepsilon_n},0)e^{\frac{\varphi(t_{\varepsilon_n},x_{\varepsilon_n})-\varphi(t_{\varepsilon_n}-\varepsilon_n\tau,x_{\varepsilon_n})}{\varepsilon_n}}.
\end{align*}
Again, the smoothness of $\varphi$ yields that
\[
\lim_{n\to\infty}\frac{\varphi(t_{\varepsilon_n},x_{\varepsilon_n})-\varphi(t_{\varepsilon_n}-\varepsilon_n\tau,x_{\varepsilon_n})}{\varepsilon_n}=\tau\partial_t\varphi(t_0,x_0).
\]

Taking $n\to\infty$ in \eqref{phi} and using the above estimates, we obtain
\begin{align*}
0\leq&d\int_{\mathbb{R}}J(y)\left(e^{y\partial_x\varphi(t_0,x_0)}-1\right)dy +\partial_t\varphi(t_0,x_0)-\mu+\mu R(x_0/t_0)e^{\tau\partial_t\varphi(t_0,x_0)},
\end{align*}
which completes the proof.
\end{proof}

\begin{lemma}\label{lem:rho-sup-sol}
Assume that {\rm(IC}$^\lambda)$ holds for some $\lambda\in(0,+\infty]$. Then $\rho_{*}(s) := w_{*}(1,s)$ is a viscosity supersolution of the following Hamilton-Jacobi equation,
\begin{align}\label{rho1}
\min\left\{ \rho,H(s,\rho',\rho-s\rho') \right\} = 0 \quad \text{ in } s\in(0,\infty) \text{ or } s\in(-\infty,0) .
\end{align}
Particularly, the following boundary conditions hold in the classical sense,
\begin{align}\label{rho1-boundary}
\rho_{*}(0) = 0, \quad \liminf_{s \to +\infty} \frac{\rho_{*}(s)}{s} \geq \lambda^r ~~~\text{and } \liminf_{s \to -\infty} \frac{\rho_{*}(s)}{|s|} \geq \lambda^l.
\end{align}
\end{lemma}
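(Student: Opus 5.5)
The plan is to exploit the self-similar structure of the problem. The rescaled equation \eqref{w^varepsilon} is invariant under $(t,x,\varepsilon)\mapsto(kt,kx,k\varepsilon)$ for every $k>0$. Concretely, $w^{k\varepsilon}(kt,kx)=k\,w^{\varepsilon}(t,x)$, because $u^{k\varepsilon}(kt,kx)=u^\varepsilon(t,x)$. Passing to the half-relaxed limits, this gives the one-homogeneity
\[
w_*(kt,kx)=k\,w_*(t,x)\quad\text{for all }k>0,
\]
so that $w_*(t,x)=t\rho_*(x/t)$ with $\rho_*(s)=w_*(1,s)$. Lower semicontinuity of $\rho_*$ and $\rho_*\ge0$ are inherited from $w_*$ (Lemma \ref{lem:w^*_*}). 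Note also that $H$ depends on $(t,x)$ only through $x/t$, and the envelopes $H^*,H_*$ are what appear at $s=c_1$.

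For the supersolution property, I take a $C^1$ test function $\psi$ on an interval in $(0,\infty)$ (or $(-\infty,0)$) such that $\rho_*-\psi$ has a strict local minimum at $s_0$. I then define $\varphi(t,x)=t\psi(x/t)$. Near $(1,s_0)$ we have $w_*-\varphi=t(\rho_*-\psi)(x/t)$. This has a local minimum at $(1,s_0)$, which is strict after adding a small term such as $(t-1)^2$; that term does not change the derivatives at $(1,s_0)$. The derivatives there are
\[
\partial_x\varphi=\psi'(s_0),\qquad \partial_t\varphi=\psi(s_0)-s_0\psi'(s_0).
\]
Lemma \ref{lem:w-sup-sol} then gives
\[
H^*\bigl(s_0,\psi'(s_0),\psi(s_0)-s_0\psi'(s_0)\bigr)\ge0,
\]
which is exactly the viscosity supersolution inequality for \eqref{rho1}. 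The minimum with $\rho$ is handled by $\rho_*\ge0$.

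For the boundary conditions, $\rho_*(0)=w_*(1,0)=0$ by Lemma \ref{lem:w^*_*}. Under (IC$^\lambda$) with $\lambda<\infty$, Lemma \ref{lem:w^vareps-bound-1} gives, after letting $\varepsilon\to0$,
\[
w_*(1,s)\ge(\lambda^r-\delta)s_+ +(\lambda^l-\delta)s_- -K.
\]
Dividing by $|s|$, letting $s\to\pm\infty$, and then letting $\delta\to0$ yields \eqref{rho1-boundary}. Under (IC$^\infty$), the proof of Lemma \ref{lem:w^*_*} gives $w_*(t,x)\ge\lambda|x|-K_4t$ for every $\lambda>0$, so both liminfs equal $+\infty$. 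This is consistent with the convention $\lambda=\infty$.

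The main obstacle I expect is justifying the homogeneity identity at the level of half-relaxed limits. The rescaling also changes the initial time layer $[-\varepsilon\tau,0]$ to $[-k\varepsilon\tau,0]$, and the heterogeneity $f(\frac{x-c_1t}{\varepsilon},\cdot)$ must transform consistently, which it does since $\frac{kx-c_1kt}{k\varepsilon}=\frac{x-c_1t}{\varepsilon}$. Because the liminf is taken over all $\varepsilon\to0$ and $(s,y)\to(t,x)$, the substitution $\varepsilon'=k\varepsilon$ maps this family bijectively, and the identity follows directly. If that turned out to be delicate, I would instead pass to the moving self-similar variables in the proof of Lemma \ref{lem:w-sup-sol} itself. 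That is, I would take test functions of the form $t\psi(x/t)$ and repeat the argument verbatim at $t_0=1$.
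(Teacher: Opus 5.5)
Your proposal is correct and follows essentially the paper's route. You use the homogeneity $w_*(t,x)=t\,w_*(1,x/t)$ and transfer the supersolution property from Lemma~\ref{lem:w-sup-sol} through test functions $t\psi(x/t)$, which is exactly what the change-of-variables lemma cited by the paper does. For the boundary behaviour you use the linear bounds of Lemma~\ref{lem:w^vareps-bound-1} directly, where the paper uses lower semicontinuity of $w_*$ at $t=0$ together with Lemma~\ref{lem:w^*_*}; this is an equivalent minor variation.

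The one step you leave implicit is normalizing $\psi(s_0)=\rho_*(s_0)$. This is needed for $(1,s_0)$ to be a local minimum of $w_*-t\psi(x/t)$, and it is what makes your $\psi(s_0)-s_0\psi'(s_0)$ coincide with $\rho_*(s_0)-s_0\psi'(s_0)$.
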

\begin{proof}
Following from the definition of the half-relaxed limit and WKB transformation, for any fixed $t >0$, we have
\[
w_{*}(t,x) = \liminf_{\substack{\varepsilon \to 0 \\ (s,y) \to (t,x)}} -\varepsilon \ln u\left( \frac{s}{\varepsilon}, \frac{y}{\varepsilon} \right) = t \cdot \liminf_{\substack{\varepsilon \to 0 \\ (s,y) \to (1,x/t)}} \frac{-\varepsilon}{t} \ln u\left(\frac{s}{\varepsilon/t}, \frac{y}{\varepsilon/t}\right)= t w_{*}\left( 1,\frac{x}{t}\right).
\]
Note that
\[
\rho_*(s) = w_*(1, s) = s w_*\left(\frac{1}{s}, 1\right).
\]
By Lemma \ref{lem:w^*_*} and the invariance of viscosity solution with respect to smooth transformation (see \cite[Lemma 2.3]{Liu2021-2}), $\rho_*$ is a viscosity supersolution of \eqref{rho1}. The lower semi-continuity of $w_*$ implies that $\rho_*$ is likewise lower semi-continuous and, moreover, satisfies
\[
\begin{cases}
\rho_*(0) = w_*(1, 0) = 0,\\[4pt]
\displaystyle\liminf_{s\to\infty} \frac{\rho_*(s)}{s} = \liminf_{s\to\infty} w_*\left(\frac{1}{s}, 1\right) \ge w_*(0, 1) = \lambda^r \in (0, +\infty],\\[4pt]
\displaystyle\liminf_{s\to-\infty} \frac{\rho_*(s)}{|s|} = \liminf_{s\to-\infty} w_*\left(\frac{1}{-s}, 1\right) \ge w_*(0, -1) = \lambda^l \in (0, +\infty].
\end{cases}
\]
This completes the proof.
\end{proof}

\begin{lemma}\label{lem:u^vareps=0}
Assume that {\rm(\text{IC}$^\lambda)$} holds for some $\lambda\in(0,+\infty]$. Then
\[
\limsup_{\varepsilon\to0}u^\varepsilon=0 \text{ locally uniformly in } \mathcal{A}=\{(t,x)\in(0,\infty)\times\mathbb{R}:w_{*}(t,x)>0\}.
\]
\end{lemma}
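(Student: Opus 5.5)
This is essentially a direct consequence of the definition of the half-relaxed limit $w_*$ together with the WKB relation $u^\varepsilon=e^{-w^\varepsilon/\varepsilon}$. The plan is to show that a positive lower bound on $w^\varepsilon$ holds uniformly near each point of $\mathcal{A}$, and then upgrade this to compact subsets.

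\emph{Step 1: a uniform lower bound near a single point.} Fix a compact set $Q\subset\mathcal{A}$. Since $w_*$ is lower semicontinuous and strictly positive on $Q$, it attains a positive minimum there, say $2\delta:=\min_Q w_*>0$. Now take any $(t_0,x_0)\in Q$. By the definition
\[
w_*(t_0,x_0)=\liminf_{\varepsilon\to0,\,(s,y)\to(t_0,x_0)}w^\varepsilon(s,y),
\]
there exist $r>0$ and $\varepsilon_1>0$ such that
\[
w^\varepsilon(s,y)\ge w_*(t_0,x_0)-\delta\ge\delta
\]
for all $|(s,y)-(t_0,x_0)|<r$ and all $\varepsilon<\varepsilon_1$.

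\emph{Step 2: passing to all of $Q$.} Cover $Q$ by finitely many such balls and let $\varepsilon_0$ be the minimum of the corresponding $\varepsilon_1$'s. This gives $w^\varepsilon\ge\delta$ on $Q$ for every $\varepsilon<\varepsilon_0$.

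\emph{Step 3: conclusion.} On $Q$ and for $\varepsilon<\varepsilon_0$ we then have
\[
0\le u^\varepsilon=e^{-w^\varepsilon/\varepsilon}\le e^{-\delta/\varepsilon},
\]
and the right-hand side tends to $0$ as $\varepsilon\to0$. Together with $u^\varepsilon\ge0$, this yields $\limsup_{\varepsilon\to0}\sup_Q u^\varepsilon=0$, which is the claimed local uniform convergence on $\mathcal{A}$.

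\emph{Potential obstacle.} The only subtle point is justifying the pointwise-to-uniform passage. This is exactly what the half-relaxed liminf is designed for: the liminf is taken jointly over $\varepsilon$ and nearby points, so it yields a neighbourhood bound, not merely a pointwise one. Compactness then finishes the argument. No equation-specific information, such as delay or nonlocality, is needed. The well-definedness of $w_*$ (finite, or $+\infty$, which only strengthens the bound) follows from Lemmas \ref{lem:w^vareps-bound-1}--\ref{lem:w^vareps-bound-2}.
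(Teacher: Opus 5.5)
Your proof is correct and follows the same route as the paper. Both arguments derive a uniform positive lower bound for $w^\varepsilon$ on a compact subset of $\mathcal{A}$ from the definition of the half-relaxed limit $w_*$, then conclude via $u^\varepsilon=e^{-w^\varepsilon/\varepsilon}\le e^{-\delta/\varepsilon}$; the paper simply asserts this uniform bound, whereas you justify it with lower semicontinuity of $w_*$ and a finite covering.
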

\begin{proof}
For any compact subset $Q_2 \subset \mathcal{A}$, there exists $k>0$ such that $w_* > k$ on $Q_2$. By the definition of the half-relaxed limit, we have $w^\varepsilon \ge k/2$ on $Q_2$ for all sufficiently small $\varepsilon > 0$. The WKB transformation then implies
\[
u^\varepsilon(t,x) = e^{-\frac{w^\varepsilon(t,x)}{\varepsilon}} \le e^{-\frac{k}{2\varepsilon}} \quad \text{in } Q_2.
\]
Passing to the limit $\varepsilon \to 0$ yields the desired result.
\end{proof}

\begin{lemma}\label{lem:w-sub-sol}
Assume that {\rm (J)}, {\rm (F1)} hold and {\rm (\text{IC}$^\lambda)$} holds for some $\lambda\in(0,+\infty]$. Then $w^*$ is the viscosity subsolution of \eqref{eq:constrained-HJ}.
\end{lemma}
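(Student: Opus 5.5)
We mirror the proof of Lemma \ref{lem:w-sup-sol}, now at a strict local maximum and with the inequalities reversed. Fix $\varphi\in C^1$ such that $w^*-\varphi$ has a strict local maximum at $(t_0,x_0)\in(0,\infty)\times\mathbb{R}$ with $w^*(t_0,x_0)>0$. By the definition of the half-relaxed limit $w^*$ there are sequences $\varepsilon_n\to0$ and $(t_{\varepsilon_n},x_{\varepsilon_n})\to(t_0,x_0)$ such that:
\begin{itemize}
\item $w^{\varepsilon_n}-\varphi$ attains a local maximum over a fixed box $Q_1$ around $(t_0,x_0)$ at $(t_{\varepsilon_n},x_{\varepsilon_n})$;
\item $w^{\varepsilon_n}(t_{\varepsilon_n},x_{\varepsilon_n})\to w^*(t_0,x_0)$.
\end{itemize}
The box $Q_1$ is chosen wide enough in $t$ to contain $t-\varepsilon_n\tau$, and wide enough in $x$ to contain $x-\varepsilon_n y$ for $y\in\operatorname{supp}J\subset[-L,L]$. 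At this point
\[
w^{\varepsilon_n}(t,x)-w^{\varepsilon_n}(t_{\varepsilon_n},x_{\varepsilon_n})\le \varphi(t,x)-\varphi(t_{\varepsilon_n},x_{\varepsilon_n})\quad\text{on } Q_1 .
\]
The time derivative satisfies $\partial_t w^{\varepsilon_n}=\partial_t\varphi$, using that $u$ is classical for $t>0$.

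We plug this into \eqref{w^varepsilon} and treat the terms one at a time.

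\textbf{Nonlocal term.} The inequality above bounds it from below:
\[
\int J(y)\,e^{-[w^{\varepsilon_n}(t_{\varepsilon_n},x_{\varepsilon_n}-\varepsilon_n y)-w^{\varepsilon_n}(t_{\varepsilon_n},x_{\varepsilon_n})]/\varepsilon_n}\,dy\;\ge\;\int J(y)\,e^{-[\varphi(t_{\varepsilon_n},x_{\varepsilon_n}-\varepsilon_n y)-\varphi(t_{\varepsilon_n},x_{\varepsilon_n})]/\varepsilon_n}\,dy .
\]
The right-hand side converges to $\int J(y)e^{y\partial_x\varphi(t_0,x_0)}\,dy$.

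\textbf{Reaction term.} We need a lower bound on
\[
\frac{f^{\varepsilon}(x-c_1t,\,u^\varepsilon(t-\varepsilon\tau,x))}{u^\varepsilon(t,x)} .
\]
This is the main obstacle, since (F1) only gives $f\ge(\partial_uf(z,0)-\eta_*)u$ for small $u$. The positivity $w^*(t_0,x_0)>0$ is used here. Since $w^{\varepsilon_n}(t_{\varepsilon_n},x_{\varepsilon_n})\to w^*(t_0,x_0)>0$, the maximum property gives
\[
w^{\varepsilon_n}(t_{\varepsilon_n}-\varepsilon_n\tau,x_{\varepsilon_n})\;\ge\; w^{\varepsilon_n}(t_{\varepsilon_n},x_{\varepsilon_n})-\varphi(t_{\varepsilon_n},x_{\varepsilon_n})+\varphi(t_{\varepsilon_n}-\varepsilon_n\tau,x_{\varepsilon_n})\;\ge\; w^*(t_0,x_0)/2>0
\]
for $n$ large. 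Hence $u^{\varepsilon_n}(t_{\varepsilon_n}-\varepsilon_n\tau,x_{\varepsilon_n})\le e^{-w^*(t_0,x_0)/(2\varepsilon_n)}\to0$, and for any $\eta_*>0$ this value eventually lies below $\delta_*$. Then (F1) and the maximum property give
\[
\frac{f^{\varepsilon_n}(\cdot)}{u^{\varepsilon_n}(t_{\varepsilon_n},x_{\varepsilon_n})}\;\ge\;\bigl(\partial_uf^{\varepsilon_n}(x_{\varepsilon_n}-c_1t_{\varepsilon_n},0)-\eta_*\bigr)\,e^{[\varphi(t_{\varepsilon_n},x_{\varepsilon_n})-\varphi(t_{\varepsilon_n}-\varepsilon_n\tau,x_{\varepsilon_n})]/\varepsilon_n}.
\]
The exponent tends to $\tau\partial_t\varphi(t_0,x_0)$. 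Taking the liminf of $\partial_uf(\frac{x_{\varepsilon_n}-c_1t_{\varepsilon_n}}{\varepsilon_n},0)$ gives at least $\underline{R}(x_0/t_0)$ by its definition.

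\textbf{Conclusion.} Letting $n\to\infty$ in \eqref{w^varepsilon} and then $\eta_*\to0$ yields
\[
d\Bigl(\int J(y)e^{y\partial_x\varphi}\,dy-1\Bigr)+\partial_t\varphi-\mu+\mu\,\underline{R}(x_0/t_0)\,e^{\tau\partial_t\varphi}\;\le\;0 .
\]
By \eqref{eq:R+-}, $\underline{R}$ is exactly the lower semicontinuous envelope of $R$, so the left side equals $H_*(x_0/t_0,\partial_x\varphi,\partial_t\varphi)$ and the subsolution inequality holds. Upper semicontinuity and local boundedness of $w^*$ follow from Lemmas \ref{lem:w^vareps-bound-1} and \ref{lem:w^vareps-bound-2}.

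One technicality remains. Locating the maximizer requires $w^{\varepsilon_n}$ to be bounded above near $(t_0,x_0)$, and the box must stay inside $t>0$ after the shift by $\varepsilon_n\tau$. Both hold for small $\varepsilon$ by the same local bounds.
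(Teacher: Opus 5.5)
Your outline matches the paper's: a maximizing sequence, a lower bound on the nonlocal term, the (F1) lower bound on the reaction term combined with the maximum property, and $\underline{R}=R_*$ at the end. However, the step showing that the delayed value $u^{\varepsilon_n}(t_{\varepsilon_n}-\varepsilon_n\tau,x_{\varepsilon_n})$ tends to $0$ fails, because the inequality you attribute to the maximum property has the wrong direction. Since $(t_{\varepsilon_n},x_{\varepsilon_n})$ maximizes $w^{\varepsilon_n}-\varphi$, one only gets the \emph{upper} bound
\[
w^{\varepsilon_n}(t_{\varepsilon_n}-\varepsilon_n\tau,x_{\varepsilon_n})\le w^{\varepsilon_n}(t_{\varepsilon_n},x_{\varepsilon_n})-\varphi(t_{\varepsilon_n},x_{\varepsilon_n})+\varphi(t_{\varepsilon_n}-\varepsilon_n\tau,x_{\varepsilon_n}).
\]
This is exactly what you use, correctly, a few lines later to bound $u^{\varepsilon_n}(t_{\varepsilon_n}-\varepsilon_n\tau,x_{\varepsilon_n})/u^{\varepsilon_n}(t_{\varepsilon_n},x_{\varepsilon_n})$ from below. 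The two uses point in opposite directions, and only the latter follows from maximality. No lower bound on $w^{\varepsilon_n}$ at the delayed point is available at this stage either: $w^*(t_0,x_0)>0$ says nothing about $w_*(t_0,x_0)$, and $w^*=w_*$ is only known after the comparison argument. So the smallness $u^{\varepsilon_n}(t_{\varepsilon_n}-\varepsilon_n\tau,x_{\varepsilon_n})\le\delta_*$ needed to invoke (F1) is unjustified.

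What is missing is a treatment of the possibility that the delayed value stays away from $0$. The paper splits into cases along a subsequence. In case (i) the delayed value tends to $0$, and your (F1) computation applies verbatim. In case (ii) it stays $\ge\tilde\delta>0$; then $f^{\varepsilon_n}$ is bounded below by a positive constant while the denominator $u^{\varepsilon_n}(t_{\varepsilon_n},x_{\varepsilon_n})=e^{-w^{\varepsilon_n}(t_{\varepsilon_n},x_{\varepsilon_n})/\varepsilon_n}\to0$, so the ratio blows up and the needed lower bound holds trivially. Note that $w^*(t_0,x_0)>0$ enters through this denominator, not through the delayed point.

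Alternatively, your single-case route can be repaired as in the claim \eqref{eq:claim} of Lemma~\ref{lem:u^vareps>0}. In \eqref{w^varepsilon} at the maximizer, the nonlocal term is $\ge -d$ and $\partial_t w^{\varepsilon_n}=\partial_t\varphi$ is bounded, so the ratio $f/u$ is bounded above. Since its denominator tends to $0$, $f^{\varepsilon_n}$ evaluated at the delayed value tends to $0$. Uniform positivity of $f$ on $\mathbb{R}\times[\tilde\delta,u^*]$ then forces the delayed value to $0$; this positivity follows from (F1) near $u=0$ together with the monotonicity in (F4), and the paper also uses it tacitly in case (ii). With either fix, the rest of your argument goes through.
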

\begin{proof}
Let $\varphi$ be a smooth test function such that $w^*-\varphi$ attains its strict local maximum at $(t_0,x_0)$ with $w^*(t_0,x_0)>0$. By the definition of $w^*$, there exist sufficiently small constants $r_2>0$ and $x_2>0$, and sequences $\{\varepsilon_n\}$ and $\{(t_{\varepsilon_n},x_{\varepsilon_n})\}$ with $|t_{\varepsilon_n}-t_0|\leq r_2$ and $|x_{\varepsilon_n}-x_0|\leq x_2$ such that $w^{\varepsilon_{n}}(t,x)-\varphi$ has a local maximum at $(t_{\varepsilon_n},x_{\varepsilon_n})$ over $Q_3:=(t_0-r_2,t_0+r_2)\times(x_0-2x_2,x_0+2x_2)$ with
\begin{align}\label{w_varepsilon_n_1}
w^{\varepsilon_n}(t_{\varepsilon_n},x_{\varepsilon_n})\to w^*(t_0,x_0)>0 \quad \text{ and } \quad(t_{\varepsilon_n},x_{\varepsilon_n})\to(t_0,x_0) ~~~\text{ as }~~~ n\to+\infty.
\end{align}
By the definition of $(t_{\varepsilon_n},x_{\varepsilon_n})$ as the maximum point in $Q_3$, we have
\begin{align}\label{w_varepsilon_n_2}
w^{\varepsilon_{n}}(t_{\varepsilon_n},x_{\varepsilon_n})-\varphi(t_{\varepsilon_n},x_{\varepsilon_n})\geq w^{\varepsilon_{n}}(t,x)-\varphi(t,x)\quad \text{for}\ (t,x)\in Q_3.
\end{align}
Then, following from \eqref{w^varepsilon}, we have at $(t_{\varepsilon_n},x_{\varepsilon_n})$,
\begin{equation}\label{w^*}
\begin{aligned}
0=&d\int_{\mathbb{R}}J(y)\left[e^{-\frac{w^{\varepsilon_n}(t_{\varepsilon_n},x_{\varepsilon_n}-\varepsilon y)-w^{\varepsilon_n}(t_{\varepsilon_n},x_{\varepsilon_n})}{\varepsilon}}-1\right]dy + \partial_t\varphi -\mu \\ &+\mu\frac{f^{\varepsilon_n}(x_{\varepsilon_n}-c_1t_{\varepsilon_n},u^{\varepsilon_n}(t_{\varepsilon_n}-\varepsilon\tau,x_{\varepsilon_n}))}{u^{\varepsilon_n}(t_{\varepsilon_n},x_{\varepsilon_n})}.
\end{aligned}
\end{equation}

First, choose a large constant $L>0$ such that $\text{supp}(J)\Subset[-L,L]$. Note that $|x_{\varepsilon_n}-x_0|<x_2$ and $|\varepsilon_ny|<x_2$ for $y\in[-L,L]$ and sufficiently large $n$. Then $x_{\varepsilon_n}-\varepsilon_ny\in(x_0-2x_2,x_0+2x_2)$. By \eqref{w_varepsilon_n_2}, we have
\begin{align*}
d\int_{\mathbb{R}}J(y)e^{-\frac{w^{\varepsilon_n}(t_{\varepsilon_n},x_{\varepsilon_n}-\varepsilon_n y)-w^{\varepsilon_n}(t_{\varepsilon_n},x_{\varepsilon_n})}{\varepsilon_n}}dy\geq\int_{-L}^{L}J(y)e^{-\frac{\varphi(t_{\varepsilon_n},x_{\varepsilon_n}-\varepsilon_n y)-\varphi(t_{\varepsilon_n},x_{\varepsilon_n})}{\varepsilon_n}}dy:=J_2^{L,\varepsilon_n}
\end{align*}
Then, the smoothness of $\varphi$ yields that
\[
\lim_{n\to\infty}-\frac{\varphi(t_{\varepsilon_n},x_{\varepsilon_n}-\varepsilon_n y)-\varphi(t_{\varepsilon_n},x_{\varepsilon_n})}{\varepsilon_n}=y\partial_x\varphi(t_0,x_0) \quad\text{ uniformly in } [-L,L].
\]
By the dominated convergence theorem, we obtain
\[
\lim_{n\to\infty}J_2^{L,\varepsilon_n}=\int_{-L}^{L}J(y)e^{y\partial_x\varphi(t_0,x_0)}dy.
\]

Next, we claim that
\begin{align}\label{f_varepsilon}
\liminf_{n\to+\infty}\frac{f^{\varepsilon_n}(x_{\varepsilon_n}-c_1t_{\varepsilon_n},u^{\varepsilon_n}(t_{\varepsilon_n}-\varepsilon_n\tau,x_{\varepsilon_n}))}{u^{\varepsilon_n}(t_{\varepsilon_n},x_{\varepsilon_n})}\geq \underline{R}(x_0/t_0)e^{\tau\partial_t\varphi(t_0,x_0)}.
\end{align}
By passing to a subsequence, we may divide the analysis into two cases:
\[
\text{(i)}\quad u^{\varepsilon_n}(t_{\varepsilon_n} - \varepsilon_n \tau, x_{\varepsilon_n}) \to 0 \quad\text{ as }  n \to \infty \quad \text{ or }\quad\text{ (ii) }\quad \inf_n u^{\varepsilon_n}(t_{\varepsilon_n} - \varepsilon_n \tau, x_{\varepsilon_n} )> 0.
\]
In case (i), we use (F1) and \eqref{w_varepsilon_n_2} to obtain
\begin{align*}
&\frac{f^{\varepsilon_n}(x_{\varepsilon_n}-c_1t_{\varepsilon_n}, u^{\varepsilon_n}(t_{\varepsilon_n} - \varepsilon_n \tau, x_{\varepsilon_n}))}{u^{\varepsilon_n}(t_{\varepsilon_n}, x_{\varepsilon_n})}\\
\geq& \left( \partial_u f^{\varepsilon_n}(x_{\varepsilon_n}-c_1t_{\varepsilon_n}, 0) - \eta_* \right) \exp\left( \frac{w^{\varepsilon_n}(t_{\varepsilon_n}, x_{\varepsilon_n}) - w^{\varepsilon_n}(t_{\varepsilon_n} - \varepsilon_n \tau, x_{\varepsilon_n})}{\varepsilon_n} \right)\\
\geq& \left( \partial_u f^{\varepsilon_n}(x_{\varepsilon_n}-c_1t_{\varepsilon_n}, 0) - \eta_* \right) \exp\left( \frac{\varphi(t_{\varepsilon_n}, x_{\varepsilon_n}) - \varphi(t_{\varepsilon_n} - \varepsilon_n \tau, x_{\varepsilon_n})}{\varepsilon_n} \right).
\end{align*}
Taking the limit as $n \to \infty$ yields
\[
\liminf_{n \to \infty} \frac{f^{\varepsilon_n}(x_{\varepsilon_n}-c_1t_{\varepsilon_n}, u^{\varepsilon_n}(t_{\varepsilon_n} - \varepsilon_n \tau, x_{\varepsilon_n}))}{u^{\varepsilon_n}(t_{\varepsilon_n}, x_{\varepsilon_n})} \geq \left( \underline{R}(x_0/t_0) - \eta_* \right) e^{\tau\partial_t \varphi(t_0, x_0)}.
\]
Since \( \eta_* > 0 \) is arbitrarily small, we obtain \eqref{f_varepsilon}.
In case (ii), observe that
\[
f^{\varepsilon_n}\!\left(x_{\varepsilon_n}-c_1t_{\varepsilon_n},\,
u^{\varepsilon_n}\big(t_{\varepsilon_n} - \varepsilon_n \tau,\, x_{\varepsilon_n}\big)\right)
\]
is bounded from below by a positive number. Meanwhile, we have
\[
u^{\varepsilon_n}(t_{\varepsilon_n}, x_{\varepsilon_n})
= \exp\big(-w^{\varepsilon_n}(t_{\varepsilon_n}, x_{\varepsilon_n})/\varepsilon_n\big) \to 0^+,
\]
where we used \eqref{w_varepsilon_n_1} and the fact that \( w^*(t_0, x_0) > 0 \).
Hence, \eqref{f_varepsilon} holds trivially.

Finally, letting $n\to+\infty$, we get from \eqref{w^*} that
\begin{align*}
0\geq d\int_{\mathbb{R}}J(y)(e^{y\partial_x\varphi(t_0,x_0)}-1)dy+\partial_t\varphi(t_0,x_0)-\mu+\mu \underline{R}(x_0/t_0)e^{\tau\partial_t\varphi(t_0,x_0)}.
\end{align*}
This ends the proof.
\end{proof}

\begin{lemma}\label{lem:rho-sub-sol}
Assume that {\rm(\text{IC}$^\lambda)$} holds for some $\lambda\in(0,+\infty]$. Then $\rho^*(s) := w^*(1,s)$ is a viscosity subsolution of \eqref{rho1}.
Particularly, the following boundary condition holds in the classical sense,
\begin{align*}
\rho^{*}(0) = 0, \quad \lim_{s \to +\infty} \frac{\rho^{*}(s)}{s} \leq \lambda^r ~~~\text{and } \lim_{s \to -\infty} \frac{\rho^{*}(s)}{|s|} \leq \lambda^l.
\end{align*}
\end{lemma}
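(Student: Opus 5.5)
The argument mirrors Lemma \ref{lem:rho-sup-sol}, now applied to the upper half-relaxed limit. The first step is the self-similarity of $w^*$. For fixed $t>0$, substitute $\varepsilon' = \varepsilon/t$ in the definition of the half-relaxed limit:
\[
w^*(t,x)=\limsup_{\substack{\varepsilon\to0\\(s,y)\to(t,x)}}-\varepsilon\ln u\left(\tfrac{s}{\varepsilon},\tfrac{y}{\varepsilon}\right)=t\,w^*\left(1,\tfrac{x}{t}\right).
\]
Consequently $w^*(t,x)=t\rho^*(x/t)$. Because $w^*$ is upper semicontinuous, so is $\rho^*$. Lemma \ref{lem:w-sub-sol} says $w^*$ is a viscosity subsolution of \eqref{eq:constrained-HJ}.

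The second step transfers this to $\rho^*$ via the change of variables $(t,x)\mapsto s=x/t$, in the same way as in Lemma \ref{lem:rho-sup-sol} (invoking \cite[Lemma 2.3]{Liu2021-2}). Formally, $\partial_x w=\rho'(s)$ and $\partial_t w=\rho(s)-s\rho'(s)$, which turns \eqref{eq:constrained-HJ} into \eqref{rho1}.

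To verify this directly, take a $C^1$ test function $\psi$ such that $\rho^*-\psi$ has a strict local maximum at $s_0\neq0$ with $\rho^*(s_0)>0$. Then $\varphi(t,x):=t\psi(x/t)$ is a test function for $w^*$ at $(1,s_0)$: the function $w^*-\varphi=t(\rho^*-\psi)(x/t)$ has a local maximum there. It may fail to be strict along rays, so one adds a small perturbation such as $(t-1)^2$, which does not change the derivatives at $(1,s_0)$. We also have $w^*(1,s_0)=\rho^*(s_0)>0$. Lemma \ref{lem:w-sub-sol} then yields
\[
H_*\bigl(s_0,\psi'(s_0),\psi(s_0)-s_0\psi'(s_0)\bigr)\le0,
\]
which is exactly the subsolution inequality for \eqref{rho1}. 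The restriction to $s\in(0,\infty)$ or $s\in(-\infty,0)$ is harmless, since $R$ only jumps at $s=c_1$ and the envelope $H_*$ already accounts for that jump.

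The third step handles the boundary conditions. Lemma \ref{lem:w^*_*} gives $\rho^*(0)=w^*(1,0)=0$. For the growth at infinity, write $\rho^*(s)=s\,w^*(1/s,1)$ for $s>0$ (by the same scaling), so that
\[
\limsup_{s\to+\infty}\frac{\rho^*(s)}{s}=\limsup_{t\to0^+}w^*(t,1).
\]
Under (IC$^\lambda$), the upper bound in Lemma \ref{lem:w^vareps-bound-1} passes to the limit and gives $w^*(t,1)\le\lambda^r+\delta+Kt$. Letting $t\to0$ and then $\delta\to0$ yields the bound $\lambda^r$. The case $s\to-\infty$ is symmetric, using $w^*(t,-1)\le\lambda^l+\delta+Kt$.

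Under (IC$^\infty$), where $\lambda^r=\lambda^l=\infty$, the inequality is trivial. The step needing the most care is the strictness and perturbation issue in the test-function transfer. However, it is standard and already covered by the cited invariance lemma, so I would simply invoke that lemma as in Lemma \ref{lem:rho-sup-sol}.
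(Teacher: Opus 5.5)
Your scaling identity, the transfer of the subsolution property from $w^*$ to $\rho^*$, and the bounds $\limsup_{s\to+\infty}\rho^*(s)/s\le\lambda^r$ and $\limsup_{s\to-\infty}\rho^*(s)/|s|\le\lambda^l$ are correct. They coincide with the first part of the paper's proof. One small fix: when you set $\varphi(t,x)=t\psi(x/t)$, you must first normalize $\psi(s_0)=\rho^*(s_0)$. Along the ray $x=s_0t$ the function $t(\rho^*-\psi)(x/t)$ equals $t(\rho^*-\psi)(s_0)$, so $(1,s_0)$ is a local maximum only if this value is $0$.

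\textbf{The gap.} The statement asserts that the limits $\lim_{s\to\pm\infty}\rho^*(s)/|s|$ exist, and you never prove this. You only control the $\limsup$, and in the (IC$^\infty$) case ``the inequality is trivial'' bypasses the question. The paper treats this as the substantive part of the proof (``It remains to prove that the limits exist''). It also uses the limit of $\rho^*(s)/|s|$ in the comparison step that gives $\rho^*=\rho_*$.

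\textbf{The missing idea.} Show that $\sigma(s):=\rho^*(s)/s$ has no positive local maximum on $(0,\infty)$. Suppose $s_0>0$ were such a point. Then $\rho^*(s)\le\sigma(s_0)s$ near $s_0$, with equality at $s_0$. So $\sigma(s_0)s+(s-s_0)^2$ is an admissible test function at a point where $\rho^*>0$, with $p=\sigma(s_0)$ and $q=\rho^*(s_0)-s_0\sigma(s_0)=0$. The subsolution inequality then reads
\[
d\Bigl(\int_{\mathbb{R}}J(y)e^{\sigma(s_0)y}\,dy-1\Bigr)-\mu+\mu\underline{R}(s_0)\le0,
\]
which is impossible, since $\int_{\mathbb{R}}J(y)e^{py}\,dy\ge1$ for symmetric $J$ and $\underline{R}>1$.

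\textbf{Concluding.} Since $\sigma\ge0$ is upper semicontinuous, its maximum on any $[a,b]\subset(0,\infty)$ is then either $0$ or attained at an endpoint. Hence $\sigma(c)\le\max\{\sigma(a),\sigma(b)\}$ for $a<c<b$, so $\sigma$ is quasi-convex and therefore eventually monotone. Thus $\lim_{s\to+\infty}\sigma(s)$ exists in $[0,\infty]$, and combined with your $\limsup$ bound this proves the claim; the case $s\to-\infty$ is symmetric. The paper phrases the same argument as a viscosity inequality for $\sigma$ tested with $\varphi\equiv0$. Before that, it records that $\rho^*$ is locally Lipschitz, because $H(s,p,q)\ge Ap^2+q-\mu$ with $A=\frac{d}{2}\int_{\mathbb{R}}J(y)y^2\,dy$ and $\rho^*\ge0$.
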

\begin{proof}
Following the same argument as in Lemma \ref{lem:rho-sup-sol}, one can verify that \(\rho^*(s) := w^*(1,s)\) is a viscosity subsolution of \eqref{rho1}, with \(\rho^{*}(0) = 0\) and 
\[\limsup_{s \to +\infty} \frac{\rho^{*}(s)}{s} \leq \lambda^r, \quad \limsup_{s \to -\infty} \frac{\rho^{*}(s)}{|s|} \leq \lambda^l.\]
It remains to prove that the limits \(\lim\limits_{|s|\to+\infty} \frac{\rho^*(s)}{|s|}\) exist. We focus on the rightward limit \(s\to+\infty\), the leftward case follows similarly.

By Taylor inequality, one can verify that $H(s,p,q)\ge Ap^2+q-\mu+\mu R(s)e^{\tau q}$, where $A=\frac{d}{2}\int_{\mathbb{R}}J(y)y^2dy$. Using the definition of viscosity subsolution together with $\rho^*\ge0$, we obtain in the viscosity sense 
\[
-s(\rho^*)'+A|(\rho^*)'|^2-\mu\le H\left(s,(\rho^*)',\rho^*-s(\rho^*)'\right)\le0.
\]
Thus $\rho^*\in {\rm Lip}_{{\rm loc}}([0,+\infty))$. Denote $\sigma(s)=\frac{\rho^*(s)}{s}$, then $\sigma(s)$ is a viscosity subsolution of 
\[
\min\left\{\sigma,-s^2\sigma'+d\left(\int_{\mathbb{R}}J(y)e^{(s\sigma'+\sigma)y}dy-1\right)-\mu+\mu R(s)e^{-s^2\tau\sigma'}\right\}=0.
\]
Suppose that $\sigma$ attains a positive local maximum at some $s_0>0$. Taking the test function $\varphi\equiv0$ in the viscosity subsolution condition yields 
\[
-\mu+\mu R(s_0)<d\left(\int_{\mathbb{R}}J(y)e^{\sigma(s_0)y}dy-1\right)-\mu+\mu R(s_0)\le0,
\]
which is a contradiction. Thus $\sigma(s)$ has no positive local maximum point in $(0,+\infty)$. This implies that $\lim\limits_{s\to+\infty}\frac{\rho^{*}(s)}{s}$ exist. We completes the proof.
\end{proof}

\begin{lemma}\label{lem:u^vareps>0}
Assume that {\rm(J), (F1)} hold and {\rm(\text{IC}$^\lambda)$} holds for some $\lambda\in(0,+\infty]$. Then
\[
\liminf_{\varepsilon\to0}u^\varepsilon>0 \text{ locally uniformly in } \mathcal{B}=\text{Int}\,\{(t,x)\in(0,\infty)\times\mathbb{R}:w^*(t,x)=0\}.
\]
\end{lemma}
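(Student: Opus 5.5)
We follow the standard argument of Evans--Souganidis, adapted to nonlocal dispersal and delay as in Liang--Zhou and Tao et al. Fix a compact set $Q\subset\mathcal{B}$ and a point $(t_0,x_0)\in Q$. Since $w^*\equiv0$ in a neighborhood of $Q$, we will build a smooth test function that touches $w^*$ from above at $(t_0,x_0)$ and is positive away from it. We then argue by contradiction. Suppose there are $\varepsilon_n\to0$ and $(t_n,x_n)\to(t_0,x_0)$ with $u^{\varepsilon_n}(t_n,x_n)\to0$.

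The key is to work with $u^\varepsilon$ directly rather than with $w^\varepsilon$. Take $\varphi(t,x)=|x-x_0|^2+|t-t_0|^2$. Then $w^*-\varphi$ has a strict local maximum at $(t_0,x_0)$ on a small cylinder $\overline{Q}_r\subset\mathcal{B}$. As in Lemma~\ref{lem:w-sub-sol}, there are points $(s_n,y_n)\to(t_0,x_0)$ at which $w^{\varepsilon_n}-\varphi$ attains its maximum over $\overline{Q}_r$. Since $w^{\varepsilon_n}(t_n,x_n)\to+\infty\cdot\varepsilon_n$-scale, we have $w^{\varepsilon_n}(t_n,x_n)\ge -\varepsilon_n\ln u^{\varepsilon_n}(t_n,x_n)$, so $w^{\varepsilon_n}(s_n,y_n)\ge w^{\varepsilon_n}(t_n,x_n)-\varphi(t_n,x_n)$. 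In particular $u^{\varepsilon_n}(s_n,y_n)\to0$ as well, at least along a subsequence if we refine the choice and subtract a small multiple of $\varphi$.

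At $(s_n,y_n)$ the maximum property gives
\[
w^{\varepsilon_n}(s_n,y_n-\varepsilon_n y)-w^{\varepsilon_n}(s_n,y_n)\le \varphi(s_n,y_n-\varepsilon_ny)-\varphi(s_n,y_n),
\]
and the analogous inequality holds for the delayed time $s_n-\varepsilon_n\tau$. Since $u^{\varepsilon_n}(s_n-\varepsilon_n\tau,y_n)$ is then comparably small, we can use the lower bound in (F1): $f\ge(\partial_uf(z,0)-\eta_*)u$ for small $u$. Inserting both estimates into \eqref{w^varepsilon} and letting $n\to\infty$ yields
\[
0\ge d\Big(\int_{\mathbb{R}} J(y)e^{y\partial_x\varphi}dy-1\Big)+\partial_t\varphi-\mu+\mu(\underline{R}-\eta_*)e^{\tau\partial_t\varphi}
\]
at $(t_0,x_0)$. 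Because $\partial_x\varphi=\partial_t\varphi=0$ there, the right-hand side equals $\mu(\underline R(x_0/t_0)-\eta_*-1)$. This is positive for small $\eta_*$ by \eqref{under-R}, which is the desired contradiction.

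The main obstacle is the second step: ensuring that smallness of $u^{\varepsilon_n}$ transfers to the maximizer $(s_n,y_n)$ and to the delayed point $(s_n-\varepsilon_n\tau,y_n)$, so that the linearization (F1) is legitimate. Note that in Lemma~\ref{lem:w-sub-sol} positivity came from $w^*>0$, which is unavailable here. We would handle this as in Lemma~\ref{lem:w-sub-sol} through the case split. In the case $\inf_n u^{\varepsilon_n}(s_n-\varepsilon_n\tau,y_n)>0$, the reaction quotient $f/u^{\varepsilon_n}(s_n,y_n)$ is bounded below by a positive constant divided by something bounded, which is at least $O(1)$. We would then need a quantitative version in which $\varphi$ is replaced by $\varphi_\delta=\varphi/\delta$ with $\delta$ small. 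Alternatively, we would use the standard device of considering $\min\{u^\varepsilon,\cdot\}$ and comparing with the solution having $f$ replaced by the monotone minorant $\min_z f$. Positivity of the limiting equation's persistence state (via (F4) and the result of \cite{Fang2014,Yi2015} used in Lemma~\ref{lem:w^*_*}) then upgrades $\liminf u^\varepsilon>0$ near $(t_0,x_0)$ to a locally uniform statement on $Q$ by compactness.
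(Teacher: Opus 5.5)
Your closing computation is correct and is exactly how the paper finishes: you insert the max-point inequalities into \eqref{w^varepsilon}, use the linear lower bound in (F1) and $D\varphi\to0$, and reach $0\ge\mu(\underline R(x_0/t_0)-\eta_*-1)>0$. The gap is the step you flag as the main obstacle, and none of your proposed remedies closes it.

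With $\varphi$ centered at $(t_0,x_0)$, the maximizer only satisfies $w^{\varepsilon_n}(s_n,y_n)\ge w^{\varepsilon_n}(t_n,x_n)-\varphi(t_n,x_n)$, that is,
\[
u^{\varepsilon_n}(s_n,y_n)\le u^{\varepsilon_n}(t_n,x_n)\,e^{\varphi(t_n,x_n)/\varepsilon_n}.
\]
Since $w^\varepsilon\to0$ uniformly near $(t_0,x_0)$, smallness of $u^{\varepsilon_n}(t_n,x_n)$ is invisible at order one in $w$. It only says $w^{\varepsilon_n}(t_n,x_n)/\varepsilon_n\to\infty$, possibly very slowly, while $\varphi(t_n,x_n)/\varepsilon_n$ is uncontrolled. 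So $u^{\varepsilon_n}(s_n,y_n)\to0$ does not follow.
\begin{itemize}
\item Replacing $\varphi$ by $\varphi/\delta$ only enlarges the bad factor.
\item Without $u^{\varepsilon_n}(s_n,y_n)\to0$, your case (ii) gives only an $O(1)$ lower bound on the reaction quotient, so there is no contradiction.
\item Nothing makes $u^{\varepsilon_n}(s_n-\varepsilon_n\tau,y_n)$ small: the maximum property bounds it from below, not from above.
\item Your fallback, comparison with the problem for $\min_z f$, cannot work. It gives positivity only inside that homogeneous problem's spreading cone (at best $|x|<c_-^*t$). But $\mathcal B$ can be strictly larger, e.g.\ $c_r=c_+^*>c_-^*$ in $A_4$. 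It would also need the monotonicity in (F4), which this lemma does not assume.
\end{itemize}

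The missing idea is to center the penalization at the bad points: take $\varphi_n(t,x)=|t-t_n|^2+|x-x_n|^2$. On a ball $B_{2r}(t_0,x_0)\subset\mathcal B$, $w^{\varepsilon}\to0$ uniformly, while $\varphi_n$ is bounded below by a positive constant on the boundary. Hence $w^{\varepsilon_n}-\varphi_n$ has an interior maximum at some $(t_n',x_n')$ with $\varphi_n(t_n',x_n')\le w^{\varepsilon_n}(t_n',x_n')-w^{\varepsilon_n}(t_n,x_n)\to0$, and therefore $D\varphi_n(t_n',x_n')\to0$. Because $\varphi_n(t_n,x_n)=0$, you get $w^{\varepsilon_n}(t_n',x_n')\ge w^{\varepsilon_n}(t_n,x_n)$, i.e.\ $u^{\varepsilon_n}(t_n',x_n')\le u^{\varepsilon_n}(t_n,x_n)\to0$ with no loss. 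This is the substitute for $w^*>0$ in Lemma~\ref{lem:w-sub-sol}.

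Smallness at the delayed point then follows by contradiction. If $u^{\varepsilon_n}(t_n'-\varepsilon_n\tau,x_n')\ge\tilde\delta$, the reaction quotient is at least $C_{\tilde\delta}/u^{\varepsilon_n}(t_n',x_n')\to\infty$. Meanwhile the nonlocal term is $\ge -d$ and $\partial_t w^{\varepsilon_n}=\partial_t\varphi_n\to0$ at the maximum, which contradicts \eqref{w^varepsilon}. After that, your final step goes through unchanged.
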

\begin{proof}
We proceed by contradiction. Fix a compact subset \(Q_4 \subset \subset \mathcal{B}\) and a point \((t_0, x_0) \in Q_4\). Suppose that there exist sequences \(\varepsilon_n \to 0\) and \((t_{\varepsilon_n}, x_{\varepsilon_n}) \to (t_0, x_0)\) such that $u^{\varepsilon_n}(t_{\varepsilon_n}, x_{\varepsilon_n}) \to 0$ as $n \to \infty$.
For simplicity, we denote these sequences by \(\varepsilon\) and \((t_\varepsilon, x_\varepsilon)\).

Define the test function
\[
\varphi_\varepsilon(t, x) := |t - t_\varepsilon|^2 + |x - x_\varepsilon|^2.
\]
Fix a constant $r_3>0$ sufficiently small.
Since \(w^\varepsilon(t, x) \to 0\) uniformly on a neighborhood \(B_{2r_3}(t_0, x_0)\) as \(\varepsilon \to 0\), we may choose \(\varepsilon\) sufficiently small so that \((t_\varepsilon, x_\varepsilon) \in B_{2r_3}(t_0, x_0)\). Then the function \(w^\varepsilon - \varphi_\varepsilon\) attains an interior maximum at some point \((t'_\varepsilon, x'_\varepsilon) \in B_{2r_3}(t_0, x_0)\). Observe that
\begin{equation}\label{eq:conv}
(t'_\varepsilon, x'_\varepsilon) \to (t_0, x_0) \quad \text{and} \quad (t_\varepsilon, x_\varepsilon) \to (t_0, x_0) \quad \text{as } \varepsilon \to 0.
\end{equation}
By construction, we have
\begin{align}\label{eq:ineq-w}
w^\varepsilon(t'_\varepsilon, x'_\varepsilon) \ge (w^\varepsilon - \varphi_\varepsilon)(t'_\varepsilon, x'_\varepsilon)\ge (w^\varepsilon - \varphi_\varepsilon)(t_\varepsilon, x_\varepsilon) = w^\varepsilon(t_\varepsilon, x_\varepsilon).
\end{align}
Recall that \(u^\varepsilon(t, x) = \exp\left(-\frac{w^\varepsilon(t, x)}{\varepsilon}\right)\). Hence \eqref{eq:ineq-w} implies $0 \le u^\varepsilon(t'_\varepsilon, x'_\varepsilon) \le u^\varepsilon(t_\varepsilon, x_\varepsilon)$.
Since \(u^\varepsilon(t_\varepsilon, x_\varepsilon) \to 0\) by assumption, we obtain
\begin{equation}\label{eq:u-eps-prime}
u^\varepsilon(t'_\varepsilon, x'_\varepsilon) \to 0 \quad \text{as } \varepsilon \to 0.
\end{equation}
Moreover, from the definition of \(\varphi_\varepsilon\) and \eqref{eq:conv}, we have (after passing to a subsequence if necessary)
\begin{equation}\label{eq:partial-varphi}
\partial_t\varphi_\varepsilon(t'_\varepsilon, x'_\varepsilon) \to 0,\quad
\partial_x\varphi_\varepsilon(t'_\varepsilon, x'_\varepsilon) \to 0,\quad
|D\varphi_\varepsilon(t'_\varepsilon - \varepsilon\tau, x'_\varepsilon)| \to 0.
\end{equation}

We claim that
\begin{equation}\label{eq:claim}
u^\varepsilon(t'_\varepsilon - \varepsilon\tau, x'_\varepsilon) \to 0 \quad \text{as } \varepsilon \to 0.
\end{equation}
Suppose, to the contrary, that there exist \(\tilde{\delta}>0\) and a subsequence (still denoted by \(\varepsilon\)) such that $u^\varepsilon(t'_\varepsilon - \varepsilon\tau, x'_\varepsilon) \ge \tilde{\delta}$.
By hypothesis (F1), there exists a constant \(C_{\tilde{\delta}}>0\) such that
\[
f^\varepsilon\left(x'_\varepsilon - c_1 t'_\varepsilon, u^\varepsilon(t'_\varepsilon - \varepsilon\tau, x'_\varepsilon)\right) \ge C_{\tilde{\delta}}.
\]
Using \eqref{eq:u-eps-prime}, we can choose \(\tilde{M}>0\) large enough so that
\[
\frac{f^\varepsilon\left(x'_\varepsilon - c_1 t'_\varepsilon, u^\varepsilon(t'_\varepsilon - \varepsilon\tau, x'_\varepsilon)\right)}
{u^\varepsilon(t'_\varepsilon, x'_\varepsilon)} \ge
\frac{C_{\tilde{\delta}}}{u^\varepsilon(t'_\varepsilon, x'_\varepsilon)} \ge \tilde{M}
\]
for all sufficiently small \(\varepsilon\). Now, \eqref{w^varepsilon} gives
\begin{align*}
0 ={}& d\int_{\mathbb{R}} J(y)\left[e^{-\frac{w^\varepsilon(t'_\varepsilon, x'_\varepsilon-\varepsilon y)-w^\varepsilon(t'_\varepsilon, x'_\varepsilon)}{\varepsilon}} - 1\right]\,dy \\
      &+ \partial_t w^\varepsilon(t'_\varepsilon, x'_\varepsilon) - \mu
      + \mu\,\frac{f^\varepsilon\left(x'_\varepsilon - c_1 t'_\varepsilon, u^\varepsilon(t'_\varepsilon - \varepsilon\tau, x'_\varepsilon)\right)}
           {u^\varepsilon(t'_\varepsilon, x'_\varepsilon)} \\
\ge{}& -d + \partial_t\varphi_\varepsilon(t'_\varepsilon, x'_\varepsilon) - \mu + \mu\tilde{M},
\end{align*}
where we used that \((t'_\varepsilon, x'_\varepsilon)\) is a maximum point of \(w^\varepsilon - \varphi_\varepsilon\). Letting \(\varepsilon\to0\) and using \eqref{eq:partial-varphi}, we obtain $0 \ge -d - \mu + \mu\tilde{M},$
which is impossible for \(\tilde{M}\) sufficiently large. Hence the claim \eqref{eq:claim} holds.

Choose \(0<\tilde{\eta}<1\) such that
\begin{equation}\label{eq:eta}
\underline{R}\left(\frac{x_0}{t_0}\right) > 1 + 2\tilde{\eta}.
\end{equation}
Such an \(\tilde{\eta}\) exists because \(\underline{R}(x_0/t_0) > 1\) by \eqref{under-R}. For small \(\varepsilon>0\), it holds that
\begin{align*}
&\mu\left(\underline{R}(x_0/t_0) - 2\tilde{\eta}\right) + o(1) \\
\le& d\int_{\mathbb{R}} J(y)\left[e^{\frac{\varphi_\varepsilon(t'_\varepsilon, x'_\varepsilon) -
          \varphi_\varepsilon(t'_\varepsilon, x'_\varepsilon-\varepsilon y)}{\varepsilon}} - 1\right]\,dy
          + \partial_t\varphi_\varepsilon(t'_\varepsilon, x'_\varepsilon)  \\
&\qquad+ \mu\left(\underline{R}(x_0/t_0) - 2\tilde{\eta}\right)\,
          e^{\frac{\varphi_\varepsilon(t'_\varepsilon, x'_\varepsilon) -
          \varphi_\varepsilon(t'_\varepsilon - \varepsilon\tau, x'_\varepsilon)}{\varepsilon}} \\
\le& d\int_{\mathbb{R}} J(y)\left[e^{\frac{w^\varepsilon(t'_\varepsilon, x'_\varepsilon) -
          w^\varepsilon(t'_\varepsilon, x'_\varepsilon-\varepsilon y)}{\varepsilon}} - 1\right]\,dy
          + \partial_t w^\varepsilon(t'_\varepsilon, x'_\varepsilon)   \\
&\qquad+\mu\left(\partial_u f^\varepsilon(x'_\varepsilon - c_1 t'_\varepsilon, 0) - 2\tilde{\eta}\right)\,
          e^{\frac{w^\varepsilon(t'_\varepsilon, x'_\varepsilon) -
          w^\varepsilon(t'_\varepsilon - \varepsilon\tau, x'_\varepsilon)}{\varepsilon}} \\
\le& d\int_{\mathbb{R}} J(y)\left[e^{\frac{w^\varepsilon(t'_\varepsilon, x'_\varepsilon) -
          w^\varepsilon(t'_\varepsilon, x'_\varepsilon-\varepsilon y)}{\varepsilon}} - 1\right]\,dy
          + \partial_t w^\varepsilon(t'_\varepsilon, x'_\varepsilon) + \mu\,
          \frac{f^\varepsilon\left(x'_\varepsilon - c_1 t'_\varepsilon,
          u^\varepsilon(t'_\varepsilon - \varepsilon\tau, x'_\varepsilon)\right)}
          {u^\varepsilon(t'_\varepsilon, x'_\varepsilon)} \\
=& \mu,
\end{align*}
where the first inequality follows from \eqref{eq:partial-varphi}, while the second inequality is due to the fact that \((t'_\varepsilon, x'_\varepsilon)\) is a maximum point of \(w^\varepsilon - \varphi_\varepsilon\); the third inequality relies on (F1) and \eqref{eq:claim}, and the final equality comes from the equation for \(w^\varepsilon\). Taking the limit as \(\varepsilon\to0\), we obtain
\[
\mu\left(\underline{R}(x_0/t_0) - 2\tilde{\eta}\right) \le \mu,
\]
which contradicts \eqref{eq:eta} since \(\underline{R}(x_0/t_0) - 2\tilde{\eta} > 1\). This contradiction shows that our initial assumption was false, and the proof is complete.
\end{proof}

By Lemmas \ref{lem:rho-sup-sol} and \ref{lem:rho-sub-sol}, $\rho^*$ and $\rho_*$ are respectively a viscosity subsolution and supersolution of \eqref{rho1}. Moreover,
\[
\rho^*(0)=\rho_*(0)=0\quad \text{ and }\quad\lim_{|s|\to\infty}\frac{\rho^*(s)}{|s|}\le\liminf_{|s|\to\infty}\frac{\rho_*(s)}{|s|}.
\]
The comparison principle for \eqref{rho1} (see \cite[Prop. 2.10-2.11]{Lam2022}\cite[Prop. 2.4]{Tao2026}) yields $\rho^*(s)\le\rho_*(s)$ for all $s\in\mathbb{R}$. As $\rho^*\ge\rho_*$ by construction, we obtain $\rho^*\equiv\rho_*:=\rho$. Consequently, $w^\varepsilon(t,x)\to t\rho(t,x)$ locally uniformly in $(0,\infty)\times\mathbb{R}$. Combined with Lemmas \ref{lem:u^vareps=0} and \ref{lem:u^vareps>0}, we have the following results.

\begin{corollary}\label{col:rho-boundary-point-r}
Assume that {\rm(\text{IC}$^\lambda)$} holds for some $\lambda\in(0,+\infty]$.
Let $\rho\in C([0,\infty);[0,\infty))$ be the unique viscosity solution of \eqref{rho1}
in $(0,\infty)$ satisfying
\begin{equation}\label{rho-boundary-1}
    \rho(0) = 0, \qquad \lim_{s\to+\infty} \frac{\rho(s)}{s} = \lambda^r \in (0,+\infty].
\end{equation}
Then there exists a unique $s_r\in[0,+\infty)$ such that
\[
\rho(s)=0\quad\text{for }s\in[0,s_r],\qquad \rho(s)>0\quad\text{for }s\in(s_r,\infty),
\]
and the following limits hold:
\begin{align*}
\begin{cases}
    \lim\limits_{t\to\infty} \sup\limits_{(s_r+\eta_1)t \leq x \leq (s_r+\eta_2)t} u(t,x) &= 0
        \qquad\text{for any } 0<\eta_1<\eta_2,\\
    \liminf\limits_{t\to\infty} \inf\limits_{(s_r-\eta_2)t \leq x \leq (s_r-\eta_1)t} u(t,x) &> 0
        \qquad\text{for any } 0<\eta_1<\eta_2<s_r.
\end{cases}
\end{align*}
\end{corollary}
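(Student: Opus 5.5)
The proof has two parts: first the zero set of $\rho$ on $[0,\infty)$ is an interval $[0,s_r]$, and then this is converted into the spreading statements through Lemmas \ref{lem:u^vareps=0} and \ref{lem:u^vareps>0}. Throughout, $\rho=\rho^*=\rho_*$ is the limit obtained just before the statement, and $w^\varepsilon(t,x)\to t\rho(x/t)$ locally uniformly.

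\textbf{Step 1: the zero set is an interval starting at $0$.} Set $s_r:=\sup\{s\ge0:\rho(s)=0\}$. This is well defined because $\rho(0)=0$. It is finite because $\lim_{s\to\infty}\rho(s)/s=\lambda^r>0$, so $\rho>0$ for large $s$. By continuity, $\rho(s_r)=0$.

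It remains to show that $\rho$ has no positive excursion inside $[0,s_r]$, i.e.\ that $\rho=0$ on all of $[0,s_r]$. Suppose instead that $\rho(s)>0$ for some $s\in(0,s_r)$. Then $\rho$ attains a positive local maximum at some point $s_0\in(0,s_r)$. There we test the subsolution property with the constant test function $\varphi\equiv\rho(s_0)$, so $\varphi'=0$ and $\varphi-s\varphi'=\rho(s_0)>0$. This gives
\[
d\Big(\int_{\mathbb{R}} J(y)\,dy-1\Big)+\rho(s_0)-\mu+\mu\,\underline{R}(s_0)e^{\tau\rho(s_0)}\le 0 .
\]
The first term vanishes. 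Since $\underline{R}>1$ by \eqref{under-R}, the left-hand side is strictly greater than $\rho(s_0)>0$, which is a contradiction. (Here the subsolution inequality is stated with $H_*$, which carries $\underline R$; this is harmless because $\underline R>1$ everywhere.)

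This also explains why no restriction to $s_0\neq c_1$ is needed: the argument is the same as the $\sigma$-argument in Lemma \ref{lem:rho-sub-sol}, and it uses only the uniform bound $\underline R>1$. Consequently $\rho\equiv0$ on $[0,s_r]$ and $\rho>0$ on $(s_r,\infty)$, and $s_r$ is unique by construction.

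\textbf{Step 2: convert to the spreading statements.} For the first (decay) limit, note that the set $\{(1,s):s_r+\eta_1\le s\le s_r+\eta_2\}$ lies in $\mathcal A=\{w_*>0\}$, since $w_*(1,s)=\rho(s)>0$ there. Lemma \ref{lem:u^vareps=0} then gives $u^\varepsilon(1,s)\to0$ uniformly on this set. Taking $\varepsilon=1/t$, we have $u^\varepsilon(1,s)=u(t,st)$, which is exactly the first limit.

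For the second (persistence) limit, the set $\{(t,x): x/t\in(0,s_r)\}$ is open and $w^*(t,x)=t\rho(x/t)=0$ on it, so it lies in $\mathcal B$. The compact slice $\{1\}\times[s_r-\eta_2,s_r-\eta_1]$ is contained in this set. Lemma \ref{lem:u^vareps>0} then gives $\liminf_{\varepsilon\to 0} u^\varepsilon>0$ uniformly on the slice, which is the second limit after the same substitution $\varepsilon=1/t$.

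\textbf{Main obstacle.} I expect the delicate point to be the viscosity test in Step 1 at $s_0=c_1$, where $R$ jumps, together with the degenerate case $s_r=0$. The case $s_r=0$ is trivial, because then the second statement is vacuous. The discontinuity at $c_1$ is handled by using the lower envelope $\underline R$ and the uniform bound $\underline R>1$, as done above.
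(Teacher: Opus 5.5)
Your proof is correct and follows essentially the same route as the paper. You rule out a positive interior local maximum of $\rho$ by testing the subsolution condition with a constant test function, deduce that the zero set is $[0,s_r]$, and then read off the two limits from Lemmas \ref{lem:u^vareps=0} and \ref{lem:u^vareps>0} via $w^\varepsilon(t,x)\to t\rho(x/t)$ with $\varepsilon=1/t$. The differences are minor: you use the lower envelope $\underline{R}>1$ that actually appears in $H_*$, where the paper writes $R(s_0)$; you argue directly on $[0,s_r]$ rather than via ``$\rho$ is nondecreasing''; and you spell out the conversion to spreading statements that the paper leaves implicit.
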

\begin{proof}
Set $Z=\{s\ge0:\rho(s)=0\}$. Then $Z\neq\emptyset$ and $s_r:=\sup Z<+\infty$
by \eqref{rho-boundary-1}. If $\rho$ attained a positive local maximum at some $s_0>0$,
taking $\varphi\equiv0$ in the viscosity subsolution condition yields
$H(s_0,0,\rho(s_0))\le0$, contradicting
\[
H(s_0,0,\rho(s_0))=\rho(s_0)-\mu+\mu R(s_0)e^{\tau\rho(s_0)}
>\rho(s_0)-\mu+\mu=\rho(s_0)>0.
\]
Thus $\rho$ has no positive local maximum, which implies $\rho$ is nondecreasing on $[0,\infty)$.
Consequently, $\rho(s) = 0$ for $s\in[0,s_r]$, $\rho(s) > 0$ for $s\in(s_r,\infty)$.
The stated limits now follow from Lemmas~\ref{lem:u^vareps=0} and~\ref{lem:u^vareps>0}
via the convergence $w^\varepsilon(t,x)\to t\rho(x/t)$. This ends the proof.
\end{proof}

\begin{corollary}\label{col:rho-boundary-point-l}
Assume that {\rm(\text{IC}$^\lambda)$} holds for some $\lambda\in(0,+\infty]$.
Let $\rho\in C((-\infty,0];[0,\infty))$ be the unique viscosity solution of \eqref{rho1}
in $(-\infty,0)$ satisfying
\begin{equation}\label{rho-boundary-2}
    \rho(0) = 0, \qquad \lim_{s\to-\infty} \frac{\rho(s)}{|s|} = \lambda^l \in (0,+\infty].
\end{equation}
Then there exists a unique $s_l \le 0$ such that
\[
\rho(s) = 0 \ \text{ for } s\in[s_l,0], \qquad \rho(s) > 0 \ \text{ for } s\in(-\infty,s_l),
\]
and the following limits hold:
\begin{align*}
\begin{cases}
    \lim\limits_{t\to\infty} \sup\limits_{(s_l-\eta_2)t \leq x \leq (s_l-\eta_1)t} u(t,x) &= 0
        \qquad\text{for any } 0<\eta_1<\eta_2,\\
    \liminf\limits_{t\to\infty} \inf\limits_{(s_l+\eta_1)t \leq x \leq (s_l+\eta_2)t} u(t,x) &> 0
        \qquad\text{for any } 0<\eta_1<\eta_2<-s_l.
\end{cases}
\end{align*}
\end{corollary}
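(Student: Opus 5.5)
The plan is to mirror the proof of Corollary \ref{col:rho-boundary-point-r} on the negative half-line. Set $Z=\{s\le 0:\rho(s)=0\}$. Then $0\in Z$, so $Z\neq\emptyset$. By \eqref{rho-boundary-2}, $\rho(s)/|s|\to\lambda^l>0$, so $\rho(s)>0$ for all sufficiently negative $s$. Hence $s_l:=\inf Z$ is finite and satisfies $s_l\le 0$, and by continuity $\rho(s_l)=0$.

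\textbf{Step 1: $\rho$ is nonincreasing on $(-\infty,0]$.} I argue by contradiction that $\rho$ has no positive local maximum at any $s_0<0$. If it did, I would test the viscosity subsolution property of $\rho$ for \eqref{rho1} (valid on $(-\infty,0)$ by Lemma \ref{lem:rho-sub-sol} and the identification $\rho=\rho^*=\rho_*$) with the constant function $\varphi\equiv\rho(s_0)$. This gives $H_*(s_0,0,\rho(s_0))\le 0$. Using $\underline R\ge$ something $>1$ from \eqref{under-R}, the same computation as before yields
\[
H_*(s_0,0,\rho(s_0))\ge \rho(s_0)-\mu+\mu\underline R(s_0)e^{\tau\rho(s_0)}>\rho(s_0)>0,
\]
a contradiction. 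A continuous function on $(-\infty,0]$ with no positive local maximum, with $\rho(0)=0$ and $\rho\to+\infty$ as $s\to-\infty$, must be nonincreasing on the set where it is positive. Combined with $\rho\ge0$, this gives $\rho\equiv0$ on $[s_l,0]$. Indeed, if $\rho(s_1)>0$ for some $s_1\in(s_l,0)$, then $\rho$ would attain a positive maximum on $[s_l,0]$ at an interior point, which is excluded. It also gives $\rho>0$ on $(-\infty,s_l)$, by the definition of $s_l$ as the infimum. Uniqueness of $s_l$ is then immediate.

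\textbf{Step 2: the spreading limits.} I translate back through the convergence $w^\varepsilon(t,x)\to t\rho(x/t)$, which holds locally uniformly.

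For the first limit: on the compact set $\{1\}\times[s_l-\eta_2,s_l-\eta_1]$ we have $w_*=\rho>0$. By Lemma \ref{lem:u^vareps=0}, $u^\varepsilon(1,x)\to 0$ uniformly there. Setting $\varepsilon=1/t$ gives $\sup_{(s_l-\eta_2)t\le x\le(s_l-\eta_1)t}u(t,x)\to0$.

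For the second limit: when $0<\eta_1<\eta_2<-s_l$, the segment $\{1\}\times[s_l+\eta_1,s_l+\eta_2]$ lies in the interior of the zero set of $w^*(t,x)=t\rho(x/t)$. This is because $\rho\equiv0$ on the open interval $(s_l,0)$, so $t\rho(x/t)=0$ on an open cone around the segment. Lemma \ref{lem:u^vareps>0} then gives a uniform positive lower bound for $u^\varepsilon$ there, and rescaling as before yields the stated $\liminf$.

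\textbf{Main obstacle.} The only delicate point is the one already present in Corollary \ref{col:rho-boundary-point-r}: the semicontinuous envelope at a possible discontinuity point $s_0=c_1$ of $R$. This is handled by working with $\underline R>1$ as indicated. The rest is routine.
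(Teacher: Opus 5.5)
Your proposal is correct and follows the paper's approach. The paper omits this proof as the mirror image of Corollary \ref{col:rho-boundary-point-r}: define $s_l$ from the zero set of $\rho$, rule out positive local maxima by testing the subsolution property with a constant test function, and pass to $u$ through $w^\varepsilon\to t\rho(x/t)$ together with Lemmas \ref{lem:u^vareps=0} and \ref{lem:u^vareps>0}. Your version evaluates $H_*$ with $\underline R(s_0)>1$ (rather than $H$ with $R(s_0)$ as the paper writes), which is the correct envelope at a possible discontinuity $s_0=c_1<0$, and you obtain $\rho\equiv0$ on $[s_l,0]$ directly from the excluded interior maximum, which is slightly cleaner.
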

\begin{proof}
The proof is similar to that of Corollary~\ref{col:rho-boundary-point-r} and is omitted.
\end{proof}

\section{Explicit formulas of the spreading speed}\label{Sec:Speed}
\noindent

In this section, we construct a series of viscosity solutions to derive the explicit formulas of spreading speeds.

\begin{lemma}\label{lem:H-convex}
Assume that $\mathrm{(J)}$ holds. For each $s \in \mathbb{R}$, there exists $\tilde{H}(s,p)$ such that the following holds.
\begin{itemize}
\item[(i)] $\operatorname{sgn} H(s,p,q) = \operatorname{sgn}\big(q + \tilde{H}(s,p)\big)$ for all $(q,p) \in \mathbb{R}^2$.
\item[(ii)] $p \mapsto \tilde{H} \in C^\infty(\mathbb{R})$.
\item[(iii)] \(p \mapsto \tilde{H}(s,p)\) is convex (resp. strictly convex) if
\((p,q) \mapsto H(s,p,q)\) is convex (resp. strictly convex), and it is even if
\(H(s,-p,q) = H(s,p,q)\) for all \(p,q\).
\end{itemize}
\end{lemma}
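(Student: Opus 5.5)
The plan is to define $\tilde H(s,p)$ implicitly as the unique root in $q$ of $H(s,p,-q)=0$. Then $\tilde H(s,p)=\mathbf{H}(p)$ in the notation of \eqref{gamma}, with $R_\pm$ replaced by $R(s)$.

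\emph{Definition of $\tilde H$.} Fix $s$ and write
\[
H(s,p,q)=\Lambda(p)+q-\mu+\mu R(s)e^{\tau q},\qquad \Lambda(p):=d\Big(\int_{\mathbb{R}}J(y)e^{py}\,dy-1\Big).
\]
By (J), $\Lambda$ is finite and $C^\infty$, since $J$ is compactly supported. We also have $\partial_q H=1+\mu\tau R(s)e^{\tau q}\ge 1>0$. Hence $q\mapsto H(s,p,q)$ is strictly increasing, with limits $\mp\infty$ as $q\to\mp\infty$: for $q\to-\infty$ the exponential term stays bounded, and for $q\to+\infty$ the linear term dominates. So for each $p$ there is a unique $q_0(p)$ with $H(s,p,q_0(p))=0$. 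Set $\tilde H(s,p):=-q_0(p)$. Strict monotonicity in $q$ then gives $\operatorname{sgn}H(s,p,q)=\operatorname{sgn}(q-q_0(p))=\operatorname{sgn}(q+\tilde H(s,p))$, which proves (i).

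\emph{Smoothness, item (ii).} Since $(p,q)\mapsto H(s,p,q)$ is $C^\infty$ and $\partial_qH\ge1$, the implicit function theorem gives that $q_0$ is $C^\infty$ near every point. Uniqueness of the root makes these local branches patch into a global $C^\infty$ function on $\mathbb{R}$. Note that $s$ is only a frozen parameter here; $R(s)$ is a constant for fixed $s$, so its discontinuity in $s$ is harmless.

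\emph{Convexity and evenness, item (iii).} I will argue geometrically. The set
\[
\{(p,q):q\ge q_0(p)\}=\{(p,q):H(s,p,q)\ge 0\}
\]
is not the right sublevel set to use. Instead I use
\[
\{(p,q):H(s,p,q)\le0\}=\{(p,q):q\le q_0(p)\}=\{(p,q):-q\ge\tilde H(s,p)\}.
\]
If $H$ is convex in $(p,q)$, this sublevel set is convex, so it is the hypograph of $q_0$. Thus $q_0$ is concave, and equivalently $\tilde H$ is convex.

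For strict convexity, take $p_1\neq p_2$ and $\theta\in(0,1)$, and put $q_i=q_0(p_i)$. By strict convexity of $H$,
\[
H\big(s,\theta p_1+(1-\theta)p_2,\ \theta q_1+(1-\theta)q_2\big)<0 .
\]
By strict monotonicity in $q$ this means $\theta q_1+(1-\theta)q_2<q_0(\theta p_1+(1-\theta)p_2)$, which is strict concavity of $q_0$, i.e. strict convexity of $\tilde H$.

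Evenness follows from uniqueness of the root. If $H(s,-p,q)=H(s,p,q)$, then $q_0(-p)$ solves the same equation as $q_0(p)$, so $q_0(-p)=q_0(p)$. In our setting this applies because $J$ is symmetric, which makes $\Lambda$ even.

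\emph{Main obstacle.} The argument is short, and the only delicate point is the global existence of the root, i.e. the surjectivity of $q\mapsto H(s,p,q)$ onto $\mathbb{R}$. This is where I will check carefully that $\tau\ge0$ and $R(s)>0$ (from \eqref{under-R}), so that the exponential term neither blows up as $q\to-\infty$ nor destroys monotonicity.
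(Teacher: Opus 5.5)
Your proof is correct and follows essentially the same route as the paper. You get the unique root from strict monotonicity of $q\mapsto H(s,p,q)$, smoothness from the implicit function theorem, and convexity and evenness from the sign characterization and uniqueness of the root. The one small difference is that you use arbitrary convex combinations, via the sublevel set $\{H\le 0\}$ as the hypograph of $q_0$, where the paper checks only midpoint convexity; your version therefore avoids the paper's unstated appeal to continuity to upgrade midpoint convexity to full convexity.
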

\begin{proof}
(i) For fixed $s$, the function $q\mapsto H(s,p,q)$ is continuous and strictly increasing
because $\partial_q H = 1+\mu R(s)\tau e^{\tau q} > 0$. Moreover, 
\[
\lim\limits_{q\to +\infty} H(s,p,q)=+\infty,\quad \lim\limits_{q\to -\infty} H(s,p,q)=-\infty.
\]
Hence, for each $p\in\mathbb{R}$ there exists a unique $q_0(s,p)$ such that
$H(s,p,q_0)=0$. Setting $\tilde{H}(s,p):=-q_0(s,p)$ we have the equivalence
\[
H(s,p,q)=0 \;\Longleftrightarrow\; q=-\tilde{H}(s,p).
\]
By strict monotonicity, $q>-\tilde{H}(s,p)$ implies $H(s,p,q)>0$ and
$q<-\tilde{H}(s,p)$ implies $H(s,p,q)<0$. Thus
$\operatorname{sgn} H(s,p,q) = \operatorname{sgn}\big(q+\tilde{H}(s,p)\big)$
for all $(q,p)\in\mathbb{R}^2$.

(ii) The $C^\infty$ regularity of $\tilde{H}$ in $p$ follows directly from the implicit function theorem applied to $H(s,p,q)=0$ at the unique solution, since $\partial_q H > 0$ and $H$ is smooth in $(p,q)$ by {\rm(J)}.

(iii) Assume $(p,q) \mapsto H(s,p,q)$ is convex.
Let $p_1,p_2\in\mathbb{R}$ and set $q_i=-\tilde{H}(s,p_i)$, so that
$H(s,p_i,q_i)=0$ for $i=1,2$. By convexity,
\begin{align}\label{eq:H-convex}
H\!\left(s,\frac{p_1+p_2}{2},\frac{q_1+q_2}{2}\right)
\le \frac12 H(s,p_1,q_1)+\frac12 H(s,p_2,q_2)=0.
\end{align}
Since $H(s,p,q)=0$ exactly at $q=-\tilde{H}(s,p)$ and $H$ is strictly increasing
in $q$, the inequality $H(s,\frac{p_1+p_2}{2},\frac{q_1+q_2}{2})\le 0$ forces
\[
\frac{q_1+q_2}{2} \le -\tilde{H}\!\left(s,\frac{p_1+p_2}{2}\right),
\]
which is precisely the convexity of $\tilde{H}(s,\cdot)$.
If, in addition, $H$ is strictly convex and $(p_1,q_1)\neq(p_2,q_2)$, then
the inequality in \eqref{eq:H-convex} is strict, yielding strict convexity of $\tilde{H}(s,\cdot)$.

If $H(s,-p,q)=H(s,p,q)$, from the definition of $\tilde{H}$ we obtain
\[
H(s,-p,-\tilde{H}(s,-p))=0 = H(s,p,-\tilde{H}(s,p)) = H(s,-p,-\tilde{H}(s,p)),
\]
and uniqueness of the root forces $\tilde{H}(s,-p)=\tilde{H}(s,p)$. This ends the proof.
\end{proof}

Thus, although the original Hamiltonian $H$ couples the spatial and temporal derivatives, its monotonicity in $q$ permits the reduction to an effective Hamiltonian $\tilde{H}$ that governs the dynamics in a standard form.
For the lower and upper semicontinuous
envelopes, the following useful equivalences are established in \cite[Lemma A.2]{Lam2022}:
\[
\begin{aligned}
H^*(s_0, p_0, q_0) \ge 0 &\;\Longleftrightarrow\; q_0 + \tilde{H}^*(s_0, p_0) \ge 0,\\
H_*(s_0, p_0, q_0) \le 0 &\;\Longleftrightarrow\; q_0 + \tilde{H}_*(s_0, p_0) \le 0.
\end{aligned}
\]

Let \(\mathbf{H}_\pm(p):\mathbb{R}\to(0,\infty)\) be uniquely defined by the implicit formula
\[
d\Bigl(\int_{\mathbb{R}}J(y)e^{py}dy-1\Bigr) - \mathbf{H}_\pm(p) -\mu+\mu R_\pm e^{-\tau \mathbf{H}_\pm(p)} = 0,
\]
and set \(c_\pm(p)=\frac{\mathbf{H}_\pm(p)}{p}\).
Then \eqref{rho1} can be re-written as
\[
\min\Bigl\{ \rho,\; \rho-s\rho'+\tilde{H}(s, \rho') \Bigr\} = 0,
\qquad\text{where}\qquad
\tilde{H}(s, p) =
\begin{cases}
\mathbf{H}_-(p) & \text{for } s \le c_1,\\[2pt]
\mathbf{H}_+(p) & \text{for } s > c_1.
\end{cases}
\]

\begin{corollary}\label{lem:gamma-pm}
Assume that {\rm (J)} and {\rm(F2)} hold.
The functions $\mathbf{H}_+$ and $\mathbf{H}_-$ satisfy the following properties.
\begin{enumerate}
    \item[(i)] Both $\mathbf{H}_+$ and $\mathbf{H}_-$ are well-defined, even, and strictly convex.
    \item[(ii)] Both $\mathbf{H}_+(p)$ and $\mathbf{H}_-(p)$ grow super-linearly as $|p|\to\infty$.
    \item[(iii)] There exist unique constants $\nu^*_+$, $\nu^*_->0$ such that
          \[
          c_\pm(p)\ge c_\pm(\nu^*_\pm) \quad\text{ for all } p>0,
          \]
          with equality holding if and only if $p=\nu^*_\pm$, respectively. Moreover, $c_\pm(p)$ are strictly decreasing on $(0,\nu^*_\pm)$, and strictly increasing on $(\nu^*_\pm,\infty)$, respectively.
\end{enumerate}
\end{corollary}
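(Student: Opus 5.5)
Write $M(p):=\int_{\mathbb{R}}J(y)e^{py}\,dy$ and $\Delta_\pm(p,h):=d(M(p)-1)-h-\mu+\mu R_\pm e^{-\tau h}$. The plan is to read off all three properties from the identity $H(s,p,q)=\Delta_\pm(p,-q)$ for $s$ on the appropriate side of $c_1$, which gives $\mathbf{H}_\pm(p)=\tilde H(s,p)$ in the notation of Lemma \ref{lem:H-convex}. The remaining work is then positivity, growth, and the shape of $c_\pm(p)=\mathbf{H}_\pm(p)/p$.

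\textbf{Step (i).} For fixed $p$, the map $h\mapsto\Delta_\pm(p,h)$ is strictly decreasing, since $\partial_h\Delta_\pm=-1-\mu\tau R_\pm e^{-\tau h}<0$. It also tends to $+\infty$ as $h\to-\infty$ and to $-\infty$ as $h\to+\infty$, so the root $\mathbf{H}_\pm(p)$ exists and is unique.

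The root is positive. By Jensen and the symmetry of $J$, $M(p)\ge e^{p\int yJ}=1$, so $\Delta_\pm(p,0)=d(M(p)-1)+\mu(R_\pm-1)>0$ because $R_\pm>1$ by (F2). Monotonicity then forces $\mathbf{H}_\pm(p)>0$.

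Evenness follows from $M(-p)=M(p)$, which holds because $J$ is symmetric; then $H(s,-p,q)=H(s,p,q)$ and Lemma \ref{lem:H-convex}(iii) applies.

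For strict convexity, note that $(p,q)\mapsto H$ is a sum of three pieces. The piece $d(M(p)-1)$ is strictly convex in $p$, since $M''(p)=\int y^2J e^{py}>0$ for nontrivial $J$. The piece $q$ is linear, and $\mu R_\pm e^{\tau q}$ is convex. Strict convexity in the joint variable needs a little care, because $H$ may be affine along directions with $p$ fixed when $\tau=0$. I would therefore argue directly in the proof of Lemma \ref{lem:H-convex}(iii): if $p_1\neq p_2$, the strict convexity in $p$ already makes the inequality \eqref{eq:H-convex} strict. That is all that is needed for strict convexity of $\tilde H(s,\cdot)$.

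\textbf{Step (ii).} Superlinear growth comes from the defining identity, which gives $\mathbf{H}_\pm(p)\ge d(M(p)-1)-\mu$ because $\mu R_\pm e^{-\tau\mathbf{H}}>0$. Since $J$ is continuous, nontrivial and symmetric, it has mass on some interval $[a,b]$ with $a>0$, so $M(p)\ge c\,e^{a|p|}$. This gives growth faster than linear.

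\textbf{Step (iii).} Here $c_\pm(p)=\mathbf{H}_\pm(p)/p$ is smooth on $(0,\infty)$. At the left end, $c_\pm(p)\to+\infty$ as $p\to0^+$ because $\mathbf{H}_\pm(0)>0$ by Step (i). At the right end, $c_\pm(p)\to+\infty$ as $p\to\infty$ by Step (ii). Hence a minimizer $\nu_\pm^*>0$ exists.

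For uniqueness and monotonicity, compute $c_\pm'(p)=(p\mathbf{H}'_\pm-\mathbf{H}_\pm)/p^2$ and set $N(p):=p\mathbf{H}'_\pm(p)-\mathbf{H}_\pm(p)$. Then $N'(p)=p\mathbf{H}''_\pm(p)$, so $N$ is strictly increasing on $(0,\infty)$, as strict convexity holds almost everywhere (or use strict monotonicity of $\mathbf{H}'$). Also $N(0)=-\mathbf{H}_\pm(0)<0$. Hence $N$ has at most one zero, and it has one because a minimizer exists. This gives a unique critical point $\nu_\pm^*$, with $c_\pm$ strictly decreasing before it and strictly increasing after it, and equality $c_\pm(p)=c_\pm(\nu^*_\pm)$ only at $p=\nu^*_\pm$.

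The main obstacle is the strict convexity step, since the paper's joint-convexity phrasing of Lemma \ref{lem:H-convex}(iii) must be replaced by the $p$-direction argument above.
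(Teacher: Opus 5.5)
Your proposal is correct and follows the paper's route: (i) via the reduction to $\tilde H$ in Lemma~\ref{lem:H-convex}, (ii) via the exponential lower bound on $\int_{\mathbb{R}}J(y)e^{py}\,dy$, and (iii) via the increasing function $p\mathbf{H}_\pm'(p)-\mathbf{H}_\pm(p)$, which is negative at $p=0$. Your extra care fills small gaps the paper leaves implicit: you prove $\mathbf{H}_\pm>0$ for every $p$ by Jensen; you get strict convexity from strict convexity in $p$ alone, which the paper's appeal to joint strict convexity of $H$ does not cover when $\tau=0$; and you obtain existence of the zero from coercivity of $c_\pm$ at both ends.
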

\begin{proof}
Assertion (i) is a direct result of Lemma~\ref{lem:H-convex} by {\rm (J)} and {\rm (F2)}.

We next show (ii).  We prove for $\mathbf{H}_+$; the case for $\mathbf{H}_-$ is identical.
By assumption (J), there exist $\hat{\delta}>\hat{r}>0$ such that $J>0$ on a subset of $B_{\hat{r}}(\hat{\delta})$ with positive measure. Hence for $p>0$,
\begin{align*}
\int_{\mathbb{R}}J(y)e^{py}dy
&\ge\int_{B_{\hat{r}}(\hat{\delta})}J(y)e^{py}dy \ge e^{p(\hat{\delta}-\hat{r})}\int_{B_{\hat{r}}(\hat{\delta})}J(y)dy \ge \hat{C} e^{(\hat{\delta}-\hat{r})p},
\end{align*}
with $\hat{C}>0$. Consequently,
\[
\mathbf{H}_+(p)=d\left(\int_{\mathbb{R}}J(y)e^{py}dy-1\right)-\mu+\mu R_+ e^{-\tau\mathbf{H}_+(p)}\ge d\hat{C} e^{(\hat{\delta}-\hat{r})p}-d-\mu,
\]
which implies that $\mathbf{H}_+(p)$ grows super-linearly at $\infty$. By evenness, the same holds at $-\infty$.

We now prove (iii). Since $\mathbf{H}_+$ is even, we have $\mathbf{H}_+'(0)=0$.
We first note $\mathbf{H}_+(0)>0$; otherwise evaluating $\Delta_+(0,\mathbf{H}_+(0))=0$
leads to
\[
-\mathbf{H}_+(0)-\mu+\mu R_+ e^{-\tau\mathbf{H}_+(0)}\ge\mu(R_+-1)>0,
\]
a contradiction.
For $p>0$, let $c_+(p)=\mathbf{H}_+(p)/p$. Then $c_+'(p)=\frac{p\mathbf{H}_+'(p)-\mathbf{H}_+(p)}{p^2}.$
The numerator $g_+(p)=p\mathbf{H}_+'(p)-\mathbf{H}_+(p)$ satisfies 
\[
g_+(0)=-\mathbf{H}_+(0)<0, \quad g_+'(p)=p\mathbf{H}_+''(p)>0,
\]
hence $g_+(p)$ is strictly increasing with a unique zero $\nu^*_+>0$.
Thus $c_+(p)$ attains its minimum at $\nu^*_+$ in $p>0$. The case of $\mathbf{H}_-$ is identical. This completes the proof.
\end{proof}

As a consequence of Corollary~\ref{lem:gamma-pm}, the derivatives
$\mathbf{H}_\pm'$ are strictly increasing on $\mathbb{R}$ and, by
super-linear growth, tend to $\pm\infty$ as $p\to\pm\infty$; hence
they are homeomorphisms. Let $\Psi_\pm$ denote their inverses and set
$c_\pm^* = \inf\limits_{\nu>0} c_\pm(\nu) = c_\pm(\nu_\pm^*) > 0$.
Define the Lagrangians
\[
\mathbf{L}_\pm(v) = \max_{p\in\mathbb{R}} [vp - \mathbf{H}_\pm(p)]
= v\Psi_\pm(v) - \mathbf{H}_\pm(\Psi_\pm(v)), \qquad v\in\mathbb{R}.
\]
Since $R_- < R_+$ and $\partial_\mathbf{H}\Delta_\pm < 0$, we have
$\mathbf{H}_-(p) < \mathbf{H}_+(p)$ for $p>0$, which directly implies
$c_-^* < c_+^*$ and, for every $c_1>0$, $\mathbf{L}_-(c_1) > \mathbf{L}_+(c_1)$.
If $c_1 > c_+^*$, then
\begin{align}\label{eq:L_+0}
\mathbf{L}_+(c_1) \ge c_1\nu_+^* - \mathbf{H}_+(\nu_+^*)
> c_+^*\nu_+^* - \mathbf{H}_+(\nu_+^*) = 0.
\end{align}

Now introduce the sets
\begin{align*}
E_1 &:= \{c_1 : c_1 \ge c_+^*\},\\
E_2 &:= \{(\lambda^r,c_1) : c_1\lambda^r > \mathbf{H}_+(\lambda^r)
\text{ for } 0<\lambda^r\le\nu_+^*\}  \cup \{(\lambda^r,c_1) : c_1 \ge \mathbf{H}_+'(\lambda^r)
\text{ for } \lambda^r > \nu_+^*\}.
\end{align*}
Note that $\mathbf{H}_+'(p) > c_+(p)$ for $p > \nu_+^*$; thus for
$\lambda^r > \nu_+^*$, the condition $c_1 \ge \mathbf{H}_+'(\lambda^r)$
already yields $c_1\lambda^r > \mathbf{H}_+(\lambda^r)$. Hence
\begin{align}\label{eq:E_2subset}
E_2 \subset \{(\lambda^r,c_1) : c_1\lambda^r > \mathbf{H}_+(\lambda^r),\,
\lambda^r > 0\}.
\end{align}

\begin{lemma}\label{lem:bar-p}
For any $c_1 \in E_1$, the equation in $p$
\begin{align*}\label{eq:bar-p}
c_1 p - \mathbf{H}_-(p) = \mathbf{L}_+(c_1)
\end{align*}
has a smallest positive root $\bar{p}:=\bar{p}(c_1)$.
Then $\bar{p}(c_1) < \min\{\Psi_+(c_1),\Psi_-(c_1)\}$, and $\bar{p}$ is strictly increasing in $c_1$.
Moreover, there exists a unique $\bar{c}_1 > c_+^*$ such that $\bar{p}(\bar{c}_1) = \nu_-^*$.
\end{lemma}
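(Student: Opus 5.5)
Consider the function $F(p):=c_1p-\mathbf{H}_-(p)$ for $p\ge 0$. By Corollary~\ref{lem:gamma-pm} it is smooth and strictly concave, with $F(0)=-\mathbf{H}_-(0)<0$. Its maximum over $\mathbb{R}$ equals $\mathbf{L}_-(c_1)$ and is attained at $p=\Psi_-(c_1)$. Since $c_1\ge c_+^*>c_-^*>0$, we have $\Psi_-(c_1)>0$, because $\mathbf{H}_-'(0)=0$ and $\mathbf{H}_-'$ is increasing. We also know $\mathbf{L}_-(c_1)>\mathbf{L}_+(c_1)$ for $c_1>0$, and $\mathbf{L}_+(c_1)\ge 0$: equality holds at $c_1=c_+^*$, where $\mathbf{L}_+(c_+^*)=c_+^*\nu_+^*-\mathbf{H}_+(\nu_+^*)=0$, and strict inequality holds by \eqref{eq:L_+0} for $c_1>c_+^*$. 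So $F(0)<0\le\mathbf{L}_+(c_1)<\max F$. By continuity and strict monotonicity of $F$ on $[0,\Psi_-(c_1)]$, there is a unique root of $F=\mathbf{L}_+(c_1)$ in $(0,\Psi_-(c_1))$, and it is the smallest positive root $\bar p(c_1)$. This gives $\bar p<\Psi_-(c_1)$ immediately.

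For $\bar p<\Psi_+(c_1)$, I compare the two concave functions. Set $G(p)=c_1p-\mathbf{H}_+(p)$, so $\max G=G(\Psi_+(c_1))=\mathbf{L}_+(c_1)$. For $p>0$ we have $\mathbf{H}_-(p)<\mathbf{H}_+(p)$, hence $F(\Psi_+(c_1))>G(\Psi_+(c_1))=\mathbf{L}_+(c_1)$, which requires $\Psi_+(c_1)>0$; this holds since $c_1>0$. Because $F(0)<\mathbf{L}_+(c_1)<F(\Psi_+(c_1))$, the intermediate value theorem gives a root in $(0,\Psi_+(c_1))$. The smallest root therefore satisfies $\bar p<\Psi_+(c_1)$.

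For monotonicity, I use implicit differentiation of $\Phi(c_1,p):=c_1p-\mathbf{H}_-(p)-\mathbf{L}_+(c_1)=0$. We have $\partial_p\Phi=c_1-\mathbf{H}_-'(\bar p)>0$, since $\bar p<\Psi_-(c_1)$. Using the envelope formula $\mathbf{L}_+'(c_1)=\Psi_+(c_1)$, we get $\partial_{c_1}\Phi=\bar p-\Psi_+(c_1)<0$. Hence
\[
\bar p'(c_1)=\frac{\Psi_+(c_1)-\bar p}{c_1-\mathbf{H}_-'(\bar p)}>0 .
\]
The implicit function theorem makes $\bar p$ smooth, so $\bar p$ is strictly increasing on $E_1$; at the endpoint $c_+^*$ I take a one-sided derivative.

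For $\bar c_1$, I evaluate at $p=\nu_-^*$, using $F(\nu_-^*)=\nu_-^*(c_1-c_-^*)$. Since $\bar p$ is the unique root in $(0,\Psi_-(c_1))$ and $F$ is increasing there, the condition $\bar p(c_1)=\nu_-^*$ is equivalent to $\nu_-^*(c_1-c_-^*)=\mathbf{L}_+(c_1)$, provided $\nu_-^*<\Psi_-(c_1)$. That proviso holds because $\mathbf{H}_-'(\nu_-^*)=c_-^*<c_1$. Define $h(c_1)=\nu_-^*(c_1-c_-^*)-\mathbf{L}_+(c_1)$. At $c_1=c_+^*$ we get $h=\nu_-^*(c_+^*-c_-^*)>0$. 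Its derivative is $h'=\nu_-^*-\Psi_+(c_1)$, and $\Psi_+(c_1)\ge\nu_+^*$. I expect the main obstacle to be showing $\nu_-^*<\nu_+^*$, which gives $h'<0$ on $E_1$. Then I need $h\to-\infty$, which follows from superlinearity of $\mathbf{L}_+$, itself a consequence of $\Psi_+(c_1)\to\infty$. To get $\nu_-^*<\nu_+^*$, I would use that $\nu$ solves $p\mathbf{H}'(p)=\mathbf{H}(p)$ and compare via the implicit relation $\Delta_\pm$. If that comparison is awkward, I would instead avoid it: by the strict monotonicity of $\bar p$ already proven, uniqueness follows directly. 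Existence needs $\bar p(c_+^*)<\nu_-^*$ and $\bar p\to\infty$. The first holds since $h(c_+^*)>0$. For the second, if $\bar p$ stayed bounded by $P$, then $F(\bar p)\le c_1P$ would contradict the superlinear growth of $\mathbf{L}_+(c_1)$. Continuity of $\bar p$ then gives $\bar c_1$ by the intermediate value theorem, with $\bar c_1>c_+^*$.
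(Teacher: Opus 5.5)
Your proposal is correct and follows essentially the same route as the paper. Both proofs use the sign of $c_1p-\mathbf{H}_-(p)-\mathbf{L}_+(c_1)$ at $p=0$ and $p=\Psi_+(c_1)$ together with its monotonicity on $[0,\Psi_-(c_1)]$, then implicit differentiation with $\mathbf{L}_+'=\Psi_+$, and finally $\bar p(c_+^*)<\nu_-^*$ (from $\nu_-^*(c_+^*-c_-^*)>0$) plus $\bar p\to\infty$.

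Your fallback argument for $\bar c_1$ is exactly the paper's, so the detour through $h$ and the comparison $\nu_-^*<\nu_+^*$ are unnecessary. Your justification of $\bar p\to\infty$ via the superlinear growth of $\mathbf{L}_+$ is, if anything, more explicit than the paper's brief appeal to superlinearity of $\mathbf{H}_-$.
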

\begin{proof}
Define an auxiliary function
\begin{align*}
F_1(c_1,p) = c_1p - \mathbf{H}_-(p) - \mathbf{L}_+(c_1) \quad\text{ for }  c_1 \in E_1.
\end{align*}
Then $F_1$ is increasing in $p$ on $[0, \Psi_-(c_1)]$ and decreasing on $[\Psi_-(c_1), \infty)$. Moreover,
\[
\begin{aligned}
F_1(c_1, 0) \leq -\mathbf{H}_-(0) < 0 , \quad F_1(c_1, \Psi_+(c_1)) = -\mathbf{H}_-(\Psi_+(c_1)) + \mathbf{H}_+(\Psi_+(c_1))>0.
\end{aligned}
\]
It then follows that the smallest root $\bar{p}(c_1) \in (0, \min\{\Psi_+(c_1),\Psi_-(c_1)\})$. Furthermore,
\begin{equation}\label{partial_F-1}
\begin{aligned}
\partial_{c_1} F_1(c_1,\bar{p}) = \bar{p} - \Psi_+(c_1) < 0,\,\,\partial_p F_1(c_1,\bar{p})= c_1-\mathbf{H}_-'(\bar{p})> 0,
\end{aligned}
\end{equation}
where the last inequality follows from the strict convexity of $\mathbf{H}_-$ (which makes $\mathbf{H}_-'$ strictly increasing) together with $\bar{p}<\Psi_-(c_1)$, so that $\mathbf{H}_-'(\bar{p}) < \mathbf{H}_-'(\Psi_-(c_1)) = c_1$.
Differentiating $F_1(c_1,p)=0$ with respect to $c_1$ along the curve $p=\bar{p}(c_1)$ gives
\[
\frac{\partial F_1}{\partial c_1}+\frac{\partial F_1}{\partial p}\frac{\mathrm{d}\bar{p}}{\mathrm{d} c_1}=0,
\]
which, combined with \eqref{partial_F-1}, implies that \( \bar{p}(c_1) \) is strictly increasing in \( c_1 \). Since \( F_1(c_+^*,\nu_-^*) = \nu_-^*(c_+^* - c_-^*) > 0 \), we have \( \bar{p}(c_+^*) < \nu_-^* \). Besides, the super-linearity of $\mathbf{H}_-$ implies \( \bar{p}(+\infty) = +\infty \). Thus, there exists a unique \( \bar{c}_1>c_+^* \) such that \( \bar{p}(\bar{c}_1) = \nu_-^* \).
\end{proof}

\begin{lemma}\label{lem:under-p}
For any $(\lambda^r,c_1) \in E_2$, the equation in $p$
\begin{align}\label{eq:under-p}
c_1 p - \mathbf{H}_-(p) = c_1 \lambda^r - \mathbf{H}_+(\lambda^r)
\end{align}
has a smallest positive root $\underline{p}:=\underline{p}(\lambda^r,c_1)$.
Then $\underline{p}(\lambda^r,c_1) < \min\{\Psi_-(c_1),\lambda^r\}$.
Furthermore,
\begin{itemize}
    \item[(i)] If $\lambda^r \in (0,\nu_-^*]$, then $\underline{p} < \nu_-^*$ for all $c_1>c_+(\lambda^r)$.
    \item[(ii)] $\underline{p}$ is strictly increasing in $c_1$, and also increasing in $\lambda^r$ whenever $c_1 > \mathbf{H}_+'(\lambda^r)$.
\end{itemize}
\end{lemma}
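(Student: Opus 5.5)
The plan mirrors the proof of Lemma~\ref{lem:bar-p}. Introduce the auxiliary function
\[
F_2(\lambda^r,c_1,p) := c_1 p - \mathbf{H}_-(p) - c_1\lambda^r + \mathbf{H}_+(\lambda^r), \qquad (\lambda^r,c_1)\in E_2 .
\]
Since $\mathbf{H}_-$ is strictly convex (Corollary~\ref{lem:gamma-pm}), the map $p\mapsto F_2$ is strictly increasing on $[0,\Psi_-(c_1)]$ and strictly decreasing afterwards.

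\textbf{Existence and location of the root.} Two endpoint evaluations do the work.
\begin{itemize}
\item At $p=0$: by \eqref{eq:E_2subset}, $c_1\lambda^r>\mathbf{H}_+(\lambda^r)$, so $F_2(\cdot,0) = -\mathbf{H}_-(0)-(c_1\lambda^r-\mathbf{H}_+(\lambda^r))<0$.
\item At $p=\lambda^r$: we get $F_2(\cdot,\lambda^r) = \mathbf{H}_+(\lambda^r)-\mathbf{H}_-(\lambda^r)>0$, using $\mathbf{H}_-<\mathbf{H}_+$ on $(0,\infty)$, which follows from $R_-<R_+$.
\end{itemize}
By the intermediate value theorem there is a root in $(0,\lambda^r)$, so the smallest positive root $\underline p$ exists and $\underline p<\lambda^r$. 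Because $F_2<0$ at $0$ and $F_2$ increases up to $\Psi_-(c_1)$, the first zero lies on the increasing branch. Moreover $\underline p\ne\Psi_-(c_1)$: otherwise $F_2\le 0$ on all of $[0,\infty)$, contradicting $F_2(\lambda^r)>0$. Hence $\underline p<\Psi_-(c_1)$, and also $\partial_pF_2(\underline p)=c_1-\mathbf{H}_-'(\underline p)>0$.

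\textbf{Part (i).} Take $\lambda^r\le\nu_-^*$ and $c_1>c_+(\lambda^r)$. Then $\underline p<\lambda^r\le\nu_-^*$ directly. (Note that $c_1>c_+(\lambda^r)$ is exactly the defining condition of $E_2$ when $\lambda^r\le\nu_+^*$; the case $\lambda^r>\nu_+^*$ cannot occur here since $\nu_-^*<\nu_+^*$, or else it is handled by the same bound.) If the comparison $\nu_-^*$ versus $\nu_+^*$ is needed, it follows from $g_\pm(p)=p\mathbf{H}_\pm'-\mathbf{H}_\pm$, but the bound $\underline p<\lambda^r$ suffices.

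\textbf{Part (ii): monotonicity.} Apply the implicit function theorem, which is valid because $\partial_pF_2>0$ at $\underline p$ (a nondegenerate smallest root persists along a smooth branch). The partial derivatives are
\[
\partial_{c_1}F_2=\underline p-\lambda^r<0,
\qquad
\partial_{\lambda^r}F_2=\mathbf{H}_+'(\lambda^r)-c_1 .
\]
It follows that
\[
\partial_{c_1}\underline p=-\frac{\partial_{c_1}F_2}{\partial_pF_2}>0,
\qquad
\partial_{\lambda^r}\underline p=\frac{c_1-\mathbf{H}_+'(\lambda^r)}{c_1-\mathbf{H}_-'(\underline p)},
\]
and the second quantity is positive when $c_1>\mathbf{H}_+'(\lambda^r)$.

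\textbf{Main obstacle.} The delicate point is making sure the smallest root varies continuously and does not jump, so that the local implicit-function computation gives global monotonicity. This is handled by the strict sign $\partial_pF_2>0$ at every smallest root, together with the unimodal shape of $F_2$ in $p$, which rules out tangential roots before $\Psi_-(c_1)$.
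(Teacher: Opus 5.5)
Your proposal is correct and follows the paper's proof essentially step for step. It uses the same auxiliary function $F_2$, the same sign checks at $p=0$ and $p=\lambda^r$ (via \eqref{eq:E_2subset} and $\mathbf{H}_-<\mathbf{H}_+$), the bound $\underline p<\lambda^r\le\nu_-^*$ for (i), and implicit differentiation with $\partial_pF_2>0$ for (ii); your extra justifications that $\underline p\neq\Psi_-(c_1)$ and that the smallest-root branch is smooth fill small points the paper leaves implicit, while the aside about $\nu_-^*<\nu_+^*$ is unnecessary.
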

\begin{proof}
Define an auxiliary function
\begin{align*}
F_2(\lambda^r,c_1, p) = c_1p - \mathbf{H}_-(p) - c_1\lambda^r + \mathbf{H}_+(\lambda^r) \quad\text{ for } (\lambda^r, c_1) \in E_2.
\end{align*}
Then $F_2$ is increasing in $p$ on $[0, \Psi_-(c_1)]$ and decreasing on $[\Psi_-(c_1), \infty)$ by concavity and \eqref{eq:L_+0}. Moreover,
\[
\begin{aligned}
F_2(\lambda^r,c_1, 0) = -\mathbf{H}_-(0) - c_1\lambda^r + \mathbf{H}_+(\lambda^r) < 0, \quad F_2(\lambda^r,c_1,\lambda^r) = -\mathbf{H}_-(\lambda^r) + \mathbf{H}_+(\lambda^r) > 0,
\end{aligned}
\]
where we used \eqref{eq:E_2subset}.
It then follows that the smallest root $\underline{p}( \lambda^r, c_1) \in (0,\min\{\lambda^r,\Psi_-(c_1)\})$. In particular, if $\lambda^r \in (0,\nu_-^*]$, then $\underline{p} <\lambda^r\le\nu_-^*$ for all $c_1>c_+(\lambda^r)$.
Additionally,
\begin{equation}\label{partial_F-2}
\begin{aligned}
&\partial_{\lambda^r} F_2(\lambda^r,c_1, \underline{p}) = -c_1 + \mathbf{H}_+'(\lambda^r), \quad\partial_{c_1} F_2(\lambda^r,c_1,\underline{p}) = \underline{p} - \lambda^r < 0,\\
&\partial_p F_2(\lambda^r,c_1, \underline{p})=c_1-\mathbf{H}_-'(\underline{p}) > 0.
\end{aligned}
\end{equation}
where the last inequality follows from the strict convexity of $\mathbf{H}_-$ and $\underline{p}<\Psi_-(c_1)$.
Differentiating the identity $F_2(\lambda^r,c_1,p)=0$ with respect to $c_1$ and $\lambda^r$ along the curve $p=\underline{p}(\lambda^r,c_1)$ yields
\[
\frac{\partial F_2}{\partial c_1}+\frac{\partial F_2}{\partial p}\frac{\partial\underline{p}}{\partial c_1}=0, \quad \frac{\partial F_2}{\partial \lambda^r}+\frac{\partial F_2}{\partial p}\frac{\partial\underline{p}}{\partial \lambda^r}=0.
\]
Using \eqref{partial_F-2}, we see that \(\underline{p}(\lambda^r, c_1)\) is increasing in \(c_1\), and also increasing in \(\lambda^r\) when \(c_1 > \mathbf{H}_+'(\lambda^r)\).
\end{proof}

For $(\lambda^r,c_1)\in E_2$ with $\lambda^r>\nu_-^*$, consider the equation $F_2(\lambda^r,c_1,\nu_-^*)=0$.
For each fixed $\lambda^r$, note that $F_2(\lambda^r,c_1,\nu_-^*) \to \mp\infty$ as $c_1 \to \pm\infty$ and $F_2$ is strictly decreasing in $c_1$ (see \eqref{partial_F-2}). This defines a function $c_1 = g(\lambda^r)$, which solves $F_2 = 0$ explicitly, and hence
\begin{align}\label{eq:g}
 g(\lambda^r) = \frac{\mathbf{H}_+(\lambda^r) - \mathbf{H}_-(\nu_-^*)}{\lambda^r - \nu_-^*}, \qquad \lambda^r > \nu_-^*.
\end{align}

\begin{lemma}\label{lem:g(lambda^r)}
Let $g$ be defined by \eqref{eq:g}.
Assume that $(\lambda^r,c_1)\in E_2$ such that $\underline{p}$ is well-defined, then the following assertions hold true.
\begin{itemize}
    \item[(i)] For any $\lambda^r \in (\nu_-^*,\Psi_+(\bar{c}_1)]$, we have
    \[
    \underline{p}(\lambda^r,c_1) \leq\nu_-^* \quad\Longleftrightarrow\quad c_1 \leq g(\lambda^r),
    \]
and the equality holds iff $c_{1}=g(\lambda^{r})$.
    \item[(ii)] $g$ is strictly decreasing on $(\nu_-^*,\Psi_+(\bar{c}_1)]$.
    \item[(iii)] The constant $\bar{c}_1$ from Lemma \ref{lem:bar-p} satisfies $\bar{c}_1 = g\left(\Psi_+(\bar{c}_1)\right)$.
\end{itemize}
\end{lemma}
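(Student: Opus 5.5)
The plan is to work entirely with the auxiliary function $F_2(\lambda^r,c_1,p)=c_1p-\mathbf{H}_-(p)-c_1\lambda^r+\mathbf{H}_+(\lambda^r)$ from Lemma \ref{lem:under-p}, using its monotonicity in $p$ on $[0,\Psi_-(c_1)]$ and in $c_1$. For (i), fix $\lambda^r\in(\nu_-^*,\Psi_+(\bar c_1)]$. By \eqref{partial_F-2}, $F_2(\lambda^r,c_1,\nu_-^*)$ is strictly decreasing in $c_1$ with slope $\nu_-^*-\lambda^r<0$, and by the definition \eqref{eq:g} it vanishes exactly at $c_1=g(\lambda^r)$. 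Hence $F_2(\lambda^r,c_1,\nu_-^*)\ge0$ if and only if $c_1\le g(\lambda^r)$, with equality if and only if $c_1=g(\lambda^r)$. Since $\underline p$ is the smallest root and $F_2(\cdot,\cdot,0)<0$, the relation $\underline p\le\nu_-^*$ holds if and only if $F_2(\lambda^r,c_1,\nu_-^*)\ge0$, provided $\nu_-^*\le\Psi_-(c_1)$; on that range $F_2$ is increasing in $p$, so there is no earlier sign change. This requires checking that $c_1\ge \mathbf{H}_-'(\nu_-^*)=c_-^*$ in the relevant region. That holds because $(\lambda^r,c_1)\in E_2$ with $\lambda^r>\nu_-^*$ forces $c_1>c_+(\lambda^r)\ge c_+^*>c_-^*$, using \eqref{eq:E_2subset}. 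The case of equality follows because $\underline p=\nu_-^*$ exactly when $F_2(\lambda^r,c_1,\nu_-^*)=0$.

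For (ii), differentiate \eqref{eq:g}:
\[
g'(\lambda^r)=\frac{\mathbf{H}_+'(\lambda^r)-g(\lambda^r)}{\lambda^r-\nu_-^*},
\]
so it suffices to show $g(\lambda^r)>\mathbf{H}_+'(\lambda^r)$ on $(\nu_-^*,\Psi_+(\bar c_1)]$, i.e. $F_2(\lambda^r,\mathbf{H}_+'(\lambda^r),\nu_-^*)>0$. Writing $c=\mathbf{H}_+'(\lambda^r)$, this quantity equals $c\nu_-^*-\mathbf{H}_-(\nu_-^*)-\mathbf{L}_+(c)$, which is $F_1(c,\nu_-^*)$ from Lemma \ref{lem:bar-p} when $c\ge c_+^*$. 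Since $\lambda^r\le\Psi_+(\bar c_1)$ gives $c\le\bar c_1$, and $\bar p$ is strictly increasing with $\bar p(\bar c_1)=\nu_-^*$, we get $\bar p(c)\le\nu_-^*$. As $F_1$ is increasing in $p$ up to $\Psi_-(c)$, this yields $F_1(c,\nu_-^*)\ge0$, with strict inequality when $c<\bar c_1$.

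The main obstacle is the remaining care in (ii). If $c<c_+^*$, which happens when $\lambda^r<\nu_+^*$, then $\mathbf L_+(c)$ is computed with the maximizer $\lambda^r$, and I plan to argue directly: $c\nu_-^*-\mathbf{H}_-(\nu_-^*)$ is compared with $c\lambda^r-\mathbf{H}_+(\lambda^r)\le c\nu_-^*-\mathbf{H}_+(\nu_-^*)<c\nu_-^*-\mathbf{H}_-(\nu_-^*)$, using that $\lambda^r$ maximizes $cp-\mathbf{H}_+(p)$ and that $\mathbf H_-<\mathbf H_+$. This direct argument in fact works uniformly for every $c$. At the endpoint $\lambda^r=\Psi_+(\bar c_1)$ one only gets $g'\le0$ there, but strict monotonicity on the closed interval still follows from the strict inequality in the interior.

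For (iii), set $\lambda^r=\Psi_+(\bar c_1)$ and $c_1=\bar c_1$. Then $c_1\lambda^r-\mathbf{H}_+(\lambda^r)=\mathbf{L}_+(\bar c_1)$, so $F_2(\lambda^r,\bar c_1,\cdot)=F_1(\bar c_1,\cdot)$, whose smallest root is $\bar p(\bar c_1)=\nu_-^*$. Hence $F_2(\lambda^r,\bar c_1,\nu_-^*)=0$, and the uniqueness of the root in $c_1$ gives $\bar c_1=g(\Psi_+(\bar c_1))$.
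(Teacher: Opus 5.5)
Your parts (i) and (iii) are correct and follow the paper's argument. In (i) you also supply the check $\nu_-^*<\Psi_-(c_1)$ (via $c_1>c_+(\lambda^r)\ge c_+^*>c_-^*$), which the paper uses tacitly. Your reduction of (ii) to the inequality $g(\lambda^r)>\mathbf{H}_+'(\lambda^r)$ is exactly what the paper's one-line appeal to \eqref{partial_F-2} leaves unjustified: \eqref{partial_F-2} does not fix the sign of $\partial_{\lambda^r}F_2=\mathbf{H}_+'(\lambda^r)-g(\lambda^r)$. Your $F_1$/$\bar p$ argument proves that inequality for $\lambda^r\in[\nu_+^*,\Psi_+(\bar c_1))$.

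The gap is the remaining range $\lambda^r\in(\nu_-^*,\nu_+^*)$, which is nonempty because $\nu_-^*<\nu_+^*$. There your ``direct argument'' uses $c\lambda^r-\mathbf{H}_+(\lambda^r)\le c\nu_-^*-\mathbf{H}_+(\nu_-^*)$ for $c=\mathbf{H}_+'(\lambda^r)$. But maximality of $\lambda^r=\Psi_+(c)$ for $p\mapsto cp-\mathbf{H}_+(p)$ gives the reverse inequality. The chain therefore only yields $\mathbf{L}_+(c)\ge c\nu_-^*-\mathbf{H}_+(\nu_-^*)<c\nu_-^*-\mathbf{H}_-(\nu_-^*)$, which compares nothing. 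Your remark that it ``works uniformly for every $c$'' confirms the error. It would give $g'<0$ on all of $(\nu_-^*,\infty)$, whereas $g(\lambda)\to+\infty$ as $\lambda\to\infty$ by superlinearity of $\mathbf{H}_+$; in fact $g'$ vanishes at $\Psi_+(\bar c_1)$ and is positive beyond it. So the endpoint information $\bar p(\bar c_1)=\nu_-^*$ must be used on the whole interval, not only for $\lambda^r\ge\nu_+^*$.

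A repair that needs no case split goes as follows. For $\lambda>\nu_-^*$ set $\phi(\lambda):=\mathbf{H}_+(\lambda)+\mathbf{H}_+'(\lambda)(\nu_-^*-\lambda)$, which equals $c\nu_-^*-\mathbf{L}_+(c)$ with $c=\mathbf{H}_+'(\lambda)$.
\begin{itemize}
\item Then $g(\lambda)-\mathbf{H}_+'(\lambda)=\bigl(\phi(\lambda)-\mathbf{H}_-(\nu_-^*)\bigr)/(\lambda-\nu_-^*)$.
\item Since $\phi'(\lambda)=\mathbf{H}_+''(\lambda)(\nu_-^*-\lambda)$, $\phi$ is strictly decreasing on $(\nu_-^*,\infty)$ by strict convexity of $\mathbf{H}_+$.
\item Because $F_1(\bar c_1,\nu_-^*)=0$, we have $\phi(\Psi_+(\bar c_1))=\bar c_1\nu_-^*-\mathbf{L}_+(\bar c_1)=\mathbf{H}_-(\nu_-^*)$.
\end{itemize}
Hence $g>\mathbf{H}_+'$, i.e.\ $g'<0$, on all of $(\nu_-^*,\Psi_+(\bar c_1))$, and your continuity remark handles the endpoint. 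This is the same mechanism as your $F_1$ argument, but it never needs $\bar p(c)$ for $c<c_+^*$, where $\bar p$ is undefined.
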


\begin{proof}
We first prove (i). If $\lambda^r\in(\nu_-^*,\Psi_+(\bar{c}_1)]$, then
\begin{align*}
c_1 < g(\lambda^r) \iff& c_1\lambda^r-\mathbf{H}_+(\lambda^r)<c_1\nu_-^*-\mathbf{H}_-(\nu_-^*)\\
\iff& c_1\underline{p}-\mathbf{H}_-(\underline{p})<c_1\nu_-^*-\mathbf{H}_-(\nu_-^*)\\
\iff& \underline{p}<\nu_-^*,
\end{align*}
where the last inequality follows from the strict monotonicity of function $h_1(s)=c_1s-\mathbf{H}_-(s)$ for $s\in(-\infty,\Psi_-(c_1)]$ and $0<\underline{p}<\Psi_-(c_1)$. The case for $c_1 = g(\lambda^r)$ is analogous.

We then prove (ii). By (i), we have $F_2(\lambda^r,c_1,\nu_-^*)|_{c_1=g(\lambda^r)}\equiv0$ for all $\lambda^r$. Differentiating in $\lambda^r$ yields
\[
\frac{\partial F_2}{\partial c_1}\,\frac{\mathrm{d}g}{\mathrm{d}\lambda^r} + \frac{\partial F_2}{\partial \lambda^r}=0.
\]
From \eqref{partial_F-2}, we obtain \(g'(\lambda^r)<0\) for \(\lambda^r \in (\nu_-^*,\Psi_+(\bar{c}_1)]\).

Finally, we prove (iii), a direct computation shows $F_2\left(\Psi_+(\bar{c}_1),\bar{c}_1, \nu_-^*\right) = F_1\left(\bar{c}_1, \nu_-^*\right) = 0,$
which gives that \(\bar{c}_1 = g\left(\Psi_+(\bar{c}_1)\right)\).
\end{proof}

\subsection{Rightward Spreading Speed}
\noindent

In what follows, adapting the constructions from Section~3 of \cite{Lam2022}, we construct explicit viscosity solutions for the Hamilton-Jacobi equation and then derive the explicit formulas for the rightward spreading speeds.

\begin{lemma}\label{lem:vis-A1}
If $(\lambda^r,c_1)\in A_1$, then $c_r=c_+(\lambda^r)$.
\end{lemma}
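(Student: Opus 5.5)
The plan is to produce an explicit viscosity solution of \eqref{rho1} on $(0,\infty)$ satisfying the boundary conditions \eqref{rho-boundary-1}. By uniqueness (Corollary~\ref{col:rho-boundary-point-r}), this candidate is the limit $\rho$. The speed $c_r$ is then read off as its free boundary point $s_r$. For $(\lambda^r,c_1)\in A_1$ we have $\lambda^r<\nu_+^*$ and $c_1\le c_+(\lambda^r)$. The natural candidate is the plane-wave profile of the far-right homogeneous problem:
\[
\rho(s)=\max\{\lambda^r(s-c_+(\lambda^r)),\,0\}=\max\{\lambda^r s-\mathbf{H}_+(\lambda^r),\,0\},\qquad s\ge0 .
\]
It satisfies $\rho(0)=0$ (since $\mathbf{H}_+>0$) and $\rho(s)/s\to\lambda^r$. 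Its zero set is $[0,c_+(\lambda^r)]$, so $s_r=c_+(\lambda^r)$, which is the claim.

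The core of the proof is verifying the viscosity conditions for the reduced form $\min\{\rho,\rho-s\rho'+\tilde H(s,\rho')\}=0$. Here $\tilde H=\mathbf{H}_+$ for $s>c_1$ and $\mathbf{H}_-$ for $s\le c_1$, and the envelopes are handled via the equivalences from \cite[Lemma A.2]{Lam2022}.

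\emph{Region $s>c_+(\lambda^r)$.} Here $\rho>0$ and $\rho$ is smooth with $\rho'=\lambda^r$. Because $c_1\le c_+(\lambda^r)$, every such $s$ satisfies $s>c_1$, so $\tilde H=\mathbf{H}_+$. Then
\[
\rho-s\rho'+\mathbf{H}_+(\lambda^r)=-\mathbf{H}_+(\lambda^r)+\mathbf{H}_+(\lambda^r)=0,
\]
so the equation holds classically. This is exactly where the restriction $c_1\le c_+(\lambda^r)$ is used. Otherwise a portion of the linear part would sit in the region where $\tilde H=\mathbf{H}_-<\mathbf{H}_+$, and the subsolution test would fail there.

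\emph{Region $0<s<c_+(\lambda^r)$.} Here $\rho\equiv0$, so the supersolution property reduces to $\rho\ge0$. There is nothing to check for the subsolution, since it is only tested where $\rho>0$. At the possible discontinuity $s=c_1$ of $\tilde H$, the function $\rho$ vanishes, so the envelopes play no role.

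\emph{Kink $s_0=c_+(\lambda^r)$.} Here $\rho$ is convex, being the maximum of two affine functions, and $\rho(s_0)=0$. No smooth function touches $\rho$ from above at $s_0$, and the subsolution test is void there anyway because $\rho(s_0)=0$. For the supersolution test, a test function touching from below has slope $p\in[0,\lambda^r]$, and we need $-s_0p+\tilde H^*(s_0,p)\ge0$. If $s_0>c_1$, then $\tilde H^*=\mathbf{H}_+$. If $s_0=c_1$, the upper envelope is $\max\{\mathbf{H}_+,\mathbf{H}_-\}=\mathbf{H}_+$ for $p\ge0$. In both cases it suffices that $\mathbf{H}_+(p)\ge c_+(\lambda^r)p$ on $[0,\lambda^r]$, i.e.\ $c_+(p)\ge c_+(\lambda^r)$ for $0<p\le\lambda^r$ (and $\mathbf{H}_+(0)>0$ at $p=0$). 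This follows from Corollary~\ref{lem:gamma-pm}(iii): $c_+$ is decreasing on $(0,\nu_+^*)$ and $\lambda^r<\nu_+^*$. This step is where $\lambda^r<\nu_+^*$ is essential, and it is the main point to get right, together with the correct choice of envelope at $s=c_1$ when $c_1=c_+(\lambda^r)$.

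Finally, by the comparison principle cited before Corollary~\ref{col:rho-boundary-point-r}, the constructed $\rho$ coincides with $w(1,\cdot)$ on $[0,\infty)$. Hence $s_r=c_+(\lambda^r)$, and Corollary~\ref{col:rho-boundary-point-r} gives the spreading limits, so $c_r=c_+(\lambda^r)$.
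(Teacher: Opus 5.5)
Your proposal is correct and follows the paper's proof essentially verbatim. It uses the same candidate $\rho(s)=\max\{0,\lambda^r s-\mathbf{H}_+(\lambda^r)\}$, the same observation that the superdifferential at the kink $s_0=c_+(\lambda^r)$ is empty, and the same supersolution check there using $R^*(s_0)=R_+$ (from $c_1\le c_+(\lambda^r)$), the monotonicity of $c_+$ on $(0,\nu_+^*)$ and $\mathbf{H}_+(0)>0$, followed by uniqueness via the comparison principle.

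One small correction concerns the zero set $0<s<c_+(\lambda^r)$, including $s=c_1$ if it lies there. There the supersolution condition for $\min\{\rho,H\}=0$ is not merely $\rho\ge 0$; you also need $H^*\ge 0$ at touching points. This holds trivially, because any test function touching from below has slope $0$ there, so $H^*(s,0,0)=\mu(R^*(s)-1)>0$, equivalently $\tilde H^*(s,0)\ge\mathbf{H}_-(0)>0$. That is exactly the $p=0$ case you already handle at the kink.
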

\begin{proof}
Define
\begin{align}\label{rho11}
\rho(s)=\max\left\{0,\;\lambda^r s-\mathbf{H}_+(\lambda^r)\right\},\qquad s\in[0,+\infty).
\end{align}
One can check directly that $\rho$ defined by \eqref{rho11}  satisfies \eqref{rho-boundary-1}, and is a classical solution of \eqref{rho1} at every point except \(s_0=c_+(\lambda^r)\). Since the superdifferential $D^+\rho(s_0)=\emptyset$, \eqref{rho11} is automatically a viscosity subsolution of \eqref{rho1}.
To verify that it is also a supersolution, for any test function \(\varphi\in C^{1}(\mathbb{R})\) such that \(\rho-\varphi\) attains a local minimum at \(s_0\).  Since \(c_1\le c_+(\lambda^r)\), we have \(R^{*}(s_0)=R_{+}\). Hence,
\begin{align*}
&d\left(\int_{\mathbb{R}}J(y)e^{\varphi'(s_{0})y}\,dy-1\right)
+\rho(s_{0})-s_{0}\varphi'(s_{0})-\mu
+\mu R(s_{0})e^{\tau(\rho(s_{0})-s_{0}\varphi'(s_{0}))} \\
=&\Delta_{+}\!\left(\varphi'(s_{0}),\,s_{0}\varphi'(s_{0})\right) \\
=&\Delta_{+}\!\left(\varphi'(s_{0}),\,c_+(\lambda^r)\varphi'(s_{0})\right)
-\Delta_{+}\!\left(\varphi'(s_{0}),\,\mathbf{H}_{+}(\varphi'(s_{0}))\right),
\end{align*}
where we used $\Delta_{+}\!\left(\varphi'(s_{0}),\,\mathbf{H}_{+}(\varphi'(s_{0}))\right)=0$ in the last equality.
Note that $\varphi'(s_0)\in D^-\rho(s_0)=[0,\lambda^r]$, where $D^-\rho(s_0)$ represents the subdifferential of $\rho$ at $s_0$, and the function \(\mathbf{H}\mapsto\Delta_{+}(p,\mathbf{H})\) is strictly decreasing. Observe that $\lambda^r\in(0,\nu_+^*)$. Then, for $\varphi'(s_0)>0$,
\[
\Delta_{+}\!\left(\varphi'(s_{0}),\,c_+(\lambda^r)\varphi'(s_{0})\right)
\ge\Delta_{+}\!\left(\varphi'(s_{0}),\,c_+(\varphi'(s_{0}))\varphi'(s_{0})\right)
=\Delta_{+}\!\left(\varphi'(s_{0}),\mathbf{H}_{+}(\varphi'(s_{0}))\right),
\]
where the inequality follows from \(c_+(\lambda^r)\le c_+(\varphi'(s_{0}))\).  For $\varphi'(s_0)=0$, the inequality follows from $\mathbf{H}_+(0)>0$. Thus $\rho$ is a viscosity supersolution at \(s_{0}\).  Therefore, the comparison principle implies that $\rho$ is the unique viscosity solution of \eqref{rho1}, and the rightmost point where \(\rho\) vanishes is \(s=c_+(\lambda^r)\).  Hence \(c_{r}=c_+(\lambda^r)\).
\end{proof}

\begin{lemma}\label{lem:vis-A2}
If $(\lambda^r,c_1)\in A_2$, then $c_r=c_-(\underline{p}(\lambda^r ,c_1))$.
\end{lemma}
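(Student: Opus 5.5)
We follow the strategy of Lemma~\ref{lem:vis-A1}: write down an explicit candidate $\rho$ on $[0,\infty)$, check that it is a viscosity solution of \eqref{rho1} with the boundary data \eqref{rho-boundary-1}, and then use uniqueness (the comparison principle, as in Corollary~\ref{col:rho-boundary-point-r}) to read off $c_r$ as the largest zero of $\rho$. Write $\underline{p}=\underline{p}(\lambda^r,c_1)$ and $C_0:=c_1\lambda^r-\mathbf{H}_+(\lambda^r)$, so that $c_1\underline{p}-\mathbf{H}_-(\underline{p})=C_0$ by Lemma~\ref{lem:under-p}. Note first that $A_2\subset E_2$. The first branch is immediate. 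For the second, $c_1>c_+(\lambda^r)$ gives the $E_2$ condition when $\lambda^r\le\nu_+^*$. For the third, $c_1\ge\mathbf{H}_+'(\lambda^r)$ is exactly the $E_2$ condition. Hence $\underline{p}$ is well defined, and $0<\underline{p}<\min\{\lambda^r,\Psi_-(c_1)\}$.

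The candidate is
\[
\rho(s)=\begin{cases}\max\{0,\ \underline{p}\,s-\mathbf{H}_-(\underline{p})\}, & 0\le s\le c_1,\\ \lambda^r s-\mathbf{H}_+(\lambda^r), & s>c_1.\end{cases}
\]
The defining relation of $\underline{p}$ is precisely continuity at $s=c_1$: $\underline{p}c_1-\mathbf{H}_-(\underline{p})=C_0$. Since $(\lambda^r,c_1)\in E_2$, \eqref{eq:E_2subset} gives $C_0>0$. The last display in the proof of Lemma~\ref{lem:under-p} gives $\underline{p}<\lambda^r$. Consequently the zero set of $\rho$ is $[0,c_-(\underline{p})]$ with $c_-(\underline{p})<c_1$, and $\rho(s)/s\to\lambda^r$ as $s\to\infty$. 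Away from the kinks $s=c_-(\underline{p})$ and $s=c_1$, $\rho$ is a classical solution. It solves $\rho-s\rho'+\mathbf{H}_\mp(\rho')=0$ where it is positive, and $\rho=0$ elsewhere, with $\tilde H=\mathbf{H}_-$ on $s\le c_1$ and $\tilde H=\mathbf{H}_+$ on $s>c_1$.

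At the kink $s_0=c_-(\underline{p})<c_1$ we argue exactly as in Lemma~\ref{lem:vis-A1}, with $\mathbf{H}_+$ replaced by $\mathbf{H}_-$. The point is a convex corner, so the subsolution test is vacuous. For the supersolution test we take $p\in[0,\underline{p}]$ and must show $-s_0p+\mathbf{H}_-(p)\ge0$, i.e. $c_-(p)\ge c_-(\underline{p})$ for $p\in(0,\underline{p}]$. This holds because $c_-$ is decreasing on $(0,\nu_-^*)$ (Corollary~\ref{lem:gamma-pm}), provided $\underline{p}\le\nu_-^*$. That condition is where the three branches of $A_2$ enter. In the first branch it follows from Lemma~\ref{lem:under-p}(i). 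In the second and third branches, which require $c_1<g(\lambda^r)$ (resp. $c_1\le g$), it follows from Lemma~\ref{lem:g(lambda^r)}(i). The case $p=0$ uses $\mathbf{H}_-(0)>0$.

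The main obstacle is the kink at $s_0=c_1$, where the Hamiltonian is discontinuous and we must use $H^*$ for supersolutions and $H_*$ for subsolutions via the reduction to $\tilde H^*,\tilde H_*$ (\cite[Lemma A.2]{Lam2022}). Here $R^*(c_1)=R_+$ and $\underline{R}(c_1)=R_-$, so $\tilde H^*=\mathbf{H}_-$ and $\tilde H_*=\mathbf{H}_+$ at $c_1$, because $\mathbf{H}_-<\mathbf{H}_+$ pointwise (smaller $R$ gives smaller $\mathbf{H}$, as in the text). Since $\underline{p}<\lambda^r$, the kink is convex, so the superdifferential is empty and only the supersolution test remains. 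For $p\in[\underline{p},\lambda^r]$ we need $C_0-c_1p+\mathbf{H}_-(p)\ge0$ (or the same with $\mathbf{H}_+$, whichever $\tilde H^*$ is). Using $\mathbf{H}_-$, this says $h(p):=c_1p-\mathbf{H}_-(p)\le h(\underline{p})$ on $[\underline{p},\lambda^r]$. Since $h$ is concave with maximum at $\Psi_-(c_1)>\underline{p}$, this can fail. I therefore expect to need the larger envelope $\mathbf{H}_+$, i.e. $H^*$ evaluated with $R_+$. The required inequality then becomes $c_1p-\mathbf{H}_+(p)\le c_1\lambda^r-\mathbf{H}_+(\lambda^r)$ for $p\in[\underline{p},\lambda^r]$. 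This holds when $\lambda^r\le\Psi_+(c_1)$, i.e. $c_1\ge\mathbf{H}_+'(\lambda^r)$. That is exactly the third branch, and it is automatic in the first two branches: there $c_1>c_+(\lambda^r)\ge\mathbf{H}_+'(\lambda^r)$ when $\lambda^r\le\nu_+^*$, since for $\lambda^r\le\nu_+^*$ we have $\lambda^r\mathbf{H}_+'(\lambda^r)-\mathbf{H}_+(\lambda^r)\le 0$ by Corollary~\ref{lem:gamma-pm}. Checking carefully which semicontinuous envelope applies at $s=c_1$ is the delicate point. Once this is settled, uniqueness gives $c_r=c_-(\underline{p})$.
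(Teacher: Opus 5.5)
Your proposal is correct and follows the paper's proof. It uses the same candidate $\rho$, the same vacuous subsolution tests at the two convex kinks, and the same supersolution test at $c_-(\underline p)$ via monotonicity of $c_-$ on $(0,\nu_-^*)$; that test is exactly where $\underline p\le\nu_-^*$, and hence the branch structure of $A_2$, enters. It then does the supersolution test at $c_1$ with $R(c_1)=R_+$ and concludes by uniqueness.

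One correction. Your intermediate claim $\tilde H^*(c_1,\cdot)=\mathbf H_-$ is backwards. Since $H$ is increasing in $R$, the upper envelope at $c_1$ uses $R_+$, so $\tilde H^*(c_1,\cdot)=\max\{\mathbf H_-,\mathbf H_+\}=\mathbf H_+$. The ``delicate point'' is therefore settled exactly as you anticipated. Your reduction of the test at $c_1$ to $\lambda^r\le\Psi_+(c_1)$ is automatic in the first two branches, because $\mathbf H_+'(\lambda^r)\le c_+(\lambda^r)$ for $\lambda^r\le\nu_+^*$. This is a more explicit justification than the paper's brief appeal to Lemma~\ref{lem:under-p}.
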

\begin{proof}
Define
\begin{align}\label{rho12}
\rho(s)=
\begin{cases}
\lambda^{r}s-\mathbf{H}_{+}(\lambda^{r}), & s\in[c_1,+\infty),\\
\underline{p}s-\mathbf{H}_{-}(\underline{p}), & s\in[c_-(\underline{p}),c_{1}),\\
0, & s\in[0,c_-(\underline{p})),
\end{cases}
\end{align}
where $\underline{p}:=\underline{p}(\lambda^r ,c_1)$ is defined in Lemma \ref{lem:under-p}. Then \eqref{rho12} is a classical solution of \eqref{rho1} except two points \(s_1=c_{1}\) and \(s_2=c_{-}(\underline{p})\), and satisfies the boundary condition \eqref{rho-boundary-1}. Since $D^+\rho(s_1)=D^+\rho(s_2)=\emptyset$, \eqref{rho12} is automatically a viscosity subsolution of \eqref{rho1}. The verification of supersolution property at $c_{-}(\underline{p})$ is similar to Lemma \ref{lem:vis-A1}, we only show the case $s_1=c_1$. Let \(\varphi\) be a test function such that \(\rho-\varphi\) has a local minimum at \(c_{1}\). For $\varphi'(c_1)\in D^-\rho(c_1)=[\underline{p},\lambda^r]$, it follows that
\begin{align*}
&d\left(\int_{\mathbb{R}}J(y)e^{\varphi'y}dy-1\right) + \rho(c_1)-c_1\varphi'-\mu+\mu R(c_1)e^{\tau(\rho(c_1)-c_1\varphi') }\\
=&\Delta_+(\varphi',c_1\varphi'-\underline{p}c_1+\mathbf{H}_{-}(\underline{p}))-\Delta_+(\varphi',\mathbf{H}_+(\varphi'))\geq0
\end{align*}
where we used $\Delta_+(\varphi',\mathbf{H}_+(\varphi'))=0$ in the first equality, $\mathbf{H}\mapsto\Delta_+(p,\mathbf{H})$ is decreasing in $\mathbf{H}$, and Lemma \ref{lem:under-p} for the last inequality. Hence the supersolution property holds at \(c_{1}\). Consequently \eqref{rho12} is the unique viscosity solution, and the rightmost zero of \(\rho\) occurs at \(s=c_{-}(\underline{p}(c_{1}))\). Therefore \(c_{r}=c_{-}(\underline{p}(c_{1}))\).
\end{proof}

\begin{lemma}\label{lem:vis-A3}
If $(\lambda^r,c_1)\in A_3$, then $c_r=c_-(\bar{p}(c_1))$.
\end{lemma}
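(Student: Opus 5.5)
The plan follows the pattern of Lemmas \ref{lem:vis-A1} and \ref{lem:vis-A2}. I will write down an explicit piecewise function $\rho$ on $[0,\infty)$ and check that it is a viscosity solution of \eqref{rho1} satisfying \eqref{rho-boundary-1}. Uniqueness (the comparison principle used before Corollary \ref{col:rho-boundary-point-r}) then identifies it with the limit $\rho$, and Corollary \ref{col:rho-boundary-point-r} gives $c_r$ as the rightmost zero of $\rho$. In $A_3$ we have $c_+^*\le c_1<\bar c_1$ and $\Psi_+(c_1)<\lambda^r$: in the first sub-case this is $c_1<\mathbf{H}_+'(\lambda^r)$, and in the second $\lambda^r\ge\Psi_+(\bar c_1)>\Psi_+(c_1)$. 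So the linear profile $\lambda^r s-\mathbf{H}_+(\lambda^r)$ cannot reach down to $c_1$, and must be joined to it through the Lagrangian $\mathbf{L}_+$. With $\bar p=\bar p(c_1)$ from Lemma \ref{lem:bar-p}, I propose
\[
\rho(s)=
\begin{cases}
\lambda^{r}s-\mathbf{H}_{+}(\lambda^{r}), & s\in[\mathbf{H}_+'(\lambda^r),+\infty),\\
\mathbf{L}_+(s)=s\Psi_+(s)-\mathbf{H}_+(\Psi_+(s)), & s\in[c_1,\mathbf{H}_+'(\lambda^r)),\\
\bar{p}s-\mathbf{H}_{-}(\bar{p}), & s\in[c_-(\bar{p}),c_{1}),\\
0, & s\in[0,c_-(\bar{p})).
\end{cases}
\]

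The first step is continuity and the sign of $\rho$. At $s=\mathbf{H}_+'(\lambda^r)$ we have $\Psi_+(s)=\lambda^r$, so the two upper pieces agree and are $C^1$-matched there. At $s=c_1$, continuity is exactly the defining relation $c_1\bar p-\mathbf{H}_-(\bar p)=\mathbf{L}_+(c_1)$. Since $\mathbf{L}_+$ is nondecreasing on $[0,\infty)$ and $\mathbf{L}_+(c_1)\ge \mathbf{L}_+(c_+^*)=0$ by \eqref{eq:L_+0}, we get $\rho\ge0$. The same relation gives $c_-(\bar p)\le c_1$, with equality only in the degenerate case $c_1=c_+^*$, where the third piece disappears. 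The boundary condition \eqref{rho-boundary-1} is clear.

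The second step is the classical verification away from the breakpoints. On $(c_1,\mathbf{H}_+'(\lambda^r))$ we have $\rho'=\Psi_+(s)$ and $\rho-s\rho'+\mathbf{H}_+(\rho')=0$, so by Lemma \ref{lem:H-convex} the equation holds with $R=R_+$. The linear pieces are handled exactly as in Lemma \ref{lem:vis-A2}, using $\mathbf{H}_-$ for $s<c_1$.

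The third step treats the kinks. At $s=\mathbf{H}_+'(\lambda^r)$ there is nothing to check, since $\rho$ is $C^1$ there. At $s=c_1$ the left slope is $\bar p$ and the right slope is $\Psi_+(c_1)$, and Lemma \ref{lem:bar-p} gives $\bar p<\Psi_+(c_1)$. Hence $D^+\rho(c_1)=\emptyset$, and the subsolution property holds automatically. For the supersolution property, take $p\in D^-\rho(c_1)=[\bar p,\Psi_+(c_1)]$; the upper envelope there is $R^*(c_1)=R_+$. So it suffices to show $\rho(c_1)-c_1p+\mathbf{H}_+(p)\ge0$, which is $\mathbf{H}_+(p)\ge c_1p-\mathbf{L}_+(c_1)$ and holds by the definition of $\mathbf{L}_+$ as a maximum. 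Translating back through the monotonicity of $\Delta_+$ in its second argument, as in Lemma \ref{lem:vis-A2}, completes this case. At $s=c_-(\bar p)$ the argument of Lemma \ref{lem:vis-A1} applies with $(\mathbf{H}_-,\bar p)$ in place of $(\mathbf{H}_+,\lambda^r)$. That argument needs $c_-(\bar p)\le c_-(p)$ for all $p\in(0,\bar p]$, i.e. $\bar p\le\nu_-^*$; this follows from $c_1<\bar c_1$ and the strict monotonicity of $\bar p$ in Lemma \ref{lem:bar-p}, together with Corollary \ref{lem:gamma-pm}(iii).

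I expect the main obstacle to be the kink at $c_1$, which is where the discontinuity of $R$ enters. One has to make sure the correct envelope ($R^*=R_+$ for the supersolution test) is used, and that the nonlocal Hamiltonian still reduces through Lemma \ref{lem:H-convex} to the inequality $\mathbf{H}_+(p)\ge c_1p-\mathbf{L}_+(c_1)$. The endpoint $c_1=c_+^*$, where the $\bar p$-piece collapses and $\rho(c_1)=0$, needs a separate check; there the supersolution test reduces to $\mathbf{H}_+(p)\ge c_+^*p$, which holds. Once all this is done, the comparison principle yields that $\rho$ is the unique solution, its rightmost zero is $c_-(\bar p(c_1))$, and therefore $c_r=c_-(\bar p(c_1))$.
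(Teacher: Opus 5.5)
Your proposal is correct and follows the paper's proof: the paper uses exactly the same four-piece profile \eqref{rho13} and just asserts that the verification goes as in Lemmas \ref{lem:vis-A1}--\ref{lem:vis-A2}. Your explicit checks supply the details the paper omits: the Legendre inequality $\mathbf{L}_+(c_1)\ge c_1p-\mathbf{H}_+(p)$ with $R^*(c_1)=R_+$ at $s=c_1$, the requirement $\bar p<\nu_-^*$ (coming from $c_1<\bar c_1$) at $s=c_-(\bar p)$, and the degenerate endpoint $c_1=c_+^*$.
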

\begin{proof}
Define
\begin{align}\label{rho13}
\rho(s)=
\begin{cases}
\lambda^{r}s-\mathbf{H}_{+}(\lambda^{r}), & s\in[\mathbf{H}_{+}'(\lambda^{r}),+\infty),\\
\mathbf{L}_+(s), & s\in[c_{1},\mathbf{H}_{+}'(\lambda^{r})),\\
\bar{p}s-\mathbf{H}_{-}(\bar{p}), & s\in[c_{-}(\bar{p}),c_{1}),\\
0, & s\in[0,c_{-}(\bar{p})).
\end{cases}
\end{align}
The same reasoning as before shows that \eqref{rho13} is the unique viscosity solution of \eqref{rho1}. Thus, $c_r=c_-(\bar{p}(c_1))$.
\end{proof}

\begin{lemma}\label{lem:vis-A4}
If $(\lambda^r,c_1)\in A_4$, then $c_r=c_+^*$.
\end{lemma}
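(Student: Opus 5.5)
We follow the pattern of Lemmas \ref{lem:vis-A1}--\ref{lem:vis-A3}. We write down an explicit candidate $\rho$ on $[0,\infty)$ that satisfies the boundary condition \eqref{rho-boundary-1}, verify that it is a viscosity solution of \eqref{rho1}, and then use uniqueness, via the comparison principle cited before Corollary \ref{col:rho-boundary-point-r}, to read off $c_r$ as the rightmost zero. For $(\lambda^r,c_1)\in A_4$ we have $\lambda^r\ge\nu_+^*$ and $c_1<c_+^*\le\mathbf{H}_+'(\lambda^r)$ (since $\mathbf{H}_+'(p)\ge c_+(p)$ for $p\ge\nu_+^*$). The candidate is the classical Fisher-KPP profile built from $\mathbf{H}_+$ alone:
\begin{align*}
\rho(s)=
\begin{cases}
\lambda^r s-\mathbf{H}_+(\lambda^r), & s\ge \mathbf{H}_+'(\lambda^r),\\
\mathbf{L}_+(s), & c_+^*\le s<\mathbf{H}_+'(\lambda^r),\\
0, & 0\le s<c_+^*.
\end{cases}
\end{align*}
When $\lambda^r=\nu_+^*$ the middle piece degenerates to the single point $c_+^*$. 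Continuity holds at $\mathbf{H}_+'(\lambda^r)$ because $\Psi_+(\mathbf{H}_+'(\lambda^r))=\lambda^r$. It holds at $c_+^*$ because $\mathbf{L}_+(c_+^*)=c_+^*\nu_+^*-\mathbf{H}_+(\nu_+^*)=0$, using $\Psi_+(c_+^*)=\nu_+^*$ and $\mathbf{H}_+'(\nu_+^*)=c_+(\nu_+^*)$ from Corollary \ref{lem:gamma-pm}(iii). Moreover $\rho\in C^1$ on $(c_+^*,\infty)$, since $\mathbf{L}_+'(s)=\Psi_+(s)$.

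Next we check that $\rho$ is a solution on each piece. The key observation is that every nonzero piece lives in $s\ge c_+^*>c_1$, where $R(s)=R_+$. Hence \eqref{rho1} reduces there to $\rho-s\rho'+\mathbf{H}_+(\rho')=0$. On the linear piece this is immediate. On the Legendre piece, $\mathbf{L}_+(s)-s\Psi_+(s)+\mathbf{H}_+(\Psi_+(s))=0$ holds by definition, and $\mathbf{L}_+(s)>0$ for $s>c_+^*$ by the argument of \eqref{eq:L_+0}. On $[0,c_+^*)$ we have $\rho=0$, so the min is attained by $\rho$. It remains to confirm $H\ge0$ in the viscosity sense, which is automatic since the constraint $\min\{\rho,\cdot\}$ is satisfied by $\rho=0$ and $H(s,0,0)=\mu(R(s)-1)>0$. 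This positivity of $H$ also takes care of the discontinuity of $R$ at $s=c_1$, which lies inside the zero region when $c_1\ge0$ and is absent when $c_1<0$. The boundary behavior is $\rho(0)=0$ and $\rho(s)/s\to\lambda^r$.

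The main obstacle is the junction point $s_0=c_+^*$. At this point $\rho$ has a kink: the left derivative is $0$ and the right derivative is $\Psi_+(c_+^*)=\nu_+^*$. Thus $D^+\rho(s_0)=\emptyset$, and the subsolution property holds vacuously. For the supersolution property we take $p=\varphi'(s_0)\in D^-\rho(s_0)=[0,\nu_+^*]$. As in Lemma \ref{lem:vis-A1}, the quantity to sign is $\Delta_+(p,c_+^*p)-\Delta_+(p,\mathbf{H}_+(p))$. Because $\mathbf{H}\mapsto\Delta_+(p,\mathbf{H})$ is decreasing, it suffices that $c_+^*p\le\mathbf{H}_+(p)$. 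For $p>0$ this holds by the minimality of $c_+^*=\inf c_+$, and for $p=0$ it holds since $\mathbf{H}_+(0)>0$. If $R$ is replaced by its upper envelope $R^*$, there is nothing further to check, since $c_1<c_+^*$ keeps $s_0$ in the $R_+$ region.

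Having verified both properties, uniqueness (the comparison principle for \eqref{rho1}) identifies $\rho$ as the solution of Corollary \ref{col:rho-boundary-point-r}, so $s_r=c_+^*$. Hence $c_r=c_+^*$.
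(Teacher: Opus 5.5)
Your proposal is correct and uses exactly the profile the paper writes down for this lemma: the linear piece for $s\ge\mathbf{H}_+'(\lambda^r)$, the Legendre piece $\mathbf{L}_+$ on $[c_+^*,\mathbf{H}_+'(\lambda^r))$, and zero on $[0,c_+^*)$, followed by uniqueness via the comparison principle. The paper only states this profile; your continuity and $C^1$ matching, your check that $R=R_+$ on all positive pieces because $c_1<c_+^*$, and your supersolution test at the kink $s=c_+^*$ via $c_+^*p\le\mathbf{H}_+(p)$ (as in Lemma~\ref{lem:vis-A1}) are precisely the verifications it omits.
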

\begin{proof}
Define
\begin{align*}
\rho(s)=
\begin{cases}
\lambda^{r}s-\mathbf{H}_{+}(\lambda^{r}), & s\in[\mathbf{H}_{+}'(\lambda^{r}),+\infty),\\
\mathbf{L}_+(s), & s\in[c_+^*,\mathbf{H}_{+}'(\lambda^{r})),\\
0, & s\in[0,c_+^*).
\end{cases}
\end{align*}
Then, $c_r=c_+^*$.
\end{proof}

\begin{lemma}\label{lem:vis-A5}
If $(\lambda^r,c_1)\in A_5$, then $c_r=c_-^*$.
\end{lemma}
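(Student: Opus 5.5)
We construct, as in Lemmas \ref{lem:vis-A1}--\ref{lem:vis-A4}, an explicit candidate $\rho$ on $[0,\infty)$ satisfying \eqref{rho-boundary-1}. We then verify that it is a viscosity solution of \eqref{rho1} and read off its rightmost zero; uniqueness follows from the comparison principle, so Corollary \ref{col:rho-boundary-point-r} gives $c_r=s_r$. In $A_5$ the front has already been overtaken by the habitat edge ($c_1$ is large), so the zero set should end at $c_-^*$ inside the bad region $s<c_1$. There $\rho$ should be the Lagrangian-type profile of the $\mathbf{H}_-$ problem. It is glued at $s=c_1$ to the $\mathbf{H}_+$ piece coming from $+\infty$.

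We treat the two subcases of $A_5$ separately. For $\lambda^r\in(\nu_-^*,\Psi_+(\bar c_1))$ with $c_1\ge g(\lambda^r)$, we set
\[
\rho(s)=\begin{cases}\lambda^r s-\mathbf{H}_+(\lambda^r), & s\ge c_1,\\ \mathbf{L}_-(s)\ (\text{on } [c_-^*,\hat s]),\ \text{then } \hat p\, s-\mathbf{H}_-(\hat p)\ (\text{on } [\hat s,c_1)), & c_-^*\le s<c_1,\\ 0,& 0\le s<c_-^*.\end{cases}
\]
Here $\hat p$ is the smallest root of $c_1p-\mathbf{H}_-(p)=c_1\lambda^r-\mathbf{H}_+(\lambda^r)$. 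By Lemma \ref{lem:g(lambda^r)}(i) we have $\hat p\ge\nu_-^*$ exactly when $c_1\ge g(\lambda^r)$, so $\hat s:=\mathbf{H}_-'(\hat p)\ge c_-^*$. On $[c_-^*,\hat s]$ the $C^1$ piece $\mathbf{L}_-(s)$ is tangent to the zero function at $s=c_-^*$, since $\mathbf{L}_-(c_-^*)=0$ and $\Psi_-(c_-^*)=\nu_-^*$. For $\lambda^r\ge\Psi_+(\bar c_1)$ with $c_1\ge\bar c_1$, we proceed analogously. On $s\ge c_1$ we use the $\mathbf{H}_+$ profile $\lambda^r s-\mathbf{H}_+(\lambda^r)$ for $s\ge\mathbf{H}_+'(\lambda^r)$ and $\mathbf{L}_+(s)$ on $[c_1,\mathbf{H}_+'(\lambda^r))$; when $c_1\ge\mathbf{H}_+'(\lambda^r)$ only the linear piece survives. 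On $s<c_1$ we use the $\mathbf{L}_-$/tangent-line construction, with the tangent slope $\hat p$ determined by continuity at $c_1$, that is, the root $\bar p$ of $c_1p-\mathbf{H}_-(p)=\mathbf{L}_+(c_1)$. Lemma \ref{lem:bar-p} gives $\bar p(c_1)\ge\nu_-^*$ iff $c_1\ge\bar c_1$, which guarantees the $\mathbf{L}_-$ segment is nonempty.

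The verification then proceeds in three steps. First, on each smooth piece $\rho$ is a classical solution. Linear pieces $ps-\mathbf{H}_\pm(p)$ give $\rho-s\rho'+\mathbf{H}_\pm(\rho')=0$. For the $\mathbf{L}_\pm(s)$ pieces, we have $\rho'=\Psi_\pm(s)$ and $\rho-s\rho'=-\mathbf{H}_\pm(\Psi_\pm(s))$, which also gives zero. Second, at $s=c_-^*$ and at the junctions inside one region, $\rho$ is $C^1$, so nothing needs checking. Third, at $s=c_1$, $\rho$ has a convex kink: the left slope $\hat p$ is at most the right slope, which is $\lambda^r$ or $\Psi_+(c_1)$, using $\hat p<\lambda^r$ or $\bar p<\Psi_+(c_1)$ from Lemmas \ref{lem:under-p} and \ref{lem:bar-p}. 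Hence $D^+\rho(c_1)=\emptyset$ and the subsolution condition is vacuous. For the supersolution condition, a slope $p\in[\hat p,\text{right slope}]$ must satisfy $H^*\ge0$, that is, $\rho(c_1)-c_1p+\max\{\mathbf{H}_+,\mathbf{H}_-\}(p)\ge0$. Using $\mathbf{H}_+\ge\mathbf{H}_-$, it suffices to show $\rho(c_1)\ge c_1p-\mathbf{H}_+(p)$. This holds because $\rho(c_1)$ equals the maximum (or the value at the endpoint slope) of the concave function $p\mapsto c_1p-\mathbf{H}_+(p)$, which is monotone between the slopes. We write this via $\Delta_+$ exactly as in Lemma \ref{lem:vis-A2}.

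The main obstacle is the bookkeeping of the case boundaries rather than any single estimate. We must check that $\hat s\le c_1$, so that the $\mathbf{L}_-$ segment lies in $[c_-^*,c_1]$. This uses $\hat p<\Psi_-(c_1)$ from Lemmas \ref{lem:under-p} and \ref{lem:bar-p}, which gives $\mathbf{H}_-'(\hat p)<c_1$. We must also check that $\rho\ge0$ and that $\rho>0$ on $(c_-^*,\infty)$, which follows from $\mathbf{L}_-(s)>0$ for $s>c_-^*$ and monotonicity. Continuity of $c_r$ across $\partial A_5\cap\partial A_2$ and $\partial A_5\cap\partial A_3$ is consistent, since there $\hat p=\nu_-^*$ and $c_-(\nu_-^*)=c_-^*$. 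The comparison principle then identifies this $\rho$ with the limit from Section \ref{Sec:HJ}, which yields $c_r=c_-^*$.
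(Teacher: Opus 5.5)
Your construction is the same as the paper's: a linear $\mathbf{H}_+$ profile (possibly preceded by $\mathbf{L}_+$) on $[c_1,\infty)$, a tangent line of slope $\ge\nu_-^*$ followed by $\mathbf{L}_-$ on $[c_-^*,c_1)$, and zero on $[0,c_-^*)$. However, two steps are wrong as written.

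\textbf{The slope at $c_1$ in part of your second subcase.} Take $\lambda^r>\Psi_+(\bar c_1)$ and $c_1>\mathbf{H}_+'(\lambda^r)$, where only the linear piece survives on $[c_1,\infty)$. Continuity at $c_1$ forces the left slope to solve $c_1p-\mathbf{H}_-(p)=c_1\lambda^r-\mathbf{H}_+(\lambda^r)$. That root is $\underline{p}(\lambda^r,c_1)$, not $\bar p(c_1)$. Here $\Psi_+(c_1)>\lambda^r$, so $c_1\lambda^r-\mathbf{H}_+(\lambda^r)<\mathbf{L}_+(c_1)$. Hence your profile with $\bar p$ jumps down at $c_1$: the left limit is $\mathbf{L}_+(c_1)$ and the right value is $c_1\lambda^r-\mathbf{H}_+(\lambda^r)$. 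It therefore cannot be the continuous viscosity solution.

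This is why the paper groups this region (its case (b)) with case (a) and uses $\underline p$ there. It reserves $\bar p$ for $\bar c_1\le c_1\le \mathbf{H}_+'(\lambda^r)$.

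Switching to $\underline p$ also removes your justification that the $\mathbf{L}_-$ segment is nonempty. Lemma \ref{lem:bar-p} says nothing about $\underline p$, and Lemma \ref{lem:g(lambda^r)}(i) only covers $\lambda^r\le\Psi_+(\bar c_1)$. You need a direct argument that $\underline p\ge\nu_-^*$, for instance:
\begin{itemize}
\item Set $c_\lambda:=\mathbf{H}_+'(\lambda^r)\ge\bar c_1$. Lemma \ref{lem:bar-p} gives $\bar p(c_\lambda)\ge\nu_-^*$.
\item Since $p\mapsto c_\lambda p-\mathbf{H}_-(p)$ is increasing on $[0,\Psi_-(c_\lambda)]$, this yields $c_\lambda\nu_-^*-\mathbf{H}_-(\nu_-^*)\le \mathbf{L}_+(c_\lambda)=c_\lambda\lambda^r-\mathbf{H}_+(\lambda^r)$. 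That is, $c_\lambda\ge g(\lambda^r)$.
\item Hence $c_1>g(\lambda^r)$. The same monotonicity argument at $c_1$, using $\nu_-^*,\underline p<\Psi_-(c_1)$, gives $\underline p>\nu_-^*$.
\end{itemize}

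\textbf{The corner at $c_-^*$.} Your claim that $\mathbf{L}_-$ is tangent to the zero function at $c_-^*$ is false. In fact $\mathbf{L}_-'(c_-^*)=\Psi_-(c_-^*)=\nu_-^*>0$, so $\rho$ has a convex corner at $c_-^*$. The supersolution condition is therefore not vacuous there. Since $c_1>c_-^*$ in $A_5$, you have $\tilde H=\mathbf{H}_-$ near $c_-^*$. You must check $\mathbf{H}_-(p)-c_-^*p\ge 0$ for all $p\in D^-\rho(c_-^*)=[0,\nu_-^*]$. This does hold, because $c_-(p)\ge c_-^*$ for $p>0$ and $\mathbf{H}_-(p)\ge\mathbf{H}_-(0)>0$, exactly as in Lemma \ref{vis-B4}. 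But it has to be done.

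\textbf{A smaller omission at $c_1$ in your first subcase.} Your monotonicity argument at the kink $c_1$ needs $\lambda^r\le\Psi_+(c_1)$, i.e. $c_1\ge\mathbf{H}_+'(\lambda^r)$. This is true because $g(\lambda^r)>\mathbf{H}_+'(\lambda^r)$ on $(\nu_-^*,\Psi_+(\bar c_1))$, but you should say so.
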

\begin{proof}
We divide it into three cases: (a) $\lambda^r\in(\nu_-^*,\Psi_+(\bar{c}_1))$ and $c_1\ge g(\lambda^r)$, (b) $\lambda^r\in(\Psi_+(\bar{c}_1),\infty)$ and $c_1>\mathbf{H}_+'(\lambda^r)$, (c) $\lambda^r\in(\Psi_+(\bar{c}_1),\infty)$ and $\bar{c}_1\le c_1\le\mathbf{H}_+'(\lambda^r)$.
For cases (a) and (b), define
\begin{align*}
\rho(s)=
\begin{cases}
\lambda^{r}s-\mathbf{H}_{+}(\lambda^{r}), & s\in[c_1,+\infty),\\
\underline{p}s-\mathbf{H}_{-}(\underline{p}), & s\in[\mathbf{H}_-'(\underline{p}),c_{1}),\\
\mathbf{L}_-(s), & s\in[c_-^*,\mathbf{H}_-'(\underline{p})),\\
0, & s\in[0,c_-^*).
\end{cases}
\end{align*}
For case (c), define
\begin{align*}
\rho(s)=
\begin{cases}
\lambda^{r}s-\mathbf{H}_{+}(\lambda^{r}), & s\in[\mathbf{H}_{+}'(\lambda^{r}),+\infty),\\
\mathbf{L}_+(s), & s\in[c_{1},\mathbf{H}_{+}'(\lambda^{r})),\\
\bar{p}s-\mathbf{H}_{-}(\bar{p}), & s\in[c_{-}(\bar{p}),c_{1}),\\
\mathbf{L}_-(s), & s\in[c_-^*,\mathbf{H}_-'(\bar{p})),\\
0, & s\in[0,c_-^*).
\end{cases}
\end{align*}
Therefore, $c_r=c_-^*$.
\end{proof}

\begin{proof}[Proof of Theorem \ref{thm:right}]
Combining Lemmas~\ref{lem:vis-A1}--\ref{lem:vis-A5}, it is easy to check that $c_r(\lambda^r,c_1)$ is a continuous function on $(0,\infty)\times\mathbb{R}$. Then we obtain Theorem~\ref{thm:right}.
\end{proof}

\subsection{Leftward Spreading Speed}
\noindent

In this subsection, adapting the constructions from Section 4 of \cite{Tao2026}, we construct explicit viscosity solutions for the Hamilton-Jacobi equation on the left half-line and derive the leftward spreading speeds.

Set $\tilde{c}_1=-c_1$. Note that $c_+^*>c_-^*$, there exists $\nu_1<\nu_-^*<\nu_2$ such that $c_+^*=c_-(\nu_i)$, $i=1,2$. Denote
\begin{align*}
&E_3 := \{\tilde{c}_1 : \tilde{c}_1 \geq c_+^*\},\\
&E_4 := \{(\lambda^l,\tilde{c}_1) : \tilde{c}_1\lambda^l > \mathbf{H}_-(\lambda^l) \text{ for } \lambda^l \leq \nu_1 \}\cup \{(\lambda^l,\tilde{c}_1):\tilde{c}_1 \geq c_+^* \text{ for } \lambda^l > \nu_1\}.
\end{align*}

\begin{lemma}\label{lem:check-p}
For any \(\tilde{c}_1 \in E_3\), the equation in \(p\)
\[
\tilde{c}_1 p - \mathbf{H}_-(p) = \mathbf{L}_+(\tilde{c}_1)
\]
has two positive roots \(\check{p}:=\check{p}({c}_1)\) and \(\hat{p}:=\hat{p}({c}_1)\) satisfying  \(\check{p} < \hat{p}\). Moreover,
\[
\check{p} < \min\{\Psi_+(\tilde{c}_1),\Psi_-(\tilde{c}_1)\} \le \max\{\Psi_+(\tilde{c}_1),\Psi_-(\tilde{c}_1)\} < \hat{p}.
\]
In particular, \(\check{p}(c_+^*) = \nu_1\) and \(\hat{p}(c_+^*) = \nu_2\), where \(\nu_1<\nu_-^*<\nu_2\) are defined by \(c_+^*=c_-(\nu_i)\) for \(i=1,2\).
\end{lemma}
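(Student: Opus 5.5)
The plan mirrors the proof of Lemma \ref{lem:bar-p}, now with both roots of a concave equation. Define
\[
F_3(\tilde c_1,p)=\tilde c_1 p-\mathbf{H}_-(p)-\mathbf{L}_+(\tilde c_1),\qquad \tilde c_1\in E_3.
\]
By the strict convexity of $\mathbf{H}_-$ (Corollary \ref{lem:gamma-pm}), $p\mapsto F_3(\tilde c_1,p)$ is strictly concave. It is strictly increasing on $(-\infty,\Psi_-(\tilde c_1)]$ and strictly decreasing on $[\Psi_-(\tilde c_1),\infty)$. Since $\mathbf{H}_-$ grows superlinearly, $F_3(\tilde c_1,p)\to-\infty$ as $p\to+\infty$.

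First, at $p=0$ we have $F_3(\tilde c_1,0)=-\mathbf{H}_-(0)-\mathbf{L}_+(\tilde c_1)<0$. This uses $\mathbf{H}_-(0)>0$ and $\mathbf{L}_+(\tilde c_1)\ge0$ for $\tilde c_1\ge c_+^*$, which follows from \eqref{eq:L_+0} (with equality at $c_+^*$).

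Second, at $p=\Psi_+(\tilde c_1)$ we have $F_3=\mathbf{H}_+(\Psi_+(\tilde c_1))-\mathbf{H}_-(\Psi_+(\tilde c_1))>0$, because $\mathbf{H}_+>\mathbf{H}_-$ for $p>0$. Here $\Psi_+(\tilde c_1)\ge\Psi_+(c_+^*)=\nu_+^*>0$, using $\mathbf{H}_+'(\nu_+^*)=c_+(\nu_+^*)=c_+^*$.

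Since the maximum of $F_3(\tilde c_1,\cdot)$ is attained at $\Psi_-(\tilde c_1)$, this maximum is at least the positive value just found. Concavity then gives exactly two positive roots, $\check p<\Psi_-(\tilde c_1)<\hat p$. Moreover, $F_3(\tilde c_1,\Psi_+(\tilde c_1))>0$ forces $\check p<\Psi_+(\tilde c_1)<\hat p$, since $F_3>0$ exactly on $(\check p,\hat p)$. Together these give the stated chain $\check p<\min\{\Psi_\pm\}\le\max\{\Psi_\pm\}<\hat p$. One sign detail needs checking here. $\Psi_-(\tilde c_1)$ must lie in $(0,\infty)$, which holds because $\mathbf{H}_-'$ is an increasing homeomorphism with $\mathbf{H}_-'(0)=0$ and $\tilde c_1>0$.

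For the endpoint $\tilde c_1=c_+^*$, we have $\mathbf{L}_+(c_+^*)=c_+^*\nu_+^*-\mathbf{H}_+(\nu_+^*)=0$, because the maximizer is $\Psi_+(c_+^*)=\nu_+^*$. The equation therefore reduces to $c_+^*p=\mathbf{H}_-(p)$, i.e. $c_-(p)=c_+^*$ for $p>0$. By Corollary \ref{lem:gamma-pm}(iii), $c_-$ is strictly decreasing on $(0,\nu_-^*)$ and strictly increasing on $(\nu_-^*,\infty)$, with minimum $c_-^*<c_+^*$ and $c_-(p)\to\infty$ at both ends. Hence its level set at height $c_+^*$ is exactly $\{\nu_1,\nu_2\}$ with $\nu_1<\nu_-^*<\nu_2$, and ordering gives $\check p(c_+^*)=\nu_1$ and $\hat p(c_+^*)=\nu_2$.

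The only mildly delicate step is the endpoint case $\tilde c_1=c_+^*$, where $\mathbf{L}_+=0$, so the sign at $p=0$ must come from $\mathbf{H}_-(0)>0$ alone; this is already covered above.
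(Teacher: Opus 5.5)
Your proof is correct and follows essentially the same route as the paper. You use the same auxiliary function $F_3(\tilde c_1,p)$, its unimodality about $\Psi_-(\tilde c_1)$, the sign checks $F_3(\tilde c_1,0)<0$ and $F_3(\tilde c_1,\Psi_+(\tilde c_1))=\mathbf{H}_+(\Psi_+(\tilde c_1))-\mathbf{H}_-(\Psi_+(\tilde c_1))>0$, and the reduction to $c_-(p)=c_+^*$ when $\tilde c_1=c_+^*$. You also make explicit a few details the paper leaves implicit, namely $\mathbf{L}_+(\tilde c_1)\ge 0$ on $E_3$ and the positivity of $\Psi_\pm(\tilde c_1)$.
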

\begin{proof}
Define
\[
F_3(\tilde{c}_1,p) = \tilde{c}_1 p - \mathbf{H}_-(p) - \mathbf{L}_+(\tilde{c}_1), \quad \tilde{c}_1 \in E_3.
\]
The function \(p \mapsto F_3(\tilde{c}_1,p)\) is increasing on \([0,\Psi_-(\tilde{c}_1)]\) and decreasing on \([\Psi_-(\tilde{c}_1),\infty)\). Observe that
\begin{align*}
&F_3(\tilde{c}_1,0) = -\mathbf{H}_-(0) - \tilde{c}_1\Psi_+(\tilde{c}_1) + \mathbf{H}_+(\Psi_+(\tilde{c}_1)) < 0,\\
&F_3(\tilde{c}_1,\Psi_+(\tilde{c}_1)) = -\mathbf{H}_-(\Psi_+(\tilde{c}_1)) + \mathbf{H}_+(\Psi_+(\tilde{c}_1)) > 0,\quad\lim_{p\to\infty} F_3(\tilde{c}_1,p) = -\infty,
\end{align*}
then \(F_3(\tilde{c}_1,p)=0\) has exactly two positive roots \(\check{p}<\hat{p}\), with \(\check{p}<\Psi_+(\tilde{c}_1)<\hat{p}\) and also \(\check{p}<\Psi_-(\tilde{c}_1)<\hat{p}\). This yields the stated inequalities.

At \(\tilde{c}_1 = c_+^*\), we have \(\Psi_+(c_+^*) = \nu_+^*\) and \(F_3(c_+^*,p) = c_+^* p - \mathbf{H}_-(p)\). The numbers \(\nu_1,\nu_2\) are precisely the two positive roots of \(c_+^* p - \mathbf{H}_-(p)=0\), hence \(\check{p}(c_+^*) = \nu_1\) and \(\hat{p}(c_+^*) = \nu_2\).
\end{proof}

\begin{lemma}\label{lem:p_*}
For any \((\lambda^l,\tilde{c}_1) \in E_4\) satisfying \(\lambda^l < \check{p}({c}_1)\), the equation in \(p\)
\begin{align}\label{eq:p_*}
\tilde{c}_1 p - \mathbf{H}_+(p) = \tilde{c}_1 \lambda^l - \mathbf{H}_-(\lambda^l)
\end{align}
has a unique smallest positive root \(p_*:=p_*(\lambda^l,{c}_1)\). Moreover,
$p_* \in (\lambda^l,\Psi_+(\tilde{c}_1))$,
and \(p_*\) is strictly decreasing in \(\tilde{c}_1\) and strictly increasing in \(\lambda^l\) whenever \(\tilde{c}_1 > \mathbf{H}_-'(\lambda^l)\).
\end{lemma}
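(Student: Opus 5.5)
Following the pattern of Lemmas~\ref{lem:bar-p}--\ref{lem:under-p}, we introduce the auxiliary function
\[
F_4(\lambda^l,\tilde c_1,p)=\tilde c_1 p-\mathbf{H}_+(p)-\tilde c_1\lambda^l+\mathbf{H}_-(\lambda^l),
\]
and locate a sign change on the interval $(\lambda^l,\Psi_+(\tilde c_1))$. By Corollary~\ref{lem:gamma-pm}, $p\mapsto F_4$ is strictly concave. Since $\mathbf{H}_+'$ is a homeomorphism, $F_4$ is strictly increasing on $(-\infty,\Psi_+(\tilde c_1)]$ and strictly decreasing afterwards.

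We evaluate $F_4$ at the two endpoints. First,
\[
F_4(\lambda^l,\tilde c_1,\lambda^l)=\mathbf{H}_-(\lambda^l)-\mathbf{H}_+(\lambda^l)<0,
\]
because $R_-<R_+$ gives $\mathbf{H}_-<\mathbf{H}_+$. Second,
\[
F_4(\lambda^l,\tilde c_1,\Psi_+(\tilde c_1))=\mathbf{L}_+(\tilde c_1)-\bigl(\tilde c_1\lambda^l-\mathbf{H}_-(\lambda^l)\bigr)=-F_3(\tilde c_1,\lambda^l),
\]
with $F_3$ as in Lemma~\ref{lem:check-p}. Since $\lambda^l<\check p$ and $F_3(\tilde c_1,\cdot)$ is increasing on $[0,\Psi_-(\tilde c_1)]$ with $F_3<0$ on $[0,\check p)$, we get $F_3(\tilde c_1,\lambda^l)<0$. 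Hence $F_4>0$ at $p=\Psi_+(\tilde c_1)$.

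These endpoint signs need $\tilde c_1\in E_3$, i.e.\ $\tilde c_1\ge c_+^*$, so that $\check p$ and $\mathbf{L}_+(\tilde c_1)\ge 0$ make sense. For $\lambda^l>\nu_1$ this is immediate from the definition of $E_4$. For $\lambda^l\le\nu_1$ we only know $\tilde c_1>c_-(\lambda^l)$. In that case we also use the condition $\lambda^l<\check p(c_1)$, which is only meaningful when $\tilde c_1\in E_3$, so we read the hypothesis as including $\tilde c_1\ge c_+^*$. Checking this interpretation of the domain is the main obstacle. If it fails, we fall back on evaluating $F_4$ at $p=0$ and at $p=\Psi_+(\tilde c_1)$ directly, using $\mathbf{L}_+\ge$ the value at $\nu_+^*$.

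By strict concavity, $F_4(\cdot)$ on $[0,\lambda^l]$ lies below its increasing branch, so it is negative there as well. The intermediate value theorem and strict monotonicity on $[\lambda^l,\Psi_+(\tilde c_1)]$ then give a unique root in $(\lambda^l,\Psi_+(\tilde c_1))$, and this is the smallest positive root $p_*$. At $p_*$ we have
\[
\partial_pF_4=\tilde c_1-\mathbf{H}_+'(p_*)>0,
\]
because $p_*<\Psi_+(\tilde c_1)$. We also have
\[
\partial_{\tilde c_1}F_4=p_*-\lambda^l>0,\qquad \partial_{\lambda^l}F_4=-\tilde c_1+\mathbf{H}_-'(\lambda^l).
\]
The implicit function theorem, applied exactly as in Lemma~\ref{lem:under-p}, gives
\[
\frac{\partial p_*}{\partial\tilde c_1}=-\frac{\partial_{\tilde c_1}F_4}{\partial_pF_4}<0,
\]
so $p_*$ is strictly decreasing in $\tilde c_1$. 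It also gives $\partial p_*/\partial\lambda^l>0$ whenever $\tilde c_1>\mathbf{H}_-'(\lambda^l)$, since then $\partial_{\lambda^l}F_4<0$.
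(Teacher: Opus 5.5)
Your proposal is correct and follows essentially the paper's route: the same concave auxiliary function $F_4$, the endpoint signs $F_4(\lambda^l)=\mathbf{H}_-(\lambda^l)-\mathbf{H}_+(\lambda^l)<0$ and $F_4(\Psi_+(\tilde c_1))>0$ (the latter from $\lambda^l<\check p$), and implicit differentiation using the signs of $\partial_pF_4$, $\partial_{\tilde c_1}F_4$, $\partial_{\lambda^l}F_4$. Your reading that $\tilde c_1\ge c_+^*$ is built into the hypothesis matches what the paper tacitly assumes, and your one deviation is also fine: you use monotonicity of $F_4$ on $[0,\lambda^l]$ in place of the paper's preliminary check $\tilde c_1\lambda^l-\mathbf{H}_-(\lambda^l)\ge 0$, which needs $\lambda^l<\Psi_+(\tilde c_1)$ and so you should say explicitly that $\lambda^l<\check p<\Psi_+(\tilde c_1)$ by Lemma~\ref{lem:check-p}.
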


\begin{proof}
For $\tilde{c}_1>c_+^*$ and $\nu_1<\lambda^l<\check{p}<\Psi_-(\tilde{c}_1)$, define $h_2(\lambda^l)=\tilde{c}_1\lambda^l-\mathbf{H}_-(\lambda^l)$. Since $\lambda^l<\Psi_-(\tilde{c}_1)$, we have $h_2'(\lambda^l)=\tilde{c}_1-\mathbf{H}_-'(\lambda^l)>0$. Thus
\[
h_2(\lambda^l)\ge h_2(\nu_1)=\tilde{c}_1\nu_1-\mathbf{H}_-(\nu_1)\ge c_+^*\nu_1-\mathbf{H}_-(\nu_1)=0\quad\text{for} \quad\nu_1<\lambda^l<\check{p}.
\]
Hence, $\tilde{c}_1 \lambda^l - \mathbf{H}_-(\lambda^l)\ge0$ in $E_4\cap \{\lambda^l < \check{p}\}$.
Define
\[
F_4(\lambda^l,\tilde{c}_1,p) = \tilde{c}_1 p - \mathbf{H}_+(p) - \tilde{c}_1 \lambda^l + \mathbf{H}_-(\lambda^l), \quad (\lambda^l,\tilde{c}_1)\in E_4\cap \{\lambda^l < \check{p}\}.
\]
For fixed $(\lambda^l,\tilde{c}_1)$, $F_4$ is increasing in $p \in [0, \Psi_+(\tilde{c}_1)]$ and decreasing in $p \in [\Psi_+(\tilde{c}_1), \infty)$.
Using the condition $\lambda^l < \check{p}$ and the definition of $\check{p}$, we have
\[
\tilde{c}_1 \lambda^l - \mathbf{H}_-(\lambda^l) < \tilde{c}_1 \Psi_+(\tilde{c}_1) - \mathbf{H}_+(\Psi_+(\tilde{c}_1)).
\]
Furthermore,
\begin{align*}
&F_4(\lambda^l,\tilde{c}_1, \lambda^l) = -\mathbf{H}_+(\lambda^l) + \mathbf{H}_-(\lambda^l) < 0,  \\
&F_4(\lambda^l,\tilde{c}_1, \Psi_+(\tilde{c}_1)) = \tilde{c}_1\Psi_+(\tilde{c}_1)-\mathbf{H}_+(\Psi_+(\tilde{c}_1))-\tilde{c}_1\lambda^l+\mathbf{H}_-(\lambda^l)>0.
\end{align*}
Consequently, $F_4(\lambda^l,\tilde{c}_1,p)=0$ admits a smallest positive root $p_*(\lambda^l,{c}_1)\in(\lambda^l,\Psi_+(\tilde{c}_1))$. Differentiating \(F_4(\lambda^l,\tilde{c}_1,p)=0\) with respect to \(\tilde{c}_1\) and $\lambda^l$ along \(p=p_*(\lambda^l,{c}_1)\) yields
\[
\frac{\partial F_4}{\partial \tilde{c}_1} + \frac{\partial F_4}{\partial p}\,\frac{\partial p_*}{\partial c_1}=0 \quad {and}\quad\frac{\partial F_4}{\partial \lambda^l} + \frac{\partial F_4}{\partial p}\,\frac{\partial p_*}{\partial \lambda^l}=0.
\]
Note that
\begin{equation}\label{partial-F-4}
\begin{aligned}
&\partial_{\lambda^l} F_4(\lambda^l,\tilde{c},p_*)=-\tilde{c}_1+\mathbf{H}_-'(\lambda^l),\quad\partial_{\tilde{c}_1}F_4(\lambda^l,\tilde{c},p_*)=p_*-\lambda^l>0,\\
&\partial_pF_4(\lambda^l,\tilde{c},p_*)=\tilde{c}_1-\mathbf{H}_+'(p_*)>0.
\end{aligned}
\end{equation}
Hence, $p_*$ is decreasing in $\tilde{c}_1$, and increasing in $\lambda^l$ if $\tilde{c}_1>\mathbf{H}_-'(\lambda^l)$.
\end{proof}

For $(\lambda^l,\tilde{c}_1)\in E_4$ with $\lambda^l \in (0,\nu_+^*)$, consider the equation $F_4(\lambda^l,\tilde{c}_1,\nu_+^*) = 0$.
Note that $F_4(\lambda^l,\tilde{c}_1,\nu_+^*) \to \pm\infty$ as $\tilde{c}_1 \to \pm\infty$ and $F_4$ is strictly increasing in $\tilde{c}_1$. Hence for each fixed $\lambda^l$ there exists a unique $\tilde{c}_1$ satisfying $F_4 = 0$. This defines a function $\tilde{c}_1 = k(\lambda^l)$, which solves $F_4 = 0$ explicitly, and hence
\begin{align}\label{eq:k}
 k(\lambda^l) = \frac{\mathbf{H}_+(\nu_+^*) - \mathbf{H}_-(\lambda^l)}{\nu_+^* - \lambda^l}, \qquad 0 < \lambda^l < \nu_+^*.
\end{align}
\begin{lemma}\label{lem:k(lambda^l)}
Let $k$ be defined by \eqref{eq:k}. Then the following assertions hold true.
\begin{enumerate}
    \item[(i)] $k$ is strictly increasing on $(0,\nu_+^*)$.
    \item[(ii)] $k(\lambda^l) < c_-(\lambda^l)$ for $0 < \lambda^l < \nu_1$, and $k(\lambda^l) > \mathbf{H}_+'(\lambda^l)$ for $\lambda^l \in (\nu_1,\nu_+^*)$.
    \item[(iii)] For any $(\lambda^l,\tilde{c}_1)\in E_4\cap \{\lambda^l < \check{p}\}$ such that $p_*$ is well-defined, we have
    \[
    p_*(\lambda^l,\tilde{c}_1) \leq \nu_+^* \quad\Longleftrightarrow\quad \tilde{c}_1 \geq k(\lambda^l),
    \]
    and the equality holds iff $\tilde{c}_1 = k(\lambda^l)$.
\end{enumerate}
\end{lemma}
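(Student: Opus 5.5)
The plan is to treat the three assertions separately. Assertions (i) and (ii) are elementary convexity facts about the chord slope defining $k$. Assertion (iii) mirrors the argument for $g$ in Lemma~\ref{lem:g(lambda^r)}(i), using the monotonicity of $F_4$ in $p$ and in $\tilde c_1$.

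For (i), I would follow the proof of Lemma~\ref{lem:g(lambda^r)}(ii). Differentiating the identity $F_4(\lambda^l,k(\lambda^l),\nu_+^*)\equiv0$ and using $\partial_{\tilde c_1}F_4=\nu_+^*-\lambda^l$ and $\partial_{\lambda^l}F_4=-\tilde c_1+\mathbf H_-'(\lambda^l)$ gives
\[
k'(\lambda^l)=\frac{k(\lambda^l)-\mathbf H_-'(\lambda^l)}{\nu_+^*-\lambda^l}.
\]
The same formula follows by differentiating \eqref{eq:k} directly. It therefore suffices to show $k(\lambda^l)>\mathbf H_-'(\lambda^l)$ for $0<\lambda^l<\nu_+^*$. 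Here $k(\lambda^l)$ is the slope of the chord from $(\lambda^l,\mathbf H_-(\lambda^l))$ to $(\nu_+^*,\mathbf H_+(\nu_+^*))$. Since $\mathbf H_+(\nu_+^*)>\mathbf H_-(\nu_+^*)$, this slope strictly exceeds the slope of the $\mathbf H_-$-chord over $[\lambda^l,\nu_+^*]$. By convexity of $\mathbf H_-$ (Corollary~\ref{lem:gamma-pm}), that chord slope is at least $\mathbf H_-'(\lambda^l)$.

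For (ii), I would clear denominators in the first inequality. Since $\lambda^l>0$ and $\nu_+^*-\lambda^l>0$, the inequality $k(\lambda^l)<c_-(\lambda^l)$ is equivalent to $\lambda^l\mathbf H_+(\nu_+^*)<\nu_+^*\mathbf H_-(\lambda^l)$, that is, to $c_+^*<c_-(\lambda^l)$. This holds for $0<\lambda^l<\nu_1$ because $\nu_1<\nu_-^*$, $c_-$ is strictly decreasing on $(0,\nu_-^*)$ by Corollary~\ref{lem:gamma-pm}(iii), and $c_-(\nu_1)=c_+^*$. For the second inequality I would reuse the chord argument with the roles swapped. Since $\mathbf H_-(\lambda^l)<\mathbf H_+(\lambda^l)$, we get
\[
k(\lambda^l)>\frac{\mathbf H_+(\nu_+^*)-\mathbf H_+(\lambda^l)}{\nu_+^*-\lambda^l}>\mathbf H_+'(\lambda^l)
\]
by strict convexity of $\mathbf H_+$. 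This in fact holds for every $\lambda^l\in(0,\nu_+^*)$.

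For (iii), the key step is to check that $\nu_+^*\le\Psi_+(\tilde c_1)$ on the relevant set, i.e. $\tilde c_1\ge \mathbf H_+'(\nu_+^*)=c_+^*$. When $\lambda^l>\nu_1$ this is part of the definition of $E_4$. When $\lambda^l\le\nu_1$, the condition $\tilde c_1\lambda^l>\mathbf H_-(\lambda^l)$ gives $\tilde c_1>c_-(\lambda^l)\ge c_-(\nu_1)=c_+^*$. By Lemma~\ref{lem:p_*}, $p\mapsto F_4$ is strictly increasing on $[0,\Psi_+(\tilde c_1)]$ and $p_*\in(\lambda^l,\Psi_+(\tilde c_1))$ is its smallest root. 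Hence, for $\lambda^l<\nu_+^*$,
\[
p_*\le\nu_+^*\iff F_4(\lambda^l,\tilde c_1,\nu_+^*)\ge0,
\]
with equality on one side exactly when it holds on the other. Finally, $\tilde c_1\mapsto F_4(\lambda^l,\tilde c_1,\nu_+^*)$ is strictly increasing with slope $\nu_+^*-\lambda^l>0$ and vanishes at $\tilde c_1=k(\lambda^l)$. So $F_4\ge0$ if and only if $\tilde c_1\ge k(\lambda^l)$, with equality if and only if $\tilde c_1=k(\lambda^l)$.

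The main obstacle I expect is the bookkeeping in (iii): confirming that $\nu_+^*$ lies in the increasing branch $[\lambda^l,\Psi_+(\tilde c_1)]$ of $F_4(\lambda^l,\tilde c_1,\cdot)$ in every subcase of $E_4$. If $\lambda^l\ge\nu_+^*$ the statement is vacuous, since $k$ is defined only for $\lambda^l<\nu_+^*$.
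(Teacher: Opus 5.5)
Your proposal is correct and follows the paper's route. For (iii), the paper likewise uses $\nu_+^*\le\Psi_+(\tilde c_1)$ together with strict monotonicity of $s\mapsto\tilde c_1 s-\mathbf H_+(s)$ on $(-\infty,\Psi_+(\tilde c_1)]$, and you usefully supply the check that $\tilde c_1\ge c_+^*$ holds on all of $E_4$, which the paper asserts without comment. The only difference is in (ii): the paper deduces it from the monotonicity in (i), implicitly via $k(\nu_1)=c_+^*$, whereas your direct argument (clearing denominators, and a chord comparison for $\mathbf H_+$) does not use (i) and gives $k>\mathbf H_+'$ on all of $(0,\nu_+^*)$.
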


\begin{proof}
A direct calculation shows that \(k\) is strictly increasing, and then (i) and (ii) follows immediately. We omit the details. It remains to establish the assertion (iii).
Note that \(\tilde{c}_1\ge c_+^*\) implies \(\nu_+^*\le \Psi_+(\tilde{c}_1)\). It follows from \(p_*\in(\lambda^l,\Psi_+(\tilde{c}_1)]\) that
\begin{align*}
\tilde{c}_1 > k(\lambda^l) \iff& \tilde{c}_1\lambda^l-\mathbf{H}_+(\nu_+^*) > \tilde{c}_1\lambda^l-\mathbf{H}_-(\lambda^l)\\
\iff& \tilde{c}_1\nu_+^*-\mathbf{H}_+(\nu_+^*) > \tilde{c}_1p_*-\mathbf{H}_+(p_*)\\
\iff& p_* < \nu_+^*,
\end{align*}
where the last equivalence follows from the strict monotonicity of the function \(h_3(s)=\tilde{c}_1s-\mathbf{H}_+(s)\) for \(s\in(-\infty,\Psi_+(\tilde{c}_1)]\). The remaining cases are analogous. This completes the proof.
\end{proof}

\begin{lemma}\label{lem:vis-B1}
If $(\lambda^l,c_1)\in B_1$, then $c_l=c_-(\lambda^l)$.
\end{lemma}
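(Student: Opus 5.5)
Following the rightward constructions (Lemma~\ref{lem:vis-A1}), I would write down an explicit candidate viscosity solution of \eqref{rho1} on $(-\infty,0]$ and read off its leftmost zero. For $(\lambda^l,c_1)\in B_1$, set
\[
\rho(s)=\max\left\{0,\;-\lambda^l s-\mathbf{H}_-(\lambda^l)\right\},\qquad s\in(-\infty,0].
\]
This satisfies \eqref{rho-boundary-2}: $\rho(0)=0$ because $\mathbf{H}_-(\lambda^l)>0$, and $\rho(s)/|s|\to\lambda^l$. It vanishes exactly on $[-c_-(\lambda^l),0]$. The condition $c_1>-c_-(\lambda^l)$ means the kink $s_0=-c_-(\lambda^l)$ satisfies $s_0<c_1$. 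Since the whole region $s\le\min\{c_1,0\}$ uses $R=R_-$, the Hamiltonian equals $\mathbf{H}_-$ along the entire nonzero branch and also at the kink.

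First, I check that $\rho$ is a classical solution away from $s_0$. On the linear piece, $\rho'=-\lambda^l$ and $\rho-s\rho'=-\mathbf{H}_-(\lambda^l)$. By evenness of $\mathbf{H}_-$ (Corollary~\ref{lem:gamma-pm}(i)), this gives $\rho-s\rho'+\mathbf{H}_-(\rho')=0$, which is equivalent to $H=0$ by Lemma~\ref{lem:H-convex}. On the zero piece, $\rho=0$, so the constraint $\min\{\rho,\cdot\}=0$ holds trivially.

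A potential issue is the point $s=c_1$ when $c_1<0$, where $R$ jumps. There the branch is smooth, and I need both $H^*\ge0$ and $H_*\le0$. Since $R_+>R_-$, the relevant envelopes are $\mathbf{H}_\pm$. The identity holds with $\mathbf{H}_-$, which gives the subsolution property. For the supersolution property I need $q+\tilde H^*\ge0$ with $\tilde H^*\ge\mathbf{H}_-$ (note $\mathbf{H}_+>\mathbf{H}_-$ since $H$ is increasing in $R$), so it also holds. If $c_1\ge 0$, this point lies outside the domain.

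At the kink $s_0$, the superdifferential is empty (convex corner), so the subsolution property holds automatically. For the supersolution property, take $\varphi'(s_0)=p\in D^-\rho(s_0)=[-\lambda^l,0]$ and write $p=-\tilde p$ with $\tilde p\in[0,\lambda^l]$. The required inequality becomes
\[
\Delta_-\bigl(\tilde p,\,c_-(\lambda^l)\tilde p\bigr)\ge \Delta_-\bigl(\tilde p,\mathbf{H}_-(\tilde p)\bigr)=0,
\]
using evenness of the integral term, which follows from the symmetry of $J$. Because $\Delta_-$ is decreasing in its second argument, this reduces to $c_-(\lambda^l)\le c_-(\tilde p)$ for $\tilde p\in(0,\lambda^l]$. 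That holds since $\lambda^l\le\nu_-^*$ and $c_-$ is decreasing on $(0,\nu_-^*)$ by Corollary~\ref{lem:gamma-pm}(iii). At $\tilde p=0$ it follows from $\mathbf{H}_-(0)>0$.

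Finally, the comparison principle for \eqref{rho1} gives uniqueness, so $\rho$ is the solution in Corollary~\ref{col:rho-boundary-point-l}. Hence $s_l=-c_-(\lambda^l)$ and $c_l=c_-(\lambda^l)$. I expect the main obstacle to be the careful bookkeeping at the discontinuity $s=c_1$ when $c_1\in(-c_-(\lambda^l),0)$. The kink check itself is routine and mirrors Lemma~\ref{lem:vis-A1} with $R_-$ in place of $R_+$.
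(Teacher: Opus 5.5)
Your proposal is correct and follows the paper's proof essentially verbatim. It uses the same candidate $\rho(s)=\max\{0,-\lambda^l s-\mathbf{H}_-(\lambda^l)\}$, the automatic subsolution property at the convex kink $-c_-(\lambda^l)$, the supersolution check there via monotonicity of $\Delta_-$ in its second argument together with monotonicity of $c_-$ on $(0,\nu_-^*]$, and uniqueness by comparison; your use of evenness to pass from $p$ to $-p$ is actually cleaner than the paper's sign bookkeeping. The one inaccuracy is your paragraph on $s=c_1$, which is not an obstacle at all. Since $-c_-(\lambda^l)<c_1$, $\rho$ vanishes identically near $c_1$, so the subsolution test is vacuous there and the supersolution test reduces to $H^*(c_1,0,0)=\mu(R_+-1)>0$. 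The ``identity with $\mathbf{H}_-$'' you invoke does not hold at that point (one has $q+\mathbf{H}_\pm(0)>0$), but it is not needed.
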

\begin{proof}
Define
\begin{align}\label{rho21}
\rho(s)=\max\{0,-\lambda^ls-\mathbf{H}_-(\lambda^l)\}, \quad s \in(-\infty,0].
\end{align}
Note that \eqref{rho21} satisfies the boundary condition \eqref{rho-boundary-2} and is a classical solution of \eqref{rho1} except $s_3:=-c_-(\lambda^l)$. Since $D^+\rho(s_3)=\emptyset$, it is automatically a subsolution. We only need to verify it is also a supersolution of \eqref{rho1}. Let \(\varphi\) be a test function with \(\rho-\varphi\) having a local minimum at \(s_3\).
For $\varphi'(s_3)\in D^-\rho(s_3)=[-\lambda^l,0]$, we have
\begin{align*}
&d\left(\int_{\mathbb{R}}J(y)e^{\varphi'y}\,dy-1\right)+\rho(s_3)
-s_3\varphi'-\mu
+\mu R(s_3)e^{\tau(\rho(s_3)-s_3\varphi')} \\
=&\Delta_{-}\left(\varphi',\,-c_-(\lambda^l)\varphi'\right) - \Delta_-(\varphi',\mathbf{H}_-(\varphi'))\ge0,
\end{align*}
where we used $\Delta_-(p,\mathbf{H})$ is decreasing in $\mathbf{H}$, $c_-(p)$ is increasing in $(0,\nu_-^*)$ and $\lambda^l\le\nu_-^*$. Then, $c_l=c_-(\lambda^l)$.
\end{proof}

\begin{lemma}\label{lem:vis-B2}
If $(\lambda^l,c_1)\in B_2$, then $c_l=c_+(p_*(\lambda^l ,c_1))$.
\end{lemma}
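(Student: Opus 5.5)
Following the pattern of Lemmas \ref{lem:vis-A2} and \ref{lem:vis-B1}, we will write down an explicit piecewise-linear candidate on $(-\infty,0]$ and check that it is a viscosity solution of \eqref{rho1} satisfying \eqref{rho-boundary-2}. The comparison principle then identifies it with the unique solution from Corollary~\ref{col:rho-boundary-point-l}, and $c_l$ can be read off from its leftmost zero. Set $\tilde c_1=-c_1$ and $p_*=p_*(\lambda^l,c_1)$ from Lemma~\ref{lem:p_*}. Writing $s<0$, the region $s\le c_1=-\tilde c_1$ corresponds to $R_-$, and $s>-\tilde c_1$ to $R_+$. Define
\begin{align*}
\rho(s)=
\begin{cases}
-\lambda^l s-\mathbf{H}_-(\lambda^l), & s\in(-\infty,-\tilde c_1],\\
-p_* s-\mathbf{H}_+(p_*), & s\in(-\tilde c_1,-c_+(p_*)],\\
0, & s\in(-c_+(p_*),0].
\end{cases}
\end{align*}

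The first step is to check consistency. Continuity at $s=-\tilde c_1$ is exactly the defining relation \eqref{eq:p_*}, namely $\tilde c_1\lambda^l-\mathbf{H}_-(\lambda^l)=\tilde c_1p_*-\mathbf{H}_+(p_*)$. For the ordering $-\tilde c_1<-c_+(p_*)<0$, we need $\tilde c_1p_*>\mathbf{H}_+(p_*)$. This follows from the right-hand side of \eqref{eq:p_*} being nonnegative, which was shown in the proof of Lemma~\ref{lem:p_*}; in the first branch of $B_2$ it holds because $\tilde c_1\ge c_-(\lambda^l)$. Here we must confirm that $B_2\subset E_4\cap\{\lambda^l<\check p\}$, so that Lemma~\ref{lem:p_*} applies. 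In the second branch, the condition $\tilde c_1\ge k(\lambda^l)$ together with Lemma~\ref{lem:k(lambda^l)}(iii) gives $p_*\le\nu_+^*$. In the first branch, $p_*\le \nu_+^*$ should follow from Lemma~\ref{lem:k(lambda^l)}(ii), since $k<c_-\le \tilde c_1$ there. On each open piece $\rho$ is affine with slope $-q$, where $q\in\{\lambda^l,p_*\}$, so $\rho-s\rho'=\pm$ the corresponding intercept. Then $H=\Delta_\pm(q,\mathbf{H}_\pm(q))=0$ once we use evenness of $\mathbf{H}_\pm$, so the classical equation holds; on the zero piece, $H(s,0,0)>0$.

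At the kink $s=-\tilde c_1$, the slopes jump from $-\lambda^l$ to $-p_*$ with $p_*>\lambda^l$, so $\rho$ is concave there. Hence $D^-\rho=\emptyset$, the supersolution test is vacuous, and we must verify the subsolution inequality with $H_*$. Since $R(-\tilde c_1)\ge\underline R(-\tilde c_1)=R_-$, $H_*$ uses $R_-$ at this point. For $\varphi'=-p$ with $p\in[\lambda^l,p_*]$, we get
\[
H_*=\Delta_-\bigl(p,\;\tilde c_1 p-(\tilde c_1\lambda^l-\mathbf{H}_-(\lambda^l))\bigr)\le 0 ,
\]
which is equivalent to $\tilde c_1 p-\mathbf{H}_-(p)\ge \tilde c_1\lambda^l-\mathbf{H}_-(\lambda^l)$ on $[\lambda^l,p_*]$. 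This holds because $p\mapsto\tilde c_1p-\mathbf{H}_-(p)$ is concave, and it holds at both endpoints: at $\lambda^l$ trivially, and at $p_*$ since $\mathbf{H}_-(p_*)<\mathbf{H}_+(p_*)$.

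At $s=-c_+(p_*)$, the function $\rho$ is convex ($D^+\rho=\emptyset$), so only the supersolution test is needed. For $\varphi'=-p$ with $p\in[0,p_*]$, $R^*=R_+$, and we need $c_+(p_*)p\le \mathbf{H}_+(p)$. This holds because $p_*\le\nu_+^*$, so $c_+(p)\ge c_+(p_*)$ on $(0,p_*]$ by Corollary~\ref{lem:gamma-pm}(iii), and at $p=0$ it holds since $\mathbf{H}_+(0)>0$. We conclude $c_l=c_+(p_*)$.

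The main obstacle is the bookkeeping that $B_2$ lies in the domain of Lemma~\ref{lem:p_*}, in particular $\lambda^l<\check p(c_1)$, together with $p_*\le\nu_+^*$ in the first branch of $B_2$. I would handle this via the monotonicity of $\check p$ and of $k$, using $\check p(c_+^*)=\nu_1$.
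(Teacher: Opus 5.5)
Your proposal is correct and follows the paper's proof. It uses the same piecewise function as in \eqref{rho22}, a subsolution check with $R_-$ at the concave kink $s=c_1$ (where $D^+\rho=[-p_*,-\lambda^l]$; the paper's write-up attaches this check to $s_4$ by a labelling slip), and a supersolution check with $R_+$ at the convex kink $s=-c_+(p_*)$ using $p_*\le\nu_+^*$. Your concavity argument is a clean way to settle the first of these checks.

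For the bookkeeping you flag, monotonicity of $\check p$ alone does not give $\lambda^l<\check p$ on the second branch of $B_2$, where $\lambda^l\ge\nu_1$. You can bypass $\check p$ entirely: by \eqref{eq:k}, $F_4(\lambda^l,\tilde c_1,\nu_+^*)=(\tilde c_1-k(\lambda^l))(\nu_+^*-\lambda^l)\ge 0$ on all of $B_2$ (on the first branch because $\tilde c_1\ge c_-(\lambda^l)>k(\lambda^l)$), while $F_4(\lambda^l,\tilde c_1,\lambda^l)<0$ and $F_4$ is increasing on $[0,\nu_+^*]\subset[0,\Psi_+(\tilde c_1)]$ since $\tilde c_1\ge c_+^*$; together these give $p_*\in(\lambda^l,\nu_+^*]$ directly.
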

\begin{proof}
Note that $-c_1\geq k(\lambda^l)$, and $\lambda^l<p_*\leq \nu_+^*$. Define
\begin{align}\label{rho22}
\rho(s)=
\begin{cases}
-\lambda^ls-\mathbf{H}_-(\lambda^l), & s\in (-\infty,c_1],\\
-p_*s-\mathbf{H}_+(p_*), & s\in(c_1,-c_+(p_*)],\\
0, & s\in(-c_+(p_*),0].
\end{cases}
\end{align}
Note that \eqref{rho22} satisfies the boundary condition \eqref{rho-boundary-2} and is a classical solution of \eqref{rho1} except $\{c_1,-c_+(p_*)\}$. Since $D^+\rho(c_1)=\emptyset$, we need to verify subsolution property at $s_4:=-c_+(p_*)$.
Let \(\varphi\) be a test function with \(\rho-\varphi\) having a local maximum at \(s_4\). For $\varphi'(s_4)\in D^+\rho(s_4)=[-p_*,-\lambda^l]$, we have
\begin{align*}
&d\left(\int_{\mathbb{R}}J(y)e^{\varphi'y}\,dy-1\right)+\rho(s_4)
-s_4\varphi'-\mu
+\mu R_*(s_4)e^{\tau(\rho(s_4)-s_4\varphi')} \\
=&\Delta_{-}\left(\varphi',\,s_4\varphi'-\rho(s_4)\right) - \Delta_-(\varphi',\mathbf{H}_-(\varphi'))\le0,
\end{align*}
where we used $\Delta_-(p,\mathbf{H})$ is decreasing in $\mathbf{H}$ and Lemma \ref{lem:p_*}. Hence, \eqref{rho22} is a subsolution. Since $D^-\rho(c_1)=\emptyset$, \eqref{rho22} automatically satisfies the supersolution property at $c_1$. The remaining verification for supersolution at $-c_+(p_*)$ is similar to Lemma \ref{lem:vis-B1} and we omit it here. Then, $c_l=c_+(p_*)$.
\end{proof}

\begin{lemma}\label{lem:vis-B3}
If $(\lambda^l,c_1)\in B_3$, then $c_l=-c_1$.
\end{lemma}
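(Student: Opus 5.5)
Following the pattern of Lemmas \ref{lem:vis-B1} and \ref{lem:vis-B2}, I will write down an explicit candidate $\rho$ on $(-\infty,0]$, check that it satisfies the boundary condition \eqref{rho-boundary-2} and is a viscosity solution of \eqref{rho1}, and then read off $s_l=c_1$ via the uniqueness in Corollary \ref{col:rho-boundary-point-l}. The locked profile should vanish on $[c_1,0]$ and be positive to the left of $c_1$. Write $\tilde c_1=-c_1\in[c_+^*,\,c_-(\lambda^l)]$ (or $\tilde c_1\in[c_-^*,c_+^*]$ when $\lambda^l\ge\nu_-^*$). For the first piece of $B_3$, where $\lambda^l\in[\nu_1,\nu_-^*)$ and $c_-^*\le c_-(\lambda^l)\le\tilde c_1\le c_+^*$, I take
\[
\rho(s)=\max\{0,\,-\lambda^l s-\mathbf{H}_-(\lambda^l)\}\ \text{ for } s\le c_1,\qquad \rho(s)=0\ \text{ for } s\in(c_1,0].
\]
Since $\tilde c_1\ge c_-(\lambda^l)$ means $-\lambda^l c_1-\mathbf{H}_-(\lambda^l)\ge 0$, I would rather use
\[
\rho(s)=-\lambda^l s-\mathbf{H}_-(\lambda^l)-\big(\tilde c_1\lambda^l-\mathbf{H}_-(\lambda^l)\big)=-\lambda^l(s-c_1)
\]
for $s\le c_1$, which is actually $\rho(s)=\lambda^l(c_1-s)_+$. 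For the second piece of $B_3$, where $\lambda^l\ge\nu_-^*$, the linear part with slope $\lambda^l$ far left is joined through the Legendre piece $\mathbf{L}_-(-s)$ on $[-\mathbf{H}_-'(\lambda^l),\,-\tilde c]$ and then a segment. A cleaner uniform choice is $\rho(s)=\max\{\lambda^l(c_1-s),\,0\}$ when $\lambda^l\le \Psi_-(\tilde c_1)$. Otherwise I use the $\mathbf{L}_-$-based concatenation, shifted so that it vanishes exactly at $c_1$; I will settle the exact gluing by the slope conditions below.

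\textbf{Classical verification.} On $s<c_1$ the region $s\le c_1$ has $R=R_-$, and $\rho=-\lambda^l s+\lambda^l c_1$ gives $\rho-s\rho'+\mathbf{H}_-(\rho')=\lambda^l c_1+\mathbf{H}_-(\lambda^l)=\mathbf{H}_-(\lambda^l)-\tilde c_1\lambda^l\le 0$. This is the wrong sign for a solution, so it is only a subsolution there. That is acceptable only if $\rho=0$ or the equation holds. This forces me to refine the construction: on $s<c_1$ I need an exact solution $\rho=-ps-\mathbf{H}_-(p)$ vanishing at $c_1$, that is, with $c_-(p)=\tilde c_1$. Such $p\le\lambda^l$ exists in the stated range, because $\tilde c_1\ge c_-^*$ and $\tilde c_1\le c_-(\lambda^l)$ with $\lambda^l<\nu_-^*$ give a root $p\in(0,\lambda^l]$ on the decreasing branch. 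So I take $\rho(s)=-ps-\mathbf{H}_-(p)$ on $(-\infty,c_1]$ for $s\le -\mathbf{H}_-'(\cdot)$, connected to the far-field slope $\lambda^l$ by an $\mathbf{L}_-$ arc or a kink with $D^+=\emptyset$, and $\rho=0$ on $[c_1,0]$. On $(c_1,0)$ the constant $0$ satisfies $\min\{\rho,\cdot\}=0$ trivially.

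\textbf{The kink at $c_1$ (main obstacle).} At $s=c_1$ the function has a convex corner, so $D^+\rho(c_1)=\emptyset$ and the subsolution test is vacuous. For the supersolution property I must show $H^*(c_1,q,-c_1q)\ge0$, i.e. $-c_1 q+\tilde H^*(c_1,q)\ge 0$, for all $q\in D^-\rho(c_1)=[-p,0]$. Here the envelope is essential: $\tilde H^*(c_1,q)=\max\{\mathbf{H}_-(q),\mathbf{H}_+(q)\}=\mathbf{H}_+(q)$. The condition then becomes $\tilde c_1|q|\le \mathbf{H}_+(|q|)$ for $|q|\in[0,p]$, which holds precisely because $\tilde c_1\le c_+^*\le c_+(|q|)$. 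The upper bound $-c_1\le c_+^*$ defining $B_3$ enters exactly here. Lower semicontinuity at interior points is handled as in Lemma \ref{lem:vis-B1}, using $c_-(\cdot)\ge\tilde c_1$ on the relevant slopes. Finally, the comparison principle gives uniqueness, and Corollary \ref{col:rho-boundary-point-l} yields $s_l=c_1$, i.e. $c_l=-c_1$.
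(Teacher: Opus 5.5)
There is a genuine gap: the explicit solution you describe cannot be assembled. Work in the first piece of $B_3$. There $\tilde c_1\ge c_-(\lambda^l)$; your opening interval and your later hypothesis $\tilde c_1\le c_-(\lambda^l)$ are reversed, although your conclusion $p\le\lambda^l$ is what the correct inequality gives.

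Write $\ell_q(s)=-qs-\mathbf{H}_-(q)$. Your piece next to $c_1$ is $\ell_p$ with $c_-(p)=\tilde c_1$ and $p\le\lambda^l$. Since $\ell_{\lambda^l}(c_1)=\lambda^l(\tilde c_1-c_-(\lambda^l))\ge 0=\ell_p(c_1)$ and $(\ell_{\lambda^l}-\ell_p)'=p-\lambda^l\le 0$, you get $\ell_{\lambda^l}\ge\ell_p$ on all of $(-\infty,c_1]$. Each proposed way of joining the two pieces then fails:
\begin{itemize}
\item A corner with $D^+=\emptyset$, i.e. $\max\{\ell_{\lambda^l},\ell_p\}$, simply returns $\ell_{\lambda^l}$, which is positive at $c_1$.
\item The opposite choice, $\min\{\ell_{\lambda^l},\ell_p\}=\ell_p$, has the wrong slope at $-\infty$.
\item The $\mathbf{L}_-$ fan joining the two slopes touches the lines at $-\mathbf{H}_-'(p)$ and $-\mathbf{H}_-'(\lambda^l)$. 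Both points lie strictly to the right of $c_1$, because $\mathbf{H}_-'(q)<c_-(q)\le\tilde c_1$ for $q<\nu_-^*$.
\end{itemize}
Independently, a convex corner between two exact solutions of the $R_-$-equation is never a supersolution. On $D^-\rho(s_k)=[-a,-b]$ the map $q\mapsto\rho(s_k)-s_kq+\mathbf{H}_-(q)$ vanishes at both endpoints and is strictly convex, hence negative inside. The ``shift so that it vanishes at $c_1$'' device for $\lambda^l\ge\nu_-^*$ fails for the reason you already found with $\lambda^l(c_1-s)_+$: the equation $\rho-s\rho'+\tilde H(s,\rho')=0$ is not translation invariant.

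The missing idea is that the piece adjacent to $c_1$ must use the larger root $\bar\nu\ge\nu_-^*$ of $c_-(\nu)=\tilde c_1$, not the smaller one. The paper does not compute the solution; it sandwiches it between two barriers.
\begin{itemize}
\item Your line is kept only as a subsolution: $\underline\rho=\max\{0,\ell_{\underline\nu}\}$ with $\underline\nu\le\lambda^l$, so its slope at $-\infty$ does not exceed $\lambda^l$.
\item The supersolution is $\bar\rho=\mathbf{L}_-$ on $(-\infty,-\mathbf{H}_-'(\bar\nu)]$, then $\ell_{\bar\nu}$ on $[-\mathbf{H}_-'(\bar\nu),c_1]$, then $0$ on $[c_1,0]$. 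The pieces are correctly ordered because $\bar\nu\ge\nu_-^*$ gives $\mathbf{H}_-'(\bar\nu)\ge\tilde c_1$.
\end{itemize}
Your corner computation at $c_1$, using $\tilde H^*=\mathbf{H}_+$ and $\tilde c_1\le c_+^*$, is exactly the check needed for $\bar\rho$, now over $D^-\bar\rho(c_1)=[-\bar\nu,0]$. Both barriers vanish exactly on $[c_1,0]$ and have admissible growth at $-\infty$, so comparison forces $s_l=c_1$.

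If you want an exact solution for $\lambda^l<\nu_-^*$, it is $\min\{\ell_{\lambda^l},\bar\rho\}$ on $(-\infty,c_1]$ and $0$ on $[c_1,0]$. Its interior corner is concave ($D^-=\emptyset$), not convex.
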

\begin{proof}
Since $c_1\le-c_-^*$, there exist two constants $\underline{\nu},\bar{\nu}$ such that
\[
\nu_1<\underline{\nu}\le\nu_-^*\le\bar{\nu}\quad \text{ and }\quad -c_1=c_-(\bar{\nu})=c_-(\underline{\nu}).
\]
Define
\begin{align}\label{eq:B3-sub}
\underline\rho(s)=\max\{0,-\underline{\nu}s-\mathbf{H}_-(\underline{\nu})\}, \quad s\in(-\infty,0].
\end{align}
Then \eqref{eq:B3-sub} is automatically a subsolution of \eqref{rho1}.
On the other hand, define
\begin{align}\label{eq:B3-sup}
\bar\rho(s)=
\begin{cases}
\mathbf{L}_-(s), & s\in(-\infty,-\mathbf{H}_-'(\bar{\nu})],\\
-\bar{\nu} s-\mathbf{H}_-(\bar{\nu}), & s\in(-\mathbf{H}_-'(\bar{\nu}), c_1],\\
0, & s\in(c_1,0].
\end{cases}
\end{align}
Then \eqref{eq:B3-sup} is a supersolution of \eqref{rho1}. Then the comparison principle implies that the viscosity solution $\rho$ of \eqref{rho1} satisfies $\rho(s)=0$ iff $s\in[c_{1},0]$. Hence, $c_l=-c_1$.
\end{proof}

\begin{lemma}\label{vis-B4}
If $(\lambda^l,c_1)\in B_4$, then $c_l=c_-^*$.
\end{lemma}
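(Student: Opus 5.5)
We construct an explicit candidate $\rho$ on $(-\infty,0]$ and check that it is the viscosity solution of \eqref{rho1} with boundary data \eqref{rho-boundary-2}. Uniqueness (the comparison principle quoted after Lemma~\ref{lem:u^vareps>0}) together with Corollary~\ref{col:rho-boundary-point-l} then identifies $c_l$ as $-s_l$, where $s_l$ is the leftmost zero of $\rho$. In $B_4$ we have $\lambda^l\ge\nu_-^*$ and $c_1>-c_-^*$, so the habitat boundary $s=c_1$ lies to the right of $-c_-^*$, and on $(-\infty,c_1)$ the Hamiltonian is $\tilde H(s,p)=\mathbf{H}_-(p)$. We therefore mimic Lemma~\ref{lem:vis-A4}, reflected to the left and using the minus branch.

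Set
\[
\rho(s)=\begin{cases}
-\lambda^l s-\mathbf{H}_-(\lambda^l), & s\in(-\infty,-\mathbf{H}_-'(\lambda^l)],\\
\mathbf{L}_-(s), & s\in(-\mathbf{H}_-'(\lambda^l),-c_-^*],\\
0, & s\in(-c_-^*,0].
\end{cases}
\]
Since $\mathbf{H}_-$ is even, $\mathbf{L}_-$ is even and $\Psi_-(-v)=-\Psi_-(v)$. Hence $\mathbf{L}_-(s)=\mathbf{L}_-(|s|)$ for $s<0$, with slope $-\Psi_-(|s|)$.

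The gluing goes as follows.
\begin{itemize}
\item At $s=-\mathbf{H}_-'(\lambda^l)$, the Legendre relation gives $\mathbf{L}_-(\mathbf{H}_-'(\lambda^l))=\lambda^l\mathbf{H}_-'(\lambda^l)-\mathbf{H}_-(\lambda^l)$, and the slopes agree (both equal $-\lambda^l$). So the pieces join $C^1$.
\item At $s=-c_-^*$, we have $\Psi_-(c_-^*)=\nu_-^*$ and $\mathbf{L}_-(c_-^*)=c_-^*\nu_-^*-\mathbf{H}_-(\nu_-^*)=0$. So $\rho$ is continuous there, with a one-sided slope jump from $-\nu_-^*$ to $0$. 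This is a convex kink, so $D^+\rho=\emptyset$.
\item Because $\lambda^l\ge\nu_-^*$, we have $\mathbf{H}_-'(\lambda^l)\ge c_-^*$, so the middle interval is nonempty. In the degenerate case $\lambda^l=\nu_-^*$ it collapses to a point and $\rho$ reduces to the function of Lemma~\ref{lem:vis-B1}.
\end{itemize}
The boundary conditions hold: $\rho(0)=0$ and $\rho(s)/|s|\to\lambda^l$.

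For the classical equation, note first that every kink and every nonzero piece lies in $s\le -c_-^*<c_1$, where $R=R_-$ and $\tilde H=\mathbf{H}_-$. On the linear piece, $\rho-s\rho'+\mathbf{H}_-(\rho')=-\mathbf{H}_-(\lambda^l)+\mathbf{H}_-(-\lambda^l)=0$. On the $\mathbf{L}_-$ piece the same identity holds by the Legendre duality $\mathbf{L}_-(v)+\mathbf{H}_-(\Psi_-(v))=v\Psi_-(v)$ with $v=|s|$. On $(-c_-^*,0]$, $\rho=0$ gives $\tilde H(s,0)=\mathbf{H}_\pm(0)>0$, so the $\min$ equals $0$. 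The point $s=c_1$ (when $c_1<0$) lies inside the zero region, where $\rho$ is smooth. Thus no discontinuity of $R$ meets a nontrivial part of $\rho$, and $R^*=R_*=R_-$ wherever it matters.

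The viscosity check reduces to $s_0=-c_-^*$. The subsolution property there is automatic because $D^+\rho(s_0)=\emptyset$. For the supersolution property, take $\varphi'\in D^-\rho(s_0)=[-\nu_-^*,0]$ and write $p=\varphi'$. The quantity to check is
\[
\Delta_-\bigl(p,\,s_0 p\bigr)-\Delta_-\bigl(p,\mathbf{H}_-(p)\bigr)\ge0 .
\]
Since $\mathbf{H}\mapsto\Delta_-(p,\mathbf{H})$ is decreasing, this is equivalent to $-c_-^*p\le \mathbf{H}_-(p)$, i.e. $c_-^*|p|\le\mathbf{H}_-(|p|)$. This holds by the definition of $c_-^*$ as $\min_{q>0} c_-(q)$ when $p\ne0$, and by $\mathbf{H}_-(0)>0$ when $p=0$. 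This is the same computation as in Lemma~\ref{lem:vis-B1}.

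The main obstacle is the region near $c_1$ when $c_1$ is close to $-c_-^*$, together with the requirement that the comparison principle from \cite{Lam2022,Tao2026} applies with the discontinuous coefficient $R$. Because $c_1>-c_-^*$ strictly, $\rho$ vanishes on a neighbourhood of $c_1$, so the envelopes $H^*$ and $H_*$ differ only where $\rho=0$. There the supersolution inequality holds trivially through the $\min$ with $\rho$, and the subsolution requirement is void since $\rho=0$. Once this is settled, uniqueness gives that $\rho$ is the solution, its leftmost zero is $s_l=-c_-^*$, and hence $c_l=c_-^*$.
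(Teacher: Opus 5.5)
Your proposal is correct and follows the paper's proof essentially verbatim. The paper uses the same three-piece function, and its middle piece $s\Psi_-(s)-\mathbf{H}_-(\Psi_-(s))$ is exactly your $\mathbf{L}_-(s)$; it obtains the subsolution property from $D^+\rho(-c_-^*)=\emptyset$ and checks the supersolution inequality at $-c_-^*$ for slopes in $[-\nu_-^*,0]$ using the monotonicity of $\Delta_-$ in its second argument. Your extra remarks, namely the $C^1$ gluing at $-\mathbf{H}_-'(\lambda^l)$ and the fact that $s=c_1$ lies in the zero region so the jump in $R$ is harmless, make explicit what the paper leaves implicit.
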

\begin{proof}
Define
\begin{align}\label{rho25}
\rho(s)=
\begin{cases}
-\lambda^ls-\mathbf{H}_-(\lambda^l), & s\in(-\infty,-\mathbf{H}_-'(\lambda^l)],\\
s\Psi_-(s)-\mathbf{H}_-(\Psi_-(s)), & s\in(-\mathbf{H}_-'(\lambda^l),-c_-^*],\\
0, & s\in(-c_-^*,0],
\end{cases}
\end{align}
Then \eqref{rho25} is a classical solution of \eqref{rho1} at every point except \(-c_-^*\), and satisfies the boundary condition \eqref{rho-boundary-2}. Note that $D^+\rho(-c_-^*)=\emptyset$, \eqref{rho25} is a viscosity subsolution automatically. We only need to verify that \eqref{rho25} is a viscosity supersolution at $-c_-^*$. Let \(\varphi\) be a test function with \(\rho-\varphi\) having a local minimum at \(-c_-^*\).
For $\varphi'(-c_-^*)\in D^-\rho(-c_-^*)=[\Psi_-(-c_-^*),0]$, we have
\begin{align*}
&d\left(\int_{\mathbb{R}}J(y)e^{\varphi'y}\,dy-1\right)+\rho(-c_-^*)
+c_-^*\varphi'-\mu
+\mu R(-c_-^*)e^{\tau(\rho(-c_-^*)+c_-^*\varphi')} \\
=&\Delta_{-}\!\left(\varphi',\,-c_-^*\varphi'\right) - \Delta_-(\varphi',\mathbf{H}_-(\varphi'))\\
=&\Delta_{-}\!\left(\varphi',\,-c_-(\nu_-^*)\varphi'\right) - \Delta_-(\varphi',\mathbf{H}_-(\varphi'))
\geq0,
\end{align*}
where we used $\Delta_-(p,\mathbf{H})$ is decreasing in $\mathbf{H}$, $c_-(p)$ is increasing in $[\Psi_-(-c_-^*),0]$ and $\Psi_-(-c_-^*)=-\nu_-^*$. Then, $c_l=c_-^*$.
\end{proof}

\begin{lemma}\label{lem:vis-B5}
If $(\lambda^l,c_1)\in B_5$, then $c_l=c_+^*$.
\end{lemma}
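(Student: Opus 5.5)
The plan is to follow the scheme of Lemmas~\ref{lem:vis-A4} and~\ref{lem:vis-B2}. In each subregion of $B_5$ I would write down an explicit continuous, piecewise-defined $\rho$ on $(-\infty,0]$ that satisfies \eqref{rho-boundary-2} and vanishes exactly on $[-c_+^*,0]$. I would check that it is a viscosity solution of \eqref{rho1}, and then conclude $c_l=c_+^*$ by the comparison principle and Corollary~\ref{col:rho-boundary-point-l}.

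Throughout, $\tilde c_1=-c_1>c_+^*$, so the habitat edge $c_1<-c_+^*$ lies strictly to the left of the intended front. Near the origin I take the $R_+$ profile of Lemma~\ref{lem:vis-A4}, mirrored:
\[
\rho(s)=0 \ \text{ on } (-c_+^*,0],\qquad \rho(s)=\mathbf{L}_+(s)=s\Psi_+(s)-\mathbf{H}_+(\Psi_+(s)) \ \text{ on } [s_a,-c_+^*].
\]
The slope of this profile is $\Psi_+(s)\in[\Psi_+(s_a),-\nu_+^*]$. The only nonsmooth point in this part is $-c_+^*$, where $D^+\rho=\emptyset$, and the supersolution check there is the $\Delta_+$-version of the computation in Lemma~\ref{vis-B4}: it uses $\varphi'\in[-\nu_+^*,0]$ and the fact that $c_+(p)$ is monotone on $(0,\nu_+^*]$.

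The remaining work is to connect this profile across $s=c_1$ to the far-field slope $-\lambda^l$. I would split into cases according to whether $p_*$ is defined.

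(a) Case $\lambda^l<\check p(c_1)$, so $p_*$ exists by Lemma~\ref{lem:p_*}. By Lemma~\ref{lem:k(lambda^l)}(iii) and $\tilde c_1<k(\lambda^l)$ we get $\nu_+^*<p_*<\Psi_+(\tilde c_1)$. I would set
\[
\rho(s)=
\begin{cases}
-\lambda^l s-\mathbf{H}_-(\lambda^l), & s\le c_1,\\
-p_* s-\mathbf{H}_+(p_*), & c_1< s\le -\mathbf{H}_+'(p_*),\\
\mathbf{L}_+(s), & -\mathbf{H}_+'(p_*)<s\le -c_+^*,\\
0, & -c_+^*<s\le 0.
\end{cases}
\]
Continuity at $c_1$ is exactly equation \eqref{eq:p_*}. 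The join at $-\mathbf{H}_+'(p_*)$ is $C^1$ by the Legendre relation, and $-\mathbf{H}_+'(p_*)>c_1$ because $p_*<\Psi_+(\tilde c_1)$. At $c_1$ the left slope $-\lambda^l$ exceeds the right slope $-p_*$, so the kink is concave and $D^-\rho(c_1)=\emptyset$. Only the subsolution inequality is needed there; it uses $\underline R(c_1)=R_-$ and the monotonicity of $\Delta_-$ in $\mathbf{H}$, exactly as at $s_4$ in Lemma~\ref{lem:vis-B2}.

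(b) Case $\lambda^l\ge\hat p(c_1)$. I would use the root $\hat p$ of Lemma~\ref{lem:check-p}. The profile is: $\mathbf{L}_+$ on $[c_1,-c_+^*]$; then $-\hat p s-\mathbf{H}_-(\hat p)$ on $[-\mathbf{H}_-'(\hat p),c_1]$; then $\mathbf{L}_-$ down to $-\mathbf{H}_-'(\lambda^l)$; then the line with slope $-\lambda^l$. Continuity at $c_1$ is $\tilde c_1\hat p-\mathbf{H}_-(\hat p)=\mathbf{L}_+(\tilde c_1)$. Since $\hat p>\Psi_+(\tilde c_1)$, the kink at $c_1$ is convex, so $D^+\rho(c_1)=\emptyset$. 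The supersolution test there has $\varphi'\in[-\hat p,-\Psi_+(\tilde c_1)]$ and $R(c_1)=R_+$ or the envelope; I would reduce it to the sign of $\Delta_\pm(\varphi',\,c_1\varphi'-\rho(c_1))$, using convexity of $\mathbf{H}_\pm$ and $\mathbf{H}_-<\mathbf{H}_+$.

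(c) Case $\check p\le\lambda^l<\hat p$. This is the main obstacle. Here $\tilde c_1\lambda^l-\mathbf{H}_-(\lambda^l)\ge\mathbf{L}_+(\tilde c_1)$, so neither of the matchings above is available. I would first try
\[
\rho=\max\bigl\{-\lambda^l s-\mathbf{H}_-(\lambda^l),\ \rho_{\rm (b)}\bigr\},
\]
or a variant in which the $\lambda^l$-line meets $\mathbf{L}_+$ at a convex kink inside $(c_1,-c_+^*)$ in the $R_+$ region. The subsolution property of the maximum is automatic. The supersolution check at the new convex kink should follow from the convexity of $p\mapsto\Delta_+(p,c\,p-\rho)$ together with $\mathbf{H}_-\le\mathbf{H}_+$. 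Care is also needed at the endpoint cases $\lambda^l=\check p$ and $\lambda^l=\nu_+^*$, where I would argue by continuity of $c_l$ in $(\lambda^l,c_1)$ as in Theorem~\ref{thm:right}.
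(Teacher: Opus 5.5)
Your cases (a) and (b) are sound: those profiles are viscosity solutions of \eqref{rho1} with $\rho(0)=0$ and $\rho(s)/|s|\to\lambda^l$, and they vanish exactly on $[-c_+^*,0]$. The gap is case (c), $\check p\le\lambda^l<\hat p$. This is a nonempty part of $B_5$; for instance $\lambda^l=\Psi_+(\tilde c_1)>\nu_+^*$ lies in it. There the maximum you propose fails, for three reasons. Write $\ell(s)=-\lambda^l s-\mathbf{H}_-(\lambda^l)$.

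First, in the $R_+$ zone $s>c_1$ this line is not a subsolution. Since $\mathbf{H}_-(\lambda^l)<\mathbf{H}_+(\lambda^l)$, one has $\Delta_+(-\lambda^l,\mathbf{H}_-(\lambda^l))>0$. In case (c) one also has $\ell(c_1)\ge\mathbf{L}_+(\tilde c_1)$, and when equality holds the slope of $\ell$ exceeds that of $\mathbf{L}_+$ at $c_1$. So $\ell$ is the active branch of your maximum immediately to the right of $c_1$, where $\rho>0$. Hence ``the subsolution property of the maximum is automatic'' is false there. Your variant, in which $\ell$ runs into $(c_1,-c_+^*)$, has the same defect.

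Second, gluing $\mathbf{L}_-$ to $\ell$ as in (b) is impossible here, because the tangency point $-\mathbf{H}_-'(\lambda^l)$ lies to the right of $-\mathbf{H}_-'(\hat p)$. So $\rho_{\mathrm{(b)}}$ must end in $\mathbf{L}_-$, the maximum then grows superlinearly, and the condition $\rho(s)/|s|\to\lambda^l$ is violated.

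Third, set $\ell_{\hat p}(s):=-\hat p s-\mathbf{H}_-(\hat p)$. Where $\ell$ crosses $\ell_{\hat p}$, the maximum has a convex kink. At that kink the supersolution inequality fails, because $p\mapsto sp-\mathbf{H}_-(p)$ is strictly concave and equals $\rho$ at both endpoints of $D^-\rho$.

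The missing idea is to take the minimum instead. Set $\rho=\min\{\ell,\ell_{\hat p}\}$ on $(-\infty,c_1]$, $\rho=\mathbf{L}_+$ on $[c_1,-c_+^*]$, and $\rho=0$ on $[-c_+^*,0]$.

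Since $\ell(c_1)\ge\ell_{\hat p}(c_1)=\mathbf{L}_+(\tilde c_1)$ and $\lambda^l<\hat p$, the two lines cross at a point $s_\times\le c_1$. The kink at $s_\times$ is concave. The subsolution test requires $s_\times p-\mathbf{H}_-(p)\ge\rho(s_\times)$ for $p\in[-\hat p,-\lambda^l]$, and this holds by concavity, with equality at both endpoints.

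The convex kink at $c_1$ is checked with $R_+$ exactly as in your case (b). When $\lambda^l=\check p$, one has $s_\times=c_1$, and the resulting concave kink at $c_1$ is checked with $R_-$ as in your case (a).

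This construction also makes your appeal to continuity of $c_l$ at endpoint values unnecessary. That appeal would be circular anyway, since continuity is read off from the very formulas being proved.

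For comparison, the paper avoids the case split altogether. It sandwiches $\rho$ between two profiles, both of which vanish exactly on $[-c_+^*,0]$:
\begin{itemize}
\item as subsolution, the explicit solution for the slope $\lambda_k<\lambda^l$ with $k(\lambda_k)=-c_1$, so that $p_*=\nu_+^*$;
\item as supersolution, the $(\mathrm{IC}^\infty)$ profile $\mathbf{L}_-$, $\ell_{\hat p}$, $\mathbf{L}_+$, $0$.
\end{itemize}
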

\begin{proof}
For fixed $c_{1}$, there exists a unique $\lambda_{k}\in(0,\nu_{+}^*)$ such that $-c_{1}=k(\lambda_{k})$.
Define
\begin{align}\label{eq:B5-sub}
\underline\rho(s)=
\begin{cases}
-\lambda_{k}s-\mathbf{H}_-(\lambda_{k}), & s\in(-\infty,c_{1}],\\
\max\left\{0,-p_{*}s-\mathbf{H}_+(p_{*})\right\}, & s\in(c_1,0].
\end{cases}
\end{align}
By Lemma \ref{lem:k(lambda^l)}, we have $p_{*}=\nu_{+}^*$.
Notably, \eqref{eq:B5-sub} is automatically a subsolution of \eqref{rho1}.
We next define
\begin{align}\label{eq:B5-sup}
\rho(s)=
\begin{cases}
s\Psi_-(s)-\mathbf{H}_-(\Psi_-(s)), & s\in(-\infty,-\mathbf{H}_{-}'(\hat{p})],\\
-\hat{p}s-\mathbf{H}_-(\hat{p}), & s\in(-\mathbf{H}_{-}'(\hat{p}), c_1],\\
s\Psi_+(s)-\mathbf{H}_+(\Psi_+(s)), & s\in(c_1,-c_+^*],\\
0, & s\in(-c_+^*,0].
\end{cases}
\end{align}
By a similar argument, \eqref{eq:B5-sup} is a supersolution of \eqref{rho1}. Then $c_l = c_+^*$.
\end{proof}

\begin{proof}[Proof of Theorem \ref{thm:left}]
 Combining Lemmas~\ref{lem:vis-B1}--\ref{lem:vis-B5}, one can readily verify that the leftward spreading speed $c_l(\lambda^l,c_1)$ is continuous on $(0,\infty)\times\mathbb{R}$. This immediately yields Theorem~\ref{thm:left}.
\end{proof}

\begin{proof}[Proof of Theorem \ref{thm:vis-infty}]
We only show the rightward spreading speeds, since the leftward one follows from a similar fashion. Here, we only consider the case $c_+^*<c_1<\bar{c}_1$, the others are similar. By Lemma \ref{lem:bar-p}, $\bar{p}(c_1)<\nu_-^*$ is well defined. Letting $\lambda^r\to\infty$ in \eqref{rho13}, then \eqref{rho13} converges locally to $\tilde{\rho}$, where
\[
\tilde{\rho}(s)=
\begin{cases}
\mathbf{L}_+(s), & s\in[ c_{1},+\infty),\\
\bar{p}s-\mathbf{H}_{-}(\bar{p}), & s\in[c_{-}(\bar{p}),c_{1}),\\
0, & s\in[0,c_{-}(\bar{p})).
\end{cases}
\]
By stability property of viscosity solutions \cite[Theorem 6.2]{Barles2013}, $\tilde{\rho}$ is a viscosity solution of \eqref{rho1} in $(0,\infty)$. Since $\tilde{\rho}(0)=0$, and
\[
\lim_{s \to \infty} \frac{\tilde{\rho}(s)}{s} \geq \lim_{s \to \infty} \frac{\rho(s)}{s} = \lambda^r \quad \text{for each } \lambda^r \in [\nu_+^*, \infty).
\]
Letting $\lambda^r\to\infty$, then \eqref{rho-boundary-1} holds. By the uniqueness result of viscosity solutions, we know that $\tilde{\rho}$ gives the unique viscosity solution of \eqref{rho1}, and thus, $c_r=c_-(\bar{p}(c_1))$.
\end{proof}

\section{Effects of time delay on spreading speeds}\label{Sec:tau}
\noindent

Next, we discuss the effect of time delay on the spreading speed. Let \(\mathbf{H}_\pm(p;\tau)\) denote the unique solutions of \eqref{gamma}, viewed as a function of both $p$ and $\tau$. All other quantities derived from \(\mathbf{H}_\pm\) (such as \(\nu_\pm^*\), \(\Psi_\pm\), etc.) are understood to depend on \(\tau\) as well, though we do not always write this dependence explicitly when it is clear from context.

\begin{lemma}\label{lem:monoton-not-strict}
Assume that {\rm (J), (F1)--(F4)} and {\rm(\text{IC}$^\lambda)$} hold with some $\lambda\in(0,\infty]$.
The rightward spreading speed \(c_r(\lambda^r,c_1;\tau)\) is strictly decreasing in \(\tau\ge 0\), while the leftward spreading speed \(c_l(\lambda^l,c_1;\tau)\) is non-increasing in \(\tau\ge 0\).
\end{lemma}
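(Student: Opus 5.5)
The argument rests on one monotonicity fact for the implicit Hamiltonians, after which the speed formulas of Theorems \ref{thm:right}, \ref{thm:left} and \ref{thm:vis-infty} are handled one regime at a time.

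\textbf{Step 1: monotonicity of $\mathbf{H}_\pm$ in $\tau$.} Differentiating $\Delta_\pm(p,\mathbf{H}_\pm(p;\tau);\tau)=0$ in $\tau$ gives
\[
\partial_\tau \mathbf{H}_\pm(p;\tau)=\frac{-\mu R_\pm \mathbf{H}_\pm e^{-\tau\mathbf{H}_\pm}}{1+\mu R_\pm\tau e^{-\tau\mathbf{H}_\pm}}<0 ,
\]
since $\mathbf{H}_\pm>0$ by Corollary \ref{lem:gamma-pm}. Consequently, for fixed $p>0$, $c_\pm(p;\tau)$ is strictly decreasing in $\tau$. Taking infima, $c_\pm^*(\tau)$ is strictly decreasing as well: at $\tau_2>\tau_1$ we have $c_\pm^*(\tau_2)=c_\pm(\nu^*_\pm(\tau_2);\tau_2)<c_\pm(\nu_\pm^*(\tau_2);\tau_1)$, and the last quantity is $\ge c_\pm^*(\tau_1)$.

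\textbf{Step 2: rightward speed via a variational characterization.} I would not differentiate each formula separately. The unique viscosity solution $\rho$ of \eqref{rho1} has the Lax--Oleinik/optimal-control representation used in \cite{Lam2022}, with the Lagrangian built from $\mathbf{H}_\pm(\cdot;\tau)$. Since $\mathbf{H}_\pm$ decrease pointwise in $\tau$, the Lagrangians $\mathbf{L}_\pm$ increase, so $\rho(\cdot;\tau)$ is nondecreasing in $\tau$ and its free boundary $s_r$ is nonincreasing. Equivalently, and more directly: $\rho(\cdot;\tau_2)$ is a viscosity supersolution of the $\tau_1$-problem, because $H$ is increasing in $\tau$ whenever $q<0$ and $q=\rho-s\rho'=-\tilde H<0$ on the positivity set. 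Comparison then gives $\rho(\cdot;\tau_1)\le\rho(\cdot;\tau_2)$.

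For strictness in the rightward case I would check regime by regime.
\begin{itemize}
\item In $A_1$ and $A_4$, $c_r$ is $c_+(\lambda^r)$ or $c_+^*$, both strictly decreasing by Step 1.
\item In $A_5$, $c_r=c_-^*$, again strictly decreasing.
\item In $A_2$, I use $c_r=c_-(\underline p)$, where $\underline p$ solves $c_1\underline p-\mathbf{H}_-(\underline p)=c_1\lambda^r-\mathbf{H}_+(\lambda^r)$. Equivalently $s_r$ is the zero of $\rho$, and $\rho(c_1)=c_1\lambda^r-\mathbf{H}_+(\lambda^r;\tau)$ strictly increases in $\tau$ while the left tangent line $\underline p s-\mathbf{H}_-(\underline p)$ rises. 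Implicit differentiation gives
\[
\partial_\tau s_r=\frac{\partial_\tau \mathbf{H}_-(\underline p)+\ldots}{\underline p}<0 .
\]
\item In $A_3$ the same argument applies with $\mathbf{L}_+(c_1)$ in place of $c_1\lambda^r-\mathbf{H}_+(\lambda^r)$.
\end{itemize}
The key observation is that the region boundaries also move with $\tau$, but $c_r$ is continuous across them, so piecewise strict decrease gives global strict decrease.

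\textbf{Step 3: leftward speed.} Non-increase follows from the comparison argument of Step 2, applied on $(-\infty,0)$ with the zero set $[s_l,0]$. Strictness fails exactly in $B_3$, where $c_l=-c_1$ independently of $\tau$, so only the weak statement is claimed here.

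The main obstacle I expect is making the comparison across different $\tau$ rigorous at the kinks and on the zero set. The subsolution/supersolution inequality must use the sign of $q$, which fails where $\rho=0$ and $q\ge 0$ is possible. There I would use the constraint $\min\{\rho,\cdot\}$: a supersolution needs only $H\ge0$ where the test function touches from below, and at $\rho=0$ with $\rho$ nondecreasing (Corollary \ref{col:rho-boundary-point-r}) this reduces to $H(s,p,-sp)$ with $p\ge0$. This quantity is monotone in $\tau$ when $-sp\le0$, that is for $s\ge0$, which is the correct sign on the right half-line; the left half-line is symmetric.
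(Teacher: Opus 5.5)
Your overall plan matches the paper's: first show that $\mathbf{H}_\pm$ is strictly decreasing in $\tau$, then prove strict decrease regime by regime and glue the pieces. Your cross-$\tau$ comparison step is a sound addition. However, the step that carries the real content of the paper's proof is not actually done, and the argument for $c^*_\pm$ is flawed.

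\textbf{The gap in $A_2$ and $A_3$.} In $A_2$ you write $\partial_\tau s_r=(\partial_\tau\mathbf{H}_-(\underline p)+\ldots)/\underline p<0$. Differentiating $\underline p\,s_r=\mathbf{H}_-(\underline p;\tau)$ shows that the omitted term is $(\mathbf{H}_-'(\underline p)-s_r)\,\partial_\tau\underline p$. Here $\partial_\tau\underline p=(L_1'+\partial_\tau\mathbf{H}_-(\underline p))/(c_1-\mathbf{H}_-'(\underline p))$, with $L_1(\tau)=c_1\lambda^r-\mathbf{H}_+(\lambda^r;\tau)$, has no definite sign, because $L_1'>0>\partial_\tau\mathbf{H}_-$. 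So the fact that $\rho(c_1)$ rises is not enough. The slope $\underline p$ may increase too, and a steeper line through a higher point need not have its zero further left. You have to substitute $\partial_\tau\underline p$, which gives
\[
\underline p\,\partial_\tau s_r=\frac{(c_1-s_r)\,\partial_\tau\mathbf{H}_-(\underline p;\tau)+\bigl(\mathbf{H}_-'(\underline p)-s_r\bigr)L_1'(\tau)}{c_1-\mathbf{H}_-'(\underline p)}<0 .
\]
The sign follows from three facts:
\begin{itemize}
\item $s_r<c_1$, which is equivalent to $L_1>0$;
\item $\underline p<\nu_-^*$, so $\mathbf{H}_-'(\underline p)<c_-(\underline p)=s_r$;
\item $\underline p<\Psi_-(c_1)$, so the denominator is positive.
\end{itemize}
This is exactly the paper's computation of $G_1'$. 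The case $A_3$ is the same with $\mathbf{L}_+(c_1;\tau)$ in place of $L_1$, once you check that $\mathbf{L}_+(c_1;\cdot)$ is strictly increasing.

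\textbf{Strict decrease of $c_\pm^*$.} Your argument is a non sequitur: $a<b$ and $b\ge c$ say nothing about $a$ versus $c$. Use the old minimizer instead: $c_\pm^*(\tau_2)\le c_\pm(\nu_\pm^*(\tau_1);\tau_2)<c_\pm(\nu_\pm^*(\tau_1);\tau_1)=c_\pm^*(\tau_1)$.

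\textbf{The comparison route.} Your comparison argument is a genuinely different route from the paper's. The paper obtains non-strict monotonicity of $c_l$ only from the regime formulas plus continuity across the moving demarcation lines. Your argument gives $c_l$ non-increasing directly. In the rightward case it also reduces the gluing to a local statement: if $c_r(\tau_1)=c_r(\tau_2)$, then $c_r$ is constant on $[\tau_1,\tau_2]$, which contradicts strict decrease on any nondegenerate piece lying in one region.

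You need to correct one sign. $\partial_\tau H=\mu R(s)\,q\,e^{\tau q}$ is negative, not positive, for $q<0$. It is this direction that makes $\rho(\cdot;\tau_2)$ a supersolution of the $\tau_1$-problem.

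Your worry about kinks and the zero set is unnecessary. If $q\ge0$, then $\int_{\mathbb{R}}J(y)e^{py}dy\ge1$ by Jensen and $R\ge R_->1$, so $H^*(s,p,q;\tau)\ge\mu(R_--1)>0$ for every $\tau$. Hence $H^*(s,p,q;\tau_2)\ge0$ implies $H^*(s,p,q;\tau_1)\ge0$ for all $(s,p,q)$. Equivalently, use $H^*\ge0\iff q+\tilde H^*\ge0$ with $\tilde H=\mathbf{H}_\pm$ pointwise decreasing in $\tau$.
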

\begin{proof}
This proof is divided into several steps.

\noindent\textbf{Step 1.} For any \(p>0\), the maps \(\tau\mapsto \mathbf{H}_+(p;\tau)/p\) and \(\tau\mapsto \mathbf{H}_-(p;\tau)/p\) are strictly decreasing.

We omit the subscript \(\pm\) throughout this step for notational simplicity.
Consider \(\mathbf{H}(p;\tau)>0\) defined implicitly by
\[
\Delta(p,\mathbf{H},\tau):=d\left(\int_{\mathbb{R}}J(y)e^{py}\,dy-1\right)-\mathbf{H}-\mu+\mu R e^{-\tau\mathbf{H}}=0.
\]
By the implicit function theorem, \(\mathbf{H}\) is differentiable with respect to \(\tau\). Differentiating the identity \(\Delta(p,\mathbf{H}(p;\tau),\tau)=0\) with respect to \(\tau\) yields
\[
\frac{\partial\Delta}{\partial\mathbf{H}}\frac{\partial \mathbf{H}}{\partial \tau}+\frac{\partial\Delta}{\partial\tau}=0.
\]
Denote \(X(\tau):=\mu R e^{-\tau\mathbf{H}}>0\).
Direct computation gives $\partial_\mathbf{H}\Delta = -1-\tau X(\tau)$, $\partial_\tau\Delta = -\mathbf{H} X(\tau)$.
Then we obtain
\begin{align}\label{eq:par-gamma-tau}
\frac{\partial \mathbf{H}}{\partial \tau}= -\frac{\mathbf{H} X(\tau)}{1+\tau X(\tau)}<0,
\end{align}
Thus, \(\mathbf{H}(p;\tau)\) is strictly decreasing in \(\tau\ge0\).

\noindent\textbf{Step 2.} For \((\lambda^r,c_1)\in A_2\), $c_r(\lambda^r,c_1;\tau)=\frac{\mathbf{H}_-(\underline{p};\tau)}{\underline{p}}$ is strictly decreasing in \(\tau\).

Define \(L_1(\tau):=c_1\lambda^r-\mathbf{H}_+(\lambda^r;\tau)\) for $(\lambda^r,c_1)\in A_2$. By Step 1, \(L_1(\tau)\) is strictly increasing. Differentiating \eqref{eq:under-p} along $p=\underline{p}$ with respect to \(\tau\) yields
\[
c_1\frac{\partial\underline{p}}{\partial\tau}-\frac{\partial \mathbf{H}_-}{\partial p}\frac{\partial\underline{p}}{\partial\tau}-\frac{\partial \mathbf{H}_-}{\partial\tau}=L_1'(\tau),
\]
Hence
\begin{align}\label{eq:par-under-p}
\frac{\partial\underline{p}}{\partial\tau}=\frac{L_1'(\tau)+\partial_\tau\mathbf{H}_-(\underline{p};\tau)}{c_1-\partial_p\mathbf{H}_-(\underline{p};\tau)}.
\end{align}
Set \(G_1(\tau):=\frac{\mathbf{H}_-(\underline{p};\tau)}{\underline{p}}=c_1-\frac{L_1(\tau)}{\underline{p}}\). Differentiating and using \eqref{eq:par-under-p} yields
\begin{align*}
G'(\tau)&=-\frac{1}{\underline{p}^2}\left(L_1'(\tau)\underline{p}-L_1(\tau)\frac{\partial\underline{p}}{\partial\tau}\right)\\
&=-\frac{1}{\underline{p}^2\left(c_1-\partial_p\mathbf{H}_-(\underline{p};\tau)\right)}\left(L_1'(\tau)\left(\mathbf{H}_-(\underline{p};\tau)-\underline{p}\partial_p\mathbf{H}_-(\underline{p};\tau)\right)-L_1(\tau)\partial_\tau\mathbf{H}_-(\underline{p};\tau)\right).
\end{align*}
Since \((\lambda^r,c_1)\in A_2\), Lemmas \ref{lem:under-p} and \ref{lem:g(lambda^r)} imply \(\underline{p}<\min\{\nu_-^*,\lambda^r\}\). This together with the convexity of \(\mathbf{H}_-\) gives \(\mathbf{H}_-(\underline{p};\tau)-\underline{p}\partial_p\mathbf{H}_-(\underline{p};\tau)>0\). Moreover, since $\underline{p}<\Psi_-(c_1)$, we have $\partial_p\mathbf{H}_-(\underline{p};\tau)<\partial_p\mathbf{H}_-(\Psi_-(c_1);\tau)=c_1$. Consequently, \(G_1'(\tau)<0\).

Both monotonicity properties in \(A_3\) and \(B_2\) can be derived via the same analysis and computations as in Step 2, with only minor adjustments to the auxiliary functions and convexity arguments. We omit it here.
By continuity of the speeds across the demarcation lines, we conclude that \(c_r\) is strictly decreasing for all \((\lambda^r,c_1)\). However, in region \(B_3\) the leftward spreading speed equals \(-c_1\), which is independent of \(\tau\). Consequently, the leftward spreading speed \(c_l\) is decreasing but not strictly in \(\tau\ge 0\). This ends the proof.
\end{proof}

\begin{lemma}\label{lem:nu-decreasing}
$\nu_\pm^*(\tau)$ is strictly decreasing in $\tau\ge0$.
\end{lemma}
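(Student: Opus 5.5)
The plan is to treat $\nu_\pm^*(\tau)$ as the unique positive zero of
\[
g(p;\tau):=p\,\partial_p\mathbf{H}(p;\tau)-\mathbf{H}(p;\tau),
\]
as in the proof of Corollary~\ref{lem:gamma-pm}(iii), and to differentiate implicitly in $\tau$. I drop the subscript $\pm$, since both cases are identical with $R=R_\pm$. That proof gives $\partial_p g=p\,\partial_{pp}\mathbf{H}>0$ at $p=\nu^*$. The implicit function theorem then makes $\nu^*$ a $C^1$ function of $\tau$ with
\[
\frac{d\nu^*}{d\tau}=-\frac{\partial_\tau g(\nu^*;\tau)}{\partial_p g(\nu^*;\tau)} .
\]
So it suffices to show $\partial_\tau g(\nu^*;\tau)>0$ for $\tau>0$. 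Explicitly, this means
\[
\nu^*\,\partial_\tau\partial_p\mathbf{H}-\partial_\tau\mathbf{H}>0 \quad\text{at } p=\nu^*.
\]

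Next I collect explicit formulas. Write $M(p)=\int_{\mathbb{R}}J(y)e^{py}dy$ and $X=\mu Re^{-\tau\mathbf{H}}$. Step 1 of Lemma~\ref{lem:monoton-not-strict} gives
\[
\partial_\tau\mathbf{H}=-\frac{\mathbf{H}X}{1+\tau X}.
\]
Differentiating \eqref{gamma} in $p$ gives $\partial_p\mathbf{H}=\dfrac{dM'(p)}{1+\tau X}$. Differentiating this in $\tau$, with $p$ fixed, gives
\[
\partial_\tau\partial_p\mathbf{H}=-\partial_p\mathbf{H}\,\frac{\partial_\tau(\tau X)}{1+\tau X}.
\]
Using the formula for $\partial_\tau\mathbf{H}$, one gets $\partial_\tau(\tau X)=X\bigl(1-\tau\mathbf{H}-\tau\partial_\tau\mathbf{H}\bigr)$, which simplifies to
\[
\partial_\tau(\tau X)=X\Bigl(1-\frac{\tau\mathbf{H}}{1+\tau X}\Bigr).
\]

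Now evaluate at $p=\nu^*$, where $\nu^*\partial_p\mathbf{H}=\mathbf{H}$ by definition. Substituting,
\[
\partial_\tau g(\nu^*;\tau)=\frac{\mathbf{H}}{1+\tau X}\Bigl[X-\partial_\tau(\tau X)\Bigr]=\frac{\tau X\,\mathbf{H}^2}{(1+\tau X)^2}.
\]
This is strictly positive for $\tau>0$, because $\mathbf{H}>0$ and $X>0$. Hence $d\nu^*/d\tau<0$ on $(0,\infty)$, and the derivative vanishes only at $\tau=0$. Since $\nu^*$ is continuous on $[0,\infty)$, strict decrease on $[0,\infty)$ follows from the mean value theorem.

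I expect the main obstacle to be the sign bookkeeping in $\partial_\tau(\tau X)$. The exponent $\tau\mathbf{H}$ depends on $\tau$ both explicitly and through $\mathbf{H}$, and an error there would destroy the clean cancellation $X-\partial_\tau(\tau X)=\tau X\mathbf{H}/(1+\tau X)$. I would therefore double-check this identity by an independent route: rewrite \eqref{gamma} as $dM(p)-d-\mu=\mathbf{H}-\mu Re^{-\tau\mathbf{H}}$ and differentiate both sides directly.
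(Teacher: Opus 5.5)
Your proof is correct and is essentially the paper's argument: since $g=p^2\partial_p c$, implicitly differentiating $g(\nu^*;\tau)=0$ is the same as the paper's differentiation of $\partial_p c(\nu^*;\tau)=0$, and your $\partial_\tau g(\nu^*;\tau)=\tau X\mathbf{H}^2/(1+\tau X)^2$ is exactly $(\nu^*)^2$ times the paper's $\partial_p\partial_\tau c=(c^*)^2\tau X/(1+\tau X)^2$. Your handling of $\tau=0$, where this derivative vanishes, via continuity and the mean value theorem is more careful than the paper, which simply asserts strict positivity. One slip: the unsimplified expression should read $\partial_\tau(\tau X)=X(1-\tau\mathbf{H}-\tau^2\partial_\tau\mathbf{H})$, with $\tau^2$ rather than $\tau$; the simplified identity $\partial_\tau(\tau X)=X\bigl(1-\frac{\tau\mathbf{H}}{1+\tau X}\bigr)$ that you actually use is correct.
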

\begin{proof}
We omit the subscript \(\pm\) throughout this proof for notational simplicity.
Recall that $\nu^*(\tau)$ is the unique positive minimizer of
\[
c(p;\tau):=\frac{\mathbf{H}(p;\tau)}{p},\quad p>0,
\]
which is equivalently characterized by the first-order optimality condition
\begin{equation*}\label{eq:par-c-nu_-}
\partial_p \mathbf{H}(\nu^*(\tau);\tau)
=\frac{\mathbf{H}(\nu^*(\tau);\tau)}{\nu^*(\tau)}
=c^*(\tau),\qquad
\partial_p c(\nu^*(\tau);\tau)=0.
\end{equation*}
Differentiating the above relation with respect to $\tau$ yields
\begin{equation}\label{eq:nuprime}
\frac{{\rm d}\nu^*}{{\rm d}\tau}
= -\,\frac{\partial_\tau\partial_p c(\nu^*(\tau),\tau)}
{\partial_{pp}c(\nu^*(\tau),\tau)}.
\end{equation}
At the minimizer $p=\nu^*(\tau)$, direct computation gives
\begin{align}\label{eq:par-c-pp}
\partial_{pp}c(\nu^*(\tau);\tau)
=\frac{1}{p^3}\,\mathbf{H}_{pp}(\nu^*(\tau);\tau)\,p^2>0,
\end{align}
so the sign of $\mathrm{d}\nu^*/\mathrm{d}\tau$ is opposite to that of the mixed partial derivative $\partial_\tau\partial_p c$.

We next compute $\partial_\tau c(p;\tau)$.
By \eqref{eq:par-gamma-tau}, we obtain
\begin{align*}
\partial_\tau c(p;\tau)
=\frac{1}{p}\partial_\tau\mathbf{H}(p;\tau)
= -\frac{\mathbf{H}}{p}\cdot
\frac{X(\tau)}{1+\tau X(\tau)}
= -\frac{cX(\tau)}{1+\tau X(\tau)}.
\end{align*}
Differentiating with respect to $p$, we obtain
\begin{align}\label{eq:par-p-tau-c}
\partial_p(\partial_\tau c)
= -
\frac{
\big(1+\tau X(\tau)\big)\partial_p\big(c X(\tau)\big)
- c X(\tau)\,\partial_p\big(1+\tau X(\tau)\big)
}
{\big(1+\tau X(\tau)\big)^2}.
\end{align}
Evaluating at $p=\nu^*(\tau)$, we use $\partial_p c=0$ and $\partial_p\mathbf{H} = c^*$ to simplify
\begin{align*}
&\partial_p\big(c X(\tau)\big)
= -(c^*)^2 \tau X(\tau),\quad\partial_p\big(1+\tau X(\tau)\big)
= - c^*\tau^2 X(\tau).
\end{align*}
Substituting these into \eqref{eq:par-p-tau-c} at $p=\nu^*$, we get
\begin{align*}
\partial_p\partial_\tau c(\nu_*(\tau);\tau)
&= -\frac{1}{\big(1+\tau X(\tau)\big)^2}
\left[
-(c^*)^2\tau X(\tau)\big(1+\tau X(\tau)\big)
+(c^*)^2\tau^2 X^2(\tau)
\right]\\
&= \frac{ (c^*)^2\tau X(\tau)}
{\big(1+\tau X(\tau)\big)^2} > 0.
\end{align*}
Hence $\partial_p\partial_\tau c(\nu_*(\tau);\tau)>0$.
Combined with \eqref{eq:nuprime} and \eqref{eq:par-c-pp}, we conclude $\frac{{\rm d}\nu^*}{{\rm d}\tau}<0$.
Therefore, $\nu^*(\tau)$ is strictly decreasing for $\tau\ge0$.
\end{proof}

\begin{lemma}
\(c_\pm(\tau)\) and \(c_\pm^*(\tau)\) are strictly convex with respect to \(\tau\ge0\).
\end{lemma}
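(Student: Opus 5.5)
The plan is to work directly with the implicit relation \eqref{gamma}, reusing the notation of Step~1 of Lemma~\ref{lem:monoton-not-strict}. For fixed $p>0$ we write $\mathbf{H}=\mathbf{H}_\pm(p;\tau)$, $X(\tau)=\mu R_\pm e^{-\tau\mathbf{H}}$ and $Y(\tau)=1+\tau X(\tau)$. Since $p$ is fixed and $c_\pm(p;\tau)=\mathbf{H}_\pm(p;\tau)/p$, strict convexity of $c_\pm(p;\cdot)$ is equivalent to $\partial_{\tau\tau}\mathbf{H}>0$.

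\textbf{First step: second derivative of $\mathbf{H}$.} From \eqref{eq:par-gamma-tau} we have
\[
\mathbf{H}'=-\frac{\mathbf{H}X}{Y}.
\]
Differentiating $X$ gives $X'=-X(\mathbf{H}+\tau\mathbf{H}')$, and substituting $\mathbf{H}'$ yields the simplification $X'=\mathbf{H}'$. Also $Y'=X+\tau\mathbf{H}'$. Differentiating the quotient once more and using $\mathbf{H}'Y=-\mathbf{H}X$ should give
\[
\partial_{\tau\tau}\mathbf{H}=\frac{1}{Y^2}\Big(2\mathbf{H}X^2+\mathbf{H}^2X\big(1-\tfrac{\tau X}{Y}\big)\Big)=\frac{1}{Y^2}\Big(2\mathbf{H}X^2+\frac{\mathbf{H}^2X}{Y}\Big)>0.
\]
Positivity uses $\mathbf{H}>0$ (Corollary~\ref{lem:gamma-pm}) and $X>0$. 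This gives strict convexity of $\tau\mapsto c_\pm(p;\tau)$ for every $p>0$.

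\textbf{Second step: $c_\pm^*(\tau)$.} A pointwise minimum of convex functions need not be convex, so I would not argue abstractly. Instead I will use $c^*(\tau)=c(\nu^*(\tau);\tau)$ together with the first-order condition $\partial_pc(\nu^*;\tau)=0$. By the envelope identity, $\frac{\mathrm d}{\mathrm d\tau}c^*=\partial_\tau c(\nu^*;\tau)$. Differentiating again and inserting \eqref{eq:nuprime} gives
\[
(c^*)''(\tau)=\partial_{\tau\tau}c-\frac{(\partial_{p\tau}c)^2}{\partial_{pp}c}\Big|_{p=\nu^*(\tau)}.
\]
Each ingredient is already available at $p=\nu^*$:
\begin{itemize}
\item $\partial_{\tau\tau}c=\frac1p\partial_{\tau\tau}\mathbf{H}$, from the first step;
\item $\partial_{p\tau}c=(c^*)^2\tau X/Y^2$, from the proof of Lemma~\ref{lem:nu-decreasing};
\item $\partial_{pp}c=\mathbf{H}_{pp}/p$, from \eqref{eq:par-c-pp}.
\end{itemize}
To compute $\mathbf{H}_{pp}$, differentiate \eqref{gamma} twice in $p$. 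This gives $Y\mathbf{H}_{pp}=dM''(p)+\mu R\tau^2e^{-\tau\mathbf{H}}\mathbf{H}_p^2$, where $M(p)=\int J(y)e^{py}dy$. At $p=\nu^*$ we have $\mathbf{H}_p=c^*$, so $\mathbf{H}_{pp}\ge \tau^2X(c^*)^2/Y$.

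\textbf{Main obstacle.} The key difficulty is to show that the Schur-complement expression above is positive, i.e.
\[
\partial_{\tau\tau}\mathbf{H}\cdot \mathbf{H}_{pp}>p\,(\partial_{p\tau}c)^2 .
\]
With $\mathbf{H}=pc^*$ this reads
\[
\Big(2\mathbf{H}X^2+\frac{\mathbf{H}^2X}{Y}\Big)\mathbf{H}_{pp}>\frac{\mathbf{H}(c^*)^3\tau^2X^2}{Y^2}\cdot\frac{\mathbf{H}}{p}\cdot\frac{1}{c^*}.
\]
I would try bounding the left side from below using only the term $\frac{\mathbf{H}^2X}{Y}\cdot\frac{\tau^2X(c^*)^2}{Y}$ coming from the lower bound on $\mathbf{H}_{pp}$. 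This matches the right side exactly, so the inequality would be closed by the strictly positive leftover contributions: the $2\mathbf{H}X^2$ term and the $dM''>0$ part of $\mathbf{H}_{pp}$. The delicate point is that the right side must be normalized consistently, since $\mathbf{H}/(pc^*)=1$. I expect this bookkeeping to require care, but no new ideas beyond the identities already derived.

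At $\tau=0$ the mixed derivative vanishes, so strict convexity there is immediate from the first step. The argument applies verbatim to both $\pm$ cases.
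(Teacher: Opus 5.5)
Your plan works, and for $c_\pm^*$ it takes a different and more careful route than the paper. However, the key displayed inequality contains a factor-of-$p$ slip, and this is exactly the "bookkeeping" you were unsure about.

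\textbf{The slip.} At $p=\nu^*$ you have $\partial_{\tau\tau}c=\mathbf{H}_{\tau\tau}/p$ and $\partial_{pp}c=\mathbf{H}_{pp}/p$. Hence
$(c^*)''=\frac{1}{p}\big(\mathbf{H}_{\tau\tau}-p^2(\partial_{p\tau}c)^2/\mathbf{H}_{pp}\big)$.
The condition to prove is therefore $\mathbf{H}_{\tau\tau}\mathbf{H}_{pp}>p^2(\partial_{p\tau}c)^2$, not $p(\partial_{p\tau}c)^2$.

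Your displayed right-hand side simplifies to $\mathbf{H}(c^*)^3\tau^2X^2/Y^2$. Your chosen term $\frac{\mathbf{H}^2X}{Y}\cdot\frac{\tau^2X(c^*)^2}{Y}$ equals $\nu^*$ times that. So, as written, the claimed exact match fails, and when $\nu^*<1$ your lower bound would not close the mis-stated inequality.

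\textbf{The fix.} With the correct $p^2$, the right-hand side times $Y^2$ is $p^2(c^*)^4\tau^2X^2/Y^2=\mathbf{H}^2(c^*)^2\tau^2X^2/Y^2$, which is exactly your chosen term. Moreover, $Y\mathbf{H}_{pp}=dM''(\nu^*)+\tau^2X(c^*)^2$ is an identity, not just a lower bound. The difference is therefore exactly $2\mathbf{H}X^2\mathbf{H}_{pp}+\mathbf{H}^2X\,dM''(\nu^*)/Y^2>0$, as you predicted.

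\textbf{First step.} This agrees with the paper. The paper inverts $\tau\mapsto\mathbf{H}$ and uses $\mathbf{H}''=-\tau''/(\tau')^3$, arriving at the same $\mathbf{H}_{\tau\tau}=2\mathbf{H}X^2/Y^2+\mathbf{H}^2X/Y^3$.

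\textbf{Comparison for $c^*$.} The paper differentiates the envelope identity $\frac{\mathrm{d}c^*}{\mathrm{d}\tau}=-c^*X/Y$ once more. This gives $(c^*)''=c^*(2X^2-X')/Y^2$, and the paper then asserts $X'<0$. Once your slip is fixed, both computations yield
$(c^*)''=\frac{c^*}{Y^2}\big(2X^2+\frac{\mathbf{H}X}{Y}\cdot\frac{dM''}{dM''+\tau^2X(c^*)^2}\big)$.

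Your Schur-complement form makes the motion of the minimizer explicit. Along $p=\nu^*(\tau)$ one has $X'=-\frac{X}{Y}\big(\mathbf{H}+\tau Y c^*(\nu^*)'\big)$. Since $(\nu^*)'<0$, the extra term has the unfavorable sign. The paper's one-line justification of $X'<0$ is written as if $p$ were frozen, so it does not account for this term. Your cancellation between $(\partial_{p\tau}c)^2$ and the $\tau^2X(c^*)^2$ part of $\mathbf{H}_{pp}$ is precisely what shows $X'<0$.

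The paper's route is shorter. Yours, once the factor of $p$ is corrected, supplies the justification that the paper's version leaves implicit.
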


\begin{proof}
We omit the subscript \(\pm\) throughout this proof for notational simplicity.
For fixed \(p>0\), the function \(\mathbf{H}(p;\tau)\) is implicitly defined by
\[
d\int_{\mathbb{R}}J(y)e^{py}\,dy-d-\mu-\mathbf{H}(p;\tau)+\mu R e^{-\tau\mathbf{H}(p;\tau)}=0.
\]
Define the constant $B:=\mu -d\int_{\mathbb{R}}J(y)e^{py}\,dy+d$,
so the above equation reduces to
\begin{align}\label{eq:gamma}
\mathbf{H}+B=\mu R e^{-\tau\mathbf{H}}.
\end{align}
We now regard \(\tau\) as a smooth function of \(\mathbf{H}\) for fixed \(p\). Solving for \(\tau\) yields
\[
\tau(\mathbf{H})=\frac{1}{\mathbf{H}}\ln\frac{\mu R}{\mathbf{H}+B},
\]
which is well-defined since \(\mathbf{H}+B>0\).
Differentiating \eqref{eq:gamma} with respect to \(\mathbf{H}\), we obtain
\begin{align}\label{eq:1}
1 = -\mu R e^{-\tau\mathbf{H}}\big(\tau'\mathbf{H}+\tau\big)
= -\big(\mathbf{H}+B\big)\big(\tau'\mathbf{H}+\tau\big).
\end{align}
Rearranging gives
\begin{align}\label{eq:tau'}
\tau'(\mathbf{H}) = -\frac{1}{\mathbf{H}(\mathbf{H}+B)}-\frac{\tau}{\mathbf{H}}<0,
\end{align}
which implies that \(\tau(\mathbf{H})\) is strictly decreasing.
Differentiating \eqref{eq:1} once more,
\[
0 = -\big(\tau'\mathbf{H}+\tau\big) - \big(\mathbf{H}+B\big)\big(\tau''\mathbf{H}+2\tau'\big).
\]
Substituting \eqref{eq:tau'} and simplifying, we arrive at
\[
\tau'' = \frac{1}{\mathbf{H}}\left(\frac{1}{(\mathbf{H}+B)^2}-2\tau'\right) >0,
\]
where the positivity follows from \(\tau'(\mathbf{H})<0\). Hence \(\tau(\mathbf{H})\) is strictly convex.

By the inverse function theorem, the inverse function \(\mathbf{H}(\tau)\) satisfies 
\[
\mathbf{H}''(\tau) = -\frac{\tau''(\mathbf{H})}{\big(\tau'(\mathbf{H})\big)^3}>0,
\]
which shows that \(\tau\mapsto \mathbf{H}(p;\tau)\) is strictly convex for each fixed \(p>0\).
As a direct consequence, \(c(\tau)=\mathbf{H}(p;\tau)/p\) is also strictly convex in \(\tau\ge0\).

It remains to verify the strict convexity of the minimal speed \(c^*(\tau)\).
Recall that
\begin{align}\label{eq:c^*}
c^*(\tau)=\partial_p\mathbf{H}\big(\nu^*(\tau);\tau\big)
=\frac{\mathbf{H}\big(\nu^*(\tau);\tau\big)}{\nu^*(\tau)}.
\end{align}
Differentiating \eqref{eq:c^*} with respect to \(\tau\) and using the optimality condition at \(p=\nu^*(\tau)\), we simplify
\[
\frac{{\rm d}c^*(\tau)}{{\rm d}\tau}
=\frac{\big((\nu^*)'\partial_p\mathbf{H}+\partial_\tau\mathbf{H}\big)\nu^*-(\nu^*)'\mathbf{H}}{(\nu^*)^2}
=\frac{\partial_\tau\mathbf{H}}{\nu^*}.
\]
Combining with \eqref{eq:par-gamma-tau}, we write $\frac{{\rm d}c^*(\tau)}{{\rm d}\tau}
=-c^*(\tau)\,\frac{X(\tau)}{1+\tau X(\tau)}$.
Differentiating once again, a direct computation yields
\begin{align*}
\frac{{\rm d^2}c^*(\tau)}{{\rm d}\tau^2}
=&-\frac{{\rm d}c^*(\tau)}{{\rm d}\tau}\cdot\frac{X(\tau)}{1+\tau X(\tau)}
-c^*(\tau)\cdot\frac{X'(\tau)(1+\tau X(\tau))-X(\tau)\big(X(\tau)+\tau X'(\tau)\big)}{(1+\tau X(\tau))^2}\\
=&c^*(\tau)\cdot\frac{2X^2(\tau)-X'(\tau)}{\big(1+\tau X(\tau)\big)^2}.
\end{align*}
Moreover, one has \(X'(\tau)=X(\tau)\big(-\mathbf{H}+\tau\partial_\tau\mathbf{H}\big)<0\) by \eqref{eq:par-gamma-tau}.
Therefore, \(2X^2(\tau)-X'(\tau)>0\), which implies $\frac{{\rm d^2}c^*(\tau)}{{\rm d}\tau^2}>0$.
This completes the proof.
\end{proof}

\begin{lemma}\label{lem:c_infty}
$\lim\limits_{\tau\to+\infty}c_\pm^*(\tau)=0$.
\end{lemma}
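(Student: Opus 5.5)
Since $c_\pm^*(\tau)=\inf_{p>0}\mathbf{H}_\pm(p;\tau)/p>0$, it suffices to produce, for every $\eta>0$, a single $p>0$ and a threshold $\tau_0$ with $\mathbf{H}_\pm(p;\tau)/p\le\eta$ for all $\tau\ge\tau_0$. We drop the subscript $\pm$, write $R=R_\pm>1$, and set
\[
m(p):=d\Big(\int_{\mathbb{R}}J(y)e^{py}\,dy-1\Big)-\mu,
\]
so that $\mathbf{H}=\mathbf{H}(p;\tau)>0$ solves $\mathbf{H}-m(p)=\mu R e^{-\tau\mathbf{H}}$.

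\textbf{Step 1 (uniform bound).} Because the right-hand side of the implicit equation lies in $(0,\mu R]$, we get
\[
m(p)<\mathbf{H}(p;\tau)\le m(p)+\mu R\quad\text{for all }\tau\ge0.
\]
The upper bound is independent of $\tau$.

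\textbf{Step 2 (pointwise decay in $\tau$).} Fix $p>0$ small enough that $m(p)<0$; this is possible since $m(0)=-\mu<0$ and $m$ is continuous. Suppose $\mathbf{H}(p;\tau)\ge\delta>0$ along a sequence $\tau\to\infty$. Then $\mu Re^{-\tau\mathbf{H}}\to0$, so $\mathbf{H}-m(p)\to0$, forcing $\mathbf{H}\to m(p)<0$, a contradiction. Hence $\mathbf{H}(p;\tau)\to0$ as $\tau\to\infty$ for each such $p$.

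\textbf{Step 3 (quantitative rate).} We can say more: $\tau\mathbf{H}=\ln\big(\mu R/(\mathbf{H}-m(p))\big)$, which stays bounded as $\mathbf{H}\to0$. Therefore
\[
\mathbf{H}(p;\tau)\sim\frac{1}{\tau}\ln\frac{\mu R}{-m(p)},
\]
and this also shows the decay is monotone, consistent with Step 1 of Lemma \ref{lem:monoton-not-strict}.

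\textbf{Conclusion.} Fix such a $p_0$. Then
\[
0<c^*(\tau)\le\frac{\mathbf{H}(p_0;\tau)}{p_0}\to0\quad\text{as }\tau\to\infty,
\]
which proves the lemma.

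The only delicate point is checking that $\mathbf{H}(p;\tau)>0$ is uniquely defined for $p$ with $m(p)<0$. This holds because $\mathbf{H}\mapsto\mathbf{H}-m(p)-\mu Re^{-\tau\mathbf{H}}$ is strictly increasing and equals $-m(p)-\mu R<0$ at $\mathbf{H}=0$. Indeed $-m(p)\le\mu<\mu R$, using that $\int_{\mathbb{R}}J(y)e^{py}\,dy\ge1$ by Jensen's inequality and the symmetry of $J$.
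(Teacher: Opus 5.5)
Your proof is correct and uses the same mechanism as the paper's. You choose $p_0>0$ small so that $d(\int_{\mathbb{R}}J(y)e^{p_0y}\,dy-1)-\mu<0$, and note that a positive lower bound on $\mathbf{H}(p_0;\tau)$ would kill $\mu Re^{-\tau\mathbf{H}}$ and push $\mathbf{H}(p_0;\tau)$ toward a negative value. The paper runs this as a contradiction to $c^*(+\infty)=c_0>0$, using monotonicity in $\tau$ to get $\mathbf{H}(p;\tau)\ge c_0p$, whereas you show $\mathbf{H}(p_0;\tau)\to0$ directly and squeeze, which avoids that monotonicity. Step 3 is unnecessary, and its claim that the asymptotic shows the decay is monotone is not justified, but nothing depends on it.
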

\begin{proof}
We omit the subscript \(\pm\) throughout this proof for notational simplicity.
Note that for each fixed $p>0$, the map $\tau\mapsto c(p;\tau)=\frac{\mathbf{H}(p;\tau)}{p}$ is strictly decreasing in $\tau\ge0$, and Lemma \ref{lem:gamma-pm} gives $c^*({\tau})>0$ for all $\tau\ge0$.
Thus $c^*(+\infty)$ exists and is nonnegative.

Suppose on the contrary that $c^*(+\infty)=c_0>0$. By the definition of infimum, this implies $\lim\limits_{\tau\to+\infty}\mathbf{H}(p;\tau)\ge c_0 p$ for all $p>0$ and all $\tau\ge0$.
For any fixed $p>0$, we have $\tau\mathbf{H}(p;\tau)\ge \tau c_0 p\to+\infty$ as $\tau\to+\infty$, hence $e^{-\tau\mathbf{H}(p;\tau)}\to0$.
Substituting into the implicit equation for $\mathbf{H}(p;\tau)$ and taking the limit as $\tau\to+\infty$, we obtain
\[
c_0 p \le \lim_{\tau\to+\infty}\mathbf{H}(p;\tau) = d\left(\int_{\mathbb{R}}J(y)e^{py}\,dy-1\right)-\mu.
\]
Let $M(p)=d\int_{\mathbb{R}}J(y)e^{py}dy-d$. By continuity of $M(p)$ and $M(0)=0$, we have 
\[
\lim\limits_{p\to0^+}M(p)-\mu = -\mu<0.
\]
Thus there exists a sufficiently small $p_0>0$ such that $M(p_0)-\mu < 0$,
which gives $c_0 p_0 < 0$, contradicting $c_0>0$ and $p_0>0$.
Therefore $c^*(+\infty)=0$.
\end{proof}

According to Theorem~\ref{thm:left}, for a fixed $\tau$, the leftward spreading speed equals $-c_1$ exactly when $(\lambda^l,c_1)$ lies in region $B_3(\tau)$.
Reading the definition of $B_3$ in terms of the critical quantities and using $\tilde c_1=-c_1>0$, we have
\begin{align}\label{eq:B3}
B_3(\tau)=\Bigl\{(\lambda^l,\tilde{c}_1):\
&\bigl[\lambda^l\in[\nu_1(\tau),\nu_-^*(\tau))\ \text{and}\ c_+^*(\tau)\ge\tilde c_1\ge c_-(\lambda^l;\tau)\bigr]\notag\\
\text{or}\quad
&\bigl[\lambda^l\in[\nu_-^*(\tau),+\infty)\ \text{and}\ c_+^*(\tau)\ge\tilde c_1\ge c_-^*(\tau)\bigr]\Bigr\}.
\end{align}
If $\tilde{c}_1 \ge c_+^*(0)$, the strict decrease of $c_+^*(\tau)$ forces $\tilde{c}_1 \ge c_+^*(\tau)$ for all $\tau\ge0$.
If $\tilde{c}_1 \le 0$, then $\tilde{c}_1 < c_-^*(\tau)$ for all $\tau\ge0$.
In either situation the condition \eqref{eq:B3} is never fulfilled, so the locking phenomenon does not occur.

We now work in the admissible range $\tilde{c}_1 \in \bigl(0,\,c_+^*(0)\bigr)$,
and examine the structure of the set $B_3(\tau)$.
Define
\begin{equation*}
\begin{aligned}
&\tau_+ := \inf\,\{\tau\ge 0 : c_+^*(\tau) \le \tilde{c}_1\}, &\tau_- := \inf\,\{\tau\ge 0 : c_-^*(\tau) \le \tilde{c}_1\},\\
&\tau_\lambda := \inf\,\{\tau\ge 0 : c_-(\lambda^l;\tau) \le \tilde{c}_1\}, &\hat\tau := \inf\,\{\tau\ge0 : \nu_-^*(\tau) \le \lambda^l\},
\end{aligned}
\end{equation*}
with the notation that $\inf \emptyset = +\infty$.
By virtue of the strict monotonicity and continuity of $c_\pm^*(\cdot),c_-(\lambda;\tau),\nu_-^*(\tau)$ on $[0,\infty)$ combined with Lemma \ref{lem:c_infty}, it follows that $\tau_+\in(0,+\infty)$ and $\tau_-\in[0,+\infty)$, whereas $\tau_\lambda,\hat{\tau}\in[0,+\infty]$. More precisely, $\tau_-=0$ holds whenever $c_1\in(-c_+^*(0),-c_-^*(0)]$; $\tau_\lambda=+\infty$ if and only if $\tilde{c}_1\le c_-(\lambda^l;+\infty)$; and $\hat\tau=+\infty$ is equivalent to $\lambda^l\le\nu_-^*(+\infty)$.

\begin{lemma}\label{lem:tau+tau-}
Let $\tau_\pm,\tau_\lambda,\hat\tau$ be defined above. We have the following results.
\begin{enumerate}
    \item[(i)] $\tau_- < \tau_+$.
    \item[(ii)] $0 \le \tau_- \le \tau_\lambda$.
    \item[(iii)] If $0<\hat\tau < +\infty$, $\hat\tau < \tau_\lambda$ implies $\hat\tau < \tau_-$.
\end{enumerate}
\end{lemma}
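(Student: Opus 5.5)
The plan is to reduce all three assertions to monotonicity in $\tau$ together with pointwise comparisons at a fixed $\tau$. We use that $c_\pm^*(\tau)$, $c_-(\lambda^l;\tau)$ and $\nu_-^*(\tau)$ are continuous and strictly decreasing (Step 1 of Lemma \ref{lem:monoton-not-strict} and Lemma \ref{lem:nu-decreasing}). Hence each threshold is the unique crossing time when it is finite and positive, and for $\tau\ge$ the threshold the corresponding inequality holds. The basic pointwise facts, valid for every $\tau$, are
\[
c_-^*(\tau)<c_+^*(\tau)\quad\text{(since $\mathbf{H}_-<\mathbf{H}_+$ for $p>0$)},\qquad c_-^*(\tau)\le c_-(\lambda^l;\tau),
\]
with equality in the second if and only if $\lambda^l=\nu_-^*(\tau)$ (Corollary \ref{lem:gamma-pm}(iii)).

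For (i): we have $\tau_+\in(0,\infty)$ and $c_+^*(\tau_+)=\tilde c_1$. Then $c_-^*(\tau_+)<c_+^*(\tau_+)=\tilde c_1$. By continuity $c_-^*(\tau)<\tilde c_1$ already for some $\tau<\tau_+$, so $\tau_-<\tau_+$. (If $\tau_-=0$ this is immediate.)

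For (ii): $\tau_-\ge0$ holds by definition. If $\tau_\lambda=\infty$ there is nothing to prove. Otherwise, at $\tau=\tau_\lambda$ we have $c_-(\lambda^l;\tau_\lambda)\le\tilde c_1$ (by continuity, or because $\tau_\lambda=0$). This gives $c_-^*(\tau_\lambda)\le c_-(\lambda^l;\tau_\lambda)\le\tilde c_1$, so $\tau_\lambda$ belongs to the set defining $\tau_-$ and therefore $\tau_-\le\tau_\lambda$.

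For (iii): assume $0<\hat\tau<\tau_\lambda$ with $\hat\tau$ finite, so $\nu_-^*(\hat\tau)=\lambda^l$. Then $c_-(\lambda^l;\hat\tau)=c_-^*(\hat\tau)$. Since $\hat\tau<\tau_\lambda$ and $\tau\mapsto c_-(\lambda^l;\tau)$ is strictly decreasing, $c_-(\lambda^l;\hat\tau)>\tilde c_1$, hence $c_-^*(\hat\tau)>\tilde c_1$. By strict monotonicity of $c_-^*$, $c_-^*(\tau)>\tilde c_1$ for all $\tau\le\hat\tau$, so $\tau_-\ge\hat\tau$. We need strictness, and it follows because equality $\tau_-=\hat\tau$ would force $c_-^*(\hat\tau)=\tilde c_1$ when $\tau_->0$, contradicting $c_-^*(\hat\tau)>\tilde c_1$. (The case $\tau_-=0$ is excluded since $\hat\tau>0$.)

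The main obstacle is only bookkeeping at the boundary cases: thresholds equal to $0$ or $+\infty$, and the use of continuity to convert an infimum into equality. In each case I will first treat the threshold as an attained crossing point and then handle the degenerate values separately. The ordering statement as a whole also needs the fact that $c_-(\lambda^l;\tau)\ge c_-^*(\tau)$ holds uniformly in $\tau$, which is immediate from the definition of $c_-^*$ as an infimum.
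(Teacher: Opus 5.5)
Your proof is correct and follows essentially the same route as the paper's: you combine the pointwise inequalities $c_-^*(\tau)<c_+^*(\tau)$ and $c_-^*(\tau)\le c_-(\lambda^l;\tau)$ (with equality exactly when $\lambda^l=\nu_-^*(\tau)$) with the continuity and strict monotonicity of the thresholds in $\tau$. Where the paper simply says (i) follows ``immediately'', you spell out the continuity step at the crossing time $\tau_+$, and you also make the strictness in (iii) explicit; both are welcome refinements rather than a different argument.
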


\begin{proof}
(i) From $c_+^*(\tau) > c_-^*(\tau)$ for all $\tau\ge0$ we obtain $\tau_-<\tau_+$ immediately.

(ii) If $\tau_\lambda=0$, then $c_-(\lambda^l;0)\le\tilde c_1$. Since
$c_-(\lambda^l;\tau)\ge c_-^*(\tau)$, we have $c_-^*(0)\le\tilde c_1$,
hence $\tau_-=0$ and $\tau_-=\tau_\lambda$.
If $0<\tau_\lambda<+\infty$, continuity gives $c_-(\lambda^l;\tau_\lambda)=\tilde c_1$,
and $c_-(\lambda^l;\tau_\lambda)\ge c_-^*(\tau_\lambda)$ implies $\tilde c_1\ge c_-^*(\tau_\lambda)$,
so $\tau_-\le\tau_\lambda$ by definition.
If $\tau_\lambda=+\infty$, the inequality $\tau_-\le\tau_\lambda$ is trivial.
Thus $0\le\tau_-\le\tau_\lambda$ in all cases.

(iii) By continuity $\nu_-^*(\hat\tau) = \lambda^l$, hence $c_-(\lambda^l;\hat\tau) = c_-^*(\hat\tau)$ by definition of the minimizer $\nu_-^*(\tau)$. By the strict monotonic decrease of $c_-(\lambda^l;\cdot)$ and $c_-^*(\cdot)$, we derive that
\[
\hat\tau < \tau_\lambda
\implies c_-(\lambda^l;\hat\tau) > c_-(\lambda^l;\tau_\lambda)
\implies c_-^*(\hat\tau) > \tilde{c}_1
\implies \hat\tau < \tau_-.
\]
This chain remains valid even when $\tau_\lambda = +\infty$, because in that case $c_-(\lambda^l;\tau) > \tilde{c}_1$ for all $\tau\ge0$. This ends the proof.
\end{proof}

\begin{lemma}\label{lem:monotonicity}
Assume that $c_1\in(-c_+^*(0),0)$.
Then the leftward spreading speed $c_l(\lambda^l,c_1;\tau)$ fails to be strictly monotone in $\tau$ if and only if $\tau$ belongs to
\( \mathcal{I}_{\rm lock}:=
\begin{cases}
[\tau_\lambda,\tau_+], & \text{if } \hat{\tau}\ge\tau_\lambda,\\[6pt]
[\tau_-,\tau_+],   & \text{if } \hat{\tau}<\tau_\lambda .
\end{cases}
\)
\end{lemma}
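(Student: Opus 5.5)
The plan is to read off, for fixed $\lambda^l>0$ and $\tilde c_1=-c_1\in(0,c_+^*(0))$, the set $\{\tau\ge0:(\lambda^l,\tilde c_1)\in B_3(\tau)\}$ from the description \eqref{eq:B3}, and to show that this set equals $\mathcal{I}_{\rm lock}$. Outside $B_3(\tau)$, Lemma~\ref{lem:monoton-not-strict} (Steps 1--2 and their analogues in $B_1,B_2,B_4,B_5$) shows that $c_l$ is given by one of $c_-(\lambda^l;\tau)$, $c_+(p_*;\tau)$, $c_-^*(\tau)$ or $c_+^*(\tau)$. Each of these is strictly decreasing in $\tau$. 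On the other hand, $c_l\equiv\tilde c_1$ on $B_3(\tau)$. Since $c_l$ is continuous and non-increasing in $\tau$, it fails to be strictly monotone exactly on the closure of the locking set. The case $c_1\notin(-c_+^*(0),0)$ was already excluded by the remarks preceding Lemma~\ref{lem:tau+tau-}.

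The first step is to translate each inequality in \eqref{eq:B3} into a condition on $\tau$, using the strict monotonicity from Step~1 and Lemma~\ref{lem:nu-decreasing}:
\begin{itemize}
\item $c_+^*(\tau)\ge\tilde c_1$ is equivalent to $\tau\le\tau_+$;
\item $c_-^*(\tau)\le\tilde c_1$ is equivalent to $\tau\ge\tau_-$;
\item $c_-(\lambda^l;\tau)\le\tilde c_1$ is equivalent to $\tau\ge\tau_\lambda$;
\item $\nu_-^*(\tau)\le\lambda^l$ is equivalent to $\tau\ge\hat\tau$.
\end{itemize}
The lower constraint $\lambda^l\ge\nu_1(\tau)$ needs no separate treatment. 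Since $\nu_1(\tau)$ is the smaller root of $c_-(\nu;\tau)=c_+^*(\tau)$, the condition $c_-(\lambda^l;\tau)\le\tilde c_1\le c_+^*(\tau)$ together with $\lambda^l<\nu_-^*(\tau)$ already forces $\lambda^l\ge\nu_1(\tau)$, because $c_-(\cdot;\tau)$ decreases on $(0,\nu_-^*)$. Hence
\[
B_3\text{-membership}\iff \tau\le\tau_+\ \text{and}\ \bigl[(\tau<\hat\tau,\ \tau\ge\tau_\lambda)\ \text{or}\ (\tau\ge\hat\tau,\ \tau\ge\tau_-)\bigr].
\]

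The second step is a case analysis using Lemma~\ref{lem:tau+tau-}.

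If $\hat\tau\ge\tau_\lambda$, then for $\tau<\hat\tau$ the condition reads $\tau\in[\tau_\lambda,\hat\tau)$. For $\tau\ge\hat\tau$ we have $\tau\ge\tau_\lambda\ge\tau_-$ by (ii), so the condition reads $\tau\ge\hat\tau$. The union is $[\tau_\lambda,\tau_+]$.

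If $\hat\tau<\tau_\lambda$, then for $\tau<\hat\tau$ the constraint $\tau\ge\tau_\lambda$ cannot hold, so only $\tau\ge\hat\tau$ with $\tau\ge\tau_-$ survives. By (iii), $\hat\tau<\tau_-$ when $0<\hat\tau<\infty$. When $\hat\tau=0$ the bound $\tau\ge\hat\tau$ holds trivially. In either case the set is $[\tau_-,\tau_+]$, which is nonempty by (i).

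When $\tau_\lambda=+\infty$ and $\hat\tau\ge\tau_\lambda$, the interval $[\tau_\lambda,\tau_+]$ is empty, consistent with no locking occurring.

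The main obstacle is the converse direction: showing that $c_l$ is strictly decreasing near each endpoint and outside the interval. This requires that, as $\tau$ leaves $\mathcal{I}_{\rm lock}$, the point $(\lambda^l,\tilde c_1)$ enters a region among $B_1,B_2,B_4,B_5$ where strict decrease holds. In particular, $B_2$ and $B_5$ involve $p_*$ and $k(\lambda^l;\tau)$, which also move with $\tau$, so some care is needed there. I would handle this by invoking the already-established strict monotonicity in each region from Lemma~\ref{lem:monoton-not-strict}, together with continuity of $c_l$ across boundaries. A piecewise-continuous function that is strictly decreasing on each piece and continuous across the pieces is strictly decreasing on any interval that avoids an open locking set. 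The endpoints $\tau_\lambda$ (or $\tau_-$) and $\tau_+$ are included in $\mathcal{I}_{\rm lock}$ because $c_l(\tau)=\tilde c_1$ there by continuity.
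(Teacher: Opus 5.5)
Your proposal is correct and follows essentially the same route as the paper: you translate the two alternatives of $B_3(\tau)$ into threshold conditions on $\tau$ via strict monotonicity, observe that $\lambda^l\ge\nu_1(\tau)$ is automatic, and resolve the resulting set $[\tau_\lambda,\min\{\tau_+,\hat\tau\}]\cup[\max\{\tau_-,\hat\tau\},\tau_+]$ using Lemma~\ref{lem:tau+tau-}. Your explicit case split on $\hat\tau$ versus $\tau_\lambda$, and your treatment of strict decrease outside the lock set via Lemma~\ref{lem:monoton-not-strict} and continuity across region boundaries, spell out what the paper leaves implicit.
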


\begin{proof}
The strict monotonicity yields the following equivalences, which hold for all $\tau\in[0,\infty)$ even when the thresholds are $+\infty$:
\begin{equation}\label{eq:tau-equiv}
\begin{aligned}
&c_+^*(\tau) \ge \tilde{c}_1 \iff \tau \le \tau_+, \quad
c_-^*(\tau) \ge \tilde{c}_1 \iff \tau \le \tau_-, \quad
c_-(\lambda^l;\tau) \le \tilde{c}_1 \iff \tau \ge \tau_\lambda,\\
&\lambda^l < \nu_-^*(\tau) \;\Longleftrightarrow\; \tau < \hat\tau,\quad
\lambda^l \ge \nu_-^*(\tau) \;\Longleftrightarrow\; \tau \ge \hat\tau.
\end{aligned}
\end{equation}
Consider the first alternative of $B_3$.
Take any $\tau$ satisfying $\tau_\lambda\le\tau\le\tau_+$ and $\lambda^l<\nu_-^*(\tau)$, then
$c_+^*(\tau)\ge\tilde c_1\ge c_-(\lambda^l;\tau) $.
In particular $c_+^*(\tau)\ge c_-(\lambda^l;\tau)$.
Recall that $\nu_1(\tau)$ is the smallest positive root of $c_+^*(\tau)=\mathbf{H}_-(\nu;\tau)/\nu$.
The function $\nu\mapsto\mathbf{H}_-(\nu;\tau)/\nu$ decreases strictly on $(0,\nu_-^*(\tau))$.
If $\lambda^l<\nu_1(\tau)$, then
\[
c_-(\lambda^l;\tau)=\frac{\mathbf{H}_-(\lambda^l;\tau)}{\lambda^l}
>\frac{\mathbf{H}_-(\nu_1(\tau);\tau)}{\nu_1(\tau)}=c_+^*(\tau),
\]
contradicting $c_-(\lambda^l;\tau)\le c_+^*(\tau)$. Hence $\lambda^l\ge\nu_1(\tau)$ is automatically true.
Thus, under the sole extra condition $\lambda^l<\nu_-^*(\tau)$, the first alternative of $B_3$ reduces to $\tau_\lambda\le\tau\le\tau_+$.
The second alternative of $B_3$ translates directly to $\tau_-\le\tau\le\tau_+$ and $\lambda^l\ge\nu_-^*(\tau)$.
Therefore, the set $S$ of delays for which $c_l(\tau)\equiv -c_1$ is
\begin{equation}\label{eq:S}
S = \bigl\{[\tau_\lambda,\tau_+]\cap S_1\bigr\}\;\cup\;\bigl\{[\tau_-,\tau_+]\cap S_2\bigr\},
\end{equation}
with $S_1=\{\tau\ge0:\lambda^l<\nu_-^*(\tau)\}$, $S_2=\{\tau\ge0:\lambda^l\ge\nu_-^*(\tau)\}$.
From \eqref{eq:tau-equiv} we have $S_1=(0,\hat{\tau})$ and $S_2=[\hat{\tau},\infty)$ (with the understanding that the interval $(0,0)$ is empty).
Substituting these into \eqref{eq:S} yields
\begin{align*}
S = \bigl[\tau_\lambda,\,\min\{\tau_+,\hat{\tau}\}\bigr]
     \;\cup\; \bigl[\max\{\tau_-,\hat{\tau}\},\,\tau_+\bigr],
\label{eq:S-explicit}
\end{align*}
with the convention that an interval is empty whenever its lower bound exceeds its upper bound. This combined with Lemma \ref{lem:tau+tau-} complete the proof.
\end{proof}

Theorem \ref{thm:tau} then follows from Lemmas \ref{lem:monoton-not-strict} and \ref{lem:monotonicity}. We further have the following result.

\begin{corollary}\label{col:left-tau}
Assume that {\rm(IC)$^\infty$} holds and $c_1\in(-c_+^*(0),0)$. Then
\[
c_l(\tau)=
\begin{cases}
c_-^*(\tau),& \tau\in[0,\tau_-],\\
-c_1,& \tau\in[\tau_-,\tau_+],\\
c_+^*(\tau),& \tau\in[\tau_+,+\infty).
\end{cases}
\]
\end{corollary}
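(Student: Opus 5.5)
The plan is to read off $c_l(\tau)$ from the leftward part of Theorem \ref{thm:vis-infty}, applied separately at each fixed $\tau\ge0$, and then translate its three regimes into intervals of $\tau$ using the thresholds $\tau_\pm$. For fixed $\tau$, Theorem \ref{thm:vis-infty} gives
\[
c_l(\tau)=c_-^*(\tau) \text{ if } \tilde c_1\le c_-^*(\tau),\qquad c_l(\tau)=\tilde c_1 \text{ if } c_-^*(\tau)<\tilde c_1<c_+^*(\tau),\qquad c_l(\tau)=c_+^*(\tau) \text{ if } \tilde c_1\ge c_+^*(\tau),
\]
with $\tilde c_1=-c_1\in(0,c_+^*(0))$. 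I will apply it with all derived quantities ($c_\pm^*$, $\nu_\pm^*$, $\mathbf{H}_\pm$) evaluated at that $\tau$, since the proof of Theorem \ref{thm:vis-infty} works for each fixed delay.

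Next I convert the conditions on $\tilde c_1$ into conditions on $\tau$. By Step 1 of Lemma \ref{lem:monoton-not-strict}, both $c_\pm^*(\tau)$ are continuous and strictly decreasing. Lemma \ref{lem:c_infty} gives $c_\pm^*(\tau)\to0$, so $\tau_+\in(0,\infty)$ and $\tau_-\in[0,\infty)$. Lemma \ref{lem:tau+tau-}(i) gives $\tau_-<\tau_+$. The equivalences \eqref{eq:tau-equiv} say that $c_-^*(\tau)\ge\tilde c_1$ iff $\tau\le\tau_-$, and $c_+^*(\tau)\ge\tilde c_1$ iff $\tau\le\tau_+$. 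Hence:
\begin{itemize}
\item[(a)] $\tau\in[0,\tau_-]$ puts us in the first regime, since there $\tilde c_1\le c_-^*(\tau)$;
\item[(b)] $\tau\in(\tau_-,\tau_+)$ gives $c_-^*(\tau)<\tilde c_1<c_+^*(\tau)$, hence $c_l=-c_1$;
\item[(c)] $\tau\ge\tau_+$ gives $c_+^*(\tau)\le\tilde c_1$, hence $c_l=c_+^*(\tau)$.
\end{itemize}

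At the endpoints $\tau=\tau_\pm$, continuity forces $c_-^*(\tau_-)=\tilde c_1$ when $\tau_->0$, and $c_+^*(\tau_+)=\tilde c_1$. So the adjacent formulas agree there, which matches the closed intervals in the statement. If $\tau_-=0$, the first interval degenerates to the single point $\{0\}$. In that case $c_-^*(0)\le\tilde c_1$, and the two formulas must be reconciled at $\tau=0$; the case $c_-^*(0)<\tilde c_1$ then falls into the locking regime, which I will flag as a convention issue in the statement.

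The main obstacle I anticipate is exactly this boundary bookkeeping. I have to make sure that the regime boundaries in Theorem \ref{thm:vis-infty}, in particular the closed endpoint at $c_1=-c_-^*$ and the open one at $-c_+^*$, are consistent at $\tau_\pm$. Continuity of $c_l$ in $\tau$, inherited from the continuity of $c_\pm^*$ and the matching values at the thresholds, settles this.
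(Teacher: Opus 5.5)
Your proposal is correct and follows essentially the argument the paper leaves implicit: the paper gives no separate proof, and the corollary is meant to be read off from the leftward formula of Theorem~\ref{thm:vis-infty} at each fixed $\tau$, with the conditions on $-c_1$ converted into $\tau$-intervals using the continuity, strict decrease and vanishing limit of $c_\pm^*(\tau)$, which is exactly what you do. Your boundary flag is also correct: if $c_-^*(0)<-c_1<c_+^*(0)$, then $\tau_-=0$ and the true value is $c_l(0)=-c_1$. So in that case the first line of the statement, and the equivalence $c_-^*(\tau)\ge\tilde c_1\iff\tau\le\tau_-$ in \eqref{eq:tau-equiv}, must be read as vacuous at $\tau=0$.
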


\section{Numerical simulations}\label{Sec:Num}

In this section, we perform numerical simulations to verify the explicit spreading speed formulas derived in Theorems \ref{thm:vis-infty} and \ref{thm:tau} for compactly supported initial data \((\text{IC}^\infty)\).

\subsection{Parameter settings}
\noindent

We implement numerical simulations for the nonlocal Nicholson's blowflies model introduced in Section \ref{Sec:Int}. The parameters used in our simulations are summarized in Table \ref{tab:para-blowfly}.
\begin{table}[htbp]
\centering
\caption{A representative parameter set for the nonlocal Nicholson's blowflies equation.}
\label{tab:para-blowfly}
\renewcommand{\arraystretch}{1.2}
\begin{tabular}{l c c}
\toprule
Parameter & Meaning & Suggested value \\
\midrule
\(d\) & Nonlocal dispersal rate & \(1\ \text{day}^{-1}\) \\
\(J(z)\) & Semicircle dispersal kernel & \(\displaystyle J(z) = \frac{2}{\pi L^2}\sqrt{L^2 - z^2}\,\mathbf{1}_{\{|z|\leq L\}}\)$^\dagger$ \\
\(L\) & Support radius of \(J\) & \(2\text{--}3\ \text{km}\) \\
\(\mu\) & Adult death rate & \(0.1\ \text{day}^{-1}\) \\
\(R_+, R_-\) & Far-field linearized birth rates & \(2.0, 1.5\) \\
\(\tau\) & Maturation delay & \(14\ \text{days}\) \\
\(\delta\) & Density-dependence strength & \(0.01\) \\
\bottomrule
\end{tabular}
\begin{flushleft}
\small{$^\dagger$ Here \(J\) is a normalized semicircle kernel supported on \([-L, L]\). For this kernel, the effective diffusion coefficient is \(D_{\text{eff}} = dL^2/8\).}
\end{flushleft}
\end{table}

The delay \(\tau = 14\) days is biologically plausible for blowflies, since the life cycle of \textit{Lucilia cuprina} is reported to take roughly \(11\)--\(21\) days. As \textit{L. cuprina} adults are capable of flying over substantial distances while searching for food, a kilometer-scale daily dispersal parameter is plausible.

\subsection{Verification of the speed selection regimes}
\noindent

To visualize the structure of the speed selection regimes predicted by Theorem \ref{thm:vis-infty}, we first plot the profiles for a representative delay \(\tau = 14\) days in Figure \ref{fig:speed-theory-verify}.

\begin{figure}[htbp]
    \centering
    \includegraphics[width=1\textwidth]{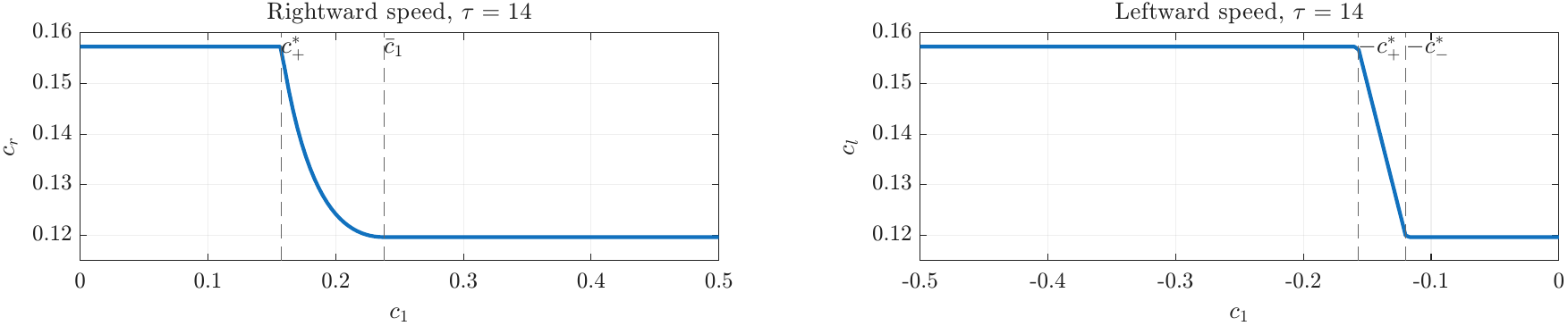}
    \caption{The rightward and leftward spreading speeds as functions of \(c_1\) for a fixed maturation delay \(\tau = 14\) days. }
    \label{fig:speed-theory-verify}
\end{figure}

As illustrated in the left panel of Figure \ref{fig:speed-theory-verify}, the rightward speed consists of three well-defined regimes separated by \(c_+^*\) and \(\bar c_1\):
(i) the rightward far-field intrinsic speed regime for \(c_1 \le c_+^*\),
(ii) the nonlocal selection regime for \(c_+^* < c_1 < \bar c_1\), and
(iii) the leftward far-field intrinsic speed regime for \(c_1 \ge \bar c_1\).
Similarly, the right panel clearly identifies the three subdomains for the leftward speed. The most prominent feature is the tilted segment in the interval \((-c_+^*, -c_-^*)\), which geometrically corresponds to the locking region \(c_l = -c_1\). The domains for \(c_1 \le -c_+^*\) and \(c_1 \ge -c_-^*\) correspond to the intrinsic speed \(c_+^*\) and \(c_-^*\), respectively.

\bigskip

We then examine how the maturation delay affects these regimes. Figure \ref{fig:tau-sweep} displays the numerical profiles of \(c_r\) and \(c_l\) for different values of \(\tau\) belonging to the set \(\{11,12,13, 14\}\).

\begin{figure}[htbp]
    \centering
    \includegraphics[width=1\textwidth]{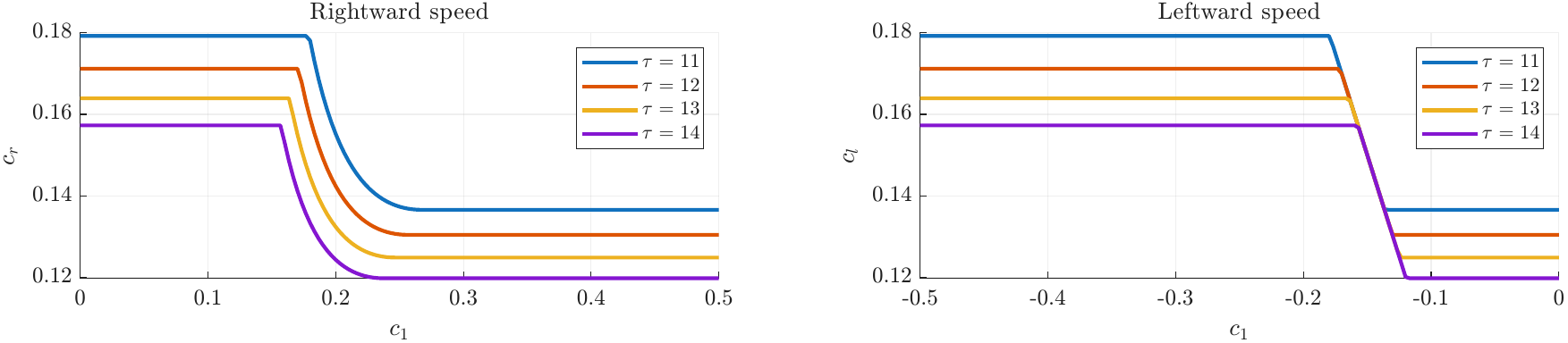}
    \caption{Dependence of the spreading speeds on \(c_1\) and the maturation delay \(\tau\). }
    \label{fig:tau-sweep}
\end{figure}

Consistent with Theorem \ref{thm:tau}, the rightward speed \(c_r\) (left panel) strictly decreases with respect to the delay \(\tau\) for all \(c_1\in\mathbb{R}\). More strikingly, the right panel confirms the delay-independence of the locking phenomenon. Within the interval \(c_1 \in (-c_+^*, -c_-^*)\), the speed satisfies \(c_l = -c_1\), which causes the curves for different \(\tau\) values to collapse onto a single straight line (the diagonal segment) regardless of the delay. This demonstrates the delay-independence predicted by Theorem~\ref{thm:tau}, whose locking window corresponds to \(c_1 \in (-c_+^*(0), 0)\). Outside this locking interval, the monotonic delay-induced deceleration recovers, as evidenced by the clear separation of the curves in the regimes \(c_1 \le -c_+^*\) and \(c_1 \ge -c_-^*\).
These numerical results validate the explicit formulas in Theorem \ref{thm:vis-infty} and the delay locking characterization in Theorem \ref{thm:tau}.

\appendix
\section{The proof of Lemma \ref{lem:w^vareps-bound-2} for Laplacian diffusion}\label{App:Local}

In this section, we prove Lemma \ref{lem:w^vareps-bound-2} for the Laplacian diffusion case. By the hyperbolic scaling and the WKB transformation, $w^\varepsilon$ associated with Laplacian diffusion satisfies that
\begin{align}\label{w^varepsilon-local}
-\varepsilon d\partial_{xx}w^\varepsilon + d|\partial_x w^\varepsilon|^2 + \partial_tw^\varepsilon -\mu  + \mu \frac{f^\varepsilon(x-c_1t,u^\varepsilon(t-\varepsilon\tau,x))}{u^\varepsilon(t,x)} =0\quad\text{ in } (0,\infty)\times\mathbb{R}.
\end{align}

\begin{lemma}\label{lem:w-vareps-bound}
Assume that {\rm (F1)--(F4)} hold. Let $w^\varepsilon$ be a solution of \eqref{w^varepsilon-local} with initial condition {\rm(\text{IC}$^\infty)$.} Then for any compact subset $Q_1$ of $(0,\infty)\times\mathbb{R}$, there exists a positive constant $C=C(Q_1)$ and $0<\varepsilon_0<1$ such that for any $\varepsilon\in(0,\varepsilon_0)$, $\sup_{Q_1}|w^\varepsilon|\leq C.$
\end{lemma}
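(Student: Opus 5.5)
The lower bound is immediate. By the comparison principle and (F4), the solution of \eqref{eq:local} with $\phi\in C_{u^*}$ satisfies $0\le u\le u^*$. Hence $w^\varepsilon\ge -\varepsilon\ln u^*\ge -\ln u^*$ for $\varepsilon<1$, and the remaining task is a uniform upper bound for $w^\varepsilon$ on $Q_1$. The approach is the same as in Lemma~\ref{lem:w^vareps-bound-2}: since $f\ge 0$, we have
\[
u_t\ge d\,u_{xx}-\mu u \quad\text{in }(0,\infty)\times\mathbb{R}.
\]
The delayed reaction term can therefore be discarded, and $u$ is a supersolution of the linear, undelayed damped heat equation. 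In the Laplacian case the explicit Gaussian kernel replaces the nonlocal argument of \cite[Theorem 4.5]{Liang2020}.

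The first step is to produce a positive bump at time $0$. Since $\phi$ is nontrivial, nonnegative and continuous, I would fix $\theta_0\in[-\tau,0]$ with $\phi(\theta_0,\cdot)\not\equiv 0$. If $\theta_0<0$, the same supersolution inequality applied from time $\theta_0$ (valid for $t>\theta_0$, since $f\ge0$) gives positivity of $u(0,\cdot)$ on an interval; if $\theta_0=0$ this is already given. Either way I obtain $x_0\in\mathbb{R}$, $r>0$ and $\delta>0$ with $u(0,x)\ge \delta\,\mathbf 1_{[x_0-r,x_0+r]}(x)$.

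Next I would compare with the explicit solution of the damped heat equation. The function
\[
v(t,x)=\delta e^{-\mu t}\int_{x_0-r}^{x_0+r}\frac{1}{\sqrt{4\pi d t}}e^{-\frac{(x-y)^2}{4dt}}\,dy
\]
solves $v_t=d v_{xx}-\mu v$ with $v(0,\cdot)=\delta\mathbf 1_{[x_0-r,x_0+r]}$. The parabolic comparison principle gives $u\ge v$ for $t>0$. Bounding the integrand from below by its value at the endpoint farthest from $x$ yields
\[
u(t,x)\ge \frac{2r\delta}{\sqrt{4\pi d t}}\exp\Big(-\mu t-\frac{(|x-x_0|+r)^2}{4dt}\Big).
\]

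Finally I would rescale. Evaluating this bound at $(t/\varepsilon,x/\varepsilon)$ and taking $-\varepsilon\ln$ gives
\[
w^\varepsilon(t,x)\le \mu t+\frac{(|x-\varepsilon x_0|+\varepsilon r)^2}{4dt}+\frac{\varepsilon}{2}\ln\frac{4\pi d t}{\varepsilon}-\varepsilon\ln(2r\delta).
\]
On a compact $Q_1\subset(0,\infty)\times\mathbb{R}$, $t$ is bounded above and away from $0$ and $|x|$ is bounded. Since $\varepsilon|\ln\varepsilon|\to0$, the right-hand side is bounded by some $C(Q_1)$ for all $\varepsilon\in(0,\varepsilon_0)$.

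The only delicate point is the first step, extracting a genuine positive lower bound for $u(0,\cdot)$ on an interval from the delayed problem. This is handled by using the supersolution inequality starting from the initial time $\theta_0$ at which $\phi$ is nontrivial, together with the strong maximum principle for the heat equation. The rest is explicit computation.
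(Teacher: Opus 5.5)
Your core argument matches the paper's: drop the nonnegative delayed reaction term, bound $u$ from below by the damped heat flow via the Gaussian kernel, and rescale. Your explicit lower bound and the final $-\varepsilon\ln$ computation are correct.

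The step you flag as delicate is wrong as written. The inequality $u_t\ge d\,u_{xx}-\mu u$ holds only for $t>0$. On $[-\tau,0]$, $u$ is the prescribed history $\phi$ and satisfies no equation, so nothing carries positivity from $\theta_0<0$ forward to time $0$. In fact $u(0,\cdot)=\phi(0,\cdot)$ can vanish identically for a nontrivial admissible $\phi$. For example, with $\tau>0$ take $\phi(\theta,x)=\max\{0,-\theta-\tau/2\}\,\psi(x)$, where $\psi\ge0$ is nontrivial, compactly supported and small. For such data the claimed bump $u(0,\cdot)\ge\delta\mathbf{1}_{[x_0-r,x_0+r]}$ is false. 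Your justification uses only $f\ge0$, and that cannot suffice: for $f\equiv0$ the same data give $u\equiv0$ on $(0,\infty)$.

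The repair needs the strict positivity $f(z,u)>0$ for $u>0$ from (F1), and it is short:
\begin{itemize}
\item By continuity of $\phi$, pick $(\theta_0,x_*)$ with $\theta_0\in(-\tau,0)$ and $\phi(\theta_0,x_*)>0$.
\item On $(0,\tau]$ the source term $\mu f(x-c_1t,\phi(t-\tau,x))$ is nonnegative. It is bounded below by a positive constant on a small box around $(\theta_0+\tau,x_*)\subset(0,\tau)\times\mathbb{R}$.
\item Duhamel's formula (or comparison) then gives $u(\tau,\cdot)\ge\delta$ on some interval.
\item Compare with the damped heat flow started at time $\tau$ instead of $0$. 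After rescaling, $t$ is replaced by $t-\varepsilon\tau$ in your bound. This is harmless on $[\iota,T]\times[-R,R]$ once $\varepsilon<\iota/(2\tau)$.
\end{itemize}
If $\phi(0,\cdot)\not\equiv0$, you may start at time $0$ as you did. The paper's proof simply assumes $u_0=\phi(0,\cdot)$ is positive on an interval, so with this fix your argument is actually more complete than the written one.

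A minor slip in your lower bound: $-\varepsilon\ln u^*\ge-\ln u^*$ holds only when $u^*\ge1$. Use $-\varepsilon\ln u^*\ge-|\ln u^*|$ instead.
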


\begin{proof}
We only prove the upper bound. It follows from (F1) that $u$ satisfies $u_t\ge du_{xx}-\mu u$.
By the comparison principle, $u(t,x)\ge e^{-\mu t}\int_{\mathbb{R}}G_{d}(t,x-z)u_0(z)\,dz,$
where $G_{d}(t,\xi)=\frac{1}{\sqrt{4\pi dt}}e^{-\frac{\xi^2}{4dt}}$.

Let $Q=[\iota,T]\times[-R,R]$ with $\iota>0$.
Since $u_0$ satisfies {\rm(\text{IC}$^\infty)$}, there exist a closed interval $I=[a,b]$ and a constant $m>0$ such that $u_0\ge m$ on $I$.
For $(t,x)\in Q$ and $\varepsilon\le1$, we have
\[
\begin{aligned}
u^\varepsilon(t,x) &= u\Bigl(\frac{t}{\varepsilon},\frac{x}{\varepsilon}\Bigr)
\ge e^{-\mu T/\varepsilon}\int_{I}
\frac{1}{\sqrt{4\pi d t/\varepsilon}}
\exp\!\Bigl(-\frac{(x/\varepsilon-z)^2}{4dt/\varepsilon}\Bigr)\,m\,dz \\
&= m e^{-\mu T/\varepsilon}\sqrt{\frac{\varepsilon}{4\pi d t}}
\int_{a}^{b}\exp\!\Bigl(-\frac{(x-\varepsilon z)^2}{4dt\varepsilon}\Bigr)dz .
\end{aligned}
\]
On $Q$, $|x|\le R$, $t\ge\iota$, so for $z\in I$ and $\varepsilon\le1$,
\[
|x-\varepsilon z|\le R+\varepsilon\max\{|a|,|b|\}\le R+\max\{|a|,|b|\}=:R_1.
\]
Hence
\[
u^\varepsilon(t,x)\ge m\,e^{-\mu T/\varepsilon}\sqrt{\frac{\varepsilon}{4\pi d T}}
\,(b-a)\,e^{-R_1^2/(4d\iota\varepsilon)}
=  C_1\sqrt{\varepsilon}\,e^{- C_2/\varepsilon},
\]
where $C_1=\frac{m(b-a)}{\sqrt{4\pi d T}}$, and $C_2=\mu T+\frac{R_1^2}{4d\iota}$.
We now revert to $w^\varepsilon=-\varepsilon\ln u^\varepsilon$ that
\[
w^\varepsilon(t,x)\le -\varepsilon\ln  C_1-\frac{\varepsilon}{2}\ln\varepsilon+ C_2.
\]
Since $-\varepsilon\ln  C_1-\frac{\varepsilon}{2}\ln\varepsilon\to0$ as $\varepsilon\to0$, there exists $0<\varepsilon_0<1$ such that for all $\varepsilon\in(0,\varepsilon_0)$ and all $(t,x)\in Q$,
\[
w^\varepsilon(t,x)\le  C_2+1,
\]
which completes the proof.
\end{proof}

\section*{Acknowledgments}
\noindent

The research of W.-T. Li was partially supported by NSF of China (Nos. 12531008; 12271226). The research of K.-Y. Lam was partially supported by the Hong Kong Research Grants Council (HK-RGC Grant 15305824).


\end{document}